\definecolor{blue}{rgb}{0.12,0.47,0.87}
\title{Almost blenders and parablenders}
\author{S\'ebastien Biebler}
\newcommand\blfootnote[1]{%
  \begingroup
  \renewcommand\thefootnote{}\footnote{#1}%
  \addtocounter{footnote}{-1}%
  \endgroup
}
\begin{document}

\newtheorem{theorem}{Theorem}[subsection]
\newtheorem*{theoremlilou}{Theorem C}
\newtheorem*{theoremliloum}{Theorem B}
\newtheorem*{theoremliloumm}{Theorem D}
\newtheorem*{theoremcontinu}{Theorem (Przytycki)}
\newtheorem*{theoremliloumMm}{Theorem A}
\newtheorem*{coro}{Corollary C} 
\newtheorem*{theoremliloumf}{Main Theorem}
\newtheorem*{theoremcontinuf}{Theorem (Przytycki)}
\newtheorem*{corofffff}{Theorem } 
\newtheorem*{corof}{Main Corollary} 
\newtheorem*{coroC}{Conjecture} 
\newtheorem*{coroC15}{Conjecture A (Berger \cite{berc})} 
\newtheorem*{coroC16}{Conjecture B (\cite{rio} Conj. 3.1)} 
\newtheorem{Lemma}[theorem]{Lemma}
\newtheorem{Definition}[theorem]{Definition}
\newtheorem{Definition2}[theorem]{Definition}
\newtheorem*{Definition2K}{Definition}
\newtheorem*{Definition21}{Definition}
\newtheorem*{Definition217}{Definition (\cite{HK} P.53)}
\newtheorem*{Definition224}{Definition}
\newtheorem{Sublemma}[theorem]{Sublemma}
\newtheorem{Remark}[theorem]{Remark}
\newtheorem{Proposition}[theorem]{Proposition}
\newtheorem*{Proposition55}{Proposition}
\newtheorem{Fact}{Fact}
\newtheorem*{theoremB}{Theorem E (Bowen)}
\newtheorem*{theoremcontinub}{Theorem F (Berger \cite{b9} Prop. 1.6,  Theorem C.5 \cite{bbb137})}
\newtheorem*{Definition22}{Definition (\cite{b9, bbb})}
\newtheorem*{Question}{Question}

\maketitle
\abstract{ \noindent A blender for a surface endomorphism  is a hyperbolic basic set for which the union of the local unstable manifolds contains robustly  an open set. Introduced by Bonatti and Díaz in the 90s, blenders turned out to have many powerful applications to differentiable dynamics. In particular, a generalization in terms of jets, called parablenders, allowed Berger to prove the existence of generic  families displaying robustly infinitely many sinks. In this paper, we introduce analogous notions in a measurable point of view. We define an almost blender as a hyperbolic basic  set for which a prevalent perturbation has a local unstable set having positive Lebesgue measure. Almost parablenders are defined similarly in terms of jets. We study families   of endomorphisms  of $\mathbb{R}^2$   leaving invariant  the continuation of  a  hyperbolic basic  set. When some inequality involving the entropy and  the  maximal contraction along stable manifolds is satisfied, we  obtain  an almost blender or parablender. This answers partially a conjecture of Berger.     The proof is based on thermodynamic formalism: following works of Mihailescu, Simon, Solomyak and Urbański, we study families of fiberwise unipotent  skew-products   and we give conditions under which these maps have limit sets of positive measure inside their fibers. \blfootnote{\footnotesize{The author is supported by ERC project 818737 \emph{Emergence of wild differentiable dynamical systems}.}}
 
 }
\tableofcontents
\newpage
\section{Introduction}

\subsection{Blenders and almost blenders}

Fractal sets have played a central role in the development of differentiable dynamics. Among several examples, a central notion is that of blender, cast by Bonatti and Díaz in the 90s. It was first introduced in the invertible setting in \cite{bt5}  to construct robustly transitive nonhyperbolic diffeomorphisms. A blender is a hyperbolic basic set on which the dynamics has a special behavior: its unstable set forms an   «impenetrable wall» in the sense that it intersects any perturbation of a submanifold of dimension lower than the   
 stable dimension. In the case of surface endomorphisms, this notion takes the following simpler form:
  \vspace{0.3cm}

\begin{Definition2K}
A $C^{r}$-{\bf blender} for a $C^{r}$-endomorphism $\mathcal{F}$ of a surface $\mathcal{S}$ is a hyperbolic basic  set $\mathcal{K}$ s.t. an  union of its local unstable manifolds has $C^r$-robustly a non-empty interior: there exists a non-empty open set  $\mathcal{U} \subset \mathcal{S}$ included in an union of  local unstable manifolds of the continuation $\tilde{\mathcal{K}}$ of $\mathcal{K}$ for any map $\tilde{\mathcal{F}}$ $C^{r}$-close to $\mathcal{F}$.
\end{Definition2K} 

  \vspace{0.2cm}
  
  \noindent This property turned out to have many other powerful applications: for example $C^1$-density of stable ergodicity \cite{sc}, robust homoclinic tangencies \cite{bt7,bibi3} and thus  Newhouse phenomenon, the existence of generic  families displaying robustly infinitely many sinks \cite{b9}, robust bifurcations in complex dynamics \cite{dm,taflin,bibi}, fast growth of the number of periodic points \cite{bb,AST} ... Thus the following question is of fundamental interest: \emph{when do blenders appear ?} \newline
  
  \noindent In this direction,   Berger proposed the following conjecture:

   \vspace{0.3cm}

\begin{coroC15}
Let $\mathcal{F}$ be a $C^r$-local diffeomorphism of a manifold $\mathcal{M}$, for $r \ge 2$.  Let $\mathcal{K}$ be a hyperbolic basic set for $\mathcal{F}$. Suppose that the topological entropy $h_\mathcal{F}$ of $\mathcal{F} | \mathcal{K}$ satisfies:
$$ h_\mathcal{F} > \mathrm{dim} \text{ } \mathcal{E}^s  \cdot   |   \log m( D \mathcal{F} )  |  \hspace{0.5 cm}  \text{ with }   \hspace{0.5 cm}
m( D  \mathcal{F} ) := \min \limits_{z \in \mathcal{K}, u \in \mathcal{E}^s_z , || u|| =1}     \text{ }  || D_z \mathcal{F}(u) ||   \, ,$$
  and $\mathcal{E}^s$ the stable bundle of $\mathcal{K}$. Then there exists a $C^r$-neighborhood $\mathcal{U}$ of $\mathcal{F}$ and an infinite codimensional subset $\mathcal{N} \subset  \mathcal{U}$ such that for every $\tilde{\mathcal{F}} \in \mathcal{U} \setminus \mathcal{N}$, the continuation $\tilde{\mathcal{K}}$ of $\mathcal{K}$ is a $C^r$-blender.
\end{coroC15}

  \vspace{0.3cm}
  
  \noindent Note that we cannot hope that $\mathcal{K}$ is itself systematically a $C^r$-blender under the assumptions of Conjecture A.  Here is an easy counterexample. \newline
  
 \noindent {\bf Counterexample: } Let us consider the doubling map $f : x \mapsto 2x \text{ mod } 1$ on the circle $\mathbb{S}:= \mathbb{R} / \mathbb{Z}$. The whole circle is a hyperbolic basic set of repulsive type and $f$ is a $C^\infty$-local diffeomorphism. The  topological entropy $h_f$  is equal to $\log 2 >0$. Let us pick $\lambda<1$ close to 1 s.t.  $h_f  > |   \log  \lambda   |$. 
    The map $\mathcal{F} : (x,y) \in \mathbb{S} \times \mathbb{R}  \mapsto (f(x), \lambda y) \in \mathbb{S} \times \mathbb{R}$ leaves invariant the hyperbolic basic set  of saddle type $\mathcal{K}:= \mathbb{S} \times \{ 0 \}$. Moreover $\mathcal{F}$ is a  $C^\infty$-local diffeomorphism and $h_\mathcal{F} = h_f  > |   \log  \lambda   |$. However the unstable set of $\mathcal{K}$ is included in $\mathbb{S} \times \{0\}$ and thus has empty interior. 
\newline

\noindent  Conjecture A is also linked to a program proposed by  Díaz \cite{puc} on the thermodynamical study of blenders. \newline

\noindent In this article,  we give an answer  to both questions, in a  measurable point of view. 
We are going to define a measurable variant  of the notion of blender, called «almost blender». This will be a  hyperbolic basic set having its  unstable set of positive Lebesgue measure instead of being with non-empty interior. Also, this property will be asked to be 
« robust » in a measurable way instead of topological. Let us precise this « measurable robustness ». We recall that this notion is tricky since there is no canonical measure on the space $C^r(\mathcal{M}, \mathcal{M})$ of $C^r$-endomorphisms of a manifold $\mathcal{M}$. But we need an  analogous of the finite-dimensional notion of « Lebesgue almost every » in an infinite-dimensional setting.  Nevertheless, there are several notions of prevalence or typicity which generalize this concept. A panorama    has been drawn by Hunt and Kaloshin in \cite{HK}, by Ott and Yorke in \cite{OY} or by Ilyashenko and Li in \cite{IL}.  Here is one of these notions of prevalence, particularly adapted to our case:

  \vspace{0.3cm} 

  \begin{Definition217}
  We say that a set $E$ in a Banach space $B$ is \emph{finite-dimensionally  prevalent} if there exists a continuous family  $(v_q)_{q \in \mathcal{Q}}$ of vectors $v_q  \in B$, for $q$ varying in a neighborhood $\mathcal{Q}$ of $0$ in $\mathbb{R}^m$ with $m >0$ and $v_0 = 0$, s.t. for every fixed $v \in B$, we have that $v + v_q \in E$ for $\mathrm{Leb}_m$ a.e. $q \in  \mathcal{Q}$.
 \end{Definition217}

  
   \vspace{0.3cm} 
  
  \noindent In other terms, we require that for some finite-dimensional family of perturbations, if we start at any point in $B$, then by adding a perturbation randomly chosen with respect to the Lebesgue measure, we are in $E$ with probability 1. 
  A similar notion, simply called « prevalence », has been designed by Sauer, Ott and Casdagli (see Definition 3.5 in \cite{OY}, or also \cite{SYC} and  \cite{HSY}), for completely metrizable topological vector spaces and with the additive condition that $v_q$ is a linear function of $q$. In our results, we will have this additional linearity but since we do not need it,  we will take inspiration from the above definition. See Remark 1 P.53 in \cite{HK} for details.
  \newline

  \noindent  We restrict ourselves in this article to the case where the manifold $\mathcal{M}$ is equal to $\mathbb{R}^2$ and endowed with its usual Euclidean metric. The vector space  $C^r(\mathbb{R}^2,\mathbb{R}^2)$ of $C^r$-endomorphisms of $\mathbb{R}^2$ is endowed with the topology given by the uniform $C^r$-norm:
  $$ || \mathcal{F} ||_{C^r} :=  \sup_{0 \le i \le r, z \in \mathbb{R}^2} ||D^i_z  \mathcal{F}  || $$
  when $0  \le r < \infty$. 
The space of $C^r$-bounded $C^r$-endomorphisms endowed with this norm is a Banach space. Since we are interested in properties depending only on perturbations of a map on a compact set, we can restrict ourselves to $C^r$-bounded $C^r$-endomorphisms if necessary and so the above definition of prevalence from  \cite{HK} could fit to our setting. Similarly, for  
$r =\infty$, we endow $ C^\infty(\mathbb{R}^2,\mathbb{R}^2)$ with the union of uniform $C^s$-topologies on $K_j$  among integers $s$ and $j$, for an exhausting sequence of compact sets $K_j$ of $\mathbb{R}^2$,  which  gives to $ C^\infty(\mathbb{R}^2,\mathbb{R}^2)$ a complete metrizable topology. \newline

\noindent
However we cannot hope that a blender-like property, even in a weak sense, holds true densely in $C^r(\mathbb{R}^2,\mathbb{R}^2)$, even less in a prevalent way. This is why we introduce the following immediate adaptation for perturbations of a map:

   \vspace{0.3cm}

 \begin{Definition21}
 A property $(P)$ holds true for a \emph{prevalent $C^r$-perturbation} of a $C^r$-endomorphism $\mathcal{F}$ of  $\mathbb{R}^2$ if there exists a  $C^r$-neighborhood $\mathcal{U}$ of  $\mathcal{F}$ and a continuous family  $(\Sigma_q)_{q \in \mathcal{Q}}$ of $C^r$-endomorphisms $\Sigma_q$ of $\mathbb{R}^2$, for $q$ varying in a neighborhood $\mathcal{Q}$ of $0$ in $\mathbb{R}^m$ with $m >0$ and $\Sigma_0 = 0$, s.t. for every fixed $\mathcal{G} \in \mathcal{U}$, the map $\mathcal{G}+\Sigma_q$ satisfies $(P)$ for $\mathrm{Leb}_m$ a.e. $q \in  \mathcal{Q}$.
 \end{Definition21}

\noindent In particular, property  $(P)$ holds true for an  arbitrary small perturbation of $\mathcal{F}$.  \newline 

\noindent 
   Here is now a new concept, which formalizes a measurable variant of blenders.

   \vspace{0.3cm} 

\begin{Definition21}
A  hyperbolic basic set $\mathcal{K}$ for a  $C^r$-endomorphism $\mathcal{F}$ of  $\mathbb{R}^2$, with $r \ge 1$,  is an {\bf almost $C^r$-blender} if an  union of     local unstable manifolds of the continuation $\tilde{\mathcal{K}}$  of $\mathcal{K}$              has  positive  measure    for a    prevalent   $C^r$-perturbation  $\tilde{\mathcal{F}}$  of $\mathcal{F}$:
$$\mathrm{Leb}_{2}( W^u_{\mathrm{loc}}( \tilde{\mathcal{K}})) >0    \, .  $$  
\end{Definition21}

  \vspace{0.2cm}

\noindent A \emph{hyperbolic basic set} for $\mathcal{F}$ is a compact, $\mathcal{F}$-invariant, hyperbolic, transitive set $ \mathcal{K} $   
s.t.  periodic points of  $\mathcal{F} |  \mathcal{K} $  are dense in $ \mathcal{K} $   (basic notions about hyperbolic sets for endomorphisms are in the Appendix). \newline 

\noindent 
Our first result  gives an answer to   Conjecture A, in a measurable point of view:

    \vspace{0.3cm}

  \begin{theoremliloumMm} 
Let $\mathcal{F}$ be a  $C^r$-local diffeomorphism of $\mathbb{R}^2$, with $ 2 \le r \le \infty$.  Let $\mathcal{K}$ be a hyperbolic   basic set for $\mathcal{F}$. 
Suppose  that the topological entropy $h_{\mathcal{F}}$ of $\mathcal{F} |  \mathcal{K}$  satisfies $   h_\mathcal{F}    >    | 
\log m(D \mathcal{F}) |$.  Then  $\mathcal{K}$ is an almost $C^r$-blender. 
\end{theoremliloumMm}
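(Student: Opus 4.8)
The plan is to reduce the statement to a statement about fiberwise unipotent skew-products, following the approach announced in the abstract (Mihailescu--Simon--Solomyak--Urba\'nski). First I would set up local coordinates: since $\mathcal{K}$ is a hyperbolic basic set of saddle type for a surface endomorphism, and $r\ge 2$, I can use the stable/unstable cone structure to conjugate $\mathcal{F}$ near $\mathcal{K}$ to a map that is, up to a controlled change of coordinates, a skew-product over an expanding base. The base dynamics is $\mathcal{F}$ restricted to an expanding quotient modelling the unstable direction (a subshift of finite type, carrying entropy $h_\mathcal{F}$), and the fiber direction is the one-dimensional stable direction, contracted with rate bounded below by $m(D\mathcal{F})$. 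The local unstable set $W^u_{\mathrm{loc}}(\mathcal{K})$ is then the graph (or union of graphs over the unstable leaves) of the ``fiber limit set'' obtained by iterating the stable coordinate; it has positive $\mathrm{Leb}_2$-measure precisely when, after projecting to a fiber, the corresponding limit set in $\mathbb{R}$ has positive $\mathrm{Leb}_1$-measure.

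Next I would introduce the finite-dimensional family of perturbations $(\Sigma_q)_{q\in\mathcal{Q}}$ realizing the prevalence: one adds to $\mathcal{F}$ small translations (or low-degree polynomial perturbations) in the stable fiber direction, parametrized by finitely many real parameters indexed by the combinatorics of the periodic orbits of the base subshift. Perturbing $\mathcal{G}\in\mathcal{U}$ by $\Sigma_q$ turns the induced fiber iterated function system into one with an affine (unipotent, i.e.\ derivative exactly equal to the contraction, no rotation) parameter dependence. The heart of the matter is then a transversality / non-degeneracy estimate: one shows that the natural ``projection'' map sending a symbolic coding to the corresponding point of the fiber limit set has, as a function of $(q,\text{coding})$, a derivative in $q$ that is uniformly transverse in the sense needed to push forward the Bernoulli (equilibrium) measure of entropy close to $h_\mathcal{F}$ to an absolutely continuous measure. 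The hypothesis $h_\mathcal{F} > |\log m(D\mathcal{F})|$ guarantees that the similarity dimension of the fiber IFS exceeds $1$, so that the natural measure has a well-defined $L^2$ density candidate; a Fubini/energy-integral argument over the parameter $q$ (in the style of the Solomyak transversality method, or of Mihailescu--Urba\'nski's work on skew-products) then shows that for $\mathrm{Leb}_m$-a.e.\ $q$ the pushforward measure is absolutely continuous, hence its support --- contained in the fiber slice of $W^u_{\mathrm{loc}}(\tilde{\mathcal{K}})$ --- has positive $\mathrm{Leb}_1$-measure. Integrating over the unstable leaves gives $\mathrm{Leb}_2(W^u_{\mathrm{loc}}(\tilde{\mathcal{K}}))>0$.

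I expect the main obstacle to be exactly the transversality estimate: verifying that the finite-dimensional perturbation family is ``rich enough'' to move the fiber limit set transversally for \emph{every} base map $\mathcal{G}$ in a whole neighborhood $\mathcal{U}$ of $\mathcal{F}$ (not just for $\mathcal{F}$ itself), uniformly, while keeping the perturbations $C^r$-small and compatible with the skew-product normal form. This requires (i) choosing the parameters $q$ so that distinct symbolic codings are separated at first order --- typically one assigns an independent parameter to each of a finite set of periodic words generating enough entropy, and checks a Hadamard-type lower bound on the relevant Jacobian; and (ii) controlling the error terms coming from the fact that the true dynamics is only approximately a linear skew-product (the $C^2$ hypothesis enters here, to bound distortion of the contraction along stable manifolds). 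A secondary technical point is handling the case $r=\infty$, where one must check the perturbations $\Sigma_q$ can be taken $C^\infty$ and the neighborhood $\mathcal{U}$ in the appropriate Fr\'echet topology; this is routine once the $C^r$ case is done since the $\Sigma_q$ can be chosen to be polynomials supported near $\mathcal{K}$. The positivity of measure then follows from the (by now standard) fact that an exact-dimensional measure of dimension $\ge 1$ on $\mathbb{R}$ which is absolutely continuous has support of positive Lebesgue measure, combined with the Ledrappier--Young / projection machinery packaged in the skew-product results cited earlier.
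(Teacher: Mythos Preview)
Your strategy is essentially the paper's own: reduce to a one-dimensional fiberwise-contracting skew-product over a subshift (the paper does this via Markov partitions and parametrizations of local stable manifolds, Step~1 of the proof of Theorem~B), observe that the entropy hypothesis forces similarity dimension $>1$ (Lemma~\ref{sim}), enlarge the family by finitely many parameters chosen from the combinatorics so that a transversality condition {\bf (T)} holds (Proposition~\ref{keylemme3}; this is exactly the obstacle you flag), and then run the Simon--Solomyak--Urba\'nski energy/Fubini argument (Theorem~C / Proposition~\ref{lilou}) to get absolute continuity of the projected Gibbs measure for a.e.\ parameter.

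One point where the paper is more careful than your sketch: the passage from ``positive $\mathrm{Leb}_1$-measure in one fiber'' to ``positive $\mathrm{Leb}_2$-measure of $W^u_{\mathrm{loc}}(\tilde{\mathcal{K}})$'' is not done by invoking absolute continuity of unstable holonomy (which is delicate for endomorphisms, since distinct local unstable manifolds may cross). Instead the paper introduces an \emph{additional} parameter $t$ indexing a family of parallel segments $\Gamma_t$ close to a given local stable manifold, shows that the intersection $W^u_{\mathrm{loc}}(\tilde{\mathcal{K}})\cap\Gamma_t$ is itself the limit set of a small $\vartheta$-perturbation of the skew-product, applies the perturbed version of Theorem~C to get positive $\mathrm{Leb}_1$-measure for a.e.\ $(q,t)$, and only then Fubini-integrates over $t$ (Step~4). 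Your ``integrate over the unstable leaves'' is the right intuition but would require an extra regularity input; the paper's device sidesteps it entirely by folding the transverse direction into the parameter space where transversality is already established.
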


  \vspace{0.3cm}

\noindent 
     Conjecture A seems a very difficult problem in its full generality. A related question is     the following long-standing open problem:

  \vspace{0.3cm}

\begin{coroC16}
Let $\mu$ be the self similar measure associated to some IFS $\Psi = (\psi^a)_{a \in \mathcal{A}}$ formed by a finite number of contracting similarities $\psi^a$ on  $\mathbb{R}$. Suppose that there are no exact overlaps and that the similarity dimension of the IFS is strictly larger than 1. Then $\mu$ is absolutely continuous with respect to $\mathrm{Leb}_1$. 
\end{coroC16}

  \vspace{0.3cm}

\noindent One can refer to the survey of Hochman  \cite{rio} for more details. Let us also point out that the creation of blenders had also been investigated by Moreira and Silva \cite{co}.

 \subsection{Parablenders and almost parablenders}

\noindent Berger introduced in \cite{b9} a variant of blenders, defined for families of maps this time,  where not only the unstable set of a hyperbolic set, but also the set of jets of points inside  unstable manifolds  contains an open set. Such sets were named parablenders (« para » standing for « parameter »). Parablenders were introduced to prove the existence of generic families displaying robustly infinitely many sinks, which gave a counter-example to a conjecture of Pugh and Shub from the 90s \cite{ps}.  
  \vspace{0.3cm}

\begin{Definition22}
A {\bf $C^{r}$-parablender} at $p_0 \in \mathcal{P}$ for a $C^{r}$-family $(\mathcal{F}_p)_{p \in \mathcal{P}}$ of    endomorphisms of a surface   $\mathcal{S}$, $r \ge 1$, parametrized by a parameter $p$   in an open subset $\mathcal{P} \subset \mathbb{R}^d$, is a continuation $(\mathcal{K}_p)_{p \in \mathcal{P}}$ of hyperbolic basic   sets $\mathcal{K}_p$ for $\mathcal{F}_{p}$ s.t.:
\begin{itemize}
\item for every $(\gamma_p)_{p \in \mathcal{P}}$ in a non-empty open set of $C^r$-families of points $\gamma_p \in \mathcal{S}$,
\item for every $C^{r}$-family $(\tilde{\mathcal{F}}_p)_{p \in \mathcal{P}}$ of endomorphisms $C^r$-close to $(\mathcal{F}_p)_{p \in \mathcal{P}}$,
\end{itemize}
there exists a $C^r$-family $(\zeta_p)_{p \in \mathcal{P}}$ of points $\zeta_p \in \mathcal{S}$ s.t.:
\begin{itemize}
\item there is a local unstable manifold of $\mathcal{K}_{p_0}$ whose continuation for  $\tilde{\mathcal{F}}_p$ contains $\zeta_p$,  for any $p \in \mathcal{P}$,
\item the $r$-jets of $\zeta_p$ and $\gamma_p$ at $p_0$ are equal:
$$     (\zeta_p, \partial_p \zeta_p , \ldots, \partial^r_p  \zeta_p  )_{     | p = p_0} =  (\gamma_p, \partial_p \gamma_p, \ldots, \partial^r_p \gamma_p )_{     | p = p_0}     \, .$$
\end{itemize}
\end{Definition22} 

  \vspace{0.3cm}

\noindent In particular, $\mathcal{K}_{p_0}$  is a  $C^r$-blender for $\mathcal{F}_{p_0}$ if $(\mathcal{K}_p)_{p \in \mathcal{P}}$ is a  $C^r$-parablender for $(\mathcal{F}_p)_{p \in \mathcal{P}}$  at $p_0 $. In a subsequent work \cite{bb}, Berger used parablenders  to prove the existence of generic families of maps displaying robustly a fast growth of the number of periodic points, solving a problem  of Arnold \cite{ar} in the finitely differentiable case. \newline


\noindent  From now on, we work with $C^r$-families $(\mathcal{F}_p)_{p   }$ of endomorphisms $\mathcal{F}_p$ of  $\mathbb{R}^2$, with $2 \le r \le \infty$, parametrized by a parameter $p$ varying in   $\mathcal{P} := (-1,1)^d$ with $1\le d<\infty$. In fact, we will need to work with families which admit some extension on a larger parameter space. We therefore fix an  open set $\mathcal{P}’ \subset \mathbb{R}^d$ s.t. $\mathcal{P} \Subset \mathcal{P}’$. We then define a $C^r$-family $(\mathcal{F}_p)_{p }$ of endomorphisms $\mathcal{F}_p$ of  $\mathbb{R}^2$ as an element of $C^r(\mathcal{P}’ \times \mathbb{R}^2, \mathbb{R}^2 )$. We endow this   space   with the uniform $C^r$-topology when $0  \le r < \infty$, and with the union of uniform $C^s$-topologies on $K_j$  among integers $s$ and $j$, for an exhausting sequence of compact sets $K_j$ of $\mathcal{P}’ \times \mathbb{R}^2$ when $r=\infty$. Note that for simplicity we denote often in the following  $(\mathcal{F}_p)_{p \in \mathcal{P}}$ this family since we are interested mainly in the dynamics when $p \in \mathcal{P}$ but keep in mind that it admits such an extension. 
 Let $(\mathcal{K}_p)_{p \in \mathcal{P}}$ be the (hyperbolic) continuation of  a  hyperbolic basic set (extending to $\mathcal{P}’$). Let  $\mathcal{E}^s_p$ and $\mathcal{E}^u_p$ be the one-dimensional  stable and unstable bundles of $\mathcal{K}_p$. 
\newline

\noindent
Our main result deals with jets of points inside local unstable manifolds of $\mathcal{K}_p$. 
 Let $(M_p)_p$ be a $C^r$-curve of points $M_p$ in the continuation of  one   local unstable     manifold  of $\mathcal{K}_p$.
  For any integer $s \le r$, one can consider the $s$-jet of $M_p$ at any $p_0 \in \mathcal{P}$:
$$\mathrm{J}_{p_0}^{s} M_p := (M_p, \partial_p M_p , \ldots, \partial^{s}_p M_p)_{     | p = p_0} \, .$$
An  interesting set is then the set $\mathrm{J }_{p_0}^s W^u_{\mathrm{loc}}( \mathcal{K}_p)$ of all the $s$-jets among such curves $(M_p)_p$. When this set has robustly a non-empty interior,  $(\mathcal{K}_p)_{p \in \mathcal{P}}$ is a  $C^{s}$-parablender at $p_0$. 
Let  $\delta_{ d, s} $  be the dimension of the set of jets in $d$ variables of order $s$ in one dimension, which is  that of the space  $\mathbb{R}_s[X_1, \cdots, X_d]$ of polynomials  in $d$ variables of degree at most  $s$. In particular, notice that  the space  of jets of order $s$ of maps from  $\mathcal{P}$ to $\mathbb{R}^2$ is of dimension $2\delta_{d,s} $. \newline 

\noindent Here is the counterpart for families of the definition of a prevalent $C^r$-perturbation:

  \vspace{0.3cm}
  
   \begin{Definition21}
 A property $(P)$ holds true for a \emph{prevalent $C^r$-perturbation} of a $C^r$-family $(\mathcal{F}_p)_{p \in \mathcal{P} }$ of endomorphisms of  $\mathbb{R}^2$ if there exists a  $C^r$-neighborhood $\mathcal{U}$ of  $(\mathcal{F}_p)_{ p \in \mathcal{P} }$ and a  continuous family  $(\Sigma_{q})_{q \in \mathcal{Q}}$ of $C^r$-families $\Sigma_q$ of endomorphisms of $\mathbb{R}^2$, for $q$  in a neighborhood $\mathcal{Q}$ of $0$ in $\mathbb{R}^m$ with $m >0$ and $\Sigma_0 = (0)_{p \in \mathcal{P}}$, s.t. for every fixed family   $(\mathcal{G}_p)_p \in \mathcal{U}$, the family $(\mathcal{G}_p)_p+\Sigma_q$ satisfies $(P)$ for $\mathrm{Leb}_m$ a.e. $q \in  \mathcal{Q}$.
 \end{Definition21}


 \vspace{0.2cm}

\noindent  In particular, property  $(P)$ holds true for an  arbitrary small perturbation of $(\mathcal{F}_p)_{p \in \mathcal{P}}$. \newline

\noindent 
 The following is an   analogous of $C^s$-parablenders, in a  measurable point of view: 
  \vspace{0.1cm}

\begin{Definition224}
The continuation $(\mathcal{K}_p)_{p \in \mathcal{P}}$  of a hyperbolic basic set for a  $C^r$-family $(\mathcal{F}_p)_{p \in \mathcal{P}}$ of endomorphisms  of  $\mathbb{R}^2$, with $r \ge 1$,  is an  {\bf  almost  $C^{r,s}$-parablender}, with $s$ an integer  s.t. $s  \le r$, if for a prevalent   $C^r$-perturbation $(\tilde{\mathcal{F}}_p)_{p  \in \mathcal{P}}$ of $(\mathcal{F}_p)_{p  \in \mathcal{P}}$, the  continuation $(\tilde{\mathcal{K}}_p)_{p \in \mathcal{P}}$ of  $(\mathcal{K}_p)_{p \in \mathcal{P}}$ satisfies:
$$\mathrm{Leb}_{ 2\delta_{d,s} }( \mathrm{J }_{p_0}^s W^u_{\mathrm{loc}}( \tilde{\mathcal{K}}_p)) >0 \text{  for } \mathrm{Leb}_{d} \text{  a.e. }  p_0  \in \mathcal{P}  \, . $$  
\end{Definition224}

\vspace{0.2cm}

\noindent Note that if $(\mathcal{K}_p)_{p \in \mathcal{P}}$ is an  almost $C^{r,s}$-parablender and $p$ is a  parameter in $\mathcal{P}$, the set $\mathcal{K}_p$  is an   almost $C^r$-blender. \newline

\noindent 
Here is our main result, which generalizes Theorem A in terms of jets:

  \vspace{0.3cm}

  \begin{theoremliloum} 
Let $(\mathcal{F}_p)_{p \in \mathcal{P}}$ be a $C^r$-family of local diffeomorphisms of  $\mathbb{R}^2$, with $2 \le r \le \infty$.  Let $(\mathcal{K}_p)_{p \in \mathcal{P}}$ be the continuation of a hyperbolic   basic set for $(\mathcal{F}_p)_{p \in \mathcal{P}}$.
 Take an integer $s \le r-2$ and suppose that the topological entropy $h_{\mathcal{F}_p}$ of $\mathcal{F}_p |  \mathcal{K}_p$  satisfies:
$$ (\star)  \hspace{0.3cm}  \text{ }  h_{\mathcal{F}_p}    > \delta_{d,s} \cdot     | \log m(D \mathcal{F}_p) |   \hspace{1cm} \forall p \in \mathcal{P}’
 \,  .     $$
\noindent  Then $(\mathcal{K}_p)_{p \in \mathcal{P}}$ is an almost  $C^{r,s}$-parablender. 
%
\end{theoremliloum}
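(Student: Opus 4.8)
The plan is to reduce Theorem~B to a statement about fiberwise unipotent skew-products of the kind announced in the abstract, by encoding the $s$-jet dynamics along unstable manifolds as a linear cocycle over the subshift associated to $\mathcal{K}_p$. First I would use the fact that $\mathcal{K}_p$ is a hyperbolic basic set to obtain a Markov partition and a finite-to-one semiconjugacy $\pi$ from a subshift of finite type $(\Sigma,\sigma)$ onto $\mathcal{K}_p$; since $\mathcal{F}_p$ is only a local diffeomorphism (noninvertible), I would work with the natural extension / inverse branches, so that a point of $W^u_{\mathrm{loc}}(\mathcal{K}_p)$ is coded by a one-sided sequence together with a choice of preimage history, and a curve $(M_p)_p$ inside $W^u_{\mathrm{loc}}(\mathcal{K}_p)$ corresponds to an infinite composition of inverse branches of the $\mathcal{F}_p$. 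The key observation is that the $s$-jet in $p$ at $p_0$ of such a curve is obtained by iterating, over $\Sigma$, a skew-product acting on the fiber $\mathbb{R}^2\times\mathbb{R}_s[X_1,\dots,X_d]$ (position $\times$ jet-coordinates): the contraction in the stable direction — governed by $m(D\mathcal{F}_p)$ — scales the position fiber, while on the $\delta_{d,s}$-dimensional jet space the derivative cocycle is (up to conjugacy) unipotent, because taking $\partial_p^k$ of a composition produces, by the chain rule and Faà di Bruno, a triangular action on jet coordinates. This is why the hypothesis $s\le r-2$ appears: we need the curve $p\mapsto M_p$ and its first $s$ derivatives to vary $C^2$ in the relevant data so that the induced skew-product is genuinely $C^2$ and Mihailescu–Simon–Solomyak–Urbański-type estimates apply.

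The second step is to choose the finite-dimensional perturbation family $(\Sigma_q)_{q\in\mathcal{Q}}$ realizing prevalence. I would take $m$ large (of order the number of symbols times $2\delta_{d,s}$) and, for each symbol $a$ of the Markov partition, insert a localized perturbation supported near the corresponding piece of $\mathcal{K}_{p_0}$ that, to first order, translates the $2\delta_{d,s}$ jet-coordinates of points exiting that piece; concretely $\Sigma_q$ is a sum of bump functions in the $\mathbb{R}^2$-variable multiplied by polynomials in $(p-p_0)$ of degree $\le s$, with coefficients the entries of $q$. The effect is that the associated skew-product becomes a \emph{randomized} iterated function system on the jet fiber whose translation parts depend affinely and nondegenerately on $q$; this is the analogue of a "transversality family" in the Hochman/Solomyak sense. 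For a fixed ambient family $(\mathcal{G}_p)_p\in\mathcal{U}$, perturbing by $\Sigma_q$ gives, for each coding, an infinite sum $\sum_{n}$ (contraction$)_n\cdot($translation depending on $q)_n$, and the limit set in the fiber over $p_0$ is the attractor of this $q$-parametrized IFS.

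The third step is the measure estimate: I would invoke the result alluded to in the abstract (following Mihailescu–Simon–Solomyak–Urbański) giving that a fiberwise unipotent $C^2$ skew-product whose fiberwise contraction rate $|\log m(D\mathcal{F}_p)|$ is dominated by the fiber entropy — here exactly the condition $(\star)$, $h_{\mathcal{F}_p}>\delta_{d,s}\cdot|\log m(D\mathcal{F}_p)|$, since the relevant "dimension" of the unipotent fiber is $\delta_{d,s}$ — has fibers of positive Lebesgue measure, provided a transversality condition holds along the parameter $q$. The $q$-family built in step two supplies precisely that transversality (by a Borel–Cantelli / energy-integral argument one shows the $q$-transversality constant is positive), so for $\mathrm{Leb}_m$-a.e.\ $q$ the fiber over $p_0$ of the limit set has positive $\mathrm{Leb}_{2\delta_{d,s}}$-measure. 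Since $\mathcal{Q}$ and the estimates are uniform for $p_0$ in a full-measure subset of $\mathcal{P}$ (using uniformity of hyperbolicity and of $(\star)$ on $\mathcal{P}'$), Fubini over $(q,p_0)$ yields the conclusion that $\mathrm{Leb}_{2\delta_{d,s}}(\mathrm{J}^s_{p_0}W^u_{\mathrm{loc}}(\tilde{\mathcal{K}}_p))>0$ for $\mathrm{Leb}_d$-a.e.\ $p_0$ and $\mathrm{Leb}_m$-a.e.\ $q$, which is exactly the definition of an almost $C^{r,s}$-parablender. Applying this to every $(\mathcal{G}_p)_p\in\mathcal{U}$ gives prevalence.

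The main obstacle I expect is twofold: first, setting up the jet cocycle correctly in the \emph{noninvertible} setting — keeping track of inverse-branch histories, showing the jet action is genuinely conjugate to a unipotent (block-triangular) one with the contraction exactly $m(D\mathcal{F}_p)^{\text{(multiplicity)}}$ in each block, and controlling the $C^2$-dependence so the thermodynamic machinery applies (this is where $s\le r-2$ is really used). Second, and more delicate, verifying the \emph{transversality} of the $q$-parametrized family: one must ensure the inserted perturbations genuinely move the jet coordinates in $2\delta_{d,s}$ independent directions at every step and that distinct codings give transverse parameter-dependencies, so that the "no exact overlaps" type obstruction (the same phenomenon underlying Conjecture~B in the excerpt) is bypassed by the extra randomization — this is what makes the \emph{measurable} statement provable where the topological Conjecture~A is not.
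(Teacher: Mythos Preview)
Your broad strategy --- encode the dynamics as a skew-product, show the jet action is unipotent via Fa\`a di Bruno, introduce a finite-dimensional perturbation to force transversality, then apply Mihailescu--Simon--Solomyak--Urba\'nski-type estimates and Fubini --- matches the paper's architecture. But there is a genuine structural gap in how you set up the skew-product.

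You propose a fiber $\mathbb{R}^2 \times \mathbb{R}_s[X_1,\dots,X_d]$ carrying both the full position and the jet coordinates, with the IFS obtained from ``inverse branches''. This cannot work as stated: the dynamics of $\mathcal{F}_p$ on $\mathbb{R}^2$ is hyperbolic, contracting one direction and expanding the other, so neither the forward maps nor any choice of inverse branches give a contracting IFS on the full $2$-dimensional position space, and the unipotent property has no reason to hold there. The paper instead restricts the \emph{forward} dynamics of $\mathcal{F}_p^\ell$ to the one-dimensional local \emph{stable} manifolds $W^{\mathfrak{a}}_p$ (indexed by forward sequences $\mathfrak{a}$). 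This yields a skew-product $(F_p)_p$ whose fibers are segments, with contraction bounded below by $m(D\mathcal{F}_p)^\ell$; only then does passing to $s$-jets in $p$ produce a genuinely unipotent $\delta_{d,s}$-dimensional skew-product $(G_{p_0})_{p_0}$ to which Theorem~C applies. The hypothesis $(\star)$ is precisely what makes the similarity dimension exceed $\delta_{d,s}$, not $2\delta_{d,s}$.

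Consequently you are missing the paper's final step: Theorem~C only delivers positive $\delta_{d,s}$-dimensional measure for the jets of the \emph{stable-direction coordinate} of intersection points with $W^u_{\mathrm{loc}}$. To upgrade to positive $2\delta_{d,s}$-measure of bidimensional jets, the paper introduces an auxiliary $\delta_{d,s}$-parameter family $(\Gamma_{t,q})_t$ of parallel segments close to a fixed stable leaf, chosen so that the jets of their transverse position sweep out a set of positive measure as $t$ varies. The intersection of $W^u_{\mathrm{loc}}$ with each $\Gamma_{t,q}$ is the limit set of a $\vartheta$-$\mathrm{U}$-perturbation of $(G_{q_0})_{q_0}$; the second part of Theorem~C gives positive measure in the segment direction for a.e.\ $t$, and Fubini in $t$ then yields the $2\delta_{d,s}$-dimensional conclusion. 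Your proposal conflates these two stages, which is why both the fiber dimension and the role of $m(D\mathcal{F}_p)$ come out wrong. A smaller point: your transversality mechanism (one bump per Markov symbol) is too coarse; in the paper (Proposition~\ref{keylemme3}) one covers the set of triples $(\mathfrak{a},\alpha,\beta)$ with $\alpha_{-1}\neq\beta_{-1}$ by finitely many small products of cylinders and, for each, adds $\delta_{d,s}$ new parameters moving one branch relative to the other in the stable direction, with a case split (periodic vs.\ aperiodic $\mathfrak{a}$) needed to ensure the perturbation does not move both branches together.
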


  \vspace{0.3cm}
%
%
%

\noindent 
This second  result also goes  in the direction of Conjecture A, both in a measurable point of view and in terms of jets this time. Let us mention that both  Theorems A and B still hold true if we  assume that the maps involved are local diffeomorphisms only in a neighborhood of the basic sets. Last but not least, we hope to use Theorem B  to solve the conjecture of Pugh and Shub  \cite{ps} in the smooth $C^\infty$ case which is not handled by \cite{b9}. Finally, let us mention the following immediate question:

\begin{Question}
Is it possible to generalize Theorems A and B to the case where $\mathcal{M}$ is any surface (not necessarily equal to $\mathbb{R}^2$) and for the alternative  notion of prevalence defined by Kaloshin in this context ? We recall that this latter  one is defined as follows: a subset $E \subset C^r(\mathcal{M},\mathcal{M})$ is {\bf strictly $n$-prevalent} if there exists an open dense set of $n$-parameter families $(\mathcal{F}_p)_p$ s.t. $\mathcal{F}_p \in E$ for a.e. $p$ and if for every $\mathcal{F} \in C^r(\mathcal{M},\mathcal{M})$, there exists such a family with $\mathcal{F}_0 = \mathcal{F}$. A {\bf $n$-prevalent} set is a countable intersection of strictly $n$-prevalent sets.   \end{Question}

\noindent {\bf Acknowledgements:} The author would like to thank Pierre Berger for introducing him this topic, and also for many invaluable encouragements and suggestions which improved a lot this manuscript. The author is also grateful to Fran\c cois Ledrappier for helpful discussions.

\section*{Combinatorics and notations}  \label{combi} 
Let $\mathcal A$ be a  finite alphabet  of cardinality at least  $2$.  Let 
 $$\overrightarrow{ \mathcal A}= \mathcal A^{\mathbb{N}}   \text{ , } \overleftarrow{ \mathcal A}= \mathcal A^{\mathbb{Z}^*_-} \text{  and  } \overleftrightarrow{ \mathcal A}= \mathcal A^{\mathbb{Z}}$$
be the sets of infinite  forward, backward and bilateral  words with letters in $\mathcal A$. We consider the left full shift on $\overrightarrow{ \mathcal A}   $ or  $\overleftrightarrow{ \mathcal A}$:
$$ \sigma: \alpha= (\alpha_i)_i  \in \overrightarrow{ \mathcal A}    \sqcup \overleftrightarrow{ \mathcal A}   \mapsto \sigma (\alpha) =  (\alpha_{i+1})_i  \in \overrightarrow{ \mathcal A}    \sqcup \overleftrightarrow{ \mathcal A}  $$ and  the right full shift  on $\overleftarrow{ \mathcal A}$:
$$\sigma:  \alpha = (\alpha_i)_i  \in \overleftarrow{ \mathcal A}   \mapsto \sigma (\alpha) =  (\alpha_{i-1})_i \in    \overleftarrow{ \mathcal A} \, .$$
\noindent In particular  these full shifts are of positive entropy and topologically mixing. We also define $\mathcal A^*$ as the set of finite words  with letters in $\mathcal A$ and  denote by $e$  the empty word.
We endow $\overleftrightarrow{ \mathcal A}$ with the distance given by $d_\infty(\alpha,\beta) = D^{q} $ for every $\alpha = (\alpha_i)_{i}  \in  \overleftrightarrow{ \mathcal A}$ and $\beta= (\beta_i)_{i} \in \overleftrightarrow{ \mathcal A}$, where $D \in (0,1)$ is a fixed number and $q$ is the largest integer such that $\alpha_i = \beta_i$ for every $|i| < q$ if $\alpha \neq \beta$. We endow $\overrightarrow{ \mathcal A}$ with a metric defined similarly.    \newline

\noindent
For $\alpha \in \mathcal A^* \cup \overrightarrow{ \mathcal A} \cup \overleftarrow{ \mathcal A}  \cup \overleftrightarrow{ \mathcal A} $, let $| \alpha  | \in \mathbb{N} \cup \{+ \infty\}$ be  the number of letters in $\alpha$. When $| \alpha  | >n$ for some integer $n>0$, we call  $\alpha_i$ the $i^{th}$ letter of $\alpha$ and denote $\alpha_{|  n} :=(\alpha_0, \cdots ,\alpha_{n-1})$ when $\alpha \in \mathcal A^* \cup \overrightarrow{ \mathcal A}  $ and $\alpha_{|  n} :=(\alpha_{-n}, \cdots, \alpha_{-1})$  when $\alpha \in \overleftarrow{ \mathcal A} \cup \overleftrightarrow{ \mathcal A} $. Finally, for $\alpha = (\alpha_{-n}, \cdots, \alpha_{-1})  \in  \mathcal A^*$, let  $[\alpha]$ be the corresponding cylinder in $\overleftrightarrow{ \mathcal A} $: 
$$[\alpha] := \{    \beta  \in \overleftrightarrow{ \mathcal A}   :  \beta_i = \alpha_i \text{ } \forall  -n \le i \le  -1    \} \, .$$ 
\noindent We define similarly cylinders in  $\overleftarrow{ \mathcal A}$ and $\overrightarrow{ \mathcal A}$ and use the same notation.  {\bf  Greek (resp. gothic) letters will be used for finite or backward infinite (resp. forward infinite) words.}  For $\mathfrak{a}  \in \overrightarrow{ \mathcal A}$, $\alpha \in  \overleftarrow{ \mathcal A}$, $\beta \in \mathcal A^*$, we denote by $ \alpha \mathfrak{a}, \beta \mathfrak{a}, \alpha \beta$ their concatenations. The topological closure of a set relatively to the Euclidean distance is denoted with an overline. The notation $\preceq$ stands for the usual domination relation and $f \asymp g$ means that $f \preceq g$ and $g \preceq f$.

\section{Example} \label{FDRX}

We give here an application of our results. More precisely, we provide simple examples of almost blenders and parablenders. \newline 

\noindent Let us consider the segment $X = [-1,1]$. We pick three integers $n \ge 2$, $d \ge 1$ and $s \ge 0$. We choose $n’:=(n+1)^{\lceil \delta_{d,s} \rceil}  $ disjoint subsegments $X_j \Subset X$ and $n’$ numbers $0<r_j<1-1/n$, for $1 \le j \le n’$. Let $g_j$ be the affine preserving order map sending $X_j$ onto $X$. We pick a $C^\infty$-map $g : \mathbb{R} \rightarrow \mathbb{R}$ which is equal to $g_j$ on a small neighborhood of each interval $X_j$ and also a $C^\infty$-map $h : \mathbb{R} \rightarrow \mathbb{R}$ which is equal to $r_j$ on a small neighborhood of each interval $X_j$. The following $C^\infty$-endomorphism 
$$\mathcal{F} : (x,y) \in \mathbb{R}^2 \mapsto (g(x), \frac{y}{n}+  h(x))  \in \mathbb{R}^2 $$

\noindent  is a local diffeomorphism on a small neighborhood of $U:=\bigsqcup_{1 \le j \le n’} X_j \times X$. It is easy to verify that the set
\begin{equation} 
\mathcal{K}:= \bigcap_{ n \in \mathbb{Z}} \mathcal{F}^n(U) 
\end{equation}
   is a compact, hyperbolic, invariant, locally maximal set, with stable and unstable  dimensions equal to 1. 
   This remains true for any $C^\infty$-endomorphism $\tilde{\mathcal{F}}$ which is $C^\infty$-close to $\mathcal{F}$, with the  same formula.
  \begin{center}
  \includegraphics[width=6cm]{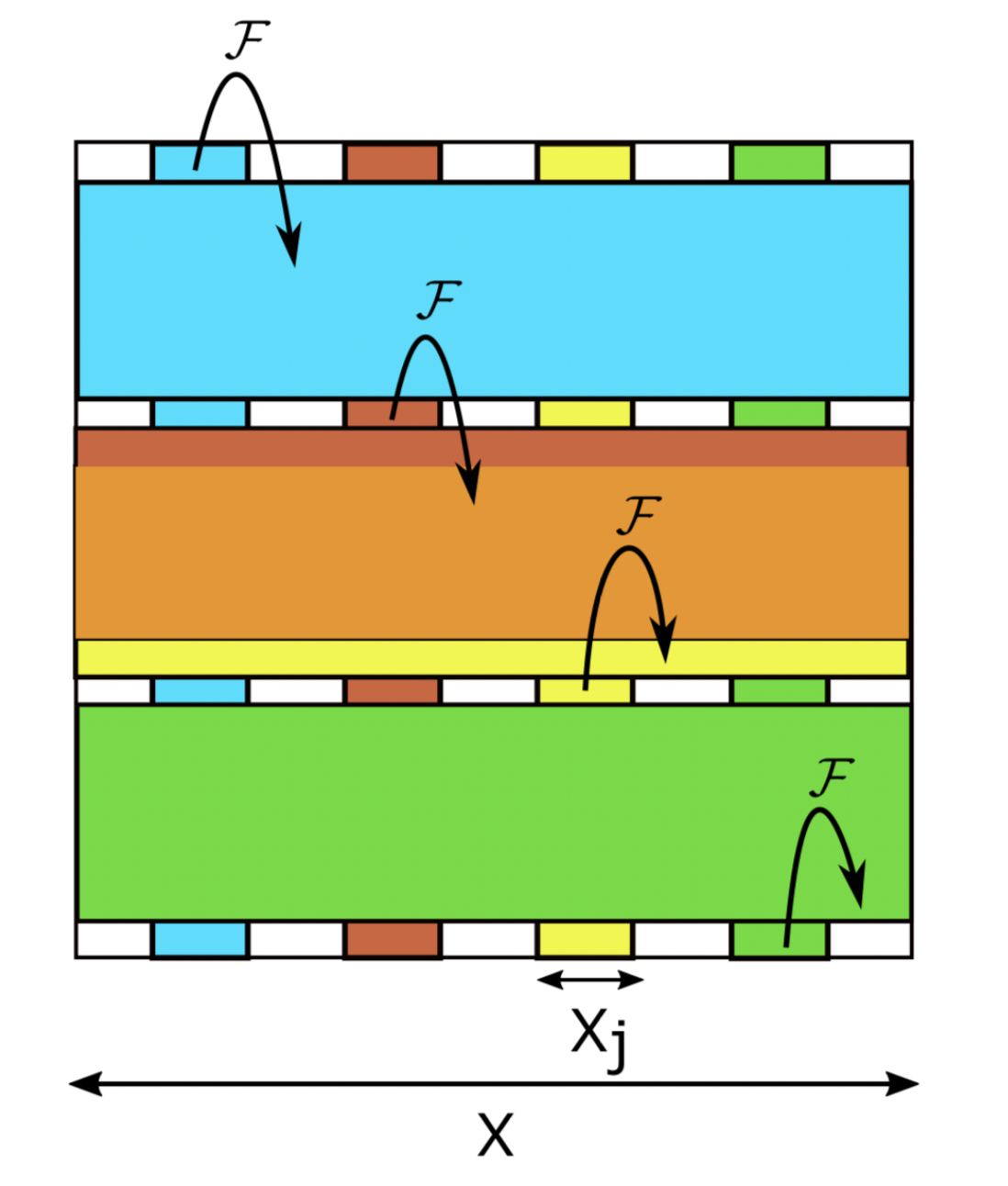}
  \end{center}
We set $\mathcal{A}:=\{1, \cdots, n’\}$ and call $\mathcal{F}_j$ the restriction of $\mathcal{F}$ on $X_j \times X$. For  $\mathfrak{a} = (\mathfrak{a}_i)_{i \ge 0} \in \overrightarrow{\mathcal A}$ and $\alpha=(\alpha_i)_{i<0} \in \overleftarrow{\mathcal A}$, the following are   local stable and unstable  manifolds of $\mathcal{K}$:
   \begin{equation}  \label{W}
    W^{\mathfrak{a}} = \bigcap_{ j \ge 0}   \mathrm{Dom}    (\mathcal{F}_{\mathfrak{a}_{j}}  \circ \cdots  \circ    \mathcal{F}_{\mathfrak{a}_0}) \text{ and  }      W^{\alpha} = \bigcap_{ j<0} \mathrm{Im}  (\mathcal{F}_{\alpha_{j}}  \circ \cdots  \circ    \mathcal{F}_{\alpha_{-1}})  \,  , 
    \end{equation}
    \noindent   where the domains $ \mathrm{Dom}   ( \mathcal{F}_{\mathfrak{a}_{j}}  \circ \cdots  \circ    \mathcal{F}_{\mathfrak{a}_0} ) $ and $ \mathrm{Dom}   (\mathcal{F}_{\alpha_{j}}  \circ \cdots  \circ    \mathcal{F}_{\alpha_{-1}})$
   of  $\mathcal{F}_{\mathfrak{a}_{j}}  \circ \cdots  \circ    \mathcal{F}_{\mathfrak{a}_0}$ and $\mathcal{F}_{\alpha_{j}}  \circ \cdots  \circ    \mathcal{F}_{\alpha_{-1}}$ are  $( g_{\mathfrak{a}_{j}}  \circ \cdots  \circ    g_{\mathfrak{a}_0})^{-1}(X)  \times X $ and $(g_{\alpha_{j}}  \circ \cdots  \circ    g_{\alpha_{-1}})^{-1}(X)  \times X$. 
    \newline

 \noindent     It is immediate that $W^{\mathfrak{a}}$ is a vertical segment of second coordinate projection $X$. 
       By hyperbolic continuation, for  every $C^\infty$-endomorphism $\tilde{\mathcal{F}}$ which is $C^\infty$-close to $\mathcal{F}$ and every $\mathfrak{a} \in \overrightarrow{\mathcal A} $, we can define a local stable manifold $\tilde{ W^{\mathfrak{a}} } $ which is a  vertical graph of class $C^\infty$ over $y \in X$ with small slope.
         We notice that these local stable manifolds are pairwise disjoint.    
    We have analogous properties for local unstable   manifolds (except their disjointness) and  $\tilde{W}^{\mathfrak{a}}$ and $\tilde{W}^{\alpha} $ intersect in exactly one point. 
     \newline

     \noindent  
     Let us now consider any $d$-unfolding $(\mathcal{F}_{p})_{p \in \mathcal{P}}$ of $\mathcal{F}$, i.e. a $C^\infty$-family of   endomorphisms $\mathcal{F}_p$ s.t. $\mathcal{F}_0=\mathcal{F}$ and $\mathcal{P} = (-1,1)^d$. Up to restricting and then rescaling the parameter space, this family leaves invariant the continuation $(\mathcal{K}_p)_{p \in \mathcal{P}}$ of the hyperbolic  set $\mathcal{K}$ and $\mathcal{F}_p$ is a local diffeomorphism on a neighborhood of $\mathcal{K}_p$.  We can define families of  local stable and unstable manifolds $ W_p^{\mathfrak{a}}$ and  $ W_p^{\alpha }$  as above, and we denote by $\phi_p$ the map sending $\beta = \alpha \mathfrak{a}  \in \overleftrightarrow{\mathcal{A}}$ (with $\alpha \in \overleftarrow{\mathcal{A}}$ and $ \mathfrak{a} \in \overrightarrow{\mathcal{A}}$)  to the unique intersection point $\phi_p(\alpha \mathfrak{
     a} ) \in \mathcal{K}_p$ between   $ W_p^{\mathfrak{a}}$ and  $ W_p^{\alpha }$. We denote:
     $$\Phi_p : \beta \in \overleftrightarrow{\mathcal{A}} \mapsto \big( \phi_p( \sigma^i(\beta) ) \big)_i \in   \overleftrightarrow{\mathcal{K}_p} \, .$$
 \noindent    We notice that  $\Phi_p$ conjugates the full shift $(\overleftrightarrow{\mathcal{A}},\sigma)$ to  the dynamics $(\overleftrightarrow{\mathcal{K}_p}, \overleftrightarrow{\mathcal{F}_p})$ on the inverse limit and so periodic points are dense in $\mathcal{K}_p$ and $\mathcal{K}_p$ is transitive, and  thus a hyperbolic basic set. 
     The entropy $h_{\mathcal{F}_p}$ of $\mathcal{F}_p   |  \mathcal{K}_p$ is equal to $\mathrm{log} (n’)$ and its stable contraction is close to $1/n$. We recall that:
     $$ \frac{ \mathrm{log} (n’) } { 
     |\mathrm{log} (1/n)| } =\lceil \delta_{d,s} \rceil \cdot   \frac{\mathrm{log}(n+1)}{\mathrm{log}(n)}>\delta_{d,s} \ge 1\, .$$
     \noindent Thus $h_{\mathcal{F}}>   |\mathrm{log} (1/n)|$ and assumption   $(\star)$ holds  true for the family $(\mathcal{F}_{p})_{p \in \mathcal{P}}$. By
      Theorems A and B, we conclude the following: 
     
     \begin{Proposition55}
        The set  $\mathcal{K}$ is an almost $C^\infty$-blender and for any $d$-unfolding $(\mathcal{F}_{p})_{p \in \mathcal{P}}$ of $\mathcal{F}$,  
    $(\mathcal{K}_p )_{p  \in \mathcal{P}}$ is an almost $C^{\infty,s}$-parablender, up to restricting and rescaling $\mathcal{P}$. 
     \end{Proposition55}

\section{ Skew-product   Formalism and Strategy}

\noindent 
Our method is based on a method introduced by Mihailescu, Simon, Solomyak and Urbański. Let us give some details. For IFS without overlaps, the Hausdorff dimension of the limit set is given by Bowen’s formula \cite{bo5}. In \cite{b3},  Simon, Solomyak and Urbański introduced a method to compute it even in the presence of overlaps. The key ingredient in their proofs (see Section \ref{simple}) is a transversality property (see also \cite{So} and \cite{PeSo} for more on transversality). This  also allows  to get parameters for which the limit set has positive measure, which is our interest. Later these results were extended by Mihailescu and Urbański to the case of hyperbolic and fiberwise conformal skew-products in \cite{b1}. Here we extend these to the setting of families of skew-products fiberwise unipotent.

\subsection{Skew-products}


We work with ($N$-dimensional) $C^r$-skew-products acting on $ \overrightarrow{ \mathcal A} \times [-1,1]^N$, where $N >0$. Here $\mathcal A$ is a   fixed finite alphabet  of cardinality at least  $2$.
For simplicity, we denote $X:= [-1,1]^N$. 
 The regularity $r$ of the maps is given by either an integer at least $2$ or $+\infty$.

  \vspace{0.3cm}

\begin{Definition2}
A {\bf pre-$C^r$-skew-product} is a map of the form:
$$ F : (\mathfrak{a},x) \in \overrightarrow{ \mathcal A} \times X \mapsto  (\sigma (\mathfrak{a}), f_{\mathfrak{a}}(x)) \in    \overrightarrow{ \mathcal A}   \times X $$
\noindent satisfying that  there exists an open set $X’ \subset \mathbb{R}^N$ independent of $\mathfrak{a}$ s.t. $X \Subset X’$ and s.t.   $f_{\mathfrak{a}}: X \rightarrow X$ extends to  a $C^r$-diffeomorphism from $X’$ to $f_{\mathfrak{a}}(X’) \Subset X$ for every $\mathfrak{a}$. \newline 

\noindent The map $F$ is a {\bf $C^r$-skew-product} if moreover the two following maps $$\mathfrak{a} \in \overrightarrow{ \mathcal A}  \mapsto   f_{\mathfrak{a}} \in C^0( X’, \mathbb{R}^N) \text{  and }   \mathfrak{a} \in \overrightarrow{ \mathcal A}  \mapsto Df_{\mathfrak{a}} \in C^0( X’ , \mathcal{L}(\mathbb{R}^N,\mathbb{R}^N))$$ 
\noindent  are H\"older with positive exponent, and the following third map is continuous:
 $$\mathfrak{a} \in \overrightarrow{ \mathcal A}  \mapsto D^2f_{\mathfrak{a}} \in C^0(X’, \mathcal{L}^2(\mathbb{R}^N,\mathbb{R}^N)) \, .$$
 \end{Definition2}

\noindent In the latter definition, $\overrightarrow{ \mathcal A}$ is endowed with its distance and the spaces of  $C^0$-maps from $X’$ to $\mathbb{R}^N$, to the space $\mathcal{L}(\mathbb{R}^N,\mathbb{R}^N)$ of linear maps from $\mathbb{R}^N$ to $\mathbb{R}^N$ and to the space $\mathcal{L}^2(\mathbb{R}^N,\mathbb{R}^N)$ of bilinear maps from $\mathbb{R}^N \times \mathbb{R}^N$ to $\mathbb{R}^N$ endowed with the uniform $C^0$-metric. In the following, we suppose that the extensions of the maps $f_{\mathfrak{a}}$ are fixed. \newline

\noindent We are even more interested in families $(F_p)_{p}$ of (pre-)$C^r$-skew-products, indexed by  $p$  varying in $ [-1,1]^d$, for $1 \le d< \infty$. 
We define such a family as a family of maps:
$$ F_p : (\mathfrak{a},x) \in \overrightarrow{ \mathcal A} \times X \mapsto  (\sigma (\mathfrak{a}), f_{p,\mathfrak{a}}(x)) \in    \overrightarrow{ \mathcal A}   \times X \, $$
s.t.  $\hat{F}: (\mathfrak{a}, (p,x)) \mapsto (\sigma (\mathfrak{a}), (p,f_{p,\mathfrak{a}}(x)))$ is a ($N+d$-dimensional) (pre-)$C^r$-skew-product. In particular, there exist open neighborhoods $X’$ and $\mathcal{P}’$ of $X$ and $\overline{\mathcal{P}}$ in $\mathbb{R}^N$ and $\mathbb{R}^d$ s.t.  $(p,x) \mapsto  (p,f_{p,\mathfrak{a}}(x))$ extends to a diffeomorphism from $\mathcal{P}’    \times    X’$ into  $    \mathcal{P}  \times    X$ for each $\mathfrak{a}$  and the map $F_p$ is a ($N$-dimensional) (pre-)$C^r$-skew-product for every $p \in \mathcal{P}’$. Again,  we denote  $(F_p)_{p \in \mathcal{P}}$ this family since we are interested mainly on the dynamics when $p \in \mathcal{P}$ but still keep in mind that it admits such an extension.\newline

%

\noindent We will say that such a family $(F_p)_{p \in \mathcal P}$ of (pre-)$C^r$-skew-products satisfies the {\bf Unipotent}    assumption  {\bf (U)}   when the following is satisfied: \newline

  \noindent {\bf (U)}: For any $p \in \mathcal{P}’ $, $\mathfrak{a} \in \overrightarrow{ \mathcal A}$ and $x \in X’$, the differential $D f_{p,\mathfrak{a}}(x)$ is inferior unipotent, that is an  inferior triangular matrix with all its diagonal coefficients equal the one other, and its  unique eigenvalue is strictly bounded between 0 and 1 in modulus.    \newline

\noindent We will adopt the following formalism. For every $p \in \mathcal{P}’$, $\mathfrak{a} \in \overrightarrow{ \mathcal A} $, $n >0$ and $\alpha = (\alpha_{-n}, \ldots, \alpha_{-1}) \in \mathcal{A}^*$, we set:
$$\forall x \in X’, \text{ } \psi_{ p,  \mathfrak{a} }^\alpha(x):=  f_{ p,  \alpha_{-1}  \mathfrak{a}   }   \circ \ldots \circ       f_{ p,    \alpha_{-n} \cdots \alpha_{-1}     \mathfrak{a}    } (x) \, .$$
We show below that a consequence of  {\bf (U)} is that $ \psi_{ p,  \mathfrak{a} }^\alpha$  is a $C^r$-contraction from $X’$ to $ \psi_{ p,  \mathfrak{a} }^\alpha(X’) \Subset X$ when $|\alpha |$ is large enough. If we now take an infinite backward sequence $\alpha = (\ldots, \alpha_{-n}, \ldots, \alpha_{-1}) \in \overleftarrow{ \mathcal A}$, we  see that the points $\psi_{p, \mathfrak{a} }^{\alpha_{ |n} }(0)$ converge to a point  $ \pi_{p, \mathfrak{a}} (\alpha) \in X$. This defines  a $C^0$-map $\pi_{p,\mathfrak{a}} : \overleftarrow{ \mathcal A} 
\rightarrow X$. 

  \vspace{0.3cm}
  
\begin{Definition2}
The {\bf limit set $K_{p,\mathfrak{a}}$} of the skew-product $F_p$ inside the $\mathfrak{a} $-fiber is:
$$K_{p,\mathfrak{a}}:= \pi_{p,\mathfrak{a}} (\overleftarrow{ \mathcal A} ) \, .$$
\end{Definition2} 
\noindent We will give conditions under which this set has positive measure. 

\vspace{0.3cm}

\noindent  
For a $C^1$-map $f : X \rightarrow X$, let $m(Df)$ and $M(Df)$ be the respective minimum and maximum of $||Df(x) \cdot u ||$ among $x \in X$ and $u \in \mathbb{R}^N$ s.t. $||u||=1$. We need the following thermodynamical  formalism:
\vspace{0.3cm}

\begin{Definition2} \label{pressiondef}
The pressure at the parameter $p$ in the $\mathfrak{a} $-fiber  is the map: 
\begin{equation} \Pi_{p, \mathfrak{a}}: s \in \mathbb{R}_+ \mapsto  \lim_{n \rightarrow + \infty} \frac{1}{n} \mathrm{log} \sum_{\alpha   \in   \mathcal A^{n}   }       M(D \psi_{p,\mathfrak{a} }^\alpha)^s  \,.    \nonumber
\end{equation} 
\noindent  When $ \Pi_{p,\mathfrak{a}} $ has a unique zero, we call it the {\bf similarity dimension}   inside the $\mathfrak{a} $-fiber.
\end{Definition2}  
 
\vspace{0.1cm}

\noindent In Proposition \ref{pression}, we show that both the pressure and the similarity dimension are well-defined and  independent of $\mathfrak{a}$.   In particular, we denote them by  $\Pi_{p}$ and $\Delta(p)$.
\newline 

\noindent  We  adopt the following terminology to denote perturbations with special properties:

\begin{Definition2}
Let $(F_p)_p$ be a family  of (pre-)$C^r$-skew-products and let us fix neighborhoods $X’$ and $\mathcal{P}’$ of $X$ and $\overline{\mathcal{P}}$ s.t.  $(p,x) \mapsto  (p,f_{p,\mathfrak{a}}(x))$ extends to a diffeomorphism from $\mathcal{P}’    \times    X’$ into  $    \mathcal{P}  \times    X$   for every $\mathfrak{a} \in \overrightarrow{\mathcal{A}}$. For any $\vartheta>0$, a  {\bf $\vartheta$-perturbation} of $(F_p)_p$ is a family of pre-$C^r$-skew-products $(\tilde{F}_p)_p$ s.t. the map $(p,x) \mapsto  (p,\tilde{f}_{p,\mathfrak{a}}(x))$ extends to a diffeomorphism from $\mathcal{P}’    \times    X’$ into  $    \mathcal{P}  \times    X$ for every $\mathfrak{a} \in \overrightarrow{\mathcal{A}}$ and:
$$\sup_{ \mathfrak{a} \in \overrightarrow{\mathcal{A}}  } || (p,x) \in  X’ \times \mathcal{P}’   \mapsto (f_{p,\mathfrak{a}}  - \tilde{f}_{p,\mathfrak{a}})(x) ||_{C^r} < \vartheta \, .$$
The family $(\tilde{F}_p)_p$ is a {\bf $\vartheta$-$\mathrm{U}$-perturbation} when it satisfies assumption $(\mathrm{U})$.

 \end{Definition2}
 
 \vspace{0.3cm}

\noindent  For $\vartheta$-$\mathrm{U}$-perturbations with small $\vartheta$, we will show that for any $\alpha \in \overleftarrow{\mathcal{A}}$, the points 
$$  \tilde{f}_{ p,  \alpha_{-1}  \mathfrak{a}   }   \circ \ldots \circ       \tilde{f}_{ p,    \alpha_{-n} \cdots \alpha_{-1}     \mathfrak{a}    }(0)$$
 converge to  $ \tilde{\pi}_{p, \mathfrak{a}} (\alpha) \in X$ s.t. $p \mapsto \tilde{\pi}_{p, \mathfrak{a}} (\alpha)$ is a $C^r$-map $C^r$-close to $p \mapsto \pi_{p, \mathfrak{a}} (\alpha)$. 
 \newline

\noindent We will consider also {\bf parameterized families of $\vartheta$-perturbations} $(\tilde{F}_{t,p})_p$:
$$ \mathbb{F}:= ((\tilde{F}_{t,p})_p)_{t \in \mathcal{T}} \, .$$ 
Here  $t$ varies in $\mathcal{T} :=(-1,1)^\tau$ with $\tau>0$ and   $(t,p,x) \mapsto  \tilde{f}_{t, p,\mathfrak{a}} (x)$ is  $C^r$ for every $\mathfrak{a} \in \overrightarrow{\mathcal{A}}$. When each $(\tilde{F}_{t,p})_p$ is a    $\vartheta$-$\mathrm{U}$-perturbation, we say that $ \mathbb{F}$ is a parameterized families of $\vartheta$-$\mathrm{U}$-perturbations. When $\vartheta$ is small enough, 
we will denote by $\tilde{\pi}_{t,p, \mathfrak{a}} (\alpha)$ the limit  point corresponding to any $\alpha \in \overleftarrow{\mathcal{A}}$
 and $\tilde{K}_{t,p,\mathfrak{a}}:= \tilde{\pi}_{t,p,\mathfrak{a}} (\overleftarrow{ \mathcal A} )$, and then the $C^r$-maps $p \mapsto   \tilde{\pi}_{t,p, \mathfrak{a}} (\alpha)   $ will be $C^r$-close to $p \mapsto \pi_{p, \mathfrak{a}} (\alpha)$, uniformly in $(t,\alpha)$. We will set conditions under which $\tilde{K}_{t,p,\mathfrak{a}}$ has positive Lebesgue measure for a.e. $t \in \mathcal{T}$.

\subsection{Strategy and Organization of the paper} \label{tz}

\noindent
 The strategy will be  to
 focus on the dynamics restricted to the local stable manifolds, which allows us to reduce the dynamics to that one of a  $C^r$-skew-product. 
 \newline  

\noindent 
Hence we forget for some time families of endomorphisms and we work with families  of (pre-)$C^r$-skew-products. We  say that such a family of (pre-)$C^r$-skew-products $(F_p)_{p \in \mathcal P}$ satisfying assumption {\bf (U)}  satisfies also the {\bf Transversality}   assumption  {\bf (T)}  when the following is satisfied:
  \newline

   \noindent  {\bf (T)}:
    There exists $C>0$ such that for every sequences $\mathfrak{a} \in \overrightarrow{ \mathcal A}$ and $\alpha, \beta \in \overleftarrow{ \mathcal A}$ satisfying $\alpha_{-1} \neq \beta_{-1}$, and for every $r >0$, we have:
$$ \mathrm{Leb}_d \{ p \in \mathcal{P}  : ||\pi_{p,\mathfrak{a}}(\alpha) - \pi_{p,\mathfrak{a}}(\beta)   || < r    \} \le Cr^N   \, ,$$
and moreover for every small $\vartheta>0$ and every family $\mathbb{F}$ of $\vartheta$-$\mathrm{U}$-perturbations, 
for any $t \in \mathcal{T}$, $\mathfrak{a} \in \overrightarrow{ \mathcal A}$,  $\alpha, \beta \in \overleftarrow{ \mathcal A}$ s.t. $\alpha_{-1} \neq \beta_{-1}$   and $r >0$ we have: 
$$\mathrm{Leb}_d \{ p \in \mathcal{P}  : ||\tilde{\pi}_{t,p,\mathfrak{a}}(\alpha) - \tilde{\pi}_{t,p,\mathfrak{a}}(\beta)   || < r    \} \le   C r^N \, .$$

\noindent 
 The main technical result to prove  Theorem B is the following. It  sets conditions under which a given family of skew-products intersects its fibers into a set of positive measure, up to perturbations.
  \vspace{0.1cm}
   \begin{theoremlilou} \label{Mmainintro}
Let $(F_p)_{p \in \mathcal{P}}$ be a family of  $C^r$-skew-products satisfying ${\bf(U)}$, ${\bf(T)}$ and  $\Delta(p)>N$ for any $p \in  \overline{\mathcal{P}}$. Then for every $\mathfrak{a} \in \overrightarrow{ \mathcal A}$, we have:
$$\mathrm{Leb}_N (K_{p,\mathfrak{a}}) >0 \text{  for } \mathrm{Leb}_{d} \text{  a.e. } p \in \mathcal{P}  \, ,$$  
\noindent 
 and for every family $\mathbb{F}$ of $\vartheta$-$\mathrm{U}$-perturbations of $(F_p)_{p \in \mathcal{P}}$  with small  $\vartheta$, it holds:
$$\mathrm{Leb}_N (\tilde{K}_{t,p,\mathfrak{a}}) >0 \text{  for }   \mathrm{Leb}_{d} \text{  a.e. } p \in \mathcal{P}   \text{ and }   \mathrm{Leb}_{\tau}  \text{  a.e. } t \in \mathcal{T}     \, .$$
\end{theoremlilou}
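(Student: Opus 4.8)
The plan is to run a Fubini--Garsia--type argument in the spirit of Simon--Solomyak--Urba\'nski \cite{b3} and Mihailescu--Urba\'nski \cite{b1}, but fiberwise and uniformly in $\mathfrak{a}$. First I would fix $\mathfrak{a} \in \overrightarrow{\mathcal{A}}$ and push forward, for each $p$, the natural Bernoulli-type measure on $\overleftarrow{\mathcal{A}}$ by the coding map $\pi_{p,\mathfrak{a}}$ to obtain a measure $\nu_{p,\mathfrak{a}}$ supported on $K_{p,\mathfrak{a}}$; the precise measure on $\overleftarrow{\mathcal{A}}$ should be the Gibbs/equilibrium state associated with the potential whose pressure function is $\Pi_p$, so that (using Proposition \ref{pression} and $\Delta(p) > N$) the symbolic cylinders $[\alpha_{|n}]$ shrink, under $\pi_{p,\mathfrak{a}}$, at a rate comparable to $M(D\psi^{\alpha_{|n}}_{p,\mathfrak{a}})^{s}$ for an exponent $s$ strictly larger than $N$ uniformly in $p \in \overline{\mathcal{P}}$. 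The heart of the argument is then to bound the energy integral
\begin{equation}
I := \int_{\mathcal{P}} \int_{\overleftarrow{\mathcal{A}}} \int_{\overleftarrow{\mathcal{A}}} \frac{d\mu(\alpha)\, d\mu(\beta)}{\|\pi_{p,\mathfrak{a}}(\alpha) - \pi_{p,\mathfrak{a}}(\beta)\|^{N}}\, dp \, , \nonumber
\end{equation}
where $\mu$ is the chosen symbolic measure. By Fubini one integrates in $p$ first; for pairs $\alpha,\beta$ with $\alpha_{-1}\neq\beta_{-1}$ the transversality assumption (T) gives $\mathrm{Leb}_d\{p : \|\pi_{p,\mathfrak{a}}(\alpha)-\pi_{p,\mathfrak{a}}(\beta)\| < r\} \le C r^{N}$, and a standard layer-cake computation shows the inner $p$-integral is finite \emph{provided the exponent is $<N$}; one therefore has to work with $\|\cdot\|^{-t}$ for $t<N$ and pass to the limit, or equivalently use the strict inequality $\Delta(p)>N$ to absorb the boundary case. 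For pairs with $\alpha_{-1}=\beta_{-1}, \dots$ sharing a long common prefix $\gamma$ of length $k$, one factors $\psi^{\alpha_{|n}} = \psi^{\sigma^k(\alpha)_{|n-k}}\circ(\text{something})$ — more precisely one uses the bounded distortion / chain-rule estimates for compositions of the fiber contractions $f_{p,\mathfrak{a}}$ coming from (U) and the H\"older dependence on $\mathfrak{a}$ — to reduce to the transverse case after stripping $\gamma$, picking up a geometric factor $M(D\psi^{\gamma})^{-N}$ which is summed against the measure of $[\gamma]$; the series converges exactly because $\Pi_p(N)>0$, i.e. $\Delta(p)>N$. Summing over the common-prefix length gives $I<\infty$.

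From $I<\infty$ I conclude that for $\mathrm{Leb}_d$-a.e.\ $p\in\mathcal{P}$ the inner double integral $\int\!\!\int \|\pi_{p,\mathfrak{a}}(\alpha)-\pi_{p,\mathfrak{a}}(\beta)\|^{-N}\,d\mu\,d\mu$ is finite; this is the statement that the pushforward measure $\nu_{p,\mathfrak{a}} = (\pi_{p,\mathfrak{a}})_*\mu$ has finite $N$-energy, hence (by the standard characterization, e.g. via its Fourier transform or via the Besicovitch covering / density argument) is absolutely continuous with respect to $\mathrm{Leb}_N$ with an $L^2$ density, and in particular $\mathrm{Leb}_N(K_{p,\mathfrak{a}}) \ge \mathrm{Leb}_N(\mathrm{supp}\,\nu_{p,\mathfrak{a}}) > 0$. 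Since the null set of bad $p$ produced this way is a priori allowed to depend on $\mathfrak{a}$, but the whole estimate on $I$ is uniform in $\mathfrak{a}$ (the constant $C$ in (T) is uniform, and the distortion constants from (U) and the H\"older exponent are uniform over $\overrightarrow{\mathcal{A}}$), the conclusion for a single $\mathfrak{a}$ already gives the statement as written; if one wanted a single full-measure set of $p$ working for all $\mathfrak{a}$ simultaneously one would additionally invoke the continuity of $\mathfrak{a}\mapsto\pi_{p,\mathfrak{a}}$ to reduce to a countable dense set of fibers.

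For the perturbative half of the statement, I would simply observe that the family $\mathbb{F} = ((\tilde F_{t,p})_p)_{t\in\mathcal{T}}$ is, after regarding $(t,p)$ jointly as a parameter in $\mathcal{T}\times\mathcal{P}$, itself a family of $C^r$-skew-products: assumption (U) is preserved by hypothesis (these are $\vartheta$-$\mathrm{U}$-perturbations), and the second displayed inequality in assumption (T) is exactly the statement that transversality persists, with the \emph{same} constant $C$, for every $\tilde\pi_{t,p,\mathfrak{a}}$. Running the identical energy-integral argument with the integration now over $\mathcal{T}\times\mathcal{P}$ (using that $p\mapsto\tilde\pi_{t,p,\mathfrak{a}}(\alpha)$ stays $C^r$-close to $p\mapsto\pi_{p,\mathfrak{a}}(\alpha)$ uniformly in $t,\alpha$, so that the Gibbs/distortion estimates and the lower bound $\Delta>N$ survive for small $\vartheta$ by continuity of the pressure) yields $\mathrm{Leb}_N(\tilde K_{t,p,\mathfrak{a}})>0$ for $\mathrm{Leb}_{d+\tau}$-a.e.\ $(p,t)$, and Fubini then gives the asserted ``a.e.\ $p$, a.e.\ $t$'' form. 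The main obstacle I anticipate is the second step: controlling the common-prefix pairs, i.e. proving the bounded-distortion estimates for long compositions $\psi^\alpha_{p,\mathfrak{a}}$ (that $M(D\psi^{\alpha\gamma}) \asymp M(D\psi^{\alpha})\,M(D\psi^{\gamma})$ up to a uniform constant, and that the Gibbs property of $\mu$ matches this) in the \emph{unipotent, non-conformal} setting — in \cite{b1} conformality makes $\|D\psi\|$ essentially scalar, whereas here one must track the off-diagonal (nilpotent) part of $Df_{p,\mathfrak{a}}$ and show it does not spoil the comparison between $M(D\psi^\alpha)$, $m(D\psi^\alpha)$ and the genuine contraction rate governing $\pi_{p,\mathfrak{a}}$; getting this uniformly in $\mathfrak{a}$ (only H\"older, not continuous, dependence) and stable under $\vartheta$-$\mathrm{U}$-perturbations is where the real work lies.
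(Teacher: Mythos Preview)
Your high-level plan --- Gibbs measure on $\overleftarrow{\mathcal A}$, pushforward to $\nu_{p,\mathfrak a}$, Fubini in $p$, common-prefix decomposition, transversality for the base case, sum using $\Delta(p)>N$ --- is exactly the paper's strategy, and your diagnosis of the ``main obstacle'' (bounded distortion for long unipotent compositions) is correct: the paper handles it by Lemma~\ref{polynome} and Lemma~\ref{xy}, which show the nilpotent off-diagonal part of $D\psi^\alpha_{p,\mathfrak a}$ contributes only a polynomial-in-$|\alpha|$ factor, harmless against the exponential contraction.

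There is, however, a genuine gap in the analytical core. You propose to bound the $N$-energy
\[
I=\int_{\mathcal P}\iint \|\pi_{p,\mathfrak a}(\alpha)-\pi_{p,\mathfrak a}(\beta)\|^{-N}\,d\mu\,d\mu\,dp,
\]
and you already notice the problem: for a transverse pair the layer-cake gives $\int_{\mathcal P}\|\ldots\|^{-N}dp \asymp \int_0 r^{-N-1}\cdot Cr^{N}\,dr$, a logarithmic divergence that no amount of ``$\Delta>N$'' can absorb, since that inequality only helps with the \emph{sum over prefixes}, not with a single pair. Your two suggested fixes do not work: replacing $N$ by $t<N$ and passing to the limit yields only $\dim_H(\mathrm{supp}\,\nu_{p,\mathfrak a})\ge N$, not positive Lebesgue measure; and finite $N$-energy does \emph{not} characterize absolute continuity --- indeed $I_N(\nu)=\infty$ for every absolutely continuous $\nu$ with density bounded below on a ball (the kernel $|x|^{-N}$ is not locally integrable), while $I_N(\nu)<\infty$ can hold for singular measures satisfying $\nu(B(x,r))\le Cr^N|\log r|^{-2}$. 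The paper avoids this entirely by using the Peres--Solomyak density integral instead of the energy: one bounds
\[
\mathcal I=\int_{\mathcal B}\int \liminf_{r\to 0}\frac{\nu_{p,\mathfrak a}(B(x,r))}{c_N r^{N}}\,d\nu_{p,\mathfrak a}(x)\,dp,
\]
applies Fatou and Fubini to get $\liminf_{r\to 0} (c_N r^N)^{-1}\iint \mathrm{Leb}_d\{p:\|\pi_{p,\mathfrak a}(\alpha)-\pi_{p,\mathfrak a}(\beta)\|<r\}\,d\mu\,d\mu$, and then the $r^N$ from {\bf (T)} (after stripping the common prefix via Lemma~\ref{xy}) cancels the $r^{-N}$ exactly, leaving a prefix sum that converges by the Gibbs estimate $\mu[\rho]\asymp\Lambda_{p_0,\mathfrak a_0,\rho}^{\Delta(p_0)}$ and $\Delta(p_0)>N$. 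Finiteness of $\mathcal I$ gives, for a.e.\ $p$, finite lower density $\nu_{p,\mathfrak a}$-a.e., hence absolute continuity by the cited lemma of Mattila. The perturbative statement is then exactly as you say: rerun the same argument with an extra integration in $t$.
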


  \vspace{0.4cm}

\noindent  
Here is the strategy to prove Theorem C. For every parameter $p$ and $\mathfrak{a}$-fiber, we define a probability measure $\nu_{p,\mathfrak{a}}$ supported on the limit set $K_{p,\mathfrak{a}}$. To show that $K_{p,\mathfrak{a}}$ has positive measure, it is enough to show that $\nu_{p,\mathfrak{a}}$  is absolutely continuous relatively to the $N$-dimensional Lebesgue measure, and so to prove that its density is finite almost everywhere. We compute the integral of the density  relatively to the parameter and the phase space. The trick is to use the Fubini Theorem to integer first relatively to $p$. The finiteness of the integral is implied by the transversality assumption {\bf (T)} and the inequality $\Delta(p)>N$. The same method will give the same results for families of   $\vartheta$-$\mathrm{U}$-perturbations, with  additional integration relatively to $t$. 
\newline

\noindent To prove Theorem B, we go back to $C^r$-families $(\mathcal{F}_p)_{p}$ of endomorphisms and we    
 restrict the dynamics to the local stable manifolds $W^{\mathfrak{a}}_p$, which are tagged in exponent by infinite forward sequences $\mathfrak{a}$  in letters in an  alphabet $\mathcal{A}$. Since $\mathrm{dim}  \text{ }      \mathcal{E}_p^s=1$, we are led to study families $(F_p)_{p}$ of $C^r$-skew-products  acting on fibers which are segments. \newline

\noindent  We then look at the action induced by $(F_p)_{p}$ on $s$-jets (with $s \le r-2$) and this gives a new family  of $C^2$-skew-products $(G_{p_0})_{p_0}$   acting on fibers of dimension $\delta_{d,s}$. This  new family satisfies assumption  ${\bf(U)}$ (the assumption $\mathrm{dim}  \text{ }      \mathcal{E}_p^s=1$ is used here),  its similarity dimension is larger than $\delta_{d,s}$ by $(\star)$.   
We extend  $(\mathcal{F}_p)_{p}$ into a larger $C^r$-family of endomorphisms so that the associated extended family  of  $C^2$-skew-products  acting on $s$-jets satisfies moreover the transversality assumption {\bf (T)}. \newline 
%

\noindent  To conclude, we pick a  family $(\Gamma_{t,p})_{t,p}$ of parallel segments $\Gamma_{t,p}$ close to a local stable manifold $W^{\mathfrak{a}}_p$ and s.t. the set of $s$-jets at any $p_0 \in \mathcal{P}$ 
 of the projection of $\Gamma_{t,p}$  on a fixed direction transversal to $\Gamma_{t,p}$ when varying $t$ has positive measure. 
The local unstable set intersects each segment $\Gamma_{t,p} $ in a set which is the limit set of a $\vartheta$-$\mathrm{U}$-perturbation with small $\vartheta$, at every parameter $p$. We then  apply the second part of 
 Theorem C to get positive sets of $s$-jets for the intersection points between the local unstable set and $\Gamma_{t,p}$ for  a.e. $t$  at a.e. $p_0$: in other terms we get positive sets of $s$-jets   in the direction of $\Gamma_{t,p}$ for these values of $t$ and $p_0$. To conclude, we apply the Fubini Theorem  to find  positive sets of bidimensional $s$-jets for points inside local unstable manifolds at a.e. $p_0$. The same extension scheme works for $(\mathcal{G}_p)_{p}$ close to $(\mathcal{F}_p)_{p}$, which proves that $(\mathcal{K}_p)_{p}$ is an  almost $C^{r,s}$-parablender. 
 \newline 
 
 \noindent Finally, Theorem A is an immediate consequence of Theorem B together with Remark \ref{retour}, by taking the constant family $(\mathcal{F})_{p \in (-1,1)}$ and the order $s$ of the jets equal to 0 (remark in particular that $\delta_{1,0}=1$). 
 \newline


\noindent 
In Section \ref{simple}, we study a model given by families of IFS of affine maps on an interval. We simplify the proof of Simon, Solomyak and Urbański \cite{b3} in this context and introduce the strategy of the proof of  Theorem C here. In Section \ref{difficile}, we prove  Theorem C. In each fiber, the behavior of the dynamics looks like the model. Finally, we prove
 Theorem B in Section \ref{jet}.

\section{Model: IFS of affine maps on the interval} \label{simple}

\subsection{Setting and results}

In this Section, we simplify the proof of a result of Simon, Solomyak and Urbański about IFS on an interval. This can be seen as a model for the behavior of the dynamics inside the fibers of a skew-product, as we will see in Section \ref{difficile}. \newline 

\noindent Let us fix  $X:=[-1,1]$  and $\mathcal{P} :=(-1,1)$. We consider families $(\Psi_p)_{p \in \mathcal{P}}$, where, for every $p \in \mathcal{P}$, the IFS $\Psi_p$ is a finite family $\Psi_p = (\psi_p^a)_{ a \in \mathcal A}$ of affine contractions $\psi_p^a : X \rightarrow X$ s.t. $\psi_p^a(X) \Subset X$. The absolute value of the  linear coefficient of $\psi_p^a$ is denoted by $\Lambda_{p,a}$.  We suppose that for every $a \in \mathcal{A}$, the map $\psi_p^a$ depends continuously on $p$.  In fact, we even suppose that the affine contraction  $\psi_p^a$ is still defined for $p$ in some open neighborhood of $[-1,1]$ and still depends continuously on $p$. \newline

\noindent For every $ p \in \overline{\mathcal{P}}$ and  $\alpha = (\alpha_{-n}, \cdots, \alpha_{-1})  \in  \mathcal A^*$, we denote by $\psi_p^\alpha = \psi_p^{\alpha_{-1}} \circ \cdots \circ \psi_p^{\alpha_{-n}}$ the corresponding composition, which is an affine contraction of the segment $X$ into its own interior. When $\alpha=e$, the map $\psi_p^e$ is simply the identity. 
The absolute value of the linear coefficient of $\psi_p^\alpha$ is denoted by  $\Lambda_{p,\alpha}$. \newline

\noindent
By continuity of the derivative of $\psi_p^a$  relatively to $p \in  \overline{\mathcal{P}}$, there exist $0< \gamma' < \gamma  <1$ s.t. $\gamma' < \Lambda_{p,a}<\gamma$ for every $p \in \overline{\mathcal{P}}$ and $a \in \mathcal A$. Then $\Lambda_{p,\alpha} \le \gamma^{|\alpha|}$ for every $\alpha   \in  \mathcal A^*$. If we now take $\alpha \in \overleftarrow{ \mathcal A}$, the  sequence of points $\psi_p^{\alpha_{| n}} (0)$  tends to a point $ \pi_p(\alpha)$. This defines for every $p \in \overline{\mathcal{P}}$ a $C^0$-map $\pi_p :  \overleftarrow{ \mathcal A} \rightarrow X$. Since this convergence is uniform in $p$, the map $p  \mapsto \pi_p(\alpha)$ is $C^2$ for any $\alpha$. Moreover the map $p \mapsto \pi_p$ is continuous,
 the set of $C^0$-maps from $\overleftarrow{ \mathcal A}$ to $\mathbb{R}$ being endowed with the uniform $C^0$-norm. We set:
$$K_p :=\pi_p(\overleftarrow{ \mathcal A}) \, . $$

\noindent
We also suppose that  the following assumption ${\bf(T_{aff})}$ is  satisfied by  $(\Psi_p)_{p \in \mathcal{P}}$. \newline 

\noindent
${\bf(T_{aff})}$: There exists $C>0$ s.t. for every $\alpha, \beta \in \overleftarrow{ \mathcal A}$ satisfying $\alpha_{-1} \neq \beta_{-1}$, we have:
$$ \mathrm
{Leb}_1 \{ p \in \mathcal{P}  : | \pi_p(\alpha) - \pi_p(\beta)  | < r    \} \le Cr \text{ for any } r >0  \, .$$ 

\noindent   
It is immediate that for any $p \in \overline{\mathcal{P}}$, there exists exactly one number $\Delta(p) \ge 0$ s.t.:
 $$\sum_{a \in \mathcal{A}} \Lambda_{p,a}^{\Delta(p)} = 1 \, .$$ This is the similarity dimension of the IFS $\Psi_p$. We are now in position to state the following result, which is a direct consequence of Theorem 3.1 of
  \cite{b3}:

\begin{theoremliloumm} \label{main} (Simon, Solomyak, Urbański)
Let $(\Psi_p)_{p \in \mathcal{P}}$ be a family of IFS of affine contractions satisfying ${\bf(T_{aff})}$ and $\Delta(p)>1$ for any $p \in \overline{\mathcal{P}}$. Then it holds:
 $$\mathrm{Leb}_{1} (K_p) >0 \text{  for }    \mathrm{Leb}_{1} \text{  a.e. } p \in \mathcal{P} \, .$$  
\end{theoremliloumm}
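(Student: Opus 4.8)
The plan is to prove that the self-similar measure on $K_p$ is, for a.e. $p$, absolutely continuous with bounded density, via a Fubini/energy argument. First I would fix a probability vector realizing the similarity dimension: since $\Delta(p)>1$ on $\overline{\mathcal P}$ and $\sum_a \Lambda_{p,a}^{\Delta(p)}=1$, there is $s_0>1$ with $\sum_a \Lambda_{p,a}^{s_0}<1$ uniformly in $p$ (shrink $\overline{\mathcal P}$ if needed, or use continuity of $\Delta$). Actually, for the positive-measure conclusion it is cleaner to work with the Bernoulli measure $\mathbb P$ on $\overleftarrow{\mathcal A}$ giving mass $\prod \Lambda_{\bar p,\alpha_{-i}}$-type weights; concretely, let $\mathbb P$ be the Bernoulli measure with weights $p_a$ chosen so that $\sum_a p_a = 1$ and $\sum_a p_a \Lambda_{p,a}^{-1}<\infty$ with the right normalization — the standard choice being $p_a$ close to $\Lambda_{p,a}^{\Delta(p)}$. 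Push $\mathbb P$ forward by $\pi_p$ to get $\nu_p := (\pi_p)_*\mathbb P$, a probability measure supported on $K_p$. It suffices to show $\nu_p \ll \mathrm{Leb}_1$ for a.e. $p$, since then $\mathrm{Leb}_1(K_p)\ge \mathrm{Leb}_1(\mathrm{supp}\,\nu_p)>0$.

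Next I would estimate the lower density of $\nu_p$ and integrate it over $p$. Define
\[
\underline{D}(\nu_p)(x) := \liminf_{r\to 0} \frac{\nu_p(B(x,r))}{r}.
\]
By a standard density argument it is enough to show $\int_{\mathcal P}\int_X \underline{D}(\nu_p)(x)\, d\nu_p(x)\, dp < \infty$, since finiteness of this double integral forces $\underline{D}(\nu_p)(x)<\infty$ for $\nu_p$-a.e. $x$ for $\mathrm{Leb}_1$-a.e. $p$, which gives absolute continuity. By Fatou's lemma it suffices to bound $\liminf_{r\to 0}\frac1r \int_{\mathcal P}\int \nu_p(B(x,r))\,d\nu_p(x)\,dp$, i.e.
\[
\liminf_{r\to 0}\frac1r \int_{\mathcal P}(\mathbb P\times\mathbb P)\{(\alpha,\beta): |\pi_p(\alpha)-\pi_p(\beta)|<r\}\,dp.
\]
Here I would swap the order of integration (Tonelli, everything nonnegative) to integrate over $p$ first. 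Split the $(\alpha,\beta)$-integral according to the length $n(\alpha,\beta)$ of the longest common prefix (from the right). For the diagonal-type part where $\alpha_{-1}\neq\beta_{-1}$, assumption ${\bf(T_{aff})}$ gives directly $\int_{\mathcal P}\mathbf 1\{|\pi_p(\alpha)-\pi_p(\beta)|<r\}\,dp \le Cr$, so this contributes $O(1)$ after dividing by $r$. For pairs agreeing on a common prefix $\omega$ of length $n$ but differing at position $-(n+1)$, factor $\pi_p(\alpha)-\pi_p(\beta) = \pm\Lambda_{p,\omega}\big(\pi_p(\sigma^n\alpha)-\pi_p(\sigma^n\beta)\big)$ using affinity, apply ${\bf(T_{aff})}$ to the shifted words (whose last letters now differ) at scale $r/\Lambda_{p,\omega}\le r/\gamma'^{\,n}$, and sum the resulting geometric-type series $\sum_n \sum_{|\omega|=n} \mathbb P(\text{prefix }\omega)^2 \cdot C r/\Lambda_{p,\omega}$. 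Choosing the Bernoulli weights $p_a\asymp\Lambda_{\bar p,a}^{\Delta}$ with $\Delta>1$ makes $\sum_{|\omega|=n}\mathbb P(\omega)^2/\Lambda_{p,\omega}$ summable in $n$ (this is exactly where $\Delta(p)>1$ enters: the exponent balance yields $\Lambda^{2\Delta-1}<\Lambda^{\Delta}$ type contractions). So the total is $O(r)$, the $\liminf_{r\to0}\frac1r(\cdots)$ is finite, and we are done.

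The main obstacle — and the place requiring genuine care rather than bookkeeping — is organizing the splitting over common prefixes so that the transversality estimate ${\bf(T_{aff})}$ can be applied at the correct rescaled radius $r/\Lambda_{p,\omega}$ while simultaneously controlling the measure $\mathbb P(\omega)$, and verifying that the probability vector can be chosen (uniformly in $p$ on the compact parameter set) to make the double sum $\sum_n\sum_{|\omega|=n}\mathbb P(\omega)^2\Lambda_{p,\omega}^{-1}$ converge. This is the arithmetic heart of the Simon–Solomyak–Urbański argument; in this affine model it simplifies because $\Lambda_{p,\omega}$ factors multiplicatively and $\pi_p$ is genuinely affine in each cylinder, so one does not need the bounded-distortion estimates that the general skew-product case (Theorem C) requires. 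I would also need to check the measurability/continuity of $p\mapsto\pi_p$ (already noted in the text) to justify the use of Fubini–Tonelli, and to handle the mild technical point that ${\bf(T_{aff})}$ is stated for $p\in\mathcal P$ while one integrates over $\mathcal P$ — no extension is needed here, unlike the endomorphism setting.
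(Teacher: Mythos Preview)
Your overall strategy matches the paper's: push forward a Bernoulli-type measure, bound the integrated lower density via Fatou and Fubini, split over the common prefix $\omega$, and apply ${\bf(T_{aff})}$ at the rescaled radius. However, there is a genuine gap at the rescaling step. After swapping the order of integration you must bound
\[
\mathrm{Leb}_1\bigl\{p\in\mathcal P:\ |\pi_p(\sigma^n\alpha)-\pi_p(\sigma^n\beta)|<r/\Lambda_{p,\omega}\bigr\},
\]
but the threshold $r/\Lambda_{p,\omega}$ still depends on $p$, so ${\bf(T_{aff})}$ does not apply as stated. Your proposed fix --- bounding $r/\Lambda_{p,\omega}\le r/\gamma'^{\,n}$ --- produces a $p$-independent radius but replaces $\Lambda_{p,\omega}^{-1}$ by $\gamma'^{-n}$; the resulting series $\sum_n\sum_{|\omega|=n}\mathbb P(\omega)^2\gamma'^{-n}$ need not converge under the sole hypothesis $\Delta(p)>1$ (convergence would force the extraneous condition $\gamma^{\Delta}<\gamma'$, which fails when the contraction ratios $\Lambda_{p,a}$ are spread out). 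And writing the post-integration bound as $\sum\mathbb P(\omega)^2/\Lambda_{p,\omega}$, as you also do, is ill-posed since $p$ has already been integrated away.

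The paper resolves this by \emph{localizing} in parameter: fix $p_0$, work on a small ball $\mathcal B\ni p_0$, and use the parameter-distortion estimate $\Lambda_{p_1,\omega}^{1+\epsilon/2}\le \Lambda_{p_2,\omega}$ for all $p_1,p_2\in\mathcal B$ (valid once $\mathcal B$ is small, by continuity of $p\mapsto\Lambda_{p,a}$). This lets one replace the $p$-dependent threshold by the fixed $r/\Lambda_{p_0,\omega}^{1+\epsilon/2}$, after which ${\bf(T_{aff})}$ gives the clean bound $Cr\,\Lambda_{p_0,\omega}^{-1-\epsilon/2}$; the small exponent loss is then absorbed by $\Delta(p_0)>1+\epsilon$ when summing against $\mu[\omega]=\Lambda_{p_0,\omega}^{\Delta(p_0)}$. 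So contrary to your closing remark, a distortion estimate --- in the parameter $p$, not in the space variable $x$ --- \emph{is} needed even in this affine model; it is precisely what makes the rescaled transversality bound usable after the Fubini swap.
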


     \subsection{Proof of Theorem D} 
       
  \begin{proof}[Proof of Theorem D] 
  
  Here is the strategy. 
   Let $p_0 \in \mathcal{P} $ such that $\Delta (p_0) >1+\epsilon$ for a small $\epsilon >0$. To prove the result, it is enough to show that there exists $\delta>0$ s.t. $\mathcal{B}:=(p_0-\delta,p_0+\delta)$ is included in $\mathcal{P}$ and $\mathrm{Leb}_1(K_p)>0$ for $\mathrm{Leb}_1$ a.e. $p \in \mathcal{B}$.    \newline  
   
 \noindent We define a probability measure $\mu$ on $\overleftarrow{ \mathcal A}$ by setting $\mu[\alpha] = \Lambda_{p_0,\alpha}^{\Delta(p_0)}$ for every cylinder defined by  $\alpha \in  \mathcal A^*$.
   For any $p \in \mathcal{P}$, let $\nu_p$ be the pushforward of $\mu$ by $\pi_p$, which is supported on $K_p$. To conclude, it is enough to show that there exists $\delta>0$ s.t.  for $\mathrm{Leb}_1$ a.e. $p \in \mathcal{B}$, the measure $\nu_p$ is absolutely continuous relatively to $\mathrm{Leb}_1$. We set   $$D(\nu_p,x) := \liminf_{r \rightarrow 0} \frac{\nu_p( x-r,x+r) }{2 r} $$  for every    $p \in \mathcal{P}$ and $x \in \mathbb{R}$, which is the lower density of the measure $\nu_p$ at $x$.

    \begin{Lemma} \label{mesureble}
   The map $(p,x) \in \mathcal{P}  \times \mathbb{R} \mapsto  D(\nu_p,x)$ is Borel measurable. 
    \end{Lemma}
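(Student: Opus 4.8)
The plan is to establish joint Borel measurability of $(p,x) \mapsto D(\nu_p,x)$ by expressing this function through a countable sequence of operations that each preserve measurability, starting from the elementary observation that the family of measures $(\nu_p)_p$ depends continuously on $p$ in the weak-$*$ topology. First I would note that the map $p \mapsto \pi_p$ is continuous from $\mathcal{P}$ into $C^0(\overleftarrow{\mathcal A}, \mathbb{R})$ (stated in the excerpt just above), so that $p \mapsto \pi_{p*}\mu = \nu_p$ is weak-$*$ continuous: for every bounded continuous $g$ on $\mathbb{R}$, the map $p \mapsto \int g \, d\nu_p = \int g \circ \pi_p \, d\mu$ is continuous by dominated convergence. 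Consequently, for any fixed open interval $I$, the map $p \mapsto \nu_p(I)$ is lower semicontinuous (approximate $\mathbf 1_I$ from below by continuous functions), hence Borel; and $p \mapsto \nu_p(\bar I)$ is upper semicontinuous.

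Next I would reduce the $\liminf$ over $r \to 0$ to a countable $\liminf$. Since $r \mapsto \nu_p(x-r,x+r)/(2r)$ need not be monotone, one cannot naively restrict to rational $r$; but one can write
$$ D(\nu_p,x) = \sup_{n \ge 1} \inf_{0 < r < 1/n} \frac{\nu_p(x-r,x+r)}{2r} \, , $$
and then argue that the inner infimum over all real $r \in (0,1/n)$ equals the infimum over rational $r$ in that range. This last point uses that $r \mapsto \nu_p(x-r,x+r)$ is left-continuous (so $\nu_p(x-r,x+r) = \sup_{r' < r, r' \in \mathbb{Q}} \nu_p(x-r',x+r')$) combined with the $1/(2r)$ factor being continuous in $r$; hence $\inf_{0<r<1/n} = \inf_{r \in \mathbb{Q} \cap (0,1/n)}$. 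This rewrites $D(\nu_p,x)$ as a countable sup of countable infs of the functions $(p,x) \mapsto \nu_p(x-r,x+r)/(2r)$ with $r$ rational, so it suffices to prove each of these is Borel in $(p,x)$.

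Finally, to handle the joint measurability of $(p,x) \mapsto \nu_p(x-r,x+r)$ for fixed $r$, I would fix a sequence of bounded continuous functions $g_{k,r}$ increasing pointwise to $\mathbf 1_{(-r,r)}$; then $(p,x) \mapsto \int g_{k,r}(t-x)\, d\nu_p(t)$ is jointly continuous (continuity in $p$ by weak-$*$ continuity, continuity in $x$ by uniform continuity of $g_{k,r}$, and the joint continuity follows since both are locally uniform), and by monotone convergence $\nu_p(x-r,x+r) = \sup_k \int g_{k,r}(t-x)\, d\nu_p(t)$, a countable supremum of continuous functions, hence Borel. Combining the three reductions gives that $(p,x) \mapsto D(\nu_p,x)$ is Borel measurable. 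I expect the main obstacle to be the $\liminf$-reduction step: verifying carefully that the infimum over real radii can be replaced by an infimum over rational radii, which hinges on the one-sided continuity properties of $r \mapsto \nu_p(x-r,x+r)$ rather than on any monotonicity, and making sure the measurable-selection-type arguments are not circular. Everything else is a routine application of weak-$*$ continuity of $p \mapsto \nu_p$ together with standard approximation of indicator functions of intervals by continuous functions.
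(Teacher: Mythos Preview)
Your proposal is correct and follows essentially the same route as the paper: weak-$*$ continuity of $p\mapsto\nu_p$, joint Borel measurability of $(p,x)\mapsto\nu_p(x-r,x+r)$ for each fixed $r$, and reduction of the $\liminf$ to rational radii. The paper is terser on the joint measurability step and phrases the radius-reduction slightly differently, invoking directly that $r\mapsto\nu_p(x-r,x+r)$ is monotone; note that the left-continuity you use is itself an immediate consequence of this monotonicity (continuity of measure from below on the increasing family of open intervals), so your concern that the reduction ``hinges on one-sided continuity rather than on any monotonicity'' is a distinction without a difference here.
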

 
    \begin{proof}
    Since  $p \in \mathcal{P}  \mapsto \pi_p$ is continuous, it is also the case for $p \in \mathcal{P}   \mapsto \nu_p$ (the set of probability measures being endowed with the weak-$*$ topology).  
    Using this and since $x \in \mathbb{R} \mapsto \nu_p(x-r,x+r)$ is Borel measurable for every $p \in \mathcal{P}$ and $r >0$, the map $(p,x) \in \mathcal{P}  \times \mathbb{R} \mapsto \nu_p(x-r,x+r)$ is  Borel measurable for every $r >0$. Since $r \mapsto \nu_p( x-r,x+r)$ is monotonic and $r \mapsto 2r$ continuous, the lower limit $D(\nu_p,x)$ does not change if $r$ is restricted to positive rationals. Thus the measurability of $(p,x)  \in \mathcal{P}  \times \mathbb{R} \mapsto  D(\nu_p,x) $ reduces to that of the lower limit of countably many measurable maps.
    \end{proof}
    \noindent
    We prove below:
    
    \begin{Proposition}  \label{integr} 
    There exists $\delta>0$ s.t.  $\mathcal{B} \subset \mathcal{P}$ and the following  is finite:
   $$\mathcal{I} := \int_{p \in \mathcal{B}} \int_{x \in \mathbb{R}} D(\nu_p,x)d\nu_p d \mathrm{Leb}_1<+ \infty \, .$$
   \end{Proposition}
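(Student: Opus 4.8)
The plan is to bound $\mathcal{I}$ from above by a $\liminf$ over radii $r\downarrow0$ of a triple integral in which one may integrate \emph{first} in the parameter $p$; this is precisely where the transversality hypothesis ${\bf(T_{aff})}$ enters, and it is what converts the condition $\Delta(p_0)>1$ into absolute continuity. First I would fix a small $\epsilon>0$ with $\Delta(p_0)>1+\epsilon$ and then choose $\delta>0$ so small that $\mathcal{B}:=(p_0-\delta,p_0+\delta)\subset\mathcal{P}$ and, using the continuity of $p\mapsto\Lambda_{p,a}$ together with the finiteness of $\mathcal{A}$, so that $\Lambda_{p,a}\ge\Lambda_{p_0,a}^{\,1+\epsilon/2}$ for every $p\in\mathcal{B}$ and $a\in\mathcal{A}$ (this holds once $\delta$ is small since $\Lambda_{p_0,a}^{1+\epsilon/2}<\Lambda_{p_0,a}$). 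Multiplying these inequalities over the letters of a word gives $\Lambda_{p,\alpha}\ge\Lambda_{p_0,\alpha}^{\,1+\epsilon/2}$ for every $\alpha\in\mathcal{A}^*$ and $p\in\mathcal{B}$.

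Next, fixing a sequence $r_k\downarrow0$, I would apply Fatou's lemma twice (against $\nu_p$, using $D(\nu_p,x)\le\liminf_k\nu_p(x-r_k,x+r_k)/(2r_k)$, then against $\mathrm{Leb}_1|_{\mathcal{B}}$, the needed measurability being Lemma \ref{mesureble}), then use $\nu_p=(\pi_p)_*\mu$ and Tonelli to exchange the integration in $p$ with the one in $(\alpha,\beta)$, obtaining
\[
\mathcal{I}\ \le\ \liminf_{k\to\infty}\ \frac{1}{2r_k}\int_{\overleftarrow{\mathcal A}^2}\mathrm{Leb}_1\{\,p\in\mathcal{B}:|\pi_p(\alpha)-\pi_p(\beta)|<r_k\,\}\,d\mu(\alpha)\,d\mu(\beta).
\]
Then I would split $\overleftarrow{\mathcal A}^2$ according to the longest common suffix, discarding the diagonal (which is $\mu\times\mu$-negligible since $\mu$ is non-atomic, $\mu[\alpha]\le\gamma^{|\alpha|\Delta(p_0)}$): a generic pair is $(\alpha'\omega,\beta'\omega)$ with $\omega\in\mathcal{A}^n$, $n\ge0$, and $\alpha'_{-1}\ne\beta'_{-1}$. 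Since $\psi_p^\omega$ is affine with linear part of modulus $\Lambda_{p,\omega}$ and $\pi_p(\alpha'\omega)=\psi_p^\omega(\pi_p(\alpha'))$, we get $|\pi_p(\alpha'\omega)-\pi_p(\beta'\omega)|=\Lambda_{p,\omega}\,|\pi_p(\alpha')-\pi_p(\beta')|$; using $\Lambda_{p,\omega}\ge\Lambda_{p_0,\omega}^{1+\epsilon/2}$ and then ${\bf(T_{aff})}$ (legitimate as $\alpha'_{-1}\ne\beta'_{-1}$ and $\mathcal{B}\subset\mathcal{P}$),
\[
\mathrm{Leb}_1\{\,p\in\mathcal{B}:|\pi_p(\alpha'\omega)-\pi_p(\beta'\omega)|<r_k\,\}\ \le\ \mathrm{Leb}_1\{\,p\in\mathcal{P}:|\pi_p(\alpha')-\pi_p(\beta')|<r_k\,\Lambda_{p_0,\omega}^{-1-\epsilon/2}\,\}\ \le\ C\,r_k\,\Lambda_{p_0,\omega}^{-1-\epsilon/2}.
\]

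To finish, I would observe that the $\mu\times\mu$-measure of the set of pairs whose longest common suffix is exactly $\omega$ is $\sum_{a\ne b}\mu[a\omega]\,\mu[b\omega]\le\Lambda_{p_0,\omega}^{2\Delta(p_0)}\bigl(\sum_{a}\Lambda_{p_0,a}^{\Delta(p_0)}\bigr)^2=\Lambda_{p_0,\omega}^{2\Delta(p_0)}$, so summing over $\omega\in\mathcal{A}^n$ and $n\ge0$ and using $\Lambda_{p_0,\omega}^{t}=\prod_i\Lambda_{p_0,\omega_i}^{t}$ yields
\[
\mathcal{I}\ \le\ \frac{C}{2}\sum_{n\ge0}\ \sum_{\omega\in\mathcal{A}^n}\Lambda_{p_0,\omega}^{\,2\Delta(p_0)-1-\epsilon/2}\ =\ \frac{C}{2}\sum_{n\ge0}\Bigl(\sum_{a\in\mathcal{A}}\Lambda_{p_0,a}^{\,2\Delta(p_0)-1-\epsilon/2}\Bigr)^{n}.
\]
Since $\Delta(p_0)>1+\epsilon/2$, the exponent $2\Delta(p_0)-1-\epsilon/2$ exceeds $\Delta(p_0)$, and each $\Lambda_{p_0,a}<1$, hence $\sum_a\Lambda_{p_0,a}^{\,2\Delta(p_0)-1-\epsilon/2}<\sum_a\Lambda_{p_0,a}^{\Delta(p_0)}=1$; the geometric series converges and $\mathcal{I}<+\infty$.

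The delicate point is the last step of organizing the sum: the naive estimate $\Lambda_{p,\omega}\ge(\gamma')^{|\omega|}$ loses a factor that need not be summable, so one must instead compare $\Lambda_{p,\omega}$ with $\Lambda_{p_0,\omega}$ up to the sub-geometric loss $\Lambda_{p_0,\omega}^{\epsilon/2}$ — this is exactly why $\mathcal{B}$ has to be taken small — and then absorb that loss into the slack $2\Delta(p_0)-1>\Delta(p_0)$ coming from $\Delta(p_0)>1$. Everything else (the Fatou/Tonelli bookkeeping and the measurability) is routine once Lemma \ref{mesureble} is in hand.
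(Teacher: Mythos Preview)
Your proof is correct and follows essentially the same route as the paper's: Fatou, pushforward and Tonelli to integrate in $p$ first, decomposition of $\overleftarrow{\mathcal A}^2$ by longest common suffix, the transversality bound (which is exactly the paper's Lemma~\ref{borne}), and then summation. The only cosmetic difference is in the final step: you exploit the exact multiplicativity $\Lambda_{p_0,\omega}=\prod_i\Lambda_{p_0,\omega_i}$ of the affine case to recognize a geometric series with ratio $\sum_a\Lambda_{p_0,a}^{2\Delta(p_0)-1-\epsilon/2}<1$, whereas the paper rewrites $\Lambda_{p_0,\rho}^{-1-\epsilon/2}$ in terms of $\mu[\rho]$ and extracts a decaying factor $\gamma^{n\epsilon/3}$ from the crude bound $\mu[\rho]\preceq\gamma^n$; the latter is less sharp here but is the form that carries over to the non-affine skew-product setting of Section~\ref{difficile}.
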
 
   \noindent
This is enough to show that for $\mathrm{Leb}_1$ a.e. $p \in \mathcal{B}$,  $\nu_p$ is absolutely continuous relatively to $\mathrm{Leb}_1$. Indeed, then, for $\mathrm{Leb}_1$ a.e. $p \in \mathcal{B}$, we will have $D(\nu_p,x)<+\infty$ for $\nu_p$ a.e.  $x \in \mathbb{R}$ and we apply the following result from the third item of Lemma 2.12 in \cite{b7}.
   
  \begin{Proposition}
  Let $\nu$ be a Radon measure on $\mathbb{R}^n$, where $n>0$, s.t. the density $D(\nu,x)$ of $\nu$ relatively to $\mathrm{Leb}_n$ is finite for $\nu$  a.e.  $x \in \mathbb{R}^n$. Then $\nu$ is absolutely continuous relatively to $\mathrm{Leb}_n$.  
  \end{Proposition}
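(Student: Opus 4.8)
The plan is to prove the equivalent statement that every Borel set $A\subset\mathbb{R}^n$ with $\mathrm{Leb}_n(A)=0$ satisfies $\nu(A)=0$. Throughout I would write $D(\nu,x)=\liminf_{r\to 0}\nu(B(x,r))/\mathrm{Leb}_n(B(x,r))$ for the lower density, $B(x,r)$ being the open ball; exactly as in the proof of Lemma \ref{mesureble}, this is a Borel function of $x$, being a liminf over rational $r$ of the Borel maps $x\mapsto\nu(B(x,r))$. First I would reduce to the case where $A$ is bounded, splitting $A=\bigcup_m\big(A\cap B(0,m)\big)$ and using that a Radon measure is finite on bounded sets. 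Then, for $k\in\mathbb{N}$, I would set $E_k:=\{x:D(\nu,x)<k\}$; the hypothesis says precisely that $\nu\big(\mathbb{R}^n\setminus\bigcup_k E_k\big)=0$, so it is enough to show $\nu(A\cap E_k)=0$ for each fixed $k$.

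Next, fixing $k$ and $\varepsilon>0$, I would use outer regularity of $\mathrm{Leb}_n$ to pick an open set $U\supset A$ with $\mathrm{Leb}_n(U)<\varepsilon$. For each $x\in A\cap E_k$, the inequality $D(\nu,x)<k$ yields radii $r>0$ arbitrarily close to $0$ with $B(x,r)\subset U$ and $\nu(B(x,r))<k\,\mathrm{Leb}_n(B(x,r))$, so these balls form a Vitali (fine) cover of $A\cap E_k$. Applying the Besicovitch--Vitali covering theorem to the Radon measure $\nu$, I would extract a countable pairwise disjoint subfamily $(B_i)_i$ of these balls with $\nu\big((A\cap E_k)\setminus\bigcup_i B_i\big)=0$, and then conclude by
$$\nu(A\cap E_k)\ \le\ \sum_i\nu(B_i)\ <\ k\sum_i\mathrm{Leb}_n(B_i)\ =\ k\,\mathrm{Leb}_n\Big(\bigsqcup_i B_i\Big)\ \le\ k\,\mathrm{Leb}_n(U)\ <\ k\varepsilon\,,$$
where disjointness gives the equality and $\bigcup_i B_i\subset U$ gives the last inequality. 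Letting $\varepsilon\to0$ gives $\nu(A\cap E_k)=0$, and summing over $k$ and over the bounded pieces of $A$ yields $\nu(A)=0$, i.e.\ $\nu\ll\mathrm{Leb}_n$.

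I do not expect a genuine obstacle here, the result being classical; the only point that needs a word of care is reconciling open and closed balls when invoking the covering theorem, which I would handle by selecting the radii above among the continuity points of the nondecreasing map $r\mapsto\nu(B(x,r))$ (all but countably many of them), so that $\nu(\overline{B(x,r)})=\nu(B(x,r))$ there; alternatively one cites a version of the covering theorem phrased for open balls. As a cross-check, the same conclusion also follows from the Lebesgue decomposition $\nu=\nu_{\mathrm{ac}}+\nu_{\mathrm{s}}$ combined with the Besicovitch differentiation theorem, which gives $D(\nu,x)=+\infty$ for $\nu_{\mathrm{s}}$-a.e.\ $x$; since the hypothesis forces $\nu$, hence $\nu_{\mathrm{s}}$, to assign zero mass to $\{D(\nu,\cdot)=+\infty\}$, this forces $\nu_{\mathrm{s}}=0$.
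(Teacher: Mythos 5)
Your proof is correct. Note that the paper does not actually prove this Proposition: it simply quotes it as the third item of Lemma 2.12 in Mattila's book \cite{b7}. Your argument — reducing to bounded Borel null sets, slicing by the level sets $E_k$ of the lower density, and applying the Vitali--Besicovitch covering theorem for Radon measures inside a small open neighborhood — is precisely the standard proof of that cited lemma, and your handling of the open/closed ball issue and of the finiteness needed for the covering theorem (via boundedness) is sound.
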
 
  \noindent
  This concludes the proof of Theorem D. 
  \end{proof}
   
   \begin{proof}[Proof of Proposition \ref{integr}] For $\delta$ small enough, the interval $\mathcal{B}:=(p_0-\delta,p_0+\delta)$ is included in $\mathcal{P}$. If necessary, we reduce $\delta$ so that $ \Lambda_{p_1,a}^{1+\epsilon/2} \le  \Lambda_{p_2,a}$ for every $p_1 ,p_2  \in \mathcal{B}$ and $a \in\mathcal{A}$. In particular, this implies the following:
   \begin{equation} \label{distorlilou} 
   \forall  p_1 ,p_2  \in \mathcal{B}, \text{ } \forall  \alpha \in\mathcal{A}^*, \text{ }  \Lambda_{p_1, \alpha
   }^{1+\frac{\epsilon}{2}} \le  \Lambda_{p_2,\alpha}  \, .
   \end{equation} 
   \noindent
   The strategy is to bound $\mathcal{I}$ by a new integral which will be easily shown to be finite using ${\bf(T_{aff})}$, for this specific choice of $\delta$. First, by Fatou's lemma, it holds:
   \begin{equation} \label{eqint}
    \mathcal{I} \le \liminf_{r \rightarrow 0}  \frac{1}{2 r}  \int_{p \in \mathcal{B}} \int_{x \in \mathbb{R}} \nu_p(  x-r,x+r) d\nu_p d  \mathrm{Leb}_1 \, .
    \end{equation}
    We can write $\nu_p( x-r,x+r)= \int_{y \in \mathbb{R}} 1_{ \{|x-y| < r \} } d\nu_p$ as the integral of the indicator function $1_{ \{|x-y| < r \} }$, equal to 1 if  $|x-y| < r$, and 0 if not. Using this and then the definition of $\nu_p$ as the pushforward of $\mu$ by $\pi_p
    $, we have:
 \begin{equation} \label{eqint11}
 \int_{x \in \mathbb{R}} \nu_p( x-r,x+r) d\nu_p  = \int_{(\alpha, \beta) \in \overleftarrow{ \mathcal A} \times \overleftarrow{ \mathcal A}}  1_{ \{ | \pi_p(\alpha)-\pi_p(\beta) | < r \} }     d \mu \times \mu \, ,
  \end{equation}
 where $1_{ \{ | \pi_p(\alpha)-\pi_p(\beta) | < r \} } $ is equal to 1 if $ | \pi_p(\alpha)-\pi_p(\beta)  | < r $ and 0 if not. Then, we inject Eq. (\ref{eqint11}) into Eq. (\ref{eqint}) and use Fubini's Theorem to reverse the order of integration:
  \begin{equation} \label{eqint111}
  \mathcal{I} \le \liminf_{r \rightarrow 0}  \frac{1}{2 r}   \int_{(\alpha, \beta) \in \overleftarrow{ \mathcal A} \times \overleftarrow{ \mathcal A}}     \mathrm{Leb}_1 \{ p \in   \mathcal{B} :   |\pi_p(\alpha)-\pi_p(\beta)   | < r \}   d \mu \times \mu  \, .
\end{equation}
   We are going to write the latter integral as a sum whose terms are all easier to bound. For every finite word $\rho \in  \mathcal A^*$, we denote by $\mathcal C_\rho$ the set of pairs $(\alpha, \beta) \in  \overleftarrow{ \mathcal A} \times \overleftarrow{ \mathcal A}$ such that $\alpha_{-|\rho |} \cdots \alpha_{-1} = \beta_{-|\rho |}  \cdots \beta_{-1} = \rho$  but $\alpha_{-|\rho |-1} \neq \beta_{-|\rho |-1}$. We notice that $\overleftarrow{ \mathcal A} \times \overleftarrow{ \mathcal A} = \bigsqcup_{n \ge 0} \bigsqcup_{\rho \in \mathcal{A}^n}   \mathcal C_\rho  $ and so by Eq. (\ref{eqint111}) we have:
      \begin{equation} \label{eqint11111}
   \mathcal{I} \le  \liminf_{r \rightarrow 0}  \frac{1}{2r} \sum_{n\ge 0}  \sum_{\rho \in \mathcal{A}^n }    \int_{(\alpha, \beta) \in  \mathcal C_\rho}     \mathrm{Leb}_1  \{   p \in \mathcal{B} :  | \pi_p(\alpha)-\pi_p(\beta) | < r \}  d \mu \times \mu \, . 
    \end{equation} 
   We show below that a consequence of the transversality assumption ${\bf(T_{aff})}$ is:
  \begin{Lemma} \label{borne}   For  every $n \ge 0$, $\rho \in \mathcal{A}^n$ and $(\alpha, \beta) \in  \mathcal C_\rho $, we have:
  $$  \mathrm{Leb}_1   \{   p \in \mathcal{B} :   | \pi_p(\alpha)-\pi_p(\beta) | < r \}  \preceq   r \cdot  \Lambda_{p_0,\rho}^{ -1- \epsilon/2  }  \, .$$
  \end{Lemma}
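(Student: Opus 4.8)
The plan is to exploit the affine structure to reduce the claimed bound to a single application of the transversality assumption $\mathbf{(T_{aff})}$ after a change of variables. Fix $n \ge 0$, $\rho \in \mathcal{A}^n$ and a pair $(\alpha,\beta) \in \mathcal{C}_\rho$. By definition of $\mathcal{C}_\rho$ we can write $\alpha = \alpha' \rho$ and $\beta = \beta' \rho$ where $\alpha', \beta' \in \overleftarrow{\mathcal A}$ have different last letters, $\alpha'_{-1} \neq \beta'_{-1}$ (here I use the convention that the word $\rho$ sits at positions $-n,\dots,-1$ and $\alpha',\beta'$ fill the positions further back). Using $\psi_p^{\alpha} = \psi_p^{\rho} \circ \psi_p^{\alpha'}$ and the analogous identity for $\beta$, and passing to the limit that defines $\pi_p$, one gets
$$\pi_p(\alpha) = \psi_p^{\rho}\bigl(\pi_p(\alpha')\bigr), \qquad \pi_p(\beta) = \psi_p^{\rho}\bigl(\pi_p(\beta')\bigr).$$
Since $\psi_p^{\rho}$ is affine with linear coefficient of absolute value $\Lambda_{p,\rho}$, this yields
$$|\pi_p(\alpha) - \pi_p(\beta)| = \Lambda_{p,\rho} \cdot |\pi_p(\alpha') - \pi_p(\beta')|.$$

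Next I would translate the event $\{p \in \mathcal{B} : |\pi_p(\alpha)-\pi_p(\beta)| < r\}$ into an event about $\alpha',\beta'$. By the previous display it equals $\{p \in \mathcal{B} : |\pi_p(\alpha')-\pi_p(\beta')| < r / \Lambda_{p,\rho}\}$. To remove the $p$-dependence in the radius, use the bounded distortion estimate~(\ref{distorlilou}): for $p \in \mathcal{B}$ we have $\Lambda_{p_0,\rho}^{1+\epsilon/2} \le \Lambda_{p,\rho}$, hence $1/\Lambda_{p,\rho} \le \Lambda_{p_0,\rho}^{-1-\epsilon/2}$, so
$$\{p \in \mathcal{B} : |\pi_p(\alpha)-\pi_p(\beta)| < r\} \subset \bigl\{p \in \mathcal{B} : |\pi_p(\alpha')-\pi_p(\beta')| < r \cdot \Lambda_{p_0,\rho}^{-1-\epsilon/2}\bigr\}.$$
Now $\alpha'$ and $\beta'$ satisfy $\alpha'_{-1} \neq \beta'_{-1}$, so $\mathbf{(T_{aff})}$ applies directly (with $\mathcal{B} \subset \mathcal{P}$ and the radius $r' := r \cdot \Lambda_{p_0,\rho}^{-1-\epsilon/2}$): the Lebesgue measure of the right-hand set is at most $C r' = C r \cdot \Lambda_{p_0,\rho}^{-1-\epsilon/2}$. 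This is exactly the claimed bound $\mathrm{Leb}_1\{p \in \mathcal{B} : |\pi_p(\alpha)-\pi_p(\beta)| < r\} \preceq r \cdot \Lambda_{p_0,\rho}^{-1-\epsilon/2}$.

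The only genuinely nontrivial point — which I would state carefully rather than the bookkeeping — is the factorization $\pi_p(\alpha) = \psi_p^\rho(\pi_p(\alpha'))$. This follows from the definition of $\pi_p(\alpha)$ as $\lim_{k} \psi_p^{\alpha_{|k}}(0)$: for $k \ge n$ one has $\psi_p^{\alpha_{|k}} = \psi_p^\rho \circ \psi_p^{(\alpha')_{|k-n}}$ by associativity of composition, and $\psi_p^\rho$ is continuous (indeed affine, hence uniformly Lipschitz on $X$ with constant $\le \gamma^n \le 1$), so it commutes with the limit. A small subtlety worth noting is that $\psi_p^\rho$ is defined and affine on a neighborhood of $X$ that is independent of $p$ (as recalled in the setup), so the composition makes sense throughout $\mathcal{B}$. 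Everything else is routine: the distortion inequality is quoted verbatim from~(\ref{distorlilou}), and $\mathbf{(T_{aff})}$ is applied off the shelf. I expect no real obstacle here; the lemma is essentially a renormalization identity plus one invocation of transversality.
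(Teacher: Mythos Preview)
Your proof is correct and follows essentially the same route as the paper: the paper writes $\sigma^n(\alpha)$ and $\sigma^n(\beta)$ where you write $\alpha'$ and $\beta'$, uses the same affine identity $|\pi_p(\alpha)-\pi_p(\beta)| = \Lambda_{p,\rho}\,|\pi_p(\sigma^n(\alpha))-\pi_p(\sigma^n(\beta))|$, then applies the distortion bound~(\ref{distorlilou}) to replace $\Lambda_{p,\rho}$ by $\Lambda_{p_0,\rho}^{1+\epsilon/2}$, and finishes with $\mathbf{(T_{aff})}$. Your extra paragraph justifying the factorization $\pi_p(\alpha)=\psi_p^\rho(\pi_p(\alpha'))$ via continuity is more explicit than the paper but not different in substance.
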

  \noindent
  We can inject the bound of Lemma $\ref{borne}
  $ into Eq. (\ref{eqint11111}):
  \begin{equation}   \label{eqint111113}
  \mathcal{I}  \preceq \sum_{n\ge 0}  \sum_{\rho \in \mathcal{A}^n }    \int_{(\alpha, \beta) \in  \mathcal C_\rho}   \Lambda_{p_0,\rho}^{ - 1 - \epsilon/2  }   d \mu \times \mu \, .
  \end{equation} 
 \noindent
    We use the equality $\mu[\rho] = \Lambda_{p_0,\rho}^{\Delta(p_0)}$, the inequality $(1+\frac{\epsilon}{2})/\Delta(p_0) < (1+\frac{\epsilon}{2})/(1+\epsilon) < 1-\frac{\epsilon}{3} $ and finally the inequality  $\gamma' < \Lambda_{p,a}<\gamma$ to get:
     
 \begin{equation}   \label{bound}
   \Lambda_{p_0,\rho}^{ -1 - \epsilon/2  } \asymp  \mu[\rho]^{ \frac{-1-\epsilon/2}{\Delta(p_0)}}  \le  \mu[\rho]^{ -(1-\frac{\epsilon}{3} ) }  \preceq   \gamma^{ \frac{n\epsilon}{3}  } \cdot   \mu [\rho]^{ -1 } \, .
 \end{equation}
 We now inject this bound into Eq. $(\ref{eqint111113})$ to find:
 $$\mathcal{I} \preceq   \sum_{n\ge 0} \gamma^{ \frac{n\epsilon}{3}  }    \sum_{\rho \in \mathcal{A}^n }  \frac{( \mu  \times \mu) (\mathcal{C}_\rho ) }{ \mu [\rho] } \le    \sum_{n\ge 0} \gamma^{ \frac{n\epsilon}{3}  }    \sum_{\rho \in \mathcal{A}^n }  \mu [\rho] =  \sum_{n\ge 0} \gamma^{ \frac{n\epsilon}{3 }  }  < + \infty \, ,$$ where we used the inequality $( \mu  \times \mu) (\mathcal{C}_\rho )  \le \mu [\rho]^2$ (coming from $\mathcal{C}_\rho \subset [\rho]^2$) to prove the second inequality. This concludes the proof of Proposition \ref{integr}.
  \end{proof}

   \begin{proof}[Proof of Lemma \ref{borne}]    
 
   For every $p \in \mathcal{P}$,
    $n \ge 0$, $\rho \in \mathcal{A}^n$ and $(\alpha, \beta) \in  \mathcal C_\rho$, it holds:
  \begin{equation}   \label{tor}
 | \pi_p(\alpha) - \pi_p(\beta) |  = \Lambda_{p,\rho} 
 \cdot |  \pi_p(\sigma^n(\alpha)) - \pi_p(\sigma^n(\beta))|  \, .
  \end{equation}
Indeed, the points $\pi_p(\alpha)$ and  $\pi_p(\beta)$ are the respective images of $ \pi_p(\sigma^n(\alpha))$ and $\pi_p(\sigma^n(\beta))$ by the map $\psi_p^\rho $ which is an affine contraction, and the absolute value of the linear coefficient of $\psi_p^\rho $ is $\Lambda_{p,\rho}$. Thus, using Eq. $(\ref{distorlilou})$, it holds: 
   \begin{equation} 
   \mathrm{Leb}_1   \{   p \in \mathcal{B} :  | \pi_p(\alpha)-\pi_p(\beta) | < r \}  =      
        \mathrm{Leb}_1   \{   p \in \mathcal{B} : | \pi_p(\sigma^n (\alpha))-\pi_p(\sigma^n(\beta)) | < \frac{ r}{ \Lambda_{p,\rho}} \}  \nonumber
    \end{equation} 
    \begin{equation}  \label{10}
  \mathrm{Leb}_1  \{   p \in \mathcal{B} :  | \pi_p(\alpha)-\pi_p(\beta)  |  < r \}  \le     \mathrm{Leb}_1   \{   p \in \mathcal{B} :  | \pi_p(\sigma^n (\alpha))-\pi_p(\sigma^n (\beta)) | <  \frac{r}{  \Lambda_{p_0,\rho}^{ 1+\frac{\epsilon}{2
  }  }  } \} \nonumber
      \end{equation}
      To conclude, by ${\bf(T_{aff})}$ and since $\mathcal{B}  \subset \mathcal{P}$ the right-hand term of the latter  is smaller than
     $  C \cdot   r \cdot  \Lambda_{p_0,\rho}^{ -1- \epsilon/2  }$.
   \end{proof}
   \noindent
   {\bf Example.} Let us give a simple example of application of Theorem D. Let $n \ge 2$ be an integer. We set $\mathcal{A}:=\{0,1,2, \ldots, n\}$, $X:=[0,1]$ and $\mathcal{P}:=(1/n, 1-1/n)$. Let $c<1/n$ be a real number close to $1/n$. For $a \in \mathcal{A}$, we put $\psi^a_p(x):= c x + \frac{1}{2} (1/n-c) + a/n $ if $0 \le a < n$ and $\psi^{n}_p(x):=  c x + p$ if $a= n$. Condition   ${\bf(T_{aff})}$ is clearly satisfied when $n$ is large. Moreover trivial computations show that the similarity dimension is $\Delta(p)= -\mathrm{log}(n+1) / \mathrm{log}(c)>1$ for any $p \in  \overline{\mathcal{P}}$. By Theorem D,  $K_p$ has positive one-dimensional Lebesgue measure for a.e. $p \in  \mathcal{P}$.

   \section{The unipotent case: Proof of Theorem C }  \label{difficile}

We now extend Theorem D to the case of families of fiberwise unipotent skew-products.  The fibers are indexed by $\mathfrak{a} \in \overrightarrow{ \mathcal A}$ and the dynamics on each fiber will look like the one of the model previously introduced. Here are some differences:
 \begin{itemize}

 \item We will not restrain ourselves to fibers of dimension 1 and we will not suppose that the dynamics on each fiber is conformal but we will suppose that its differentials are unipotent with contracting eigenvalue (assumption ({\bf U})).
  \item We will need  distortion results (Lemmas \ref{bdsk}, \ref{gfsk}, \ref{fisk} and \ref{poisson}) since the dynamics will not supposed to be affine this time.
   \end{itemize}

   \subsection{Notations and immediate facts}  \label{sub}

\noindent  
 We adopt from now the following  formalism in order to prove Theorem C. Let $(F_p)_{p \in \mathcal{P}}$ be a family of  $C^r$-skew-products satisfying ${\bf(U)}$ and  ${\bf(T)}$. We recall that there exist open neighborhoods $X’$ and $\mathcal{P}’$ of $X$ and $\overline{\mathcal{P}}$ in $\mathbb{R}^N$ and $\mathbb{R}^d$ s.t. each map $(p,x) \mapsto  (p,f_{p,\mathfrak{a}}(x))$ extends to a diffeomorphism from $X’ \times \mathcal{P}’ $ into  $X \times \mathcal{P} $ and the map $F_p$ is a ($N$-dimensional) $C^r$-skew-product for every $p \in \mathcal{P}’$. We set:
\begin{equation} \label{fpsi}
\forall p \in \mathcal{P}’ , \mathfrak{a} \in \overrightarrow{ \mathcal A}  , a \in \mathcal{A}, x \in X’, \text{ } \psi_{ p,  \mathfrak{a} }^a(x)
:= f_{p, a \mathfrak{a}}(x)
\end{equation}
   and notice that $\psi_{ p,  \mathfrak{a} }^a : X’ \rightarrow X$ is a $C^2$-map depending continuously on $(p,  \mathfrak{a})$. 
 The $C^2$-norm of $\psi_{ p,  \mathfrak{a} }^a$ on $X$ is then bounded independently of $p \in  \overline{\mathcal{P}}$, $
    \mathfrak{a} \in \overrightarrow{ \mathcal A}$ and  $a \in \mathcal{A}$. 
     We now define for any $p \in \mathcal{P}’$, $\mathfrak{a} \in \overrightarrow{ \mathcal A} $, $n >0$ and $\alpha = (\alpha_{-n}, \cdots, \alpha_{-1}) \in \mathcal{A}^*$:
$$\forall x \in X’, \text{ } \psi_{ p,  \mathfrak{a} }^\alpha(x):=   \psi_{ p,  \mathfrak{a} }^{\alpha_{-1}}   \circ \cdots \circ       \psi_{ p,       \alpha_{-n+1} \cdots \alpha_{-1}   \mathfrak{a}}^ {\alpha_{-n}}        (x) = f_{ p,  \alpha_{-1}  \mathfrak{a}   }   \circ \cdots \circ       f_{ p,    \alpha_{-n} \cdots \alpha_{-1}     \mathfrak{a}    } (x) \, . $$

\noindent
In particular, assumption {\bf (U)} implies that  for every $p \in \mathcal{P}’$, $\mathfrak{a} \in \overrightarrow{ \mathcal A} $, $\alpha \in \mathcal{A}^*$    and $x \in X’$, the differential $D\psi_{ p,  \mathfrak{a} }^\alpha(x)$ is unipotent inferior and thus has a unique eigenvalue.    This motivates the definition of the following contraction rate:


\begin{Definition}
For any $p \in \mathcal{P}’$, $\mathfrak{a} \in \overrightarrow{ \mathcal A} $, $\alpha \in \mathcal{A}^*$ and 
$x \in X’$,  let  $\lambda_{p,\mathfrak{a},\alpha}(x)$ be  the absolute value  of the unique eigenvalue of the differential $D\psi_{ p,  \mathfrak{a} }^\alpha(x)$ and: 

\begin{equation} \Lambda_{p,\mathfrak{a},\alpha}:=\mathrm{max}_{x \in X} \lambda_{ p,\mathfrak{a},\alpha}(x) \, .
\end{equation} 
\end{Definition}

\noindent 
For any  $p \in \mathcal{P}’$, $\mathfrak{a} \in \overrightarrow{ \mathcal A} $, $\alpha=(\alpha_{-n}, \ldots, \alpha_{-1}) \in \mathcal{A}^*$ and $x \in X’$, we then have:
\begin{equation} \label{produitchinne}
   \lambda_{p,\mathfrak{a},\alpha}(x)  = \prod_{k=1}^{n}       \lambda_{p, \mathfrak{a}_k, \alpha_{-k}} \big( \psi_{p, \mathfrak{a}_{k+1} }^{\alpha_{-k-1}} \circ \cdots \circ \psi_{p, \mathfrak{a}_n }^{\alpha_{-n}}(x) \big)  \text{ with }  \mathfrak{a}_k:=\alpha_{|  k-1} \mathfrak{a}   \, .
   \end{equation}

\noindent
By continuity of $D\psi_{ p,  \mathfrak{a} }^a(x)$  relatively to $p \in  \overline{\mathcal{P}}$, $ \mathfrak{a} \in   \overrightarrow{ \mathcal A}$ and   $x \in X$ and by compactness of $\overline{\mathcal{P}}$, $ \overrightarrow{ \mathcal A}$ and  $X$, there exist $0< \gamma' < \gamma  <1$ s.t. for any $p$,  $\mathfrak{a}$, $a$, it holds:
\begin{equation}
\forall x \in X, \text{ } \gamma' <  \lambda_{p,\mathfrak{a},a}(x)   < \gamma \, . 
\end{equation} 
\noindent In particular for every $\alpha \in \mathcal{A}^*$ it holds:
\begin{equation} \label{ineqpres}
\gamma’^{| \alpha|} <       \Lambda_{p,\mathfrak{a},\alpha}   < \gamma^{| \alpha|} \, . 
\end{equation} 
\noindent

\noindent


\noindent
We will need later the following, whose proof is in the Appendix:

\begin{Lemma} \label{polynome}
There exists a real polynomial $P$ positive on $\mathbb{R}_+$ s.t. for any $p \in  \overline{\mathcal{P}} $, $\mathfrak{a} \in  \overrightarrow{ \mathcal A}$,  $\alpha \in \mathcal{A}^*$, $x \in X$ and $(i,j) \in \{1, \cdots, N\}^2$ with $i>j$, the modulus  of the $(i,j)^{th}$ coefficient of the differential $D\psi_{p,\mathfrak{a}}^\alpha(x)$ is smaller than  $P(| \alpha| ) \cdot \lambda_{p,\mathfrak{a},\alpha}(x)$. 
\end{Lemma}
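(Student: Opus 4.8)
The plan is to argue by induction on the length $n = |\alpha|$, exploiting the chain rule together with the unipotent structure. Write $\alpha = \alpha' \alpha_{-1}$ with $\alpha'$ of length $n-1$, so that $\psi_{p,\mathfrak{a}}^\alpha = \psi_{p,\mathfrak{a}}^{\alpha_{-1}} \circ \psi_{p,\alpha_{-1}\mathfrak{a}}^{\alpha'}$ (up to a relabelling of the tail fibers, which I would set up carefully once at the start). By the chain rule, $D\psi_{p,\mathfrak{a}}^\alpha(x) = D\psi_{p,\alpha_{-1}\mathfrak{a}}^{\alpha_{-1}}\big(\psi_{p,\mathfrak{a}}^{\alpha'}(x)\big) \cdot D\psi_{p,\mathfrak{a}}^{\alpha'}(x)$, a product of two inferior unipotent matrices. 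Both factors are lower triangular with a single eigenvalue on the diagonal: the first has all diagonal entries equal to $\lambda_{p,\mathfrak{a}_1,\alpha_{-1}}(\cdot)$ and off-diagonal entries bounded by a fixed constant $C_0$ (the uniform $C^2$-bound on the generators $\psi_{p,\mathfrak{a}}^a$), while the second has diagonal entries $\lambda_{p,\mathfrak{a},\alpha'}(x)$ and, by the inductive hypothesis, strictly-lower entries bounded by $P(n-1)\cdot \lambda_{p,\mathfrak{a},\alpha'}(x)$.

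The heart of the argument is to track what multiplying a unipotent matrix with small off-diagonal entries by another does to the off-diagonal part. Writing each matrix as $\lambda \cdot (I + L)$ with $L$ strictly lower triangular and $\lambda$ its eigenvalue, the product becomes $\lambda_1\lambda_2 \cdot (I + L_1)(I + L_2) = \lambda_1\lambda_2(I + L_1 + L_2 + L_1 L_2)$, and since $L_1, L_2$ are strictly lower triangular $N\times N$ matrices, the nilpotent sums involving their product terminate; the $(i,j)$-entry of the product's lower part is a polynomial (of degree at most $N$) in the entries of $L_1$ and $L_2$ with bounded coefficients. Plugging in the bounds — $C_0$ for the entries of $L_1$ and $P(n-1)$ for those of $L_2$ (after dividing out $\lambda_{p,\mathfrak{a},\alpha'}(x)$, using $(\ref{produitchinne})$ to recover $\lambda_{p,\mathfrak{a},\alpha}(x) = \lambda_{p,\mathfrak{a}_1,\alpha_{-1}}(\psi_{p,\mathfrak{a}}^{\alpha'}(x))\cdot \lambda_{p,\mathfrak{a},\alpha'}(x)$) — gives an $(i,j)$-entry bounded by $Q\big(C_0, P(n-1)\big)\cdot \lambda_{p,\mathfrak{a},\alpha}(x)$ for a fixed polynomial $Q$. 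One then chooses $P$ to dominate the recursion $P(n) \ge Q(C_0, P(n-1))$; since $Q$ has bounded degree in its second argument one can take $P$ to be an explicit polynomial in $n$ of degree $N-1$ or so (a careful bookkeeping of the degree increment per step).

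The main obstacle I expect is the combinatorial control of the polynomial degree: naively, each composition could multiply the off-diagonal bound, leading to exponential rather than polynomial growth in $n$. The point is that the $\lambda$'s are genuinely contracting ($\lambda < \gamma < 1$ uniformly, by $(\ref{ineqpres})$), so the product of diagonal terms is exponentially small, and one only needs the ratio "off-diagonal over $\lambda$" to grow polynomially. The correct way to see this is to introduce the normalized quantities $\widehat{L}^{(n)}_{ij} := |D\psi_{p,\mathfrak{a}}^\alpha(x)_{ij}| / \lambda_{p,\mathfrak{a},\alpha}(x)$ and show directly that $\widehat{L}^{(n)}_{ij} \le \widehat{L}^{(n-1)}_{i'j'} + C_0 + (\text{bounded sum of products with at most } N \text{ factors from the previous step and } C_0)$; because the index $i - j$ must strictly increase along any chain of strictly-lower-triangular multiplications, a product of more than $N$ such factors vanishes, so only finitely many (at most $N$) "jumps" accumulate multiplicatively, and the linear accumulation over the remaining $n$ steps yields the polynomial bound. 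I would state this degree-counting step as a short self-contained sublemma about products of strictly lower triangular matrices, prove it by the nilpotency of strictly lower triangular matrices, and then feed it into the induction. The rest is routine bookkeeping with the uniform bounds already established in $(\ref{ineqpres})$ and the $C^2$-boundedness of the generators.
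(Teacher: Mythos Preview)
Your proposal is essentially correct and rests on the same key idea as the paper --- the nilpotency of strictly lower triangular $N\times N$ matrices, which caps the number of multiplicative off-diagonal contributions at $N-1$ --- but the packaging differs.

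The paper does not induct on $|\alpha|$. Instead it writes $D\psi_{p,\mathfrak a}^\alpha(x)$ directly as a product of $n=|\alpha|$ one-step differentials, decomposes each factor as $M_0+M_1+\cdots+M_{N-1}$ (diagonal plus the $N-1$ subdiagonal strips), and expands the product into $N^n$ terms. Any term containing $\ge N$ non-diagonal $M_k$'s vanishes by nilpotency, so at most $\tilde P(n)=\sum_{k=0}^{N-1}\binom{n}{k}(N-1)^k$ terms survive --- a polynomial in $n$. Each surviving term has at least $n-(N-1)$ diagonal factors; replacing the missing $\le N-1$ diagonal factors using the lower bound $\gamma'$ on each eigenvalue recovers the full product $\lambda_{p,\mathfrak a,\alpha}(x)$ at a fixed multiplicative cost $(\gamma')^{-(N-1)}C_1^{N-1}$. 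This gives the bound in one stroke.

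Your inductive route would also work, but not quite as you first write it: the one-step recursion on the global bound $P(n)$ produces $P(n)\lesssim(1+NC_0')\,P(n-1)$, which is exponential --- exactly the obstacle you flag. The correct induction is \emph{stratified by subdiagonal level}: setting $p_k(n)$ to bound the normalized $(i,j)$-entries with $i-j=k$, the recursion reads $p_k(n)\le p_k(n-1)+c+\sum_{m=1}^{k-1}c\,p_{k-m}(n-1)$, from which $p_k(n)=O(n^k)$ follows by induction on $k$ first, then $n$. You gesture at this with the remark that ``the index $i-j$ must strictly increase along any chain,'' but the induction you actually set up (on $n$ alone, with a single $P$) does not encode it. Your proposed sublemma about products of strictly lower triangular matrices, if proved by the combinatorial count, is exactly the paper's argument and would bypass the induction entirely.

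In short: both approaches land on the same nilpotency-driven polynomial count; the paper's direct expansion is cleaner and avoids the need to stratify the induction by level.
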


  \noindent
  In particular, for any $p \in \overline{\mathcal{P}}$, $\mathfrak{a} \in \overrightarrow{ \mathcal A} $ and $\alpha \in \mathcal{A}^*$ of length sufficiently large, the map $ \psi_{p,\mathfrak{a}}^{\alpha}$ is a contraction.  For $p \in \overline{\mathcal{P}}$, $\mathfrak{a} \in \overrightarrow{ \mathcal A} $ and  $\alpha \in  \overleftarrow{ \mathcal A}$,  the diameter of $ \psi_{p,\mathfrak{a}}^{\alpha_{| n}} (X)$  is then small when $n$ is large. Thus the sequence of points  $\psi_{p,\mathfrak{a}}^{\alpha_{| n}} (0)$ converges to $ \pi_{p, \mathfrak{a}} (\alpha) \in X$. This defines for every $p \in  \overline{\mathcal{P}}$ and $\mathfrak{a} \in \overrightarrow{ \mathcal A} $ 
 a $C^0$-map $\pi_{p,\mathfrak{a}} : \overleftarrow{ \mathcal A}  \rightarrow X$. The map $(p, \mathfrak{a}) \in  \overline{\mathcal{P}} \times  \overrightarrow{ \mathcal A}  \mapsto \pi_{p,\mathfrak{a}}$ is then continuous, the set of $C^0$-maps from $\overleftarrow{ \mathcal A}$ to $\mathbb{R}^N$ being endowed with the uniform $C^0$-norm. We set: $$
 K_{p,\mathfrak{a}} :=\pi_{p,\mathfrak{a}}( \overleftarrow{ \mathcal A}) \, .$$

\noindent For any  family $\mathbb{F} := ((\tilde{F}_{t,p})_p)_{t \in \mathcal{T}}$ of $\vartheta$-$\mathrm{U}$-perturbations with small $\vartheta>0$, the map $\psi_{t, p,  \mathfrak{a} }^\alpha:= \tilde{f}_{ t, p,  \alpha_{-1}  \mathfrak{a}   }   \circ \cdots \circ       \tilde{f}_{t,  p,    \alpha_{-n} \cdots \alpha_{-1}     \mathfrak{a}    }$ is also a contraction when $| \alpha|$ is large and so the sequence of points  $\psi_{t,p,\mathfrak{a}}^{\alpha_{| n}} (0)$ still converges to $ \tilde{\pi}_{t, p, \mathfrak{a}} (\alpha) \in X$. This allows us  to define a $C^0$-map $\tilde{\pi}_{t,p,\mathfrak{a}} : \overleftarrow{ \mathcal A}  \rightarrow X$ for any $t$ and  $p$, and its limit set $\tilde{K}_{t,p,\mathfrak{a}} :=\tilde{\pi}_{t,p,\mathfrak{a}}( \overleftarrow{ \mathcal A})$.

\begin{Lemma} \label{ljungq}
The map $p \mapsto \pi_{p,\mathfrak{a}}(\alpha)$ is $C^r$ for every $\mathfrak{a} \in  \overrightarrow{ \mathcal A}$ and $\alpha \in  \overleftarrow{ \mathcal A}$. Moreover, for every    family $\mathbb{F}$ of $\vartheta$-$\mathrm{U}$-perturbations with small $\vartheta>0$, the map $(t,p) \mapsto \tilde{\pi}_{t,p,\mathfrak{a}}(\alpha)$ is $C^r$ and $p \mapsto  \tilde{\pi}_{t,p,\mathfrak{a}}(\alpha)$ is $C^r$-close to $p \mapsto \tilde{\pi}_{p,\mathfrak{a}}(\alpha)$ uniformly in $t \in \mathcal{T}$.
\end{Lemma}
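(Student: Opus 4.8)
\textbf{Proof plan for Lemma \ref{ljungq}.}

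The plan is to prove the $C^r$-regularity of $p \mapsto \pi_{p,\mathfrak{a}}(\alpha)$ by exhibiting it as the uniform limit, together with all partial derivatives up to order $r$, of the sequence of $C^r$-maps $p \mapsto \psi_{p,\mathfrak{a}}^{\alpha_{|n}}(0)$, and then invoke the classical fact that a uniform limit of $C^r$-maps whose derivatives up to order $r$ also converge uniformly is itself $C^r$. The key point to establish is therefore that for each fixed multi-index $\iota$ with $|\iota| \le r$, the sequence $\partial_p^\iota \big( \psi_{p,\mathfrak{a}}^{\alpha_{|n}}(0) \big)_n$ is uniformly Cauchy in $p \in \overline{\mathcal{P}}$ (equivalently, in the larger compact where everything is defined).

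First I would write $\psi_{p,\mathfrak{a}}^{\alpha_{|n+1}} = \psi_{p,\mathfrak{a}}^{\alpha_{|n}} \circ \psi_{p, \alpha_{|n}\mathfrak{a}}^{\alpha_{-n-1}}$, so that the increment between consecutive terms of the sequence is controlled by the composition of the first $n$ fiber maps applied near the attracting point. To bound the derivatives in $p$ of such long compositions I would use the Faà di Bruno / chain-rule expansion for $\partial_p^\iota$ of a composition: each such derivative is a finite sum of products of derivatives of the individual blocks $f_{p, \alpha_{-k}\cdots\alpha_{-1}\mathfrak{a}}$ in the variables $(p,x)$, each of order $\le r$, composed along the orbit. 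The $C^r$-norms of the individual blocks are uniformly bounded (this uses that $(F_p)_p$ is a genuine $C^r$-skew-product, so $\mathfrak{a} \mapsto f_{p,\mathfrak{a}}$ and its relevant derivatives are Hölder/continuous on the compact $\overrightarrow{\mathcal{A}}$), while the contracting effect of the composition is quantified by Lemma \ref{polynome} and the exponential bound $\Lambda_{p,\mathfrak{a},\alpha} < \gamma^{|\alpha|}$ from Eq. (\ref{ineqpres}): the matrix norm of $D\psi_{p,\mathfrak{a}}^{\alpha_{|n}}$ is at most $P(n)\cdot \gamma^n$, and similarly the higher-order spatial derivatives of the composition, while more complicated to expand, still decay like (polynomial in $n$) $\times \gamma^{cn}$ for some $c>0$ because every term in the Faà di Bruno expansion contains at least one factor that is a first derivative of a deep block. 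Summing the geometric-type series $\sum_n P(n)\gamma^{cn} < \infty$ gives the uniform Cauchy property, hence the $C^r$-regularity, and moreover gives a uniform bound on the $C^r$-norm of $p \mapsto \pi_{p,\mathfrak{a}}(\alpha)$ independent of $\mathfrak{a}$ and $\alpha$.

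For the second assertion, I would run the identical argument for a $\vartheta$-$\mathrm{U}$-perturbation $\mathbb{F}$, now treating $(t,p)$ jointly as the parameter (the family $(\tilde{F}_{t,p})_p$ is $C^r$ in $(t,p,x)$ by hypothesis and still satisfies $(\mathrm{U})$, so the same contraction estimates apply with constants that are uniform once $\vartheta$ is small), which yields that $(t,p)\mapsto \tilde{\pi}_{t,p,\mathfrak{a}}(\alpha)$ is $C^r$ with $C^r$-norm bounded uniformly in $(t,\alpha,\mathfrak{a})$. The $C^r$-closeness of $p \mapsto \tilde{\pi}_{t,p,\mathfrak{a}}(\alpha)$ to $p \mapsto \pi_{p,\mathfrak{a}}(\alpha)$ then follows by comparing the two telescoping series term by term: the difference of the $n$-th terms is controlled by the $C^r$-distance $\vartheta$ between the blocks times the same summable polynomial-times-geometric factor, so the whole difference (and all its $p$-derivatives up to order $r$) is $O(\vartheta)$ uniformly in $t$ and $\alpha$. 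The main obstacle is the bookkeeping in the Faà di Bruno expansion of high-order derivatives of a long composition — one must verify carefully that \emph{every} resulting monomial still carries an exponentially small factor (not merely the purely first-order term), which is where Lemma \ref{polynome} and the unipotence assumption $(\mathrm{U})$ do the essential work; once that decay is in hand, everything else is a routine comparison of absolutely convergent series.
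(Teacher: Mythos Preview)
Your proposal is correct and follows essentially the same approach as the paper: both argue that $p\mapsto \pi_{p,\mathfrak{a}}(\alpha)$ is the $C^r$-limit of the approximants $p\mapsto \psi_{p,\mathfrak{a}}^{\alpha_{|n}}(0)$ with exponential convergence, and then compare termwise for the perturbed family. The paper's proof is a four-line sketch that simply asserts the exponential convergence and the $C^r$-closeness of the building blocks; your Fa\`a di Bruno bookkeeping together with Lemma~\ref{polynome} and Eq.~(\ref{ineqpres}) is exactly the work needed to substantiate that assertion, so you have supplied the missing details rather than taken a different route.
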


\begin{proof}
These maps are the respective  uniform limits of the following $C^r$-maps 
$$p \mapsto  \psi_{p,\mathfrak{a}}^{\alpha_{| n}} (0)  \text{ and } (t,p) \mapsto  \psi_{t,p,\mathfrak{a}}^{\alpha_{| n}} (0) \, .$$
\noindent 
 To conclude, we  remark that these convergences are both exponential and that the   maps $f_{p,\mathfrak{a}}$ and $\tilde{f}_{t,p,\mathfrak{a}}$     are (uniformly in $t$  and $\mathfrak{a}$) $C^r$-close when $\vartheta$ is small. 
\end{proof}

\noindent As an immediate consequence of Lemma \ref{polynome}, the pressure function $\Pi_{p,\mathfrak{a}}$ defined in Definition \ref{pressiondef} is equal to: 
\begin{equation} \label{pii}
 \Pi_{p,\mathfrak{a}}(s)=\lim_{n \rightarrow + \infty} \frac{1}{n} \mathrm{log} \sum_{\alpha \in \mathcal A^n}  \Lambda_{p,\mathfrak{a}
 ,\alpha}^s \text{ for any }  s \ge 0   \, .  
 \end{equation} 
Moreover this map satisfies the following nice properties:

 \begin{Proposition}  \label{pression}
The map $s \in \mathbb{R}_+ \mapsto  \Pi_{p,\mathfrak{a}}(s) \in \mathbb{R}$ is well-defined, strictly decreasing, continuous, independent of $\mathfrak{a}$, $\Pi_{p,\mathfrak{a}}(0)  >0$  and $ \mathrm{lim}_{ s \rightarrow + \infty} \Pi_{p,\mathfrak{a}}(s) = -\infty$. In particular, it has exactly one zero denoted by $\Delta (p)$, depending continuously on $p$. 
\end{Proposition}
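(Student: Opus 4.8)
The plan is to verify each asserted property of $\Pi_{p,\mathfrak{a}}$ in turn, relying on the expression $(\ref{pii})$ and the submultiplicativity of the quantities $\Lambda_{p,\mathfrak{a},\alpha}$. First I would record the \emph{submultiplicativity}: from Eq.~$(\ref{produitchinne})$ together with Lemma~\ref{polynome}, one gets that for any $\alpha,\beta\in\mathcal{A}^*$ one has $\Lambda_{p,\mathfrak{a},\alpha\beta}\le P(|\alpha|+|\beta|)\cdot\Lambda_{p,\sigma^{|\beta|}\!(\text{something}),\alpha}\cdot\Lambda_{p,\mathfrak{a},\beta}$, but since the polynomial factor is subexponential it does not affect the limit; more cleanly, setting $Z_n(s):=\sum_{\alpha\in\mathcal{A}^n}\Lambda_{p,\mathfrak{a},\alpha}^s$, one checks $Z_{n+m}(s)\le C_{n,m}\,Z_n(s)\,Z_m(s)$ with $C_{n,m}$ of subexponential growth, so that $\frac1n\log Z_n(s)$ converges by a Fekete-type argument. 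This gives well-definedness of $\Pi_{p,\mathfrak{a}}(s)$ for each $s\ge 0$.

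Next I would establish \emph{independence of $\mathfrak{a}$}: changing the base fiber from $\mathfrak{a}$ to $\mathfrak{b}$ replaces each $\psi_{p,\mathfrak{a}}^\alpha$ by $\psi_{p,\mathfrak{b}}^\alpha$, and since these maps are $C^2$-close in a controlled way — indeed, by Hölder continuity of $\mathfrak{a}\mapsto f_{p,\mathfrak{a}}$ on $\overrightarrow{\mathcal A}$ and the contraction, $\lambda_{p,\mathfrak{a},\alpha}(x)/\lambda_{p,\mathfrak{b},\alpha}(x)$ stays within a bounded factor independent of $|\alpha|$ — the two sums $Z_n(s)$ differ only by a bounded multiplicative constant, hence the limit is the same. (Alternatively, one invokes that the full shift is topologically mixing, making the pressure a genuine topological pressure independent of the chosen fiber.) Then \emph{monotonicity and continuity in $s$}: since each $\Lambda_{p,\mathfrak{a},\alpha}\in(\gamma'^{|\alpha|},\gamma^{|\alpha|})\subset(0,1)$ by Eq.~$(\ref{ineqpres})$, the function $s\mapsto\Lambda_{p,\mathfrak{a},\alpha}^s$ is strictly decreasing and log-convex, so each $\frac1n\log Z_n(s)$ is convex and non-increasing, and for $s'>s$ one has the quantitative estimate $Z_n(s')\le\gamma^{n(s'-s)}Z_n(s)$, giving $\Pi_{p,\mathfrak{a}}(s')\le\Pi_{p,\mathfrak{a}}(s)+(s'-s)\log\gamma$; this yields strict monotonicity with a definite slope, and convexity plus finiteness gives continuity on $\mathbb{R}_+$.

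For the \emph{boundary behavior}: at $s=0$ we have $Z_n(0)=\#\mathcal{A}^n=(\#\mathcal{A})^n$, so $\Pi_{p,\mathfrak{a}}(0)=\log\#\mathcal{A}>0$ since $\#\mathcal{A}\ge 2$; and from $\Pi_{p,\mathfrak{a}}(s)\le\log\#\mathcal{A}+s\log\gamma$ with $\log\gamma<0$ we get $\Pi_{p,\mathfrak{a}}(s)\to-\infty$ as $s\to+\infty$. Combining strict monotonicity, continuity, positivity at $0$ and the limit $-\infty$, the intermediate value theorem gives a unique zero $\Delta(p)$. Finally, for \emph{continuity of $\Delta$ in $p$}: I would first show $(p,s)\mapsto\Pi_{p,\mathfrak{a}}(s)$ is continuous — each $\frac1n\log Z_n(s)$ is continuous in $(p,s)$ by continuity of $p\mapsto D\psi^a_{p,\mathfrak{a}}$, and the convergence is locally uniform because the subexponential error terms are uniform over $\overline{\mathcal P}$ — and then deduce continuity of the zero from the uniform transversality of $\Pi_{p,\mathfrak{a}}$ at its root: since $\partial_s\Pi_{p,\mathfrak{a}}\le\log\gamma<0$ uniformly, a standard implicit-function-free argument (if $\Pi_{p_0}(\Delta(p_0))=0$ then $\Pi_p$ is positive at $\Delta(p_0)-\eta$ and negative at $\Delta(p_0)+\eta$ for $p$ near $p_0$) gives $|\Delta(p)-\Delta(p_0)|\to 0$.

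The main obstacle I expect is controlling the subexponential polynomial distortion factors from Lemma~\ref{polynome} carefully enough that the Fekete subadditivity argument actually applies and that the $\mathfrak{a}$-dependence genuinely washes out in the limit; everything else is a routine repackaging of standard thermodynamic-formalism facts, but one must be careful that the unipotent (non-conformal) structure only enters through these controlled off-diagonal terms and does not spoil the submultiplicativity of the spectral radii $\Lambda_{p,\mathfrak{a},\alpha}$.
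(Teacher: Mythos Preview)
Your argument is largely correct and tracks the paper's proof closely for well-definedness, monotonicity, convexity/continuity in $s$, the value at $0$, and the limit at $+\infty$. Two points deserve comment.

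First, your invocation of Lemma~\ref{polynome} and the ``subexponential polynomial distortion factors'' is a red herring. The eigenvalues $\lambda_{p,\mathfrak{a},\alpha}(x)$ are \emph{exactly} multiplicative along concatenations by Eq.~$(\ref{produitchinne})$; Lemma~\ref{polynome} only controls the off-diagonal entries of $D\psi^\alpha_{p,\mathfrak{a}}$ and plays no role here. The only obstruction to clean subadditivity of $\log Z_n(s)$ is the fiber change $\mathfrak{a}\rightsquigarrow\gamma\mathfrak{a}$ in the factorization, which is handled by Lemma~\ref{fisk} and introduces a constant $D_3^s$, not a polynomial in $n$. The paper simply says $\log Z_n(s)$ is subadditive and invokes Fekete.

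Second, for the continuity of $p\mapsto\Delta(p)$ you take a different route from the paper. The paper uses the distortion estimate of Lemma~\ref{lemauxlilou} (equivalently Lemma~\ref{gfsk}) to get, for $p'$ near $p$ and any $\epsilon'>1$, the sandwich $\Lambda_{p,\mathfrak{a},\alpha}^{\epsilon'}/D_5\le\Lambda_{p',\mathfrak{a},\alpha}\le D_5\Lambda_{p,\mathfrak{a},\alpha}^{1/\epsilon'}$, which yields $\Pi_p(s\epsilon')\le\Pi_{p'}(s)\le\Pi_p(s/\epsilon')$ and hence $\Delta(p)/\epsilon'\le\Delta(p')\le\Delta(p)\epsilon'$ directly. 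Your approach via joint continuity of $(p,s)\mapsto\Pi_p(s)$ plus the uniform slope bound is also valid, but your justification of the key step --- locally uniform convergence of $\tfrac{1}{n}\log Z_n(s)$ --- is incomplete: subadditivity alone does not give a uniform rate. What does work is to combine Lemmas~\ref{bdsk} and~\ref{fisk} to get two-sided almost-additivity $|\log Z_{n+m}(s)-\log Z_n(s)-\log Z_m(s)|\le C(s)$ with $C(s)$ independent of $p$, from which $|\tfrac{1}{n}\log Z_n(s)-\Pi_p(s)|\le C(s)/n$ uniformly in $p$. Once you add this, your argument goes through; the paper's route is simply more direct.
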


   \noindent From now on we suppose that $\Delta(p)>N$ for any $p \in  \overline{\mathcal{P}}$.

      \subsection{Distortion lemmas}
   
We now state distortion results, whose proofs are given in the Appendix:
 
 \begin{Lemma} \label{bdsk} (Bounded distortion w.r.t. $x$) There exists $D_1>1$ s.t.  for every $p \in  \overline{\mathcal{P}}$,  $\mathfrak{a} \in  \overrightarrow{ \mathcal A}$, $\alpha \in  \mathcal A^*$ and $
 x,y \in X$, it holds:
$$1/D_1 < \frac{   \lambda_{p,\mathfrak{a},\alpha}(x) }{\lambda_{p,\mathfrak{a},\alpha}  (y) } < 
D_1 \, .$$
 \end{Lemma}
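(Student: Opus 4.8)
\textbf{Proof strategy for Lemma \ref{bdsk} (Bounded distortion w.r.t. $x$).} The plan is to exploit the chain rule formula \eqref{produitchinne} for $\lambda_{p,\mathfrak{a},\alpha}(x)$, which expresses it as a product of $|\alpha|$ factors $\lambda_{p,\mathfrak{a}_k,\alpha_{-k}}(\xi_k(x))$, where $\xi_k(x) := \psi_{p,\mathfrak{a}_{k+1}}^{\alpha_{-k-1}} \circ \cdots \circ \psi_{p,\mathfrak{a}_n}^{\alpha_{-n}}(x)$ is the $k$-th intermediate point along the orbit. The key structural observation is that each single-step factor $\lambda_{p,\mathfrak{b},a}(\cdot)$ is a $C^1$ function on $X$, bounded below away from $0$ (by $\gamma'$) and above (by $\gamma$) uniformly in $p,\mathfrak{b},a$, with derivative uniformly bounded as well (this uses that the $C^2$-norms of the maps $\psi_{p,\mathfrak{a}}^a$ are uniformly bounded, which is recorded just after \eqref{fpsi}). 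Hence $\log \lambda_{p,\mathfrak{b},a}$ is uniformly Lipschitz on $X$ with some constant $L$ independent of everything.

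First I would write
\begin{equation}
\log \frac{\lambda_{p,\mathfrak{a},\alpha}(x)}{\lambda_{p,\mathfrak{a},\alpha}(y)} = \sum_{k=1}^{n} \left( \log \lambda_{p,\mathfrak{a}_k,\alpha_{-k}}(\xi_k(x)) - \log \lambda_{p,\mathfrak{a}_k,\alpha_{-k}}(\xi_k(y)) \right),
\end{equation}
so that $\bigl| \log (\lambda_{p,\mathfrak{a},\alpha}(x)/\lambda_{p,\mathfrak{a},\alpha}(y)) \bigr| \le L \sum_{k=1}^{n} |\xi_k(x) - \xi_k(y)|$. The decisive point is then to control the distances $|\xi_k(x) - \xi_k(y)|$: because $\xi_k$ is the composition of the tail maps $\psi_{p,\mathfrak{a}_{k+1}}^{\alpha_{-k-1}} \circ \cdots \circ \psi_{p,\mathfrak{a}_n}^{\alpha_{-n}}$ applied to $x$ and $y$, and because such a composition of length $n-k$ is itself unipotent with contracting eigenvalue, Lemma \ref{polynome} gives a bound of the form $\|D(\psi_{p,\mathfrak{a}_{k+1}}^{\alpha_{-k-1}} \circ \cdots \circ \psi_{p,\mathfrak{a}_n}^{\alpha_{-n}})\| \preceq P(n-k)\cdot \gamma^{n-k}$ on the off-diagonal part (the diagonal part being $\le \gamma^{n-k}$ directly). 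Thus $|\xi_k(x) - \xi_k(y)| \preceq P(n-k)\, \gamma^{n-k} |x-y| \preceq P(n-k)\,\gamma^{n-k} \cdot \mathrm{diam}(X')$, and since $\sum_{k=1}^{n} P(n-k)\gamma^{n-k} \le \sum_{j \ge 0} P(j)\gamma^{j} =: S < +\infty$ (a convergent series, as $P$ is polynomial and $\gamma < 1$), we obtain the uniform bound $\bigl| \log (\lambda_{p,\mathfrak{a},\alpha}(x)/\lambda_{p,\mathfrak{a},\alpha}(y)) \bigr| \le L \cdot S \cdot \mathrm{diam}(X')$, independent of $p, \mathfrak{a}, \alpha$. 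Setting $D_1 := \exp(L \cdot S \cdot \mathrm{diam}(X'))$ finishes the argument.

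\textbf{Main obstacle.} The only genuinely delicate point is the contraction estimate for the intermediate maps $\xi_k$: one must be careful that a long composition of fiberwise-unipotent maps, even though each single step has eigenvalue in $(\gamma',\gamma)$, could a priori have large norm because the nilpotent (off-diagonal) parts can grow. This is exactly what Lemma \ref{polynome} is designed to rule out — it caps the off-diagonal growth by a polynomial factor $P(|\alpha|)$ times the eigenvalue, and polynomial growth is absorbed by the geometric decay $\gamma^{|\alpha|}$. So the proof reduces to invoking Lemma \ref{polynome} with care, summing a convergent series $\sum_j P(j)\gamma^j$, and keeping all constants uniform; there is no hidden difficulty beyond that once Lemma \ref{polynome} is granted. (If one prefers to avoid Lemma \ref{polynome} here, an alternative is to note that for $|\alpha|$ large the composition is genuinely contracting with a uniform rate, split $\alpha$ into blocks of fixed large length, and telescope — but the Lemma \ref{polynome} route is cleaner.)
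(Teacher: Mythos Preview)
Your proof is correct and follows essentially the same approach as the paper. The paper packages the two ingredients as separate sublemmas --- a single-step ratio estimate $1-A|x-y| \le \lambda_{p,\mathfrak{a},a}(x)/\lambda_{p,\mathfrak{a},a}(y) \le 1+A|x-y|$ (your Lipschitz bound on $\log\lambda$), and a contraction bound $\mathrm{diam}\,\psi_{p,\mathfrak{a}}^\alpha(X) \preceq \gamma^{|\alpha|/2}$ obtained from Lemma~\ref{polynome} (your $P(j)\gamma^j$ estimate with the polynomial absorbed into the half-exponent) --- and then telescopes the product exactly as you do; the only differences are cosmetic (ratios versus logarithms, $\gamma^{j/2}$ versus $P(j)\gamma^j$).
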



 \begin{Lemma} \label{gfsk} (Distortion w.r.t. $p$) For every $\eta>0$, there exists $\delta(\eta)>0$ and $D_2 =D_2(\eta)>1$ s.t.  for every $p_1, p_2 \in \overline{\mathcal{P}}$ and $\mathfrak{a} \in  \overrightarrow{ \mathcal A}$, it holds the following:
$$  || p_1-p_2 || \le \delta(\eta) \implies \forall \alpha \in \mathcal A^*, \text{ }  D_2^{-1}   e^{-\eta |\alpha| } < \frac{ \Lambda_{p_1,\mathfrak{a},\alpha} }{\Lambda_{p_2,\mathfrak{a},\alpha}} < D_2  e^{\eta |\alpha| }\, .$$
\end{Lemma}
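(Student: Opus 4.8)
\noindent The plan is to reduce the multiplicative quantity $\Lambda_{p,\mathfrak{a},\alpha}$ to a telescoping sum over the individual letters of $\alpha$ via the product formula (\ref{produitchinne}), and to control the $p$-dependence of the intermediate points appearing there by exploiting the unipotent structure through Lemma \ref{polynome}.

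First I would record the regularity of the single-step rate. Since $D\psi_{p,\mathfrak{a}}^{a}(x)=Df_{p,a\mathfrak{a}}(x)$ is inferior unipotent, $\lambda_{p,\mathfrak{a},a}(x)$ is simply the modulus of its common diagonal entry, so $(p,\mathfrak{a},a,x)\mapsto\lambda_{p,\mathfrak{a},a}(x)$ is jointly continuous on the compact set $\overline{\mathcal{P}}\times\overrightarrow{\mathcal A}\times\mathcal A\times X$ and takes values in $[\gamma',\gamma]$ by the bounds preceding (\ref{ineqpres}). Hence $g:=\log\lambda$ is bounded and uniformly continuous; in particular it admits a modulus of continuity $\omega$ in the variable $p$ (with $\omega(0^{+})=0$), uniform in $(\mathfrak{a},a,x)$. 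Moreover, since $r\ge 2$ makes $(p,\mathfrak{a},x)\mapsto D^{2}f_{p,\mathfrak{a}}(x)$ bounded on a compact domain, $g$ is Lipschitz in $x$ with a constant $L$ uniform in $(p,\mathfrak{a},a)$.

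The heart of the argument — and what I expect to be the only delicate point — is a uniform bound $\sup\|\partial_{p}\psi_{p,\mathfrak{a}}^{\beta}(x)\|\le C_{0}<\infty$, taken over all $p\in\overline{\mathcal P}$, $\mathfrak{a}\in\overrightarrow{\mathcal A}$, $x\in X$ and, crucially, all $\beta\in\mathcal A^{*}$ regardless of length. I would obtain it by differentiating the composition $f_{p,\beta_{-1}\mathfrak{a}}\circ\cdots\circ f_{p,\beta_{-m}\cdots\beta_{-1}\mathfrak{a}}$ in $p$: the chain rule expresses $\partial_{p}\psi_{p,\mathfrak{a}}^{\beta}(x)$ as a sum over $1\le j\le m=|\beta|$ of a term of the form (differential of a length-$(j-1)$ iterate $\psi^{\gamma}$) times a $\partial_{p}f$ factor; by Lemma \ref{polynome} together with (\ref{ineqpres}), every entry of that iterate's differential is $\preceq P(j)\gamma^{j}$, so its operator norm decays geometrically up to a polynomial factor, while $\|\partial_{p}f\|$ is uniformly bounded since $f$ is $C^{r}$. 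Summing the polynomial-times-geometric series yields $C_{0}$. This is where the unipotent structure is essential: without Lemma \ref{polynome} the norms $\|D\psi^{\beta}\|$ need not decay and the bound would fail. As a consequence, with $\mathfrak{a}_{k}:=\alpha_{|k-1}\mathfrak{a}$ and $w_{k}^{p}:=\psi_{p,\mathfrak{a}_{k+1}}^{\alpha_{-k-1}}\circ\cdots\circ\psi_{p,\mathfrak{a}_{n}}^{\alpha_{-n}}(x)$ the points occurring in (\ref{produitchinne}), the mean value inequality on the convex set $\overline{\mathcal P}$ gives $\|w_{k}^{p_{1}}-w_{k}^{p_{2}}\|\le C_{0}\|p_{1}-p_{2}\|$.

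To conclude, fix $\alpha$ with $|\alpha|=n\ge 1$ and $x\in X$. Applying the product formula (\ref{produitchinne}) and a one-step triangle inequality,
\[
\Bigl|\log\frac{\lambda_{p_{1},\mathfrak{a},\alpha}(x)}{\lambda_{p_{2},\mathfrak{a},\alpha}(x)}\Bigr|\le\sum_{k=1}^{n}\Bigl(\omega(\|p_{1}-p_{2}\|)+L\,\|w_{k}^{p_{1}}-w_{k}^{p_{2}}\|\Bigr)\le n\bigl(\omega(\|p_{1}-p_{2}\|)+LC_{0}\|p_{1}-p_{2}\|\bigr).
\]
Given $\eta>0$, I would pick $\delta(\eta)>0$ with $\omega(\delta(\eta))+LC_{0}\delta(\eta)<\eta$; then $\|p_{1}-p_{2}\|\le\delta(\eta)$ forces $e^{-\eta|\alpha|}\le\lambda_{p_{1},\mathfrak{a},\alpha}(x)/\lambda_{p_{2},\mathfrak{a},\alpha}(x)\le e^{\eta|\alpha|}$ for every $x\in X$. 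Choosing $x_{i}\in X$ realizing $\Lambda_{p_{i},\mathfrak{a},\alpha}=\lambda_{p_{i},\mathfrak{a},\alpha}(x_{i})$ and using $\lambda_{p_{2},\mathfrak{a},\alpha}(x_{2})\ge\lambda_{p_{2},\mathfrak{a},\alpha}(x_{1})$ (and symmetrically at $p_{1}$) transfers the same two-sided bound to $\Lambda_{p_{1},\mathfrak{a},\alpha}/\Lambda_{p_{2},\mathfrak{a},\alpha}$; since the empty word gives ratio exactly $1$, any $D_{2}=D_{2}(\eta)>1$ makes all the inequalities strict, as in the statement. (Alternatively one could pass from the pointwise estimate to the maximum by invoking Lemma \ref{bdsk}, which absorbs the constant $D_{2}$; the direct comparison above avoids even that.)
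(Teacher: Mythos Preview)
Your proof is correct and takes a genuinely different route from the paper's. The paper argues via Sublemmas~\ref{sub11} and~\ref{sub12}: it fixes a threshold $n_0$ so that for $|\alpha|>n_0$ the images $\psi_{p,\mathfrak a}^{\alpha}(X)$ have small diameter, then uses uniform continuity in $p$ of the single-step eigenvalue to control each factor of the product~(\ref{produitchinne}) beyond depth $n_0$ within $e^{\pm\eta}$, while the first $n_0$ factors are absorbed into the constant $D_2$. Your argument instead proves outright the uniform bound $\sup_{\beta,p,\mathfrak a,x}\|\partial_p\psi_{p,\mathfrak a}^{\beta}(x)\|\le C_0$ by differentiating the composition and summing the resulting polynomial-times-geometric series (this is where you invoke Lemma~\ref{polynome}); the mean value inequality then controls $\|w_k^{p_1}-w_k^{p_2}\|$ directly, and the telescoping estimate on $\log\lambda$ follows with no threshold. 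What your approach buys is a cleaner and slightly sharper conclusion (any $D_2>1$ works), and it makes explicit the key intermediate fact---uniform $C^1$-dependence of arbitrary iterates on $p$---that the paper's argument uses only implicitly when it passes from $x,y\in\psi_{p,\mathfrak a}^{\alpha}(X)$ at a single $p$ to $x\in\psi_{p_1,\mathfrak a}^{\alpha}(X)$, $y\in\psi_{p_2,\mathfrak a}^{\alpha}(X)$. The paper's approach, on the other hand, recycles the two sublemmas already set up for Lemma~\ref{bdsk} and so is shorter on the page.
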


\begin{Lemma} \label{fisk} (Bounded distortion w.r.t. $\mathfrak{a}$)
There exists $D_3>1$ s.t. for any  $p \in \overline{\mathcal{P}}$, $\mathfrak{a}, \mathfrak{a}' \in  \overrightarrow{ \mathcal A}$  and  $\alpha \in \mathcal A^*$, it holds:  
\begin{equation}
 1/D_3 <   \frac{ \Lambda_{p,\mathfrak{a},\alpha} }{ \Lambda_{p,\mathfrak{a}',\alpha}}<  D_3 \, . \nonumber
 \end{equation} 
 \end{Lemma}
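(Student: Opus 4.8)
\emph{Proof proposal for Lemma \ref{fisk}.} The plan is to compare $\lambda_{p,\mathfrak{a},\alpha}(x)$ and $\lambda_{p,\mathfrak{a}',\alpha}(x)$ for the \emph{same} $x\in X$, term by term in the factorization \eqref{produitchinne}, and then pass to the maxima over $x$. Fix $p\in\overline{\mathcal{P}}$, $\alpha=(\alpha_{-n},\dots,\alpha_{-1})\in\mathcal{A}^*$, $\mathfrak{a},\mathfrak{a}'\in\overrightarrow{\mathcal{A}}$ and $x\in X$. Writing $\mathfrak{a}_k:=\alpha_{|k-1}\mathfrak{a}$, $y_k:=\psi_{p,\mathfrak{a}_{k+1}}^{\alpha_{-k-1}}\circ\cdots\circ\psi_{p,\mathfrak{a}_n}^{\alpha_{-n}}(x)$, and $\mathfrak{a}'_k,y'_k$ the analogues for $\mathfrak{a}'$, formula \eqref{produitchinne} gives $\log\lambda_{p,\mathfrak{a},\alpha}(x)-\log\lambda_{p,\mathfrak{a}',\alpha}(x)=\sum_{k=1}^{n}\big(\log\lambda_{p,\mathfrak{a}_k,\alpha_{-k}}(y_k)-\log\lambda_{p,\mathfrak{a}'_k,\alpha_{-k}}(y'_k)\big)$. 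Since every single-letter rate lies in $[\gamma',\gamma]$ by \eqref{ineqpres}, the logarithm is $1/\gamma'$-Lipschitz there, so it suffices to bound $\sum_{k}\big|\lambda_{p,\mathfrak{a}_k,\alpha_{-k}}(y_k)-\lambda_{p,\mathfrak{a}'_k,\alpha_{-k}}(y'_k)\big|$ by a constant independent of $n,\alpha,p,x,\mathfrak{a},\mathfrak{a}'$.

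For the single-letter rate, $\lambda_{p,\mathfrak{b},a}(y)$ is the modulus of the common diagonal entry of $Df_{p,a\mathfrak{b}}(y)=D(\psi_{p,\mathfrak{b}}^a)(y)$ by \eqref{fpsi}. The Hölder assumption on $\mathfrak{a}\mapsto Df_{\mathfrak{a}}$ in the definition of a $C^r$-skew-product, together with the uniform bound on $\mathfrak{a}\mapsto D^2f_{\mathfrak{a}}$ (continuous on the compact set $\overrightarrow{\mathcal{A}}$, hence bounded), yields $C>0$ and $\theta>0$ with $\big|\lambda_{p,\mathfrak{b},a}(y)-\lambda_{p,\mathfrak{b}',a}(y')\big|\le C\big(d(\mathfrak{b},\mathfrak{b}')^{\theta}+\|y-y'\|\big)$, uniformly in $p$ and $a$. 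Here $\mathfrak{a}_k$ and $\mathfrak{a}'_k$ share their first $k-1$ letters, so $d(\mathfrak{a}_k,\mathfrak{a}'_k)^{\theta}\le D^{(k-1)\theta}$ with $D^{\theta}<1$.

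It remains to control $\|y_k-y'_k\|$. Telescoping along the composition, one gets $\|y_k-y'_k\|\le\sum_{j=k+1}^{n}\mathrm{Lip}\big(\psi_{p,\mathfrak{a}_{k+1}}^{(\alpha_{-j+1},\dots,\alpha_{-k-1})}\big)\cdot\big\|\psi_{p,\mathfrak{a}_j}^{\alpha_{-j}}-\psi_{p,\mathfrak{a}'_j}^{\alpha_{-j}}\big\|_{C^0}$. Since $\psi_{p,\mathfrak{a}_j}^{\alpha_{-j}}=f_{p,\,\alpha_{-j}\alpha_{-(j-1)}\cdots\alpha_{-1}\mathfrak{a}}$ and the primed version shares its first $j$ letters, the Hölder assumption on $\mathfrak{a}\mapsto f_{\mathfrak{a}}$ gives $\big\|\psi_{p,\mathfrak{a}_j}^{\alpha_{-j}}-\psi_{p,\mathfrak{a}'_j}^{\alpha_{-j}}\big\|_{C^0}\le CD^{j\theta}$; and Lemma \ref{polynome} (off-diagonal entries $\le P(|\beta|)\gamma^{|\beta|}$, diagonal $\le\gamma^{|\beta|}$) gives $\mathrm{Lip}(\psi_{p,\mathfrak{b}}^{\beta})\le N\,P(|\beta|)\gamma^{|\beta|}$ for every $\beta\in\mathcal{A}^*$. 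Hence $\|y_k-y'_k\|\le CN\sum_{\ell\ge1}P(\ell-1)\gamma^{\ell-1}D^{(k+\ell-1)\theta}\le C_1D^{k\theta}$, the series $\sum_{\ell}P(\ell-1)\gamma^{\ell-1}D^{(\ell-1)\theta}$ converging because $\gamma<1$.

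Combining the last two paragraphs, the $k$-th term is $\le C\big(D^{(k-1)\theta}+C_1D^{k\theta}\big)$, so $\big|\log\lambda_{p,\mathfrak{a},\alpha}(x)-\log\lambda_{p,\mathfrak{a}',\alpha}(x)\big|\le\tfrac{C}{\gamma'}\sum_{k\ge1}\big(D^{(k-1)\theta}+C_1D^{k\theta}\big)=:\log D_3<\infty$, uniformly in all data. Choosing $x$ realizing $\Lambda_{p,\mathfrak{a},\alpha}=\lambda_{p,\mathfrak{a},\alpha}(x)$ gives $\Lambda_{p,\mathfrak{a},\alpha}\le D_3\,\lambda_{p,\mathfrak{a}',\alpha}(x)\le D_3\,\Lambda_{p,\mathfrak{a}',\alpha}$, and exchanging the roles of $\mathfrak{a}$ and $\mathfrak{a}'$ gives the reverse bound, which proves the lemma. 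The main obstacle is the estimate on $\|y_k-y'_k\|$: since an individual map $\psi_{p,\mathfrak{b}}^a$ need not be a contraction (its differential is unipotent but may have large off-diagonal part), one must invoke Lemma \ref{polynome} to extract an effective geometric contraction rate for long blocks and check that the polynomial factors remain summable against the geometric term.
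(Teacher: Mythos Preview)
Your argument is correct. Both proofs factor $\lambda_{p,\mathfrak{a},\alpha}$ via \eqref{produitchinne} and exploit the H\"older dependence of $f_{p,\mathfrak{a}}$ and $Df_{p,\mathfrak{a}}$ on $\mathfrak{a}$ to get summable terms, but the execution differs. The paper compares at the special points $\pi_{p,\alpha\mathfrak{a}}(\beta)$ and $\pi_{p,\alpha\mathfrak{a}'}(\beta)$ for a fixed $\beta\in\overleftarrow{\mathcal A}$: since the intermediate points in the chain rule are then again limit points, the $k$-th factor is exactly of the form handled by the H\"older estimate in the proof of Lemma~\ref{hol}, and one finishes by invoking Lemma~\ref{bdsk} to pass from these particular points to the supremum $\Lambda$. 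You instead compare at a common $x\in X$, which forces you to control the orbit drift $\|y_k-y'_k\|$ explicitly; you do this by telescoping and using Lemma~\ref{polynome} to get an effective Lipschitz bound $\mathrm{Lip}(\psi_{p,\mathfrak{b}}^{\beta})\preceq P(|\beta|)\gamma^{|\beta|}$ for long blocks. Your route is more self-contained (it avoids both Lemma~\ref{hol} and Lemma~\ref{bdsk}) at the cost of redoing a piece of the H\"older estimate from Lemma~\ref{hol} by hand; the paper's route is shorter because it recycles those lemmas. Minor remark: the sequences $\alpha_{-k}\mathfrak{a}_k=\alpha_{|k}\mathfrak{a}$ and $\alpha_{-k}\mathfrak{a}'_k=\alpha_{|k}\mathfrak{a}'$ actually share $k$ letters (not $k-1$), so your bound $D^{(k-1)\theta}$ could be improved to $D^{k\theta}$, but this is harmless.
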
 

 \begin{Lemma}  (Distortion w.r.t. $\vartheta$-perturbations) \label{poisson}
 For every  $\epsilon’>1$, there exists  $D_4>1$ s.t. for  every family $\mathbb{F}$ of $\vartheta$-$\mathrm{U}$-perturbations with $\vartheta$  small enough, we have for every $t \in \mathcal{T}$, $\mathfrak{a} \in   \overrightarrow{ \mathcal A}$, $p \in \overline{\mathcal{P}}$ and $\alpha \in \mathcal{A}^*$:  $$ \Lambda_{p,\mathfrak{a},\alpha}^{\epsilon’}/D_4 <    \tilde{\Lambda}_{t,p,\mathfrak{a},\alpha}     <   D_4  \Lambda_{p,\mathfrak{a},\alpha}^{1/\epsilon’}$$
  where $ \tilde{\Lambda}_{t,p,\mathfrak{a},\alpha}  $ is the maximum  among $x \in X$  of the absolute value $ \tilde{ \lambda}_{t,p,\mathfrak{a},\alpha}(x)  $ of the unique eigenvalue  of the differential $D \psi^\alpha_{t,p,\mathfrak{a}}(x)$. 
 \end{Lemma}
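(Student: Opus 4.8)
The plan is to reduce the whole estimate to a single bound on $|\log\tilde\lambda_{t,p,\mathfrak{a},\alpha}(x)-\log\lambda_{p,\mathfrak{a},\alpha}(x)|$ that is \emph{linear} in $|\alpha|$, and then to convert that into a power of $\Lambda_{p,\mathfrak{a},\alpha}$ using the a priori bounds of Eq. (\ref{ineqpres}). First I would record the single-step control. Since each $(\tilde F_{t,p})_p$ is a $\vartheta$-perturbation we have $\|D\tilde f_{t,p,\mathfrak{b}}(y)-Df_{p,\mathfrak{b}}(y)\|<\vartheta$ for all $t,p,y,\mathfrak{b}$; both differentials are inferior unipotent by $\mathrm{(U)}$, so their common diagonal entries — whose moduli are the single-step rates $\tilde\lambda_{t,p,\mathfrak{b},a}(y)$ and $\lambda_{p,\mathfrak{b},a}(y)$ — differ by at most $\vartheta$. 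As these rates stay above $\gamma'$, hence above $\gamma'/2$ after perturbation when $\vartheta$ is small, one gets $|\log\tilde\lambda_{t,p,\mathfrak{b},a}(y)-\log\lambda_{p,\mathfrak{b},a}(y)|\le C_0\vartheta$ with $C_0$ depending only on $(F_p)_p$. I would also note that each $y\mapsto\log\lambda_{p,\mathfrak{b},a}(y)$ is Lipschitz, uniformly in $(p,\mathfrak{b},a)$, because the fibered maps have a uniformly bounded $C^2$-norm (and the same is then true for all $\vartheta$-perturbations with $\vartheta<1$).

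Next I would exploit the product formula Eq. (\ref{produitchinne}): both $\lambda_{p,\mathfrak{a},\alpha}(x)$ and $\tilde\lambda_{t,p,\mathfrak{a},\alpha}(x)$ are products of $|\alpha|$ single-step rates, the $k$-th factors being evaluated at the points $z_k$ and $\tilde z_k$ obtained by applying the length-$(|\alpha|-k)$ unperturbed, resp. perturbed, fibered composition to $x$. The perturbed family still satisfies $\mathrm{(U)}$ with the same uniform $C^2$-bounds, so Lemma \ref{polynome} applies to it as well and shows that the Lipschitz constant of a composition of $j$ perturbed fibered maps is $\preceq(1+P(j))(\gamma+O(\vartheta))^j=:\tilde L_j$, whence $\sum_j\tilde L_j<\infty$ uniformly for $\vartheta$ small. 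A telescoping estimate, swapping one unperturbed map for its perturbation at a time and propagating the resulting $O(\vartheta)$ error through the remaining contracting composition, then gives $|\tilde z_k-z_k|\le\vartheta\sum_j\tilde L_j=O(\vartheta)$, uniformly in $k$ and in $|\alpha|$. Combining this with the two facts of the previous paragraph, factor by factor, yields the key bound $|\log\tilde\lambda_{t,p,\mathfrak{a},\alpha}(x)-\log\lambda_{p,\mathfrak{a},\alpha}(x)|\le C_1\vartheta\,|\alpha|$ for all $t,p,\mathfrak{a},\alpha,x$, with $C_1$ independent of the perturbation.

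To finish I would take the maximum over $x\in X$ in the key bound — and, for the lower inequality, evaluate both sides at a point realizing $\Lambda_{p,\mathfrak{a},\alpha}$ — obtaining $\Lambda_{p,\mathfrak{a},\alpha}\,e^{-C_1\vartheta|\alpha|}\le\tilde\Lambda_{t,p,\mathfrak{a},\alpha}\le\Lambda_{p,\mathfrak{a},\alpha}\,e^{C_1\vartheta|\alpha|}$. By Eq. (\ref{ineqpres}) one has $|\alpha|\le|\log\Lambda_{p,\mathfrak{a},\alpha}|/|\log\gamma|$, so $e^{\pm C_1\vartheta|\alpha|}\le\Lambda_{p,\mathfrak{a},\alpha}^{\mp\beta}$ with $\beta:=C_1\vartheta/|\log\gamma|\to0$ as $\vartheta\to0$; hence $\Lambda_{p,\mathfrak{a},\alpha}^{1+\beta}\le\tilde\Lambda_{t,p,\mathfrak{a},\alpha}\le\Lambda_{p,\mathfrak{a},\alpha}^{1-\beta}$. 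Given $\epsilon'>1$, it then suffices to choose $\vartheta$ small enough that $\beta\le\min(\epsilon'-1,\,1-1/\epsilon')$; since $\Lambda_{p,\mathfrak{a},\alpha}\le\gamma^{|\alpha|}\le1$ this yields $\tilde\Lambda_{t,p,\mathfrak{a},\alpha}\le\Lambda_{p,\mathfrak{a},\alpha}^{1/\epsilon'}$ and $\tilde\Lambda_{t,p,\mathfrak{a},\alpha}\ge\Lambda_{p,\mathfrak{a},\alpha}^{\epsilon'}$, so the stated inequalities hold with any fixed $D_4>1$ (the strict inequalities coming from the $D_4$ slack).

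I expect the main obstacle to be precisely the key bound of the second paragraph: one must ensure that the logarithmic discrepancy grows at most \emph{linearly}, not quadratically, in $|\alpha|$. A careless telescoping of the orbit-shift errors $|\tilde z_k-z_k|$ would insert an extra factor of $|\alpha|$ and destroy the conversion into a power of $\Lambda_{p,\mathfrak{a},\alpha}$. This is exactly where Lemma \ref{polynome} is indispensable: it forces the Lipschitz constants of long fibered compositions to be summable, so that each $|\tilde z_k-z_k|$ is $O(\vartheta)$ uniformly in both $k$ and $|\alpha|$, and the $|\alpha|$ in the final bound comes only from summing $|\alpha|$ uniformly small contributions.
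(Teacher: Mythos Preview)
Your argument is correct and follows essentially the same route as the paper: a factor-by-factor comparison via the product formula Eq.~(\ref{produitchinne}), with the key technical step being the orbit comparison $|\tilde z_k-z_k|=O(\vartheta)$ (which you make explicit via Lemma~\ref{polynome} and telescoping, whereas the paper leaves it implicit in the phrase ``up to taking $\vartheta$-perturbations for small $\vartheta$''). The only cosmetic difference is that the paper, mimicking the proof of Lemma~\ref{gfsk}, splits the product into a head of bounded length $n_0$ absorbed into $D_4$ and a tail where Sublemma~\ref{sub12} makes images small enough for single-step power bounds, while you obtain a uniform linear-in-$|\alpha|$ logarithmic bound and convert it to a power via $\Lambda_{p,\mathfrak{a},\alpha}<\gamma^{|\alpha|}$.
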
 


    \subsection{Choice of a probability measure $\mu$}  \label{mesurebuild}

We will first need the following result of Bowen (\cite{b5} thm 1.4 P7 and its proof P19) about the existence of a Gibbs measure. We recall that $\sigma$ is a  full shift. We state the result in this case but it remains true for subshifts of finite type topologically mixing. We suppose that a parameter $p_0$ has been fixed (the precise choice will be made in the next subsection). We fix an arbitrary $\mathfrak{a}_0  \in \overrightarrow{ \mathcal{A} }$.  

    \begin{theoremB} \label{gibbs} 
   Let $\phi: \overleftrightarrow{ \mathcal{A} } \rightarrow \mathbb{R}$ be a H\"older map with positive exponent (the set $\overleftrightarrow{ \mathcal{A}}$ being endowed with the distance $d_\infty$). Then there exists a unique $\sigma$-invariant measure $\mu$ on $ \overleftrightarrow{ \mathcal{A}}$ s.t. for every $A \in  \overleftrightarrow{ \mathcal{A}}$, it holds:    
   $$   \mu [A_{|  n}]   \asymp  \mathrm{exp} \big(     -    \Pi  n  + \sum_{k=0}^{n-1} \phi (\sigma^k (A)) \big) $$
\noindent where $\Pi = \Pi(\phi) = \mathrm{lim}_{ n \rightarrow \infty} \frac{1}{n} \mathrm{log} Z_n(\phi)$, with
  $$Z_n(\phi):= \sum_{x  \in \mathcal{A}^n } \mathrm{exp} (S_x) \text{  and }  S_x= \mathrm{sup} \{  \sum_{k=0}^{n-1} \phi (\sigma^k (y))  : y \in [ x ]  \} \, .$$
 \end{theoremB}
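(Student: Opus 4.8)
\noindent The strategy is the classical Ruelle--Perron--Frobenius (transfer operator) construction of Gibbs states for H\"older potentials on topologically mixing shifts of finite type; the plan is to run it, referring to \cite{b5} for the computational details. The first move is to reduce to a one-sided shift: by Sinai's cohomology lemma the H\"older potential $\phi$ on $\overleftrightarrow{\mathcal A}$ is cohomologous, via a H\"older transfer function $u$, to a potential $\phi^{+}$ depending only on the coordinates of non-negative index; since passing from $\phi$ to a H\"older-cohomologous potential changes each Birkhoff sum $\sum_{k=0}^{n-1}\phi(\sigma^{k}A)$ only by the uniformly bounded quantity $u(A)-u(\sigma^{n}A)$, it neither affects $\sigma$-invariance of a measure nor the truth of the two-sided Gibbs estimate in the statement, so it is enough to build $\mu$ for $\phi^{+}$ on the one-sided full shift. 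The engine of the argument is then the single elementary estimate afforded by H\"older continuity, the \emph{bounded distortion} bound: there is $C_{\phi}>0$ such that whenever $A,A'$ agree on their first $n$ symbols one has $\bigl|\sum_{k=0}^{n-1}\bigl(\phi(\sigma^{k}A)-\phi(\sigma^{k}A')\bigr)\bigr|\le C_{\phi}$, which follows at once by summing the geometric series of H\"older oscillations.

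\noindent Next I would carry out the Perron--Frobenius step for the transfer operator $\mathcal L$ acting on $C^{0}$ by $(\mathcal L g)(x)=\sum_{\sigma(y)=x}e^{\phi(y)}g(y)$. Applying the Schauder--Tychonoff fixed point theorem to the continuous self-map $\rho\mapsto \mathcal L^{*}\rho/(\mathcal L^{*}\rho)(\mathbf 1)$ of the weak-$*$ compact convex set of Borel probability measures yields an eigenmeasure $\nu$ with $\mathcal L^{*}\nu=\lambda\nu$ for some $\lambda>0$; comparing $\mathcal L^{n}\mathbf 1$ with the partition function $Z_{n}(\phi)$ by means of the bounded distortion bound identifies $\lambda$ with $\exp\bigl(\lim_{n}\tfrac1n\log Z_{n}(\phi)\bigr)=e^{\Pi}$. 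A strictly positive eigenfunction $h$ with $\mathcal L h=\lambda h$ is then obtained as an Arzel\`a--Ascoli limit of a subsequence of $\lambda^{-n}\mathcal L^{n}\mathbf 1$, the equicontinuity and the uniform two-sided bounds for this family being, once more, direct consequences of bounded distortion.

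\noindent With $(\lambda,h,\nu)$ in hand I would normalise $\int h\,d\nu=1$ and set $\mu:=h\,\nu$. Invariance is the one-line identity $\mathcal L\bigl((g\circ\sigma)\,h\bigr)=\lambda\,g\,h$, which upon integration against $\nu$ and use of $\mathcal L^{*}\nu=\lambda\nu$ gives $\int g\circ\sigma\,d\mu=\int g\,d\mu$ for all continuous $g$. The Gibbs estimate then comes from iterating $\mathcal L^{*}\nu=\lambda\nu$ and using that $h$ is bounded away from $0$ and $\infty$: this gives $\mu[A_{|n}]\asymp\lambda^{-n}\exp\bigl(\sum_{k=0}^{n-1}\phi(\sigma^{k}A)\bigr)$, and replacing the Birkhoff sum by its supremum $S_{A_{|n}}$ over the cylinder costs only the bounded factor $e^{\pm C_{\phi}}$; this is precisely the asserted estimate with $\Pi=\log\lambda$. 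Summing it over the finitely many cylinders of length $n$ and using $\mu(\overleftrightarrow{\mathcal A})=1$ recovers $\Pi=\lim_{n}\tfrac1n\log Z_{n}(\phi)$, in agreement with the normalisation stated.

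\noindent Finally, for uniqueness I would invoke a spectral gap for $\mathcal L$ on the Banach space of H\"older functions --- established either through a Lasota--Yorke (Doeblin--Fortet) inequality yielding quasi-compactness, or, more directly, through Birkhoff's theorem that $\mathcal L$ strictly contracts the Hilbert projective metric on the cone of positive H\"older functions, the cone having finite projective diameter precisely because $\sigma$ is a topologically mixing full shift. The gap forces the eigendata $(\lambda,h,\nu)$ to be unique up to the already-fixed normalisation, hence $\mu$ is unique; equivalently, any two $\sigma$-invariant measures obeying the Gibbs bound are mutually absolutely continuous with densities bounded above and below and are both ergodic by mixing, so they coincide. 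The only genuinely delicate part of this program is the Perron--Frobenius / projective-contraction step --- the existence of the positive eigenfunction and the uniformity of the two-sided Gibbs bounds --- while the rest reduces to the one-line bounded distortion estimate together with routine bookkeeping.
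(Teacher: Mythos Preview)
Your outline is a correct sketch of the classical Ruelle--Perron--Frobenius construction of Gibbs states, and there is no gap in the strategy. Note, however, that the paper does not give its own proof of this statement: Theorem~E is quoted as a result of Bowen, with an explicit reference to \cite{b5} (Theorem~1.4, p.~7 and its proof p.~19), and is used as a black box to produce the measure $\mu$ in Subsection~\ref{mesurebuild}. Your proposal is thus not being compared to an argument in the paper but rather reproduces, in outline, the very proof the paper is citing.
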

   
\noindent
Writing $A = \alpha \mathfrak{a} \in \overleftrightarrow{ \mathcal{A}} $ as the concatenation of $\alpha  \in \overleftarrow{ \mathcal{A}} $ and $\mathfrak{a}  \in \overrightarrow{ \mathcal{A}} $, we can apply the previous result with the map 
\begin{equation} 
\phi : A  \in  \overleftrightarrow{ \mathcal{A}}       \mapsto  \Delta(p_0) \cdot   \mathrm{log} \lambda_{p_0, \mathfrak{a}, \alpha_{-1}}  \big( \pi_{p_0, \alpha_{-1} \mathfrak{a}   }(\sigma (\alpha))  \big)  \,  . \nonumber   
\end{equation}

 \noindent We show in the Appendix:
 \begin{Lemma}   \label{hol}
 The map $\phi$ is H\"older with positive exponent.
 \end{Lemma}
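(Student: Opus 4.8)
The plan is to prove that the map
$$\phi : A = \alpha \mathfrak{a} \in \overleftrightarrow{ \mathcal{A}} \mapsto \Delta(p_0) \cdot \mathrm{log} \, \lambda_{p_0, \mathfrak{a}, \alpha_{-1}}\big( \pi_{p_0, \alpha_{-1}\mathfrak{a}}(\sigma(\alpha)) \big)$$
is H\"older by exhibiting it as a composition and product of maps each of which is either H\"older or Lipschitz on the relevant compact spaces, and then using the fact that $\lambda_{p_0,\mathfrak{a},\alpha_{-1}}$ stays uniformly bounded away from $0$ and $\infty$ (by the inequality $\gamma' < \lambda_{p,\mathfrak{a},a}(x) < \gamma$), so that $\log$ is Lipschitz on the range. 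Since $\Delta(p_0)$ is a fixed constant, it suffices to show that $A \mapsto \log \lambda_{p_0,\mathfrak{a},\alpha_{-1}}\big(\pi_{p_0,\alpha_{-1}\mathfrak{a}}(\sigma(\alpha))\big)$ is H\"older with positive exponent for the distance $d_\infty$ on $\overleftrightarrow{\mathcal{A}}$.

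First I would isolate the three ``inputs'' that depend on $A$: the letter $\alpha_{-1} \in \mathcal{A}$, the forward sequence $\mathfrak{a} \in \overrightarrow{\mathcal{A}}$, and the point $x := \pi_{p_0,\alpha_{-1}\mathfrak{a}}(\sigma(\alpha)) \in X$. The letter $\alpha_{-1}$ is locally constant (it is determined once $d_\infty(A,B) < D$, i.e. once $A$ and $B$ agree on coordinate $-1$), so it contributes no genuine continuity issue: one may split into finitely many cases according to the value of $\alpha_{-1}$. For the forward sequence, the map $\mathfrak{a} \mapsto Df_{p_0,a\mathfrak{a}}$ is H\"older with positive exponent by the definition of a $C^r$-skew-product (second displayed condition in Definition of $C^r$-skew-product), hence so is $\mathfrak{a} \mapsto \lambda_{p_0,\mathfrak{a},a}(x)$ uniformly in $x$, since the unique eigenvalue of an inferior unipotent matrix is a smooth (indeed affine, it is just the common diagonal entry) function of the matrix entries, and taking absolute values is Lipschitz. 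For the point $x$: the map $x \mapsto \lambda_{p_0,\mathfrak{a},a}(x) = $ the diagonal entry of $Df_{p_0,a\mathfrak{a}}(x)$ is $C^1$ on $X'$ (the skew-product is $C^r$ with $r \ge 2$, so $Df$ is at least $C^1$), hence Lipschitz on the compact set $X$, uniformly in $(\mathfrak{a},a)$ by the continuity/compactness arguments already used in the text.

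The remaining point is that the dependence $A \mapsto x = \pi_{p_0,\alpha_{-1}\mathfrak{a}}(\sigma(\alpha))$ is itself H\"older. Here I would argue: $\mathfrak{a} \mapsto \pi_{p_0,\mathfrak{a}'}$ (as a $C^0$-map $\overleftarrow{\mathcal{A}} \to X$) is H\"older, because $\pi_{p_0,\mathfrak{a}'}(\beta)$ is the uniformly-exponentially-fast limit of $\psi^{\beta_{|n}}_{p_0,\mathfrak{a}'}(0)$, each term of which depends H\"older-continuously on $\mathfrak{a}'$ (composition of the H\"older-in-$\mathfrak{a}'$ maps $f_{p_0,\cdot}$, using Lemma \ref{polynome} / Eq. (\ref{ineqpres}) to control how the contraction beats the possible growth of H\"older constants under composition — this is the standard telescoping estimate); and separately $\sigma(\alpha)$ depends on $\alpha$ in a way that, for $d_\infty(A,B) = D^q$, agrees on the first $q-1$ backward coordinates, so $\|\pi_{p_0,\mathfrak{a}'}(\sigma(\alpha)) - \pi_{p_0,\mathfrak{a}'}(\sigma(\beta))\| \preceq \gamma^{q-1}$ by the uniform contraction. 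Combining, $\|x(A) - x(B)\| \preceq d_\infty(A,B)^{\theta}$ for some $\theta > 0$. Then chaining all the Lipschitz/H\"older bounds and invoking $\log$ being Lipschitz on $[\gamma',\gamma]$ gives that $\phi$ is H\"older with positive exponent. The main obstacle — and the only step requiring real care rather than bookkeeping — is the telescoping argument showing $\mathfrak{a}' \mapsto \pi_{p_0,\mathfrak{a}'}(\beta)$ is H\"older: one must check that the (a priori $n$-dependent) H\"older constant of $\mathfrak{a}' \mapsto \psi^{\beta_{|n}}_{p_0,\mathfrak{a}'}(0)$ grows at most sub-exponentially (polynomially, via Lemma \ref{polynome}) in $n$ while the tail of the series contributes $O(\gamma^n)$, so that the sum over $n$ converges and yields a genuine H\"older modulus; this is exactly the kind of estimate that makes skew-product limit sets depend H\"older-continuously on the base point, and it is where the hypotheses on the regularity of $\mathfrak{a} \mapsto f_{\mathfrak{a}}, Df_{\mathfrak{a}}$ are used.
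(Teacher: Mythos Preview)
Your proposal is correct and follows essentially the same route as the paper's proof: reduce to H\"older continuity of $(\alpha,\mathfrak{a}) \mapsto \pi_{p_0,\mathfrak{a}}(\alpha)$ after noting that $\log$ is Lipschitz on $[\gamma',\gamma]$ and that $\mathfrak{a} \mapsto Df_{p_0,\mathfrak{a}}$ is H\"older by definition of a $C^r$-skew-product, then handle the $\alpha$- and $\mathfrak{a}$-dependences separately, the latter via the uniform (exponential) convergence of $\mathfrak{a} \mapsto \psi^{\alpha_{|n}}_{p_0,\mathfrak{a}}(0)$. Your explicit mention of the telescoping estimate and of the polynomial correction from Lemma~\ref{polynome} is a welcome elaboration of what the paper compresses into the phrase ``using again both the hyperbolicity and that $\mathfrak{a} \mapsto f_{p,\mathfrak{a}}$ and $\mathfrak{a} \mapsto Df_{p,\mathfrak{a}}$ are H\"older''.
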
  
 \noindent 
Using Lemma \ref{bdsk} and \ref{fisk}, we note that $\Pi = \Pi(\phi)$ coincides with $ \Pi_{p_0}(\Delta(p_0))$ and thus  vanishes by definition of $\Delta(p_0) $ (see Proposition \ref{pression}). Moreover,  by Eq. $(\ref{produitchinne})$,  
for any $A \in \overleftrightarrow{ \mathcal{A}} $ the sum $\sum_{k=0}^{n-1} \phi (\sigma^k(A))$ is equal to:
\begin{equation} \Delta(p_0) \cdot  \mathrm{log} \prod_{k=1}^n  \lambda_{p_0, \mathfrak{a}_k, \alpha_{-k}}  \big( \pi_{p_0, \mathfrak{a}_{k+1} }(\sigma^k (\alpha) ) \big)  = \Delta(p_0) \cdot     \mathrm{log} \lambda_{ p_0,  \mathfrak{a}, \alpha_{|  n}} \big( \pi_{p_0,  \alpha_{|  n} \mathfrak{a}  }(\sigma^n (\alpha) ) \big)  
\nonumber   
\end{equation}   \noindent with $ \mathfrak{a}_k:=  \alpha_{|  k-1} \mathfrak{a}$.  By Theorem E, this gives us a $\sigma$-invariant measure $\mu$ on $ \overleftrightarrow{ \mathcal{A}}    $ such that for every $A = \alpha \mathfrak{a} 
 \in  \overleftrightarrow{ \mathcal{A}}    $, we have:
$$\mu [A_{|  n}] \asymp               \lambda_{ p_0,  \mathfrak{a}, \alpha_{|  n}}^{\Delta(p_0)} \big( \pi_{p_0, \alpha_{|  n} \mathfrak{a}   }(\sigma^n( \alpha))  \big) \text{ when } n \rightarrow + \infty \, .$$ 
Using successively Lemmas \ref{bdsk} and \ref{fisk}, for any $A = \alpha \mathfrak{a} \in  \overleftrightarrow{ \mathcal{A}}  $, it holds:
 \begin{equation}   \label{helppppp}
 \mu [A_{|  n}]   \asymp   \Lambda_{p_0,\mathfrak{a} , \alpha_{|  n} }  ^{\Delta(p_0)}  \asymp   \Lambda_{p_0, \mathfrak{a}_0   , \alpha_{|  n} }  ^{\Delta(p_0)} \text{ when } n \rightarrow + \infty \, .
   \end{equation}   
  We then  define a $\sigma$-invariant probability measure on $\overleftarrow{ \mathcal{A}}$, still  denoted $\mu$,  by giving  to each cylinder in $\overleftarrow{ \mathcal{A}}  $ the same measure than the corresponding one in $\overleftrightarrow{ \mathcal{A}}$. Then:

\begin{equation}   \label{help}
 \mu [\rho]   \asymp    \Lambda_{p_0,  \mathfrak{a}_0   , \rho }^{\Delta(p_0)} \text{ when }  \rho \in \mathcal{A}^n      \text{ and  }  n \rightarrow + \infty \, .
   \end{equation}  

\begin{Remark} 
We will not need the $\sigma$-invariance property of $\mu$ in the following but only the estimation from Eq. $(\ref{help})$. 
\end{Remark}

   \subsection{Proof of Theorem C }

The strategy is the same as for the proof of Theorem D. Let us consider  $p_0 \in \mathcal{P} $ and $\mathfrak{a} \in   \overrightarrow{ \mathcal A}$.  We have $\Delta(p_0) >N+\epsilon$, where $\epsilon:= \frac{1}{2}( \min_{p \in \overline{\mathcal{P}} }\Delta(p)  -N) >0$. To prove the result, we show that there exists $\delta>0$ s.t.  the $d$-dimensional ball  $\mathcal{B}$ of center $p_0$ of radius $\delta$ is included in $\mathcal{P}$ with  $\mathrm{Leb}_{N}(K_{p,\mathfrak{a}}) >0$ and $\mathrm{Leb}_{N}( \tilde{K}_{t,p,\mathfrak{a}}) >0$ for $\mathrm{Leb}_d$ a.e.  $p \in \mathcal{B}$ and  $\mathrm{Leb}_{\tau} \text{  a.e. } t \in \mathcal{T}$, for every family $\mathbb{F}$ of $\vartheta$-$\mathrm{U}$-perturbations with small $\vartheta$. 
  \newline

\noindent We endow $\overleftarrow{ \mathcal A}$ with the probability measure $\mu$ defined in Subsection \ref{mesurebuild}. For any $p \in \mathcal{P}$ and $ t \in \mathcal{T}$, let $\nu_{p,\mathfrak{a}}$ and $\nu_{t,p,\mathfrak{a}}$ be the images of $\mu$ by the maps $\pi_{p,\mathfrak{a}}$ and  $\tilde{\pi}_{t,p,\mathfrak{a}}$.  \newline

   \noindent 
  As Proposition \ref{integr} implies Theorem D, Theorem C is a   consequence of:

  \begin{Proposition}  \label{lilou}
There exists $\delta>0$ s.t.  the  ball $\mathcal{B}$ of center $p_0$ and radius $\delta$ is included in $\mathcal{P}$ and the two following integrals are finite:
   $$ \mathcal{I} :=    \int_{p \in \mathcal{B}}   \int_{x \in \mathbb{R}^{N}} \liminf_{r \rightarrow 0}  \frac{\nu_{p,\mathfrak{a}}( x + B(r)) } {c_{N} r^{N}}    d\nu_{p,\mathfrak{a}}   d  \mathrm{Leb}_d<+ \infty  \, ,$$
    $$ \mathcal{I}’ :=     \int_{p \in \mathcal{B}}  \int_{t \in \mathcal{T}}     \int_{x \in \mathbb{R}^{N}} \liminf_{r \rightarrow 0}  \frac{\nu_{t,p,\mathfrak{a}}( x + B(r)) } {c_{N} r^{N}}    d\nu_{t,p,\mathfrak{a}}  d  \mathrm{Leb}_\tau   d  \mathrm{Leb}_d   
        <+ \infty  \, ,$$
 for any    family $\mathbb{F}$ of $\vartheta$-$\mathrm{U}$-perturbations with small $\vartheta$, where $B(r) \subset \mathbb{R}^{N}$ is the ball of center 0 and radius $r$ and the constant $c_{N}$ is defined by $c_{N} r^{N} := \mathrm{Leb}_{N}(B(r))$. 
 \end{Proposition}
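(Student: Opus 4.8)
The plan is to mimic exactly the proof of Proposition \ref{integr} from the model case, replacing affine distortion estimates by the distortion Lemmas \ref{bdsk}--\ref{poisson}. First I would fix $\delta>0$ small enough that $\mathcal{B}$ lies in $\mathcal{P}$ and, using Lemma \ref{gfsk} with a suitable $\eta$, that a uniform distortion estimate $\Lambda_{p_1,\mathfrak{a},\alpha}^{1+\epsilon/2}\preceq \Lambda_{p_2,\mathfrak{a},\alpha}$ holds for all $p_1,p_2\in\mathcal{B}$ and all $\alpha\in\mathcal{A}^*$; here the $e^{\eta|\alpha|}$ loss in Lemma \ref{gfsk} is absorbed because the exponent $\epsilon/2$ multiplies $|\log\Lambda|\asymp |\alpha|$. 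I would treat $\mathcal{I}$ and $\mathcal{I}'$ in parallel, the only difference being an extra integration over $t\in\mathcal{T}$ and the use of Lemma \ref{poisson} to compare $\tilde\Lambda_{t,p,\mathfrak{a},\alpha}$ with $\Lambda_{p,\mathfrak{a},\alpha}$.

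Next, by Fatou's lemma I would bring the $\liminf_{r\to0}$ outside the integral, write $\nu_{p,\mathfrak{a}}(x+B(r))=\int 1_{\{\|\pi_{p,\mathfrak{a}}(\alpha)-\pi_{p,\mathfrak{a}}(\beta)\|<r\}}\,d\mu(\beta)$, and apply the Fubini theorem to integrate over $p$ first. Decomposing $\overleftarrow{\mathcal A}\times\overleftarrow{\mathcal A}=\bigsqcup_{n\ge0}\bigsqcup_{\rho\in\mathcal A^n}\mathcal C_\rho$, the heart of the argument is the analogue of Lemma \ref{borne}: for $(\alpha,\beta)\in\mathcal C_\rho$ one has $\pi_{p,\mathfrak{a}}(\alpha)=\psi^\rho_{p,\mathfrak{a}}(\pi_{p,\sigma^n(\rho)\mathfrak{a}}(\sigma^n\alpha))$ and likewise for $\beta$, and since $\psi^\rho_{p,\mathfrak{a}}$ is a unipotent contraction, $\|\pi_{p,\mathfrak{a}}(\alpha)-\pi_{p,\mathfrak{a}}(\beta)\|$ is comparable to $\Lambda_{p,\mathfrak{a},\rho}\cdot\|\pi_{p,\cdots}(\sigma^n\alpha)-\pi_{p,\cdots}(\sigma^n\beta)\|$ up to the polynomial factor $P(|\rho|)$ of Lemma \ref{polynome} (this is where the unipotent, not merely conformal, nature forces an extra polynomial loss, again harmless against $\gamma^{n\epsilon/3}$). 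Combining this with the distortion bound on $\mathcal{B}$ and then invoking assumption {\bf (T)} (in its perturbed form for $\mathcal{I}'$) with exponent $N$ gives $\mathrm{Leb}_d\{p\in\mathcal{B}:\|\pi_{p,\mathfrak{a}}(\alpha)-\pi_{p,\mathfrak{a}}(\beta)\|<r\}\preceq r^N\cdot\Lambda_{p_0,\mathfrak{a}_0,\rho}^{-N(1+\epsilon/2)}\cdot P(|\rho|)^N$.

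Then I would inject this into the sum over $\rho$, use the Gibbs estimate $\mu[\rho]\asymp\Lambda_{p_0,\mathfrak{a}_0,\rho}^{\Delta(p_0)}$ from Eq. $(\ref{help})$, the inequality $(\rho\times$-measure$)$ $(\mu\times\mu)(\mathcal C_\rho)\le\mu[\rho]^2$, and the key exponent comparison $N(1+\epsilon/2)/\Delta(p_0)<N(1+\epsilon/2)/(N+\epsilon)<1-c\epsilon$ for a small $c>0$ together with $\gamma'^{|\rho|}<\Lambda<\gamma^{|\rho|}$; this yields a bound of the shape $\mathcal{I}\preceq\sum_{n\ge0}\gamma^{cn\epsilon}P(n)^N\sum_{\rho\in\mathcal A^n}\mu[\rho]=\sum_{n\ge0}\gamma^{cn\epsilon}P(n)^N<+\infty$ since a polynomial times a geometrically small term is summable. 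For $\mathcal{I}'$, the extra $t$-integral is handled by the perturbed transversality in {\bf (T)} (which is uniform in $t$), and Lemma \ref{poisson} with $\epsilon'$ close to $1$ guarantees that replacing $\Lambda$ by $\tilde\Lambda_t$ changes the exponents by an amount small enough that the strict inequality $\Delta(p)>N$ is preserved with room to spare; the measurability of the integrands follows as in Lemma \ref{mesureble} using continuity of $(t,p)\mapsto\tilde\pi_{t,p,\mathfrak{a}}$ from Lemma \ref{ljungq}.

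I expect the main obstacle to be bookkeeping the various distortion losses simultaneously: one must choose $\eta$ (in Lemma \ref{gfsk}), $\epsilon'$ (in Lemma \ref{poisson}) and the auxiliary exponent $1+\epsilon/2$ all small/close enough, depending only on $\epsilon=\tfrac12(\min_{\overline{\mathcal P}}\Delta-N)$, so that the net exponent of $\mu[\rho]$ in the final sum stays strictly below $1$ after all three perturbation mechanisms (dependence on $p$, on $\mathfrak{a}$ via Lemma \ref{fisk}, and on $t$) and after multiplying by the polynomial factor $P(|\rho|)^N$ coming from unipotence. Once these constants are fixed at the outset, each individual estimate is routine, so the difficulty is purely in organizing the order of quantifiers and verifying that the geometric decay always dominates the polynomial growth.
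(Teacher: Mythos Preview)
Your approach is essentially the paper's, step for step: Fatou, Fubini, the $\mathcal C_\rho$ decomposition, the unipotent lower bound with polynomial loss (this is Lemma \ref{xy} in the paper), the distortion transfer from $p$ to $p_0$, assumption {\bf (T)}, the Gibbs estimate Eq.~(\ref{help}), and summation. The treatment of $\mathcal I'$ via the perturbed half of {\bf (T)} and Lemma \ref{poisson} is also what the paper does.

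There is one concrete error in your exponent bookkeeping. You carry over the distortion exponent $1+\epsilon/2$ from the one-dimensional model, and then claim
\[
\frac{N(1+\epsilon/2)}{N+\epsilon}<1-c\epsilon.
\]
But $N(1+\epsilon/2)=N+N\epsilon/2$, which is $\ge N+\epsilon$ as soon as $N\ge 2$, so this inequality is false in general. The paper avoids this by taking the distortion exponent to be $1+\epsilon/(2N)$ instead (Lemma \ref{lemauxlilou}, obtained from Lemma \ref{gfsk} with $\eta=\frac{-\epsilon\log\gamma}{2N+\epsilon}$). After applying {\bf (T)}, which contributes an $N$th power, the resulting exponent on $\Lambda_{p_0,\mathfrak a_0,\rho}^{-1}$ is $N+\epsilon/2$; the polynomial $R(n)^N$ is then absorbed by passing to $N+2\epsilon/3$, and one checks $(N+2\epsilon/3)/\Delta(p_0)<1-\epsilon/(4N)$, giving the summable tail $\gamma^{n\epsilon/(4N)}$. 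This is exactly the bookkeeping hazard you flagged in your last paragraph; once you rescale the auxiliary exponent by $1/N$, the rest of your argument goes through unchanged.
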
 
   
\begin{proof}
 Let us take a small $\delta$ s.t. $\mathcal{B} \subset \mathcal{P}$. The radius $\delta$ will be reduced one time so   that  $\mathcal{I}$ and $\mathcal{I}’$ are  finite.  We first begin by bounding $\mathcal{I}$.
 The proof is similar to the one of Proposition \ref{integr}: we begin by using  Fatou's Lemma, 
the definition of $\nu_{p,\mathfrak{a}}$ and the Fubini-Tonelli Theorem to find the following bound:  
$$\mathcal{I} \le \liminf_{r \rightarrow 0}                \frac{1}{c_{N} r^{N}   }  \int_{(\alpha,\beta)  \in \overleftarrow{ \mathcal A}   \times   \overleftarrow{ \mathcal A}     }   \mathrm{Leb}_d  \{ p \in   \mathcal{B} :   ||       \pi_{p,\mathfrak{a}}(\alpha) - \pi_{p,\mathfrak{a}}(\beta)    || < r \}     d \mu \times \mu  
  \, .$$
\noindent
We write $\overleftarrow{ \mathcal A}   \times \overleftarrow{ \mathcal A} 
 = \bigsqcup_{n \ge 0} \bigsqcup_{\rho \in \mathcal{A}^n}   \mathcal C_\rho  $ where $\mathcal C_\rho$ is the set of pairs $(\alpha, \beta) \in  \overleftarrow{ \mathcal A} \times \overleftarrow{ \mathcal A}$ s.t. $\alpha_{-|\rho |} \cdots \alpha_{-1} = \beta_{-|\rho |}  \cdots \beta_{-1} = \rho$  but $\alpha_{-|\rho |-1} \neq \beta_{-|\rho |-1}$. Thus 
 $\mathcal{I}$ is smaller than:
      \begin{equation} \label{eqint11111'} 
   \liminf_{r \rightarrow 0}   \frac{1}{c_{N} r^{N}   }           \sum_{n\ge 0}  \sum_{\rho \in \mathcal{A}^n }    \int_{   (\alpha,\beta)  \in   \mathcal C_\rho     } \mathrm{Leb}_d  \{ p \in   \mathcal{B} :   ||       \pi_{p,\mathfrak{a}}(\alpha) - \pi_{p,\mathfrak{a}}(\beta)    || <
    r \}       d \mu \times \mu     \, . 
    \end{equation} 
 We show below that a  consequence of the transversality assumption ${\bf (T)}$ is:
 \begin{Lemma} \label{bornelilou}  We  fix  $\eta:=\frac{-\epsilon \mathrm{log} \gamma}{2N+\epsilon}$ and reduce $\delta$ if necessary s.t.  $\delta < \delta(\eta)$ (where $\delta(\eta)$ is defined in Lemma $\ref{gfsk}$).  Then
 for any $n \ge 0$, $\rho \in \mathcal{A}^n$, $(\alpha, \beta) \in  \mathcal C_\rho$, we have:
  $$  \mathrm{Leb}_d   \{   p \in \mathcal{B} :   ||       \pi_{p,\mathfrak{a}}(\alpha) - \pi_{p,\mathfrak{a}}(\beta)           || < r \} \preceq  r^N   \cdot   \Lambda_{p_0, \mathfrak{a}_0,\rho}^{ -N- 2\epsilon/3  }  \, .$$
  \noindent For any family $\mathbb{F}$ of $\vartheta$-$\mathrm{U}$-perturbations of $(F_p)_p$ with small $\vartheta$, for any $t \in \mathcal{T}$, we have:
 $$  \mathrm{Leb}_d   \{   p \in \mathcal{B} :   ||       \tilde{\pi}_{t,p,\mathfrak{a}}(\alpha) - \tilde{\pi}_{t,p,\mathfrak{a}}(\beta)           || < r \} \preceq  r^N   \cdot   \Lambda_{p_0, \mathfrak{a}_0,\rho}^{ -N- 2\epsilon/3  }  \, .$$
  \end{Lemma}
  \noindent Notice that when the similarity dimension is close to the dimension $N$ of the fibers (and so $\epsilon$ small), we need to work with a ball  of small radius     $\eta$. 
  We can inject the first bound of Lemma $\ref{bornelilou}
  $  into Eq. (\ref{eqint11111'}):
  \begin{equation}   \label{eqint111113'}
  \mathcal{I}  \preceq \sum_{n\ge 0}  \sum_{\rho \in \mathcal{A}^n }    \int_{(\alpha, \beta) \in  \mathcal C_\rho} \Lambda_{p_0, \mathfrak{a}_0,\rho}^{  -N - 2\epsilon/3  }   d \mu \times \mu   \, .
  \end{equation} 
 \noindent
    We use successively Eq. (\ref{help}), the inequality $( N+\frac{2\epsilon}{3})/\Delta(p_0) < ( N+\frac{2\epsilon}{3})/(N+\epsilon) < 1-\frac{\epsilon}{4 N} $ and finally Eq. $(\ref{ineqpres})$ (which gives $\mu[\rho]  \preceq  \gamma^n$) to get:
      \begin{equation}   \label{bound}
   \Lambda_{p_0,\mathfrak{a}_0 ,\rho}^{ - N-2\epsilon/3  } \asymp  \mu[\rho]^{ \frac{- N-2\epsilon/3}{\Delta(p_0)}}  \le  \mu[\rho]^{ -(1-\frac{\epsilon}{4N} ) }  \preceq   \gamma^{ \frac{n\epsilon}{4 N}  } \cdot   \mu [\rho]^{ -1 } \, .
 \end{equation}
 We now inject this bound into Eq. $(\ref{eqint111113'})$ to find:
 $$\mathcal{I} \preceq   \sum_{n\ge 0} \gamma^{ \frac{n\epsilon}{4 N}  }    \sum_{\rho \in \mathcal{A}^n }  \frac{( \mu  \times \mu) (\mathcal{C}_\rho ) }{ \mu [\rho] } \le    \sum_{n\ge 0} \gamma^{ \frac{n\epsilon}{4 N}  }    \sum_{\rho \in \mathcal{A}^n }  \mu [\rho] =  \sum_{n\ge 0} \gamma^{ \frac{n\epsilon}{4 N }  }  < + \infty \, ,$$ where we used the inequality $( \mu  \times \mu) (\mathcal{C}_\rho )  \le \mu [\rho]^2$ (coming from  $\mathcal{C}_\rho \subset [\rho]^2$) to prove the second inequality. 
 To bound $\mathcal{I}’$ for every family $\mathbb{F}$ of $\vartheta$-$\mathrm{U}$-perturbations of $(F_p)_p$ with small $\vartheta$, we just remark that the same proof works when $\vartheta$ is small enough, with an additional integration relatively to $t \in \mathcal{T}$.
  \end{proof}

        \begin{proof}[Proof of Lemma \ref{bornelilou}] Let us begin with the following distortion lemma:

   \begin{Lemma} \label{lemauxlilou}
   There exists $D_5>0$ s.t. 
   for every   $p_1 ,p_2  \in \overline{\mathcal{P}}$, $\mathfrak{a}_1 ,\mathfrak{a}_2  \in  \overrightarrow{ \mathcal A}$ and   $\rho \in\mathcal{A}^*$, the following holds true:
   $$    || p_1 - p_2 || < \delta \implies  \Lambda_{p_1,\mathfrak{a}_{1},  \rho}^{1+\frac{\epsilon}{2 N}} \le  D_5 \cdot \Lambda_{p_2, \mathfrak{a}_2,\rho}  \, .  $$
   \end{Lemma}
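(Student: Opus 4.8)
The plan is to deduce Lemma \ref{lemauxlilou} from the distortion results already established in Subsection \ref{sub}, namely Lemma \ref{gfsk} (distortion with respect to $p$) and Lemma \ref{fisk} (bounded distortion with respect to $\mathfrak{a}$), combined with the choice $\eta:=\frac{-\epsilon \log \gamma}{2N+\epsilon}$ already fixed in Lemma \ref{bornelilou} and the hypothesis $\delta<\delta(\eta)$. The point is that $\Lambda_{p_1,\mathfrak{a}_1,\rho}$ and $\Lambda_{p_2,\mathfrak{a}_2,\rho}$ differ only by a multiplicative factor which is at most exponential in $|\rho|$ with a small rate $\eta$, and this sub-exponential slack can be absorbed into the exponent gain $\epsilon/(2N)$ because $\Lambda_{p_1,\mathfrak{a}_1,\rho}$ itself decays exponentially in $|\rho|$ at rate at least $|\log\gamma|$ by Eq. \eqref{ineqpres}.

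First I would write $n:=|\rho|$ and compare $\Lambda_{p_1,\mathfrak{a}_1,\rho}$ to $\Lambda_{p_2,\mathfrak{a}_2,\rho}$ in two steps: by Lemma \ref{fisk} we have $\Lambda_{p_1,\mathfrak{a}_1,\rho}\le D_3\,\Lambda_{p_1,\mathfrak{a}_2,\rho}$, and then, since $\|p_1-p_2\|<\delta<\delta(\eta)$, Lemma \ref{gfsk} gives $\Lambda_{p_1,\mathfrak{a}_2,\rho}\le D_2 e^{\eta n}\,\Lambda_{p_2,\mathfrak{a}_2,\rho}$. Hence
\begin{equation}
\Lambda_{p_1,\mathfrak{a}_1,\rho}\le D_2 D_3\, e^{\eta n}\,\Lambda_{p_2,\mathfrak{a}_2,\rho}\, .
\end{equation}
Raising to the power $1+\tfrac{\epsilon}{2N}$ and using that $1+\tfrac{\epsilon}{2N}\le 2$ (say), it suffices to bound $e^{\eta n(1+\epsilon/2N)}\,\Lambda_{p_2,\mathfrak{a}_2,\rho}^{\epsilon/2N}$ by a constant times $1$. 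By Eq. \eqref{ineqpres} we have $\Lambda_{p_2,\mathfrak{a}_2,\rho}<\gamma^{n}$, so $\Lambda_{p_2,\mathfrak{a}_2,\rho}^{\epsilon/2N}<\gamma^{n\epsilon/2N}=e^{n\epsilon \log\gamma/2N}$. Therefore the product of the two $n$-dependent factors is at most $\exp\!\big(n[\eta(1+\tfrac{\epsilon}{2N})+\tfrac{\epsilon\log\gamma}{2N}]\big)$, and the choice $\eta=\frac{-\epsilon\log\gamma}{2N+\epsilon}$ makes the bracket equal to $\eta\cdot\frac{2N+\epsilon}{2N}+\frac{\epsilon\log\gamma}{2N}=\frac{-\epsilon\log\gamma}{2N}+\frac{\epsilon\log\gamma}{2N}=0$ — actually one should double-check that one wants the bracket $\le 0$, which it is (it equals $0$), so this factor is $\le 1$. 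Collecting the constants, $D_5:=(D_2D_3)^{1+\epsilon/2N}$ works.

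I do not expect a genuine obstacle here: the lemma is a bookkeeping consequence of the earlier distortion estimates, and the only thing to be careful about is the arithmetic of the exponents — in particular, confirming that the exponential slack $e^{\eta n}$ coming from Lemma \ref{gfsk} is tame enough relative to the guaranteed decay $\gamma^n$ that the extra factor $\Lambda_{p_2,\mathfrak{a}_2,\rho}^{\epsilon/2N}$ annihilates it, which is exactly why $\eta$ was chosen of the form $\frac{-\epsilon\log\gamma}{2N+\epsilon}$ rather than something larger. If the final inequality in the statement is meant with $\le D_5\cdot\Lambda_{p_2,\mathfrak{a}_2,\rho}$ and $D_5$ allowed to depend on $\eta$ (hence on $\epsilon$, hence ultimately on $p_0$), all constants above are admissible since $\epsilon$ is fixed once $p_0$ is fixed. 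One last sanity check I would perform is that the constant $D_3$ from Lemma \ref{fisk} and $D_2=D_2(\eta)$ from Lemma \ref{gfsk} are both independent of $\rho$, which they are by their statements, so $D_5$ is a genuine uniform constant.
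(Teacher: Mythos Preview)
Your proof is correct and follows essentially the same route as the paper: combine Lemma \ref{gfsk} and Lemma \ref{fisk} to compare $\Lambda_{p_1,\mathfrak{a}_1,\rho}$ with $\Lambda_{p_2,\mathfrak{a}_2,\rho}$, then use $\Lambda_{\cdot,\cdot,\rho}<\gamma^{|\rho|}$ from Eq.~\eqref{ineqpres} so that the extra exponent $\epsilon/(2N)$ exactly cancels the $e^{\eta|\rho|(1+\epsilon/2N)}$ slack, thanks to the choice $\eta=\frac{-\epsilon\log\gamma}{2N+\epsilon}$. The only cosmetic difference is that the paper applies Lemma \ref{gfsk} before Lemma \ref{fisk} rather than after; your parenthetical ``$1+\tfrac{\epsilon}{2N}\le 2$ (say)'' is unnecessary and need not hold, but it plays no role since $(D_2D_3)^{1+\epsilon/2N}$ is a fixed constant regardless.
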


   \begin{proof}If $    || p_1 - p_2 || < \delta $, then $    || p_1 - p_2 || < \delta (\eta)$.
   By Lemma \ref{gfsk}, it holds:
   $$  \Lambda_{p_1, \mathfrak{a}_{1  }, \rho}^{1+\epsilon_0} \le D_2^{1+\epsilon_0}   e^{|\rho| \eta  ( 1 +\epsilon_0)} \Lambda_{p_2, \mathfrak{a}_{1}, \rho}^{  1 +\epsilon_0  } \le   D_2^{1+\epsilon_0}  e^{|\rho| \eta  (1 +\epsilon_0)} \gamma^{|\rho| \epsilon_0}  \Lambda_{p_2, \mathfrak{a}_{1}, \rho} \, ,$$ with  $\epsilon_0 := \epsilon / (2N)$. 
 By Lemma \ref{fisk}, $ \Lambda_{p_2, \mathfrak{a}_{1},\rho} \asymp  \Lambda_{p_2, \mathfrak{a}_2,\rho}  $  when $|\rho| \rightarrow + \infty$, with bounds independent of $p_2$. The result follows since, by definition of $\eta$, it holds: 

 \begin{equation}
   e^{|\rho| \eta  ( 1+\epsilon_0)} \gamma^{|\rho| \epsilon_0} = e^{|\rho|  (\eta ( 1 +\epsilon_0 )+\epsilon_0 \mathrm{log} \gamma            )  } \text{ with } \eta ( 1 +\epsilon_0 )+\epsilon_0 \mathrm{log} \gamma          = 0 \, .
   \nonumber \end{equation} 
   \end{proof}

      \begin{Lemma} \label{xy} There exists a real polynomial $R$ positive on $\mathbb{R}_+$ s.t.  for every $p \in \overline{\mathcal{P}}$, $\mathfrak{a} \in  \overrightarrow{ \mathcal A}$, $n \ge 0$, $\rho \in \mathcal{A}^n$ and $(\alpha, \beta) \in  \mathcal C_\rho$, it holds:
  \begin{equation}   \label{tor}
 || \pi_{p,\mathfrak{a}} (\alpha) - \pi_{p,\mathfrak{a}} (\beta) ||  \ge \frac{\Lambda_{p,\mathfrak{a},\rho}}{R(n)} \cdot ||  \pi_{p,\rho  \mathfrak{a}} (\sigma^n(\alpha)) - \pi_{p,\rho \mathfrak{a}}(\sigma^n(\beta)) | | \, .
  \end{equation}
  Moreover for any  $\epsilon’>1$,    for  every family $\mathbb{F}$ of $\vartheta$-$\mathrm{U}$-perturbations of $(F_p)_p$  with $\vartheta>0$ small enough we have for every $t \in \mathcal{T}$, $p \in \overline{\mathcal{P}}$, $\mathfrak{a} \in  \overrightarrow{ \mathcal A}$, $n \ge 0$, $\rho \in \mathcal{A}^n$ and $(\alpha, \beta) \in  \mathcal C_\rho$:
 \begin{equation}   \label{torp}
 || \tilde{\pi}_{t,p,\mathfrak{a}} (\alpha) - \tilde{\pi}_{t,p,\mathfrak{a}} (\beta) ||  \ge \frac{\Lambda^{\epsilon’}_{p,\mathfrak{a},\rho}}{R(n)} \cdot ||  \tilde{\pi}_{t,p,\rho  \mathfrak{a}} (\sigma^n(\alpha)) - \tilde{\pi}_{t,p,\rho \mathfrak{a}}(\sigma^n(\beta)) | | \, .
  \end{equation}

   \end{Lemma}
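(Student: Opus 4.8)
The plan is to deduce Eq.~(\ref{tor}) from the self-similar structure of the limit sets together with an elementary bound on the inverse of an inferior triangular matrix whose diagonal entries all coincide. First I would record the self-similarity relation: since $(\alpha,\beta)\in\mathcal{C}_\rho$ both have their last $n:=|\rho|$ letters equal to $\rho$, unwinding the definitions of the composed maps $\psi_{p,\mathfrak{a}}^\rho$ and of $\pi_{p,\mathfrak{a}}$ gives
$$\pi_{p,\mathfrak{a}}(\alpha)=\psi_{p,\mathfrak{a}}^{\rho}\big(\pi_{p,\rho\mathfrak{a}}(\sigma^n(\alpha))\big)\quad\text{and}\quad\pi_{p,\mathfrak{a}}(\beta)=\psi_{p,\mathfrak{a}}^{\rho}\big(\pi_{p,\rho\mathfrak{a}}(\sigma^n(\beta))\big)\, ,$$
where $u:=\pi_{p,\rho\mathfrak{a}}(\sigma^n(\alpha))$ and $v:=\pi_{p,\rho\mathfrak{a}}(\sigma^n(\beta))$ lie in the convex set $X$. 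Writing $\psi:=\psi_{p,\mathfrak{a}}^\rho$, which is $C^2$ on the neighbourhood $X'$ of $X$, the integral form of the mean value theorem gives $\psi(u)-\psi(v)=A(u-v)$ with $A:=\int_0^1 D\psi\big(v+t(u-v)\big)\,dt$. Hence it suffices to prove that $A$ is invertible and $\|A^{-1}\|\le R(n)\,\Lambda_{p,\mathfrak{a},\rho}^{-1}$ for a polynomial $R$: indeed then $\|u-v\|=\|A^{-1}(\psi(u)-\psi(v))\|\le R(n)\,\Lambda_{p,\mathfrak{a},\rho}^{-1}\,\|\pi_{p,\mathfrak{a}}(\alpha)-\pi_{p,\mathfrak{a}}(\beta)\|$, which is Eq.~(\ref{tor}).

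To bound $A$ I would use assumption $\mathrm{(U)}$: for each $t$ the matrix $D\psi(v+t(u-v))$ is inferior triangular with all diagonal entries equal to a common value $\mu_t$ of modulus $\lambda_{p,\mathfrak{a},\rho}(v+t(u-v))$. This modulus is continuous and positive on the segment, so $\mu_t$ keeps a constant sign there; thus $A=cI+L$ with $L$ strictly inferior triangular and $|c|=\int_0^1\lambda_{p,\mathfrak{a},\rho}(v+t(u-v))\,dt$, which lies in $[\Lambda_{p,\mathfrak{a},\rho}/D_1,\,\Lambda_{p,\mathfrak{a},\rho}]$ by the bounded distortion Lemma~\ref{bdsk}. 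By Lemma~\ref{polynome} every subdiagonal entry of $D\psi$ on $X$ has modulus at most $P(n)\,\Lambda_{p,\mathfrak{a},\rho}$, so $\|L\|\le N\,P(n)\,\Lambda_{p,\mathfrak{a},\rho}$ (bounding the operator norm by the Frobenius norm). Since $L$ is nilpotent of index at most $N$, the Neumann series terminates: $A^{-1}=c^{-1}\sum_{k=0}^{N-1}(-c^{-1}L)^k$, whence $\|A^{-1}\|\le\frac{D_1}{\Lambda_{p,\mathfrak{a},\rho}}\sum_{k=0}^{N-1}\big(D_1 N P(n)\big)^k=:\frac{R(n)}{\Lambda_{p,\mathfrak{a},\rho}}$ with $R$ a polynomial depending only on $N$, $D_1$ and $P$. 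This establishes Eq.~(\ref{tor}).

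For Eq.~(\ref{torp}) I would rerun the same argument for $\tilde\psi:=\psi_{t,p,\mathfrak{a}}^{\rho}$. A $\vartheta$-$\mathrm{U}$-perturbation still satisfies $\mathrm{(U)}$ and, for $\vartheta$ small, has $C^2$-data bounded uniformly, so the proofs of Lemmas~\ref{bdsk} and~\ref{polynome} apply verbatim with constants uniform in $t\in\mathcal{T}$ and in such perturbations; this gives a polynomial $R_1$ with
$$\|\tilde\pi_{t,p,\mathfrak{a}}(\alpha)-\tilde\pi_{t,p,\mathfrak{a}}(\beta)\|\ge\frac{\tilde\Lambda_{t,p,\mathfrak{a},\rho}}{R_1(n)}\,\|\tilde\pi_{t,p,\rho\mathfrak{a}}(\sigma^n(\alpha))-\tilde\pi_{t,p,\rho\mathfrak{a}}(\sigma^n(\beta))\|\, .$$
Given $\epsilon'>1$, Lemma~\ref{poisson} (with that $\epsilon'$ and $\vartheta$ small enough) gives $\tilde\Lambda_{t,p,\mathfrak{a},\rho}>\Lambda_{p,\mathfrak{a},\rho}^{\epsilon'}/D_4$; substituting and enlarging the polynomial to absorb the constant $D_4 R_1$ — still denoting it $R$ — yields Eq.~(\ref{torp}).

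The only genuinely delicate step is the polynomial control of $\|A^{-1}\|$: a priori an inferior triangular matrix whose diagonal has size $\Lambda_{p,\mathfrak{a},\rho}$ (exponentially small in $n$) and whose subdiagonal entries may be as large as $P(n)\,\Lambda_{p,\mathfrak{a},\rho}$ could have an inverse of norm comparable to $\Lambda_{p,\mathfrak{a},\rho}^{-N}$, which would be fatal downstream. It is precisely the unipotent hypothesis — all diagonal entries equal — that lets one factor out the scalar $c$ and be left with $I+c^{-1}L$, whose off-diagonal part is only polynomially large in $n$ by Lemma~\ref{polynome}, so that nilpotency of $L$ caps $\|A^{-1}\|$ by a polynomial. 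Everything else — the self-similarity relation, the mean value representation and the bookkeeping for the perturbed case via Lemma~\ref{poisson} — is routine.
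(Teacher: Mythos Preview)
Your proof is correct and takes a genuinely different route from the paper's. The paper argues coordinate-wise: given $v=\pi_{p,\rho\mathfrak{a}}(\sigma^n(\alpha))-\pi_{p,\rho\mathfrak{a}}(\sigma^n(\beta))$, it selects the maximal index $j$ for which $|v_j|>2NP(n)\,|v_i|$ for all $i<j$, shows $|v_j|\gtrsim (2NP(n))^{-N}\|v\|$, and then applies the one-dimensional mean value theorem to the $j$-th coordinate of $\psi_{p,\mathfrak{a}}^\rho$ along the segment; by the unipotent structure and Lemma~\ref{polynome} the diagonal contribution $\lambda_{p,\mathfrak{a},\rho}(y)\,|v_j|$ dominates the sum of the subdiagonal terms. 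You instead package the whole argument into a matrix inversion: the integral mean value representation yields a single lower triangular matrix $A=cI+L$ with all diagonal entries equal to $c$, and the terminating Neumann series $A^{-1}=c^{-1}\sum_{k<N}(-c^{-1}L)^k$ gives the required polynomial bound on $\|A^{-1}\|$. Your approach is more conceptual and makes the role of the unipotent hypothesis completely transparent --- it is precisely what allows the scalar $c$ to be factored out so that $c^{-1}L$ is nilpotent with norm only polynomial in $n$; the paper's approach avoids inverting a matrix and keeps everything scalar, at the price of the somewhat ad~hoc choice of the index $j$. Both yield a polynomial $R$ of comparable degree, and the perturbed statement is handled identically in both via Lemma~\ref{poisson}.
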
 
   \noindent
 The proof is  in the Appendix. 
    Given  $\mathfrak{a} \in  \overrightarrow{ \mathcal A}$, $n \ge 0$, $\rho \in \mathcal{A}^n$, $(\alpha, \beta) \in  \mathcal C_\rho$, using Eq. (\ref{tor}) and then Lemma \ref{lemauxlilou} together with the fact that $\mathcal{B}$ is the ball of center $p_0$ and radius $\delta$, it holds:

    \footnotesize
   \begin{equation} 
   \mathrm{Leb}_d   \{   p \in \mathcal{B}:||  \pi_{p,\mathfrak{a}} (\alpha) - \pi_{p,\mathfrak{a}} (\beta) || < r \}  \le     
        \mathrm{Leb}_d   \{   p \in \mathcal{B} : || \pi_{p, \rho \mathfrak{a}} (\sigma^n (\alpha)) - \pi_{p,\rho \mathfrak{a}} (\sigma^n (\beta)) || < \frac{R(n) r}{ \Lambda_{p,\mathfrak{a} ,\rho}} \} 
        \hspace{4 cm} \le  \mathrm{Leb}_d   \{   p \in \mathcal{B} :  ||  \pi_{p,\rho \mathfrak{a}} (\sigma^n(\alpha))- \pi_{p,\rho \mathfrak{a}} (\sigma^n(\beta)) ||  < \frac{D_5 R(n) r}{  \Lambda_{p_0,    \mathfrak{a}_0    ,\rho}^{1+\frac{\epsilon}{2 N}  }  } \} \nonumber
      \end{equation}
         \small

      \noindent
      To conclude, by ${\bf(T)}$ and since $\mathcal{B}  \subset \mathcal{P}$ the latter  is smaller than:
       $$ r^N \cdot Q(n) \cdot   \Lambda_{p_0, \mathfrak{a}_0 ,\rho}^{ - N- \epsilon/2  } \text{ with } Q(n):= C \cdot D^{ N}_5  \cdot (R(n))^N    \, .$$ 
       \noindent The result follows since $ \Lambda_{p_0, \mathfrak{a}_0 ,\rho}$ decreases exponentially with $n$, and $\epsilon/2<2\epsilon/3$. The proof of the second item is similar, by taking $\epsilon’$ close to 1 in  Eq. (\ref{torp}). 
   \end{proof}

   \section{Jets: Proof of Theorem B} \label{jet}

   \noindent We now prove  Theorem B. 
    The strategy is to study the dynamics of the family $(\mathcal{F}_p)_{p}$ inside the local stable manifolds to reduce the problem to the dynamics of a family $(F_p)_{p}$ of $C^r$-skew-products (Step 1) with one-dimensional fibers, from which we construct a family of $C^2$-skew-products $(G_{p_0})_{p_0}$ acting on $s$-jets (Step 2). Then we extend the latter one into a larger family $(G_{q_0})_{q_0}$ to satisfy the transversality assumption {\bf (T)} (Step 3). Finally, we look at the intersection between the unstable set and a family of curves all close to a stable manifold. In each curve, this intersection is equal to the limit set of a perturbation of the skew-product. We then apply successively Theorem C and the Fubini Theorem to conclude to a set of jets of positive measure at   a.e. parameter, which gives the parablender property (Step 4).  \newline 
    
\noindent {\bf Step 1: Dynamically defined family of skew-products.} We first need to define local stable and unstable manifolds. Let us fix a small $\varepsilon>0$   and an arbitrary parameter in $\overline{\mathcal{P}}$, taken arbitrarily equal to 0 for simplicity.    We recall that $\mathcal{K}_0$ is a hyperbolic basic set for $\mathcal{F}_0$.  Up to a change of metric on the stable (resp. unstable)  bundles of $\mathcal{K}_0$, we suppose that $D\mathcal{F}_0$ strictly contracts (resp. expands) the stable (resp. unstable) bundle by a factor $\lambda<1$ (resp. $1/\lambda $) uniform  over $z \in \mathcal{K}_0$. \newline  
 
\noindent  It has been shown by Qian and Zhang (see section 4 of \cite{qz}) that the limit inverse $\overleftrightarrow{\mathcal{K}_0}$ can be endowed with a map $[ \cdot ]$ defined on a subset of $\overleftrightarrow{\mathcal{K}_0} \times \overleftrightarrow{\mathcal{K}_0}$ with values in  $\overleftrightarrow{\mathcal{K}_0}$ so that for every sufficiently  closed orbits $x, y \in \overleftrightarrow{\mathcal{K}_0}$, the orbit $z=[x,y]$ is well-defined and   the 0-coordinate projection    $\pi(z) \in \mathcal{K}_0$ of $z$ is the intersection of the local unstable manifold of $x$ and the local stable manifold of $y_0$. This map endows $\overleftrightarrow{\mathcal{K}_0}$ with a structure of Smale space (see Ruelle \cite{ruell}, Chapter 7, for the definition of a Smale space). This implies that  $\overleftrightarrow{\mathcal{K}_0} $ admits Markov partitions of arbitrarily small diameter (see again Ruelle \cite{ruell}, Chapter 7, for this result and the definition and properties  of a Markov partition for a Smale space). We pick such a partition of $\overleftrightarrow{\mathcal{K}_0}$ by a finite number of compact rectangles $\overline{\mathcal{R}}_1, \ldots, \overline{\mathcal{R}}_M$ of diameter small compared to $\varepsilon$.  Up to reducing $\varepsilon$, we can suppose that $\mathcal{F}_0$ is a local diffeomorphism when restricted to the 0-coordinate projection $\mathcal{R}_i:= \pi(\overline{\mathcal{R}}_i) \subset \mathcal{K}_0$ of   any  rectangle $\overline{\mathcal{R}}_i$.  \newline 
 
\noindent The topological entropy of $\mathcal{F}_0 | \mathcal{K}_0$ is larger than $\delta_{d,s} \cdot     | \log m(D \mathcal{F}_0) |$ by $(\star)$. Then for      large $\ell$ we can pick $N_{\varepsilon,\ell}$  orbits $x_a=(x^i_a)_{0 \le i \le \ell-1}$ of length $\ell$ (with $a \in \mathcal{A}:=\{1,2, \ldots, N_{\varepsilon,\ell}\}$) under $\mathcal{F}_0$ which are $(\ell,\varepsilon)$-separated (in the sense of Bowen) with:
$$ (\diamond) \hspace{1cm}  \frac{ \mathrm{log} N_{\varepsilon,\ell} } { \ell } > \delta_{d,s} \cdot     |  \log m( D \mathcal{F}_0) |  \, .$$
In particular  the cardinality $N_{\varepsilon,\ell}$ of $\mathcal{A}$ is at least 2. We can extend each of these orbits  $x_a$ of length $\ell$   into an infinite orbit in $\overleftrightarrow{\mathcal{K}_0} $, still denoted $x_a$. Up to slightly perturbating $x_a$, we can suppose that for every $0 \le  k \le   \ell-1$, the orbit  $\overleftrightarrow{\mathcal{F}_0}^k (x_a)$ is in the interior of some rectangle $\overline{\mathcal{R}}_{i}$ that we denote $\overline{\mathcal{R}}_{a,k}$. In particular $x_a^k = \mathcal{F}_0^k(x^0_a)$ is in $\mathcal{R}_{a,k} = \pi ( \overline{\mathcal{R}}_{a,k} )$. Since the rectangles $\overline{\mathcal{R}}_i$ are in finite number $M$ independent of $\ell$, up to modifying $N_{\varepsilon,\ell}$ by a multiplicative constant (independent of $\ell$ large), we can suppose that all     orbits $x_a$ ($a \in \mathcal{A}$) begin in the same $\mathcal{R}_i$ and end in the same $\mathcal{R}_j$, with $(\diamond)$ still true. 
 For any $a \in \mathcal{A}$, let us define:   
 $$R_a :=                        \mathcal{R}_{a,0}  \cap \mathcal{F}^{-1}_0 ( \mathcal{R}_{a,1} ) \cap \cdots \cap \mathcal{F}^{-(\ell-1)}_0 ( \mathcal{R}_{a,\ell-1})     \, .$$ 
 which is not empty by assumption.
Since the $\ell$-orbits $(x^i_a)_{0 \le i \le \ell-1}$ (with $a \in \mathcal{A}$) under $\mathcal{F}_0$ are $(\ell,\varepsilon)$-separated and since the diameter of each rectangle of the partition is small compared to $\varepsilon$,  the sets $R_a$ ($a \in \mathcal{A}$)  are pairwise disjoint compact subsets of $\mathcal{R}_i$ having their images by $\mathcal{F}^\ell_0$  included in $\mathcal{R}_j$. Up to adding a constant (independent of $\ell$ large) to $\ell$,  using the properties of  Markov partitions, we can suppose moreover that   $\mathcal{R}_i$ and $\mathcal{R}_j$ are equal. We denote this set by $R$ in the following and thus the sets $R_a$ are non empty pairwise disjoint compact  subsets of $R$ with images under $\mathcal{F}^\ell_0$  included in $R$. This does not change the number $N_{\varepsilon,\ell}$ of such sets $R_a$ and so inequality $(\diamond)$ is still true.
 For every non empty finite word $\beta= \beta_0 \cdots \beta_p \in \mathcal{A}^{p+1}$, we now set
 $$  R_\beta :=               R_{\beta_0  }  \cap \mathcal{F}^{-\ell}_0 (R_{\beta_1} ) \cap \cdots \cap \mathcal{F}^{-\ell p}_0 (R_{\beta_p})       \, . $$ which is a non empty compact set by the properties of  Markov partitions,  and the sets $R_\beta$ among $\beta \in  \mathcal{A}^{p+1}$ are pairwise disjoint subsets of $R$ for a fixed value of $p$. Finally, for $\mathfrak{a} \in \overrightarrow{\mathcal{A}}$, we define:
 $$R_{\mathfrak{a}} := \bigcap_{n >0} R _{\mathfrak{a}_{| n}} $$
 which is also a non empty compact subset of $R$, included in $\mathcal{K}_0$, and  the sets $R_\mathfrak{a}$ ($\mathfrak{a}  \in  \overrightarrow{\mathcal{A}}$) are pairwise disjoint. Finally, we notice that since the partition admits a continuation in a neighborhood of 0 in $\mathcal{P}$, the sets $R_a$, $R_\beta$ and $R_\mathfrak{a}$ admit continuations $R_{p,a}$, $R_{p,\beta}$ and $R_{p,\mathfrak{a}}$ with the same dynamical properties when $p$ varies  in a neighborhood of 0 in the parameter space. Up to taking a finite covering of $\overline{\mathcal{P}}$ by such neighborhoods and extending a finite number of times $(\mathcal{F}_p)_p$ in Steps 2-3-4, we can suppose that $R_{p,a}$, $R_{p,\beta}$ and $R_{p,\mathfrak{a}}$ vary continuously in a neighborhood of $\overline{\mathcal{P}}$, that we can suppose equal to $\mathcal{P}’$ up to reducing it, and  
 inequality $(\diamond)$  is satisfied in $\overline{\mathcal{P}}$.  \newline

 \noindent  Let us take a small $\varepsilon’>0$. 
 For any infinite forward sequence  $\mathfrak{a} \in \overrightarrow{ \mathcal A}$ and any $p \in \mathcal{P}’$, we notice that all the points  in $R_{p, \mathfrak{a}}$ are asymptotic (using the hyperbolicity) and thus belong to a same stable manifold $W$ of $\mathcal{K}_p$ (which is one-dimensional). 
We define $W^{\mathfrak{a}}_p$ as the $\varepsilon’$-neighborhood in $W$ of the maximal arc of $W$ bounded by  points of $R_{p, \mathfrak{a}}$. We parameterize $W^{\mathfrak{a}}_p$ with $X:=[-1,1]$ via a $C^r$-map $s_p^\mathfrak{a}$ s.t. $(s_p^\mathfrak{a})_p$ depends  H\"older  in the $C^{r-1}$-topology  on $\mathfrak{a}$ by 
  Remark \ref{epicerie}. \newline
 
 \noindent Since $\mathcal{F}_p$ is a local diffeomorphism and since $W^{\mathfrak{a}}_p$ is injectively immersed, up to decreasing $\varepsilon’$,  the restriction of $\mathcal{F}_p^\ell$ to each $W_p^\mathfrak{a}$ is a diffeomorphism satisfying:
 $$\forall p \in \mathcal{P}’, \forall  \mathfrak{a} \in \overrightarrow{ \mathcal A}, \text{ }  \mathcal{F}^\ell_p (W_p^\mathfrak{a}) \subset \mathring{W}_p^{\sigma (\mathfrak{a})} :=s_p^{\sigma(\mathfrak{a})} ((-1,1)) $$ by hyperbolicity. We define   the $C^r$-diffeomorphism $f_{p,\mathfrak{a}}:=                ( s_p^{\sigma (\mathfrak{a})}  )^{-1}  \circ   \mathcal{F}^\ell_p     \circ   s_p^\mathfrak{a} $ on a small neighborhood $X’$ of $X$ independent of $(p,\mathfrak{a})$. Its image 
     is s.t. $f_{p,\mathfrak{a}}(X’) \Subset X$. 
The differential $D\mathcal{F}^\ell_p$ strictly contracts (resp. expands) $\mathcal{E}^s_{p,z}$ (resp. $\mathcal{E}^u_{p,z}$) by a factor $\lambda^\ell <\lambda<1$ (resp. $1/\lambda$) uniform over $p \in \overline{\mathcal{P}}$ and $z \in \mathcal{K}_p$. Up to modifying $\mathcal{F}_p^\ell$ outside a neighborhood of $\mathcal{K}_p$ (this operation does not modify the local stable/unstable sets of $\mathcal{K}_p$ in a neighborhood of $\mathcal{K}_p$ nor jets of points inside it), we can suppose that $\mathcal{F}_p^\ell$ contracts strictly each $W_p^\mathfrak{a}$ by $\lambda$ and so each map   $f_{p,\mathfrak{a}}$  contracts by  $<\lambda$. Moreover the map $(p,x) \mapsto f_{p,\mathfrak{a}}(x)$ is $C^r$. It extends on  $\mathcal{P}’ \times X’$, up to reducing $\mathcal{P}’$ and $X’$. Easy computations show that the map $\mathfrak{a}  \mapsto ((p,x) \mapsto f_{p,\mathfrak{a}} (x) )$ is H\"older for the $C^{r-1}$-topology and continuous for the $C^r$-topology.
 We set
$$ F_{ p} : (\mathfrak{a},x) \in \overrightarrow{ \mathcal A} \times X \mapsto  (\sigma (\mathfrak{a}), f_{p,\mathfrak{a}}(x)) \in    \overrightarrow{ \mathcal A}   \times X  \, , $$
and $(F_p)_p$  is a     family of fiberwise $\lambda$-contracting $C^r$-skew-products satisfying  the preliminary assumptions of Theorem C. We now show:

\vspace{0.2cm}

\begin{Lemma} \label{sim} 
The similarity dimension $\Delta(p)$ is larger than $\delta_{d,s}$ for any $p \in \overline{\mathcal{P}}$.
\end{Lemma}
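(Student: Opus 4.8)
The plan is to reduce the claim to a lower bound on the pressure. By Proposition \ref{pression} the map $s\mapsto\Pi_p(s)$ is well-defined, independent of $\mathfrak{a}$, strictly decreasing, and $\Delta(p)$ is its unique zero; it therefore suffices to prove that $\Pi_p(\delta_{d,s})>0$ for every $p\in\overline{\mathcal{P}}$, which forces $\delta_{d,s}<\Delta(p)$.

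The heart of the matter is the uniform lower bound $\Lambda_{p,\mathfrak{a},\alpha}\ge c\cdot m(D\mathcal{F}_p)^{\ell|\alpha|}$, with $c>0$ independent of $p,\mathfrak{a},\alpha$. First I would telescope the definition of $\psi_{p,\mathfrak{a}}^{\alpha}$: for a word $\alpha$ of length $n$ and $\beta:=\alpha\mathfrak{a}\in\overrightarrow{\mathcal{A}}$ the concatenation, the cancellations in $f_{p,\alpha_{-1}\mathfrak{a}}\circ\cdots\circ f_{p,\alpha_{-n}\cdots\alpha_{-1}\mathfrak{a}}$ give $\psi_{p,\mathfrak{a}}^{\alpha}=(s_p^{\mathfrak{a}})^{-1}\circ\mathcal{F}_p^{\ell n}\circ s_p^{\beta}$. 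Next, since $(p,\mathfrak{b})\mapsto s_p^{\mathfrak{b}}$ is continuous on the compact $\overline{\mathcal{P}}\times\overrightarrow{\mathcal{A}}$ and each $s_p^{\mathfrak{b}}$ is a $C^r$-diffeomorphism of $X$ onto the arc $W_p^{\mathfrak{b}}$ (a curve of length bounded above and below uniformly), the Jacobians $|Ds_p^{\mathfrak{b}}|$ are pinched between two positive constants. Finally I would exploit that the arc defining $W_p^{\beta}$ is bounded by points of $R_{p,\beta}\subset\mathcal{K}_p$: choosing $z\in\mathcal{K}_p\cap W_p^{\beta}$, its whole forward $\mathcal{F}_p$-orbit remains in $\mathcal{K}_p$ by invariance, and near that orbit $\mathcal{F}_p^{\ell}$ agrees with the original (unmodified) map, the modification having been performed only away from $\mathcal{K}_p$. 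Hence the stable-direction derivative of $\mathcal{F}_p^{\ell n}$ at $z$ is a product of $\ell n$ stable derivatives of $\mathcal{F}_p$ at points of $\mathcal{K}_p$, hence of modulus $\ge m(D\mathcal{F}_p)^{\ell n}$ (recall that $m(D\mathcal{F}_p)$, since $\dim\mathcal{E}^s_p=1$, is precisely the minimum over $\mathcal{K}_p$ of that contraction). Evaluating $\lambda_{p,\mathfrak{a},\alpha}$ at $x:=(s_p^{\beta})^{-1}(z)\in X$ and applying the chain rule gives $\lambda_{p,\mathfrak{a},\alpha}(x)\ge c\,m(D\mathcal{F}_p)^{\ell n}$, and since $\Lambda_{p,\mathfrak{a},\alpha}$ is the maximum of $\lambda_{p,\mathfrak{a},\alpha}$ over $X$ we are done.

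With this bound, and using formula (\ref{pii}) for the pressure together with $\#\mathcal{A}=N_{\varepsilon,\ell}$, I would estimate $\sum_{\alpha\in\mathcal{A}^n}\Lambda_{p,\mathfrak{a},\alpha}^{\delta_{d,s}}\ge N_{\varepsilon,\ell}^{\,n}\,c^{\delta_{d,s}}\,m(D\mathcal{F}_p)^{\ell n\,\delta_{d,s}}$; taking $\tfrac1n\log(\cdot)$, letting $n\to\infty$ and using $m(D\mathcal{F}_p)<1$ yields
$$\Pi_p(\delta_{d,s})\;\ge\;\log N_{\varepsilon,\ell}-\ell\,\delta_{d,s}\,\bigl|\log m(D\mathcal{F}_p)\bigr|\;=\;\ell\Bigl(\tfrac{\log N_{\varepsilon,\ell}}{\ell}-\delta_{d,s}\,\bigl|\log m(D\mathcal{F}_p)\bigr|\Bigr)\,,$$
whose right-hand side is strictly positive by inequality $(\diamond)$, valid on all of $\overline{\mathcal{P}}$ after the extension performed in Step 1. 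Thus $\Pi_p(\delta_{d,s})>0$ for every $p\in\overline{\mathcal{P}}$, proving the lemma.

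I expect no serious obstacle: the estimate is a direct consequence of $(\diamond)$ and the definition of the pressure. The only points needing a little care are justifying the lower bound $m(D\mathcal{F}_p)^{\ell n}$ — which requires a point of $W_p^{\beta}$ whose full forward orbit lies in $\mathcal{K}_p$ and near which $\mathcal{F}_p^{\ell}$ has not been altered, both immediate from invariance of $\mathcal{K}_p$ and the locality of the modification — and checking that the distortion constant $c$ coming from the reparameterizations is genuinely uniform over $\overline{\mathcal{P}}\times\overrightarrow{\mathcal{A}}$.
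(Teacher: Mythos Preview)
Your proof is correct and follows the same route as the paper's: both use that the $N_{\varepsilon,\ell}$ fiber contractions have contraction rate bounded below by (essentially) $m(D\mathcal{F}_p)^\ell$, and then invoke $(\diamond)$ together with the definition of the pressure/similarity dimension. Your version is simply more detailed --- you make the telescoping $\psi_{p,\mathfrak{a}}^{\alpha}=(s_p^{\mathfrak{a}})^{-1}\circ\mathcal{F}_p^{\ell n}\circ s_p^{\alpha\mathfrak{a}}$ explicit so that the chart distortion appears as a single constant $c$ independent of $|\alpha|$, and you justify the lower bound by evaluating at a point of $R_{p,\alpha\mathfrak{a}}\subset\mathcal{K}_p$ where the modification of $\mathcal{F}_p^\ell$ plays no role --- points the paper's two-line proof leaves implicit.
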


\begin{proof}
 The fact that the similarity dimension does not depend on the fiber is a  consequence of Proposition \ref{pression}. Moreover for a fixed fiber  $\mathfrak{a}$,  there are $N_{\varepsilon,\ell}$ contractions $f_{p,\mathfrak{a}}$ which contract by at most $m( D \mathcal{F}_p)^\ell$. Thus the result follows by the definition of the similarity dimension and inequality $(\diamond)$. 
\end{proof} 

\vspace{0.2cm}


\vspace{0.2cm}

\noindent Using backward sequences instead of forward sequences, we can define similarly families $(W^\alpha_p)_p$ of local unstable manifolds of $\mathcal{K}_p$ parameterized by maps $s^\alpha_p$ s.t.  the map $ \mathcal{F}_p^\ell$  restricted to some subset of each $\mathring{W}_p^{\sigma (\alpha )} := s^{\sigma(\alpha)}_p((-1,1))$ is a diffeomorphism onto $W_p^{\alpha}$
which expands strictly by $1/\lambda$. Moreover the local unstable manifold $W_p^{\alpha}$ intersects the local stable manifold $W_p^\mathfrak{a}$ at a unique point.

\begin{Remark} \label{remutil}
For any $C^r$-family $(\mathcal{F}_q)_{q \in \mathcal{Q}}$ of endomorphisms which extends $(\mathcal{F}_p)_{p \in \mathcal{P}}$ (by this, we mean that $\mathcal{Q}$ is a neighborhood of $\mathcal{P}$), by hyperbolic continuation, we can extend $(\mathcal{K}_p)_{p \in \mathcal{P}}$ into           the   continuation  $(\mathcal{K}_q)_{q \in \mathcal{Q}}$ of a  hyperbolic basic set of stable dimension 1, up to reducing $\mathcal{Q}$. Thus we can also extend the local stable and  unstable manifolds (using    Theorem F  in the Appendix) and so   the family $(F_p)_p$ into a      family $(F_q)_q$ of fiberwise $\lambda$-contracting $C^r$-skew-products. 
\end{Remark}

\noindent In the following, we prove Theorem B in the case $\ell=1$. The proof is the same when $\ell>1$, with a variant  in the proof of Proposition \ref{keylemme3} described in Remark \ref{keyrem3}.

\vspace{0.2cm}

\noindent   {\bf Step 2: Maps acting on jets.} 
  From $(F_p)_p$, we define a family of $C^2$-skew-products $(G_{p_0})_{p_0}$ acting on $s$-jets at any $p_0$. We begin by  performing this Step for $d=1$ for the sake of simplicity concerning the notations, and then treat the general case  $d \ge 2$.  \newline 
   
   \noindent 
   For any $p_0 \in \mathcal{P}’$ and $\mathfrak{a} \in \overrightarrow{\mathcal A}$, we define from $(f_{p, \mathfrak{a}})_p$ the following map $g_{p_0, \mathfrak{a}}$ acting on $s$-jets:
   $$g_{p_0, \mathfrak{a}} : (x_p, \partial_p x_p , \ldots, \partial^s_p x_p)_{ | p = p_0}  \mapsto   (f_{p, \mathfrak{a}} (x_p), \partial_p (f_{p, \mathfrak{a}} (x_p)) , \ldots, \partial^s_p (f_{p, \mathfrak{a}} ( x_p)))_{     | p = p_0} \, , $$
when defined.     Since  $(f_{p, \mathfrak{a}})_p$ is a $C^r$-family  of maps with $s \le r-2$, the family $(g_{p_0, \mathfrak{a}})_{p_0}$  is itself a  $C^2$-family  of maps. It depends continuously on $\mathfrak{a}$. 
  \noindent
   Notice that 
   $$ \partial_p (f_{p, \mathfrak{a}} (x_p))_{ | p = p_0}   = Df_{p_0} (x_{p_0}  ) \cdot  \partial_p (x_p)_{ | p = p_0} +  \partial_p (   f_p(   x_{p_0}    )      )_{ | p = p_0} \, .$$
   We notice that  $\partial_p (f_{p, \mathfrak{a}} (x_p))_{ | p = p_0}  $ has possibly a non zero derivative relatively to $x_{p_0}$ (independent of $\partial_p (x_p)_{ | p = p_0} $),  a derivative $Df_{p_0} (x_{p_0}  )$ relatively to $\partial_p (x_p)_{ | p = p_0} $ and  derivatives equal to zero relatively to each $ \partial^j_p (x_p)_{ | p = p_0} $ for $2 \le j \le s$.  \newline 
   
\noindent When $1 < k \le s$, the term $\partial^k_p (f_{p, \mathfrak{a}} (x_p))_{ | p = p_0}  $  has  possibly a non zero derivative relatively to $ \partial^j_p (x_p)_{ | p = p_0} $ for $0 \le j \le k-1$ (independent of $\partial^k_p (x_p)_{ | p = p_0} $),  a  derivative $Df_{p_0} (x_{p_0}  )$ relatively to $\partial^k_p (x_p)_{ | p = p_0} $ and derivatives equal to zero relatively to each $ \partial^j_p (x_p)_{ | p = p_0} $ for $k< j \le s$. To see this, it is enough to write the map $p \mapsto f_{p, \mathfrak{a}} (x_p)$ as the composition of $p \mapsto (p,x_p)$ and $(p,x) \mapsto f_{p, \mathfrak{a}} (x)$ and to apply the multidimensional Fa\`a di Bruno formula (see for example Theorem 2.1 in \cite{formule}).\newline 

\noindent We recall that by assumption we have $0<|Df_{p_0} (x_{p_0})|<\lambda<1$. Then    for any  $p_0 \in 
 \mathcal{P}’$ and $\mathfrak{a} \in \overrightarrow{\mathcal A}$, the map $g_{p_0, \mathfrak{a}}$ has inferior unipotent differentials with eigenvalues  uniformly bounded between 0 and 1 in modulus. \newline

\noindent 
We recall that  $\delta_{1,s} = s+1$. 
We define a set of $s$-jets $Y$, identified with a subset of  $\mathbb{R}^{s+1}$ as follows (the term of degree $i$ corresponding to the $i^{th}$-coordinate). We set: $$Y:= X \times [-R_1,R_1] \times \cdots \times [-R_s, R_s] \, .$$ 
\noindent We can choose $R_1$ large compared to the diameter of $X$ and $R_{i+1}$ large compared to $R_i$ for $1 \le i \le s-1$. 
Since its differentials are unipotent with non zero contracting  diagonal coefficients, an immediate induction shows that $g_{p_0, \mathfrak{a}}$ is a $C^2$-diffeomorphism from a small neighborhood $Y’$ of $Y$ onto  $g_{p_0, \mathfrak{a}}(Y’) \Subset Y$. Up to rescaling, we can suppose that $Y:=[-1,1]^{s+1}$.
The map $\mathfrak{a} \mapsto ((p,x) \mapsto f_{p,\mathfrak{a}} (x) )$ is H\"older for the $C^{r-1}$-topology and continuous for the $C^r$-topology
 and we have  $s \le r-2$. This implies that the map $\mathfrak{a} \mapsto ((p_0,y) \mapsto g_{p_0,\mathfrak{a}} (y))$ is H\"older for the $C^1$-topology and continuous for the $C^2$-topology. To summarize, we just prove that $(G_{ p_0})_{p_0}$, with
$$ G_{ p_0} : (\mathfrak{a},y)   \in    \overrightarrow{ \mathcal A}   \times Y  \mapsto  (\sigma (\mathfrak{a}), g_{p_0,\mathfrak{a}}(y)) \in    \overrightarrow{ \mathcal A}   \times Y \, , $$
is a    family of $C^2$-skew-products satisfying  the preliminary assumptions of Theorem C,   ${\bf(U)}$ and  $\Delta(p_0)>s+1=\delta_{1,s}$ for any $p_0 \in \overline{\mathcal{P}}$. \newline

\noindent   As already  said, we get the same result with $d \ge 2$ but with painful notations. Indeed, it is enough to remark that the action of $(f_{p, \mathfrak{a}})_p$ on jets of multiorder $(k_1,\ldots , k_d)$ with $\sum_i k_i \le s$ only depends on jets of the same multiorder, with a linear coefficient $Df_{p_0} (x_{p_0}  )$, and on jets  of multiorder $(k’_1,\ldots , k’_d)$ with $\sum_i k’_i <  \sum_i k_i$. In particular, its differentials still satisfy assumption {\bf (U)}.  

\vspace{0.2cm}

%
%
%
%
%
%
%
%
\noindent {\bf Step 3: Extending the family.}  
We now extend $(G_{p_0})_{p_0}$ to satisfy the transversality assumption {\bf (T)} inside a larger family.   But we have to ensure that this extension comes from an extension  of $(F_p)_p$, itself coming from an extension of $(\mathcal{F}_p)_{p}$.

   \begin{Proposition} \label{keylemme3}  There exists a  family  of $C^2$-skew-products $(G_{q_0})_{q_0 \in \mathcal{Q}}$:
    \begin{equation}  \label{exit3}
   G_{q_0} : (\mathfrak{a},y) \in \overrightarrow{ \mathcal A} \times  Y \mapsto  (\sigma (\mathfrak{a}), g_{q_0,\mathfrak{a}}(y)) \in    \overrightarrow{ \mathcal A}   \times Y  \, , \nonumber
    \end{equation}
    with  $\mathcal{Q}:=\mathcal{P}  \times (-1,1)^{m}$ for some  $m>0$,  
   satisfying  the preliminary assumptions of Theorem C,  {\bf (U)}, {\bf (T)}, $\Delta(q_0)> \delta_{d,s}$ for $q_0 \in \overline{\mathcal{Q}}$  and:
   $$G_{(p_0,0)} = G_{p_0} \text{ and } g_{(p_0 ,0), \mathfrak{a}} = g_{p_0  ,  \mathfrak{a}} \text{ for every } p_0  \in  \overline{\mathcal{P}}   \text{ and } \mathfrak{a } \in \overrightarrow{\mathcal{A}} \, .$$
   Moreover there exists a $C^r$-family $(\mathcal{F}_q)_{q \in \mathcal{Q}}$ of  local diffeomorphisms extending $(\mathcal{F}_p)_{p \in \mathcal{P}}$ s.t. if $ (F_q)_q=(f_{q,\mathfrak{a}})_q$ is its associated  family of $C^r$-skew-products, we have $f_{(p_0,0) ,  \mathfrak{a} } = f_{p_0 ,  \mathfrak{a}}$ and $g_{q_0,\mathfrak{a}}$ is the map  acting on the $s$-jets at $p_0$ derived from $(f_{q,\mathfrak{a}})_q$ (here  the jets at $p_0$ are taken varying $p$ for fixed $q’_0$ where $q_0:=(p_0,q’_0)$). The family  $(\mathcal{F}_q)_{q \in \mathcal{Q}}$ is   of the form $\mathcal{F}_q = \mathcal{F}_{(p,q’)} = \mathcal{F}_p + \Sigma_{p,q’} $ where $(\Sigma_{p,q’})_{(p,q’)}$ is a $C^r$-family s.t. $\Sigma_{p,0} = 0$. 
     \end{Proposition}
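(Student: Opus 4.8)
The plan is to construct the extra parameters $q' \in (-1,1)^m$ by adding to $\mathcal{F}_p$ a finite-dimensional family of localized perturbations supported near the pieces $R_{p,a}$, chosen so that the induced perturbations of the jet-maps $g_{p_0,\mathfrak{a}}$ move the limit points $\pi_{q_0,\mathfrak{a}}(\alpha)$ in transverse directions. First I would recall that by the construction of Step 1, each $\mathcal{F}_p$ restricted to $R_{p,a}$ is a local diffeomorphism, and after the reduction the rectangles $\overline{\mathcal{R}}_{a,k}$ are disjoint and interior to the Markov rectangles; so I can pick, for each letter $a \in \mathcal{A}$, a bump function supported in a small neighborhood of $R_{0,a}$ disjoint from the other $R_{0,b}$, and use it to build a perturbation $\Sigma_{p,q'}$ of the form $\mathcal{F}_p + \Sigma_{p,q'}$ that affects only the branch $f_{p,\mathfrak{a}}$ indexed by sequences starting with $a$ (via the identification $\psi^a_{p,\mathfrak{a}} = f_{p,a\mathfrak{a}}$ from Eq. \eqref{fpsi}). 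The number $m$ of extra parameters is dictated by how many independent "directions" are needed in the jet space $Y \subset \mathbb{R}^{\delta_{d,s}}$ to get the transversality estimate; since the leading term of $\pi_{q_0,\mathfrak{a}}(\alpha)$ is governed by $f_{q,\alpha_{-1}\mathfrak{a}}$, it suffices to independently translate the images $f_{p,a\mathfrak{a}}(X)$ and, at higher jet order, independently perturb the parameter-derivatives $\partial_p^j f_{p,a\mathfrak{a}}$ up to order $s$ — this is where I need $s \le r-2$, so that after two derivatives are "spent" the perturbation is still $C^2$ on jet space. Concretely I would take $m = \delta_{d,s} \cdot |\mathcal{A}|$ (or a multiple), one block of parameters per letter, each block a basis of the $\delta_{d,s}$-dimensional jet-translation directions realized by multiplying the bump by monomials in $p$.

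Next I would check that this extension preserves everything from Step 2. Smallness of $q'$ (reducing the neighborhood $(-1,1)^m$ if necessary) keeps $(\mathcal{F}_q)_q$ a family of local diffeomorphisms near $\mathcal{K}_q$, keeps the hyperbolic continuation $\mathcal{K}_q$ and its local stable/unstable manifolds well-defined (Remark \ref{remutil} and Theorem F), and keeps the contraction rate below $\lambda$; hence $(F_q)_q$ is a family of fiberwise $\lambda$-contracting $C^r$-skew-products, and the induced $(G_{q_0})_{q_0}$ is a family of $C^2$-skew-products satisfying the preliminary assumptions and {\bf (U)} exactly as in Step 2 (the unipotent structure is unaffected since the perturbation is still $C^r$ and the diagonal differential coefficient stays $Df_{q_0}(x_{q_0})$ with modulus in $(0,\lambda)$). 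The normalization $\Sigma_{p,0}=0$, $G_{(p_0,0)}=G_{p_0}$, $f_{(p_0,0),\mathfrak{a}}=f_{p_0,\mathfrak{a}}$ holds by construction since the bump functions are multiplied by $q'$. The inequality $\Delta(q_0) > \delta_{d,s}$ for $q_0 \in \overline{\mathcal{Q}}$ follows because the similarity dimension depends continuously on the map (Proposition \ref{pression}) and by $(\star)$ we have strict inequality $\Delta > \delta_{d,s}$ on the compact $\overline{\mathcal{P}}$, so it persists under small $q'$; equivalently, Lemma \ref{sim} applied to the perturbed family, whose branches still contract by at most $m(D\mathcal{F}_q)^\ell$ close to $m(D\mathcal{F}_p)^\ell$.

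The main obstacle is verifying the transversality assumption {\bf (T)}: for $\alpha_{-1}\ne\beta_{-1}$ and all $r>0$,
$$\mathrm{Leb}_d\{q_0'\in (-1,1)^m : \|\pi^{G}_{q_0,\mathfrak{a}}(\alpha)-\pi^{G}_{q_0,\mathfrak{a}}(\beta)\| < r\} \le C r^{\delta_{d,s}},$$
uniformly in $\mathfrak{a}$ and over the $\vartheta$-$\mathrm{U}$-perturbations. The key point is that since $\alpha$ and $\beta$ differ in their last letter, the first contraction applied — $\psi^{\alpha_{-1}}$ versus $\psi^{\beta_{-1}}$ — is perturbed by the parameter block attached to letter $\alpha_{-1}$, which by design does \emph{not} affect the $\beta$-branch (disjoint supports), so the difference $\pi^{G}_{q_0,\mathfrak{a}}(\alpha)-\pi^{G}_{q_0,\mathfrak{a}}(\beta)$ has, with respect to that block of $\delta_{d,s}$ parameters, a differential that is (up to the contraction factor $\Lambda_{q_0,\mathfrak{a},\alpha_{-1}}$, bounded below and above) a \emph{submersion} onto jet space — I would make this precise by differentiating $g_{q_0,a\mathfrak{a}}$ in $q'$ and reading off, via the Faà di Bruno computation of Step 2, that translating the $j$-th parameter-derivative of $f$ translates precisely the degree-$j$ component of the jet, so the Jacobian is (lower) triangular and invertible with uniformly bounded inverse. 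A submersion with uniformly bounded derivative satisfies exactly the desired sublevel-set volume bound $\le Cr^{\delta_{d,s}}$ by the co-area/implicit function theorem argument (this is the standard transversality-implies-measure-estimate step, cf. the model ${\bf(T_{aff})}$). The subtleties to handle carefully are: making the constant $C$ uniform in $\mathfrak{a}$ (using the Hölder dependence of $f_{\cdot,\mathfrak{a}}$ on $\mathfrak{a}$ and compactness of $\overrightarrow{\mathcal{A}}$), uniform over $t\in\mathcal{T}$ for $\vartheta$-$\mathrm{U}$-perturbations (using Lemma \ref{poisson} and Lemma \ref{ljungq}, shrinking $\vartheta$ so the submersion property is stable), and ensuring the parameter domain $\mathcal{Q}$ can be taken a product $\mathcal{P}\times(-1,1)^m$ and extended slightly past $\overline{\mathcal{Q}}$ as required by Theorem C — the latter just by choosing the original neighborhoods slightly larger. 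Finally, the remark about $\ell > 1$ (Remark \ref{keyrem3}) is only that the perturbation must be inserted at one of the $\ell$ steps along each orbit $x_a$ so as to still have disjoint supports along distinct branches, which works identically.
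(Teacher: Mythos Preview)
Your per-letter localization is too coarse for the submersion claim. Perturbing $\mathcal{F}_p$ on a neighborhood of $R_{p,a}$ with $a=\alpha_{-1}$ modifies $f_{q,\mathfrak{b}}$ for every $\mathfrak{b}$ with $\mathfrak{b}_0=a$, hence every factor $\psi^{a}_{q,\cdot}$ appearing in the infinite compositions that define \emph{both} limit points. Since $\beta$ may contain the letter $\alpha_{-1}$ at any depth $k\ge 2$, those factors contribute to $\partial_{q'}\pi_{q_0,\mathfrak{a}}(\beta)$ an amount of order $\sum_{k\ge 2}\lambda^{k-1}|T|=\tfrac{\lambda}{1-\lambda}|T|$, where $T$ is the jet-translation you insert at depth~$1$; the same tail contaminates $\partial_{q'}\pi_{q_0,\mathfrak{a}}(\alpha)$ beyond its leading term~$T$. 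So you only obtain $\bigl|\partial_{q'}(\pi(\alpha)-\pi(\beta))-T\bigr|\le \tfrac{2\lambda}{1-\lambda}|T|$, which yields a submersion only when $\lambda<1/3$. Nothing in the setup guarantees this (for $\ell=1$ the rate $\lambda$ is that of $\mathcal{F}_p$ on its stable bundle), and you do not arrange it by enlarging $\ell$; the ``disjoint supports'' step, as written, does not close.

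The paper's remedy is a much finer localization. One fixes $M$ large with $\lambda^M/(1-\lambda)$ small, covers the compact set $\Omega=\{(\mathfrak a,\alpha,\beta):\alpha_{-1}\ne\beta_{-1}\}$ by finitely many products of \emph{long} cylinders $[\rho_{\mathfrak a}]\times[\rho_\alpha]\times[\rho_\beta]$, and for each such product supports the bump function not on a whole first-letter rectangle but on a neighborhood of the stable leaves $W^{\mathfrak b}_p$ with $\mathfrak b$ in the thin cylinder $[\beta_{-1}\rho_{\mathfrak a}]\subset\overrightarrow{\mathcal A}$. The cylinders are chosen (Facts~\ref{f12}--\ref{f15}, with a separate Case~2 argument when $\mathfrak a$ is periodic and $\alpha$ ends in $\beta_{-1}\mathfrak p^f$) so that among the first $M$ preimages of both points only the depth-$1$ preimage of $\pi(\beta)$ meets the support. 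This forces $\partial_{q'}\pi(\alpha)=O\bigl(h\lambda^M/(1-\lambda)\bigr)$ while $\partial_{q'}\pi(\beta)\approx h$ (Lemma~\ref{cochon}), giving the required nonzero relative speed for \emph{any} $\lambda<1$. The perturbation is moreover directed along the stable field $v^s_p$, so that one can track the stable component of the $s$-jet cleanly despite the $q'$-dependence of the leaves $W^{\mathfrak a}_q$ themselves; your sketch does not isolate this component and would have to contend with the motion of the charts $s^{\mathfrak a}_q$ as well.
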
 \vspace{0.3cm}

\noindent      We now finish the proof of Theorem B. We postpone the proof of Proposition \ref{keylemme3} after it  since it is technical.

\vspace{0.2cm}

\noindent   {\bf Step 4: Conclusion.}  
 Let us pick the family $(G_{q_0})_{q_0}$ of  $C^2$-skew-products   satisfying  the preliminary assumptions of Theorem C,  {\bf (U)}, {\bf (T)} and $\Delta(q_0)>\delta_{d,s}$ for every $q_0 \in \overline{\mathcal{Q}}$ given by Proposition  \ref{keylemme3}. We will pick  well-chosen $\vartheta$-$\mathrm{U}$-perturbations and apply the second part of Theorem C. We will conclude by using the Fubini Theorem. \newline

\noindent 
We pick the continuation $(k_q)_q$ of a point $k_q \in \mathcal{K}_q$ at the intersection of  local stable and unstable manifolds  $W_q^{\mathfrak{a}}$ and $W_q^\alpha$ for arbitrary $\mathfrak{a} \in \overrightarrow{\mathcal{A}}$ and $\alpha \in \overleftarrow{\mathcal{A}}$. 
Up to working locally in the parameter space $\mathcal{Q}$, we can pick  a  $C^r$-family $(\Gamma_{t,q})_{t,q}$ of segments of same direction $u_\Gamma$ (these segments remain all parallel when varying $t$ and $q$), which do not intersect $W^s(\mathcal{K}_q)$ and which  intersect $(W_q^\alpha)_{t,q}$ in a curve $(z_{t,q})_{t,q}$ $C^0$-close to $(k_q)_{t,q}$. Moreover we choose $(\Gamma_{t,q})_{t,q}$ s.t. the $v_\Gamma$-coordinate $\mathcal{X}(q,t)$ in the basis $(u_\Gamma,v_\Gamma)$ of $\Gamma_{t,q}$,   
for a  fixed direction $v_\Gamma$ transversal to $\Gamma_{t,q}$,  is of the form $$\mathcal{X}(q,t):= \sum_{  | i |  \le s} t_i  p^i \text{  for  } t=(t_1, \cdots, t_{\delta_{d,s} } ) \in \mathcal{T}:= (-1,1)^{\delta_{d,s} }\,  $$
which does not depend on $q’$ but only on $p$ for $q=(p,q’)$. In particular, for any $q_0 \in \mathcal{Q}$, the map $t \in \mathcal{T} \mapsto  \mathrm{J }_{q_0}^s \mathcal{X}(q,t)$ sends  diffeomorphically $\mathcal{T}$  to a non empty set of $s$-jets in $p$  (and thus a set of $s$-jets in $p$ of  positive $\delta_{d,s}$-dimensional measure). \newline

\noindent By the parametric inclination Lemma \ref{refff} (in the Appendix), up to taking an inverse iterate, for any $t \in \mathcal{T}$, we can suppose that $(\Gamma_{t,q})_{q}$ is (uniformly in $t \in \mathcal{T}$) close to the continuation $(W^{\mathfrak{a}}_q)_q$ of a local stable manifold for some $\mathfrak{a} \in \overrightarrow{ \mathcal A}$. We can parameterize it  by    the segment  $X$ with a $C^r$-family of charts $C^r$-close to $(s^{\mathfrak{a}}_q)_q$. Still iterating backwards, for every $a \in \mathcal{A}$,   there is a family of  submanifolds $(\Gamma^a_{t,q})_{q}$ close to $(W^{a\mathfrak{a}}_q)_q$ (uniformly in $t \in \mathcal{T}$) s.t. $\Gamma^a_{t,q}$ is sent by $\mathcal{F}_q$ into the interior of $\Gamma_{t,q}$. We can parameterize $(\Gamma^a_{t,q})_{q}$ by $X$ with a family of charts  close to $(s^{a\mathfrak{a}}_q)_q$ and  in these charts the restriction of $\mathcal{F}_q$ from $\Gamma^a_{t,q}$ into $\Gamma_{t,q}$ defines  a family of maps $(\tilde{f}_{t,q,a \mathfrak{a} })_{q}$  $C^r$-close to $(f_{q,a \mathfrak{a} })_{q}$. Iterating backwards by induction, we can define  for every $\alpha \in \mathcal{A}^*$ families of curves $(\Gamma^\alpha_{t,q})_{q}$ which are (uniformly in $\alpha$) $C^r$-close to $(W^{\alpha \mathfrak{a}}_q)_q$ by Lemma \ref{refff} and   s.t. each  $\Gamma^{a \alpha}_{t,q}$ is sent into the interior of $\Gamma^\alpha_{t,q}$. This defines families of maps  $(\tilde{f}_{t,q,\alpha \mathfrak{a}} )_{q}$ $C^r$-close to $(f_{q, \alpha \mathfrak{a} })_{q}$ (uniformly in $t \in \mathcal{T}$ and $\alpha \in \mathcal{A}^*$). We set $\tilde{f}_{t,q,\mathfrak{b} } := f_{q, \mathfrak{b} }$ when 
  $\mathfrak{b}$ is not of the form $\alpha  \mathfrak{a}$ with $\alpha$ a non empty word. When looking at the action of the families of maps $(\tilde{f}_{t,q,\mathfrak{a} })_{q}$ on $s$-jets (w.r.t. $p$) for fixed values of $t$ and $\mathfrak{a}$,  one obtains a family of maps $(\tilde{g}_{t,q_0, \mathfrak{a}} )_{q_0}$ from $Y$ into itself $C^2$-close to $(g_{q_0, \mathfrak{a} })_{q_0}$ (uniformly in $t$ and $\mathfrak{a}$).
This defines a family  (indexed by $t$) of $\vartheta$-$\mathrm{U}$-perturbations of the family $(G_{q_0})_{q_0}$ of $C^2$-skew-products, with arbitrarily small $\vartheta$. \newline 

\noindent 
By  the second part of Theorem C, we have 
$$\mathrm{Leb}_{\delta_{d,s} }(  \tilde{K}_{t,q_0,\mathfrak{a}} ) >0 \text{ for }    \mathrm{Leb}_{d+m} \text{ a.e. } q_0 
  \in \mathcal{Q}          \text{ and  }     \mathrm{Leb}_{\delta_{d, s}} \text{  a.e. } t \in   \mathcal{T}       \, ,$$ where the limit set $ \tilde{K}_{t,q_0,\mathfrak{a}}$ is formed by
   jets at $p_0$ taken while varying $p$ for fixed $q’_0$ where $q_0 := (p_0,q’_0)$.      By the Fubini Theorem, for a.e. $q’_0$ we have:
   $$\mathrm{Leb}_{\delta_{d,s} }(   \tilde{K}_{t,(p_0,q’_0),\mathfrak{a}} ) >0 \text{ for }      \mathrm{Leb}_d \text{ a.e. } p_0 \in \mathcal{P}  \text{ and  for }  \mathrm{Leb}_{\delta_{d,s}} \text{  a.e. } t \in   \mathcal{T}     \, .$$ 
  We  fix such a $q’_0$. 
  We consider  $(\tilde{\mathcal{F}}_{p})_p := (\mathcal{F}_{(p,q’_0)})_p$ and $\tilde{\Gamma}_{t,p}:= \Gamma_{t,(p,q’_0)}$. We notice that $\tilde{K}_{t,(p_0,q’_0),\mathfrak{a}}$ is (in the charts) the set of the $s$-jets  at $p_0$   of the 
    intersection points between  the local unstable set of $\tilde{\mathcal{K}}_{p}$ and $\tilde{\Gamma}_{t,p}$. Moreover the   set of $s$-jets at any $p_0$  of    the $v_\Gamma$-coordinate  of $\tilde{\Gamma}_{t,p}$  in the basis $(u_\Gamma,v_\Gamma)$ when varying $t$ in $\mathcal{T}$ has positive $\delta_{d,s}$-dimensional Lebesgue measure. Since we have a  positive set of one-dimensional $s$-jets  in the direction of $u_\Gamma$ for a.e. $t \in \mathcal{T}$, we just have to use the Fubini Theorem  to conclude to a set of bidimensional $s$-jets of positive measure for a.e. $p_0 \in \mathcal{P}$. \newline
    
      \noindent
     The same proof works for every family $(\mathcal{G}_p)_p$ which is $C^r$-close to $(\mathcal{F}_p)_p$ with the extension $(\mathcal{G}_q)_{q \in \mathcal{Q}}$ given by $\mathcal{G}_q = \mathcal{G}_p + \Sigma_{p,q’} $ where $(\Sigma_{p,q’})_{(p,q’)}$ is the $C^r$-family given by     Proposition \ref{keylemme3}. Indeed, the preliminary conditions of Theorem C, {\bf (U)}, $\Delta(q_0)> \delta_{d,s}$ are open conditions and the extension in Proposition \ref{keylemme3} in order to get {\bf (T)} works for nearby families with the same additive perturbation since having a relative positive speed is an open property. Thus we can apply Theorem C to the family of $C^2$-skew-products derived from $(\mathcal{G}_q)_{q }$.
     This  achieves  the proof that  $(\mathcal{K}_{p})_p$ is an almost  $C^{r,s}$-parablender  and thus shows     Theorem B.  
    %
    %
    %
   %
%
 %
\begin{Remark} \label{retour}
When the family $(\mathcal{F}_p)_p$ is of the form $(\mathcal{F})_p$ with $\mathcal{F}$ independent of $p$ and when the order $s$ of the jets is equal to 0, the extension $(\mathcal{F}_q)_q$ satisfying the conclusions of Theorem B can be taken of the form $(\mathcal{F}+\Sigma_{q’})_{(p,q’)}$  with $(\Sigma_{q’})_{q’}$  independent of $p$. This will help to prove  Theorem A from  Theorem B. 
\end{Remark} 

   \begin{proof}[Proof of Proposition \ref{keylemme3}]
   The proof is divided in two Steps. The goal and main difficulty of the extension is about satisfying {\bf (T)}: we want to extend the family $(\mathcal{F}_p)_{p \in \mathcal{P}}$ by adding finitely many new parameters,  which will give a positive relative speed to pairs of limit points with different combinatorics, inside each fiber. We set
   $$   \Omega := \{ (\mathfrak{a},\alpha, \beta) \in 
 \overrightarrow{\mathcal{A}}  \times \overleftarrow{\mathcal{A}} \times \overleftarrow{\mathcal{A}}: \alpha_{-1} \neq \beta_{-1}  \}    $$  
 and by introducing new parameters, we will give a positive relative speed to the points coded by $\alpha$ and $\beta$   inside the fiber coded by $\mathfrak{a}$ for any  $(\mathfrak{a},\alpha, \beta) \in   \Omega$, 
 and this will give the transversality assumption {\bf (T)} inside the extended family. \newline 
 
\noindent  We begin by choosing a covering of  $ \Omega$ by small products of cylinders. The reason why we are going to work with a covering by small products of cylinders is that this will allow us to control precisely the relative movement of the two limit points. We will then extend  iteratively the original family  $(\mathcal{F}_p)_{p \in \mathcal{P}}$ by adding new parameters for each set of the covering.\newline 

\noindent We first pick an arbitrary number $h>0$. The extended family  $(\mathcal{F}_q)_{q \in \mathcal{Q}}$ will be taken s.t. the families  $(\mathcal{F}_{(p,q’)})_{p \in \mathcal{P}}$ are (uniformly in $q’$)  $h$-$C^r$-close to  $(\mathcal{F}_p)_{p \in \mathcal{P}}$. We pick  an integer $M$ large enough so that $ \lambda^M/(1-\lambda)$ is small. \newline

   %
 \noindent   {\bf Step 1: Working locally.}
 %
 %
 Let us pick any $(\mathfrak{a},\alpha, \beta) \in  \Omega$. 
 If $\mathfrak{a}$ is periodic of minimal period $\mathfrak{p} \in \mathcal{A}^*$, since $\alpha_{-1} \neq \beta_{-1}$, we can choose $\beta$ s.t. $\beta_{-1}$ is different from the last letter of $\mathfrak{p}$. 
 
\begin{Fact} \label{f12}
The sequences $\sigma^k(\mathfrak{a})$ where $0 \le k \le M$ are all distinct from $\beta_{-1}  \mathfrak{a}$. 
\end{Fact}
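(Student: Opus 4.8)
The plan is to argue by contradiction, reducing the claim to a periodicity obstruction and then invoking the normalization of $\beta_{-1}$ made just above the statement.

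First I would suppose that $\sigma^k(\mathfrak{a}) = \beta_{-1}\mathfrak{a}$ for some $0 \le k \le M$ and read off this equality of infinite words letter by letter. Comparing the zeroth letters of the two sides gives $\mathfrak{a}_k = \beta_{-1}$, while comparing all the subsequent letters (equivalently, applying $\sigma$ once more to both sides) gives $\sigma^{k+1}(\mathfrak{a}) = \mathfrak{a}$. In particular $\mathfrak{a}$ would be periodic, with $k+1 \ge 1$ a (not necessarily minimal) period.

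I would then split into two cases according to whether $\mathfrak{a}$ is periodic. If $\mathfrak{a}$ is not periodic, the identity $\sigma^{k+1}(\mathfrak{a}) = \mathfrak{a}$ is already absurd and we are done. If $\mathfrak{a}$ is periodic of minimal period $\mathfrak{p}$, set $\ell := |\mathfrak{p}|$; by the elementary fact that every period of a sequence of minimal period $\ell$ is a multiple of $\ell$ (immediate from Euclidean division together with minimality), we get $k+1 = m\ell$ for some integer $m \ge 1$, whence $\mathfrak{a}_k = \mathfrak{a}_{m\ell-1} = \mathfrak{a}_{\ell-1} = \mathfrak{p}_{\ell-1}$, the last letter of $\mathfrak{p}$. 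This contradicts $\mathfrak{a}_k = \beta_{-1}$, since $\beta_{-1}$ was chosen precisely so as to differ from the last letter of $\mathfrak{p}$ (possible because $\alpha_{-1} \ne \beta_{-1}$, so $\alpha_{-1}$ and $\beta_{-1}$ cannot both equal $\mathfrak{p}_{\ell-1}$, and one may swap the roles of $\alpha$ and $\beta$ if needed). In both cases we reach a contradiction, which proves the Fact.

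There is essentially no obstacle here: the only point deserving a line of justification is the elementary lemma on periods of a periodic sequence. Note incidentally that the bound $k \le M$ is not actually used — the argument yields the conclusion for every $k \ge 0$ — but this is the form that will be convenient later.
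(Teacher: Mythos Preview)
Your proof is correct and follows essentially the same approach as the paper's: the paper's proof is a two-line sketch (``If $\mathfrak{a}$ is not periodic, this is immediate. If $\mathfrak{a}$ is periodic, this is due to the fact that $\beta_{-1}$ is different from the last letter of its minimal period $\mathfrak{p}$''), and you have simply unpacked both cases in full detail. Your observation that the bound $k\le M$ plays no role is also accurate.
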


\begin{Fact}  \label{f15}
The sequences $\beta_{| k}  \mathfrak{a}$ where $2 \le k \le M$ are all distinct from $\beta_{-1}  \mathfrak{a}$. 
\end{Fact}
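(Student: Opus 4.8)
The plan is to obtain Fact \ref{f15} as an immediate consequence of Fact \ref{f12} by shifting, so that the whole proof reduces to symbolic bookkeeping with one‑sided sequences. I would fix $k$ with $2 \le k \le M$, argue by contradiction assuming $\beta_{| k}\mathfrak{a} = \beta_{-1}\mathfrak{a}$, and then apply the shift $\sigma^{k-1}$ to both sides of this equality.

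First I would record the two elementary identities needed for this step. Since the finite word $\beta_{| k} = (\beta_{-k}, \ldots, \beta_{-1})$ has length $k$ and ends with the letter $\beta_{-1}$, in the forward sequence $\beta_{| k}\mathfrak{a}$ the letter at position $k-1$ (counting positions from $0$) is $\beta_{-1}$ and the letters from position $k$ on form $\mathfrak{a}$; hence $\sigma^{k-1}(\beta_{| k}\mathfrak{a}) = \beta_{-1}\mathfrak{a}$, with no hypothesis on $\beta$ needed. On the other side, because $k \ge 2$, applying $\sigma^{k-1}$ to $\beta_{-1}\mathfrak{a}$ simply deletes the leading letter $\beta_{-1}$ together with the first $k-2$ letters of $\mathfrak{a}$, so $\sigma^{k-1}(\beta_{-1}\mathfrak{a}) = \sigma^{k-2}(\mathfrak{a})$.

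Plugging these identities into the assumed equality after applying $\sigma^{k-1}$ would give $\beta_{-1}\mathfrak{a} = \sigma^{k-2}(\mathfrak{a})$; since $0 \le k-2 \le M-2 \le M$, this contradicts Fact \ref{f12}, and the proof is finished. Alternatively, and self‑containedly, one could unwind $\beta_{| k}\mathfrak{a} = \beta_{-1}\mathfrak{a}$ letter by letter: matching positions forces $\mathfrak{a}$ to be periodic of period $k-1$ and forces $\beta_{-1} = \mathfrak{a}_{k-2}$, which, modulo the minimal period length $\mathfrak{q} = |\mathfrak{p}|$ (which divides $k-1$, so $k-2 \equiv \mathfrak{q}-1$), equals the last letter of the minimal period word $\mathfrak{p}$ — contradicting the choice of $\beta_{-1}$ made just before when $\mathfrak{a}$ is periodic; and when $\mathfrak{a}$ is not periodic at all, being periodic of period $k-1 \ge 1$ is already absurd.

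I do not expect any genuine obstacle here: the statement is elementary combinatorics on one‑sided words, and all the real content — arranging $\beta_{-1}$ to differ from the last letter of the minimal period of $\mathfrak{a}$ whenever $\mathfrak{a}$ is periodic — was already built in before Fact \ref{f12} and reused there. The only point requiring a little care is the position bookkeeping in the two shift identities; in particular $k \ge 2$ is exactly what makes $\sigma^{k-1}(\beta_{-1}\mathfrak{a}) = \sigma^{k-2}(\mathfrak{a})$ meaningful, which is why the range of $k$ in the statement starts at $2$.
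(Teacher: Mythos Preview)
Your proof is correct. Your alternative self-contained argument is exactly what the paper does (in one sentence, for Facts \ref{f12} and \ref{f15} simultaneously): if $\mathfrak{a}$ is not periodic the equality is immediately impossible, and if $\mathfrak{a}$ is periodic then the equality forces $\beta_{-1}$ to equal the last letter of the minimal period $\mathfrak{p}$, contradicting the choice of $\beta$. Your primary route via $\sigma^{k-1}$ and reduction to Fact \ref{f12} is just a minor repackaging of the same symbolic computation, not a genuinely different argument.
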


\begin{proof}[Proof of Facts \ref{f12} and  \ref{f15}]
If  $\mathfrak{a}$ is not periodic, this is immediate. If $\mathfrak{a}$ is periodic, this is due to the fact that $\beta_{-1}$ is different from the last letter of its minimal period $\mathfrak{p}$.
\end{proof}

\noindent From now,  we  distinguish two disjoint Cases which cover all possible tuples in $ \Omega$: \newline 

\noindent  {\bf Case 1: the sequences $\alpha_{| k}  \mathfrak{a}$ where $1 \le k \le M$ are all distinct from $\beta_{-1}  \mathfrak{a}$.} In this case, we consider the two cylinders $[\rho_\alpha]$ and $[\rho_\beta]$ of $ \overleftarrow{\mathcal{A}}$ of length $M$ defined by $\rho_\alpha := \alpha_{|M}$ and $\rho_\beta := \beta_{|M}$. We   pick a small cylinder $[\rho_{\mathfrak{a}}]$ of $\overrightarrow{\mathcal{A}}$ s.t.  $\ \mathfrak{a} \in [\rho_{\mathfrak{a}}]$ and:
\begin{itemize}
\item The cylinders $[\alpha_{|k} \rho_{\mathfrak{a}}]$ where $1 \le k \le M$ are all disjoint from $[\beta_{-1} \rho_{\mathfrak{a}}]$.  
\item The cylinders $[ \beta_{|k}  \rho_{\mathfrak{a}}]$ where $2 \le k \le M$ are all disjoint from $[\beta_{-1} \rho_{\mathfrak{a}}]$.
\end{itemize}

\noindent  {\bf Case 2: $\mathfrak{a}$ is periodic of minimal period $\mathfrak{p}$ and there exists $f>0$ s.t. the last letters of $\alpha$ are of the form $\beta_{-1}  \cdot \mathfrak{p}^f$.} We consider two small cylinders $[\rho_\alpha]$ and $[\rho_\beta]$ of $ \overleftarrow{\mathcal{A}}$ containing $\alpha$ and $\beta$, and a small cylinder $[\rho_{\mathfrak{a}}]$ of $\overrightarrow{\mathcal{A}}$ s.t.  $\ \mathfrak{a} \in [\rho_{\mathfrak{a}}]$ and:
\begin{itemize}
\item The cylinders $\sigma^k( [\rho_{\mathfrak{a}}])$ where $0 \le k \le M$ are all disjoint from $[\beta_{-1} \rho_{\mathfrak{a}}]$.
\item The cylinders $[ \beta_{|k}  \rho_{\mathfrak{a}}]$ where $2 \le k \le M$ are all disjoint from $[\beta_{-1} \rho_{\mathfrak{a}}]$. 
\end{itemize}

\noindent We take these cylinders so that their lengths are large compared to $M$.  \newline

\noindent
We notice that the compact set $ \Omega$ is covered by the union of the (open) products of cylinders $[\rho_{\mathfrak{a}}] \times [\rho_\alpha ] \times [\rho_\beta ]$ associated to any $(\mathfrak{a},\alpha, \beta) \in  \Omega$. We can find then a finite 
covering of $ \Omega$ by such products. 
  \newline

 \noindent   {\bf Step 2: Extension.} We now construct an extension of  $(\mathcal{F}_p)_p$ obtained by extending successively a finite number of times the family, adding at each step $\delta_{d,s}$ parameters corresponding to a product  of cylinders $[\rho] \times [\rho'] \times [\rho''] \subset  \Omega$ in the finite covering defined in Step 1. Here $[\rho]$ is a cylinder in $\overrightarrow{\mathcal{A}}$ and $[\rho']$ and $[\rho'']$ are cylinders in $ \overleftarrow{\mathcal{A}}$ s.t. $\rho'_{-1} \neq \rho''_{-1}$. The $\delta_{d,s}$ parameters are intended to move the $s$-jet of the limit point associated to $\alpha$ relatively to the one corresponding to $\beta$, inside the fiber encoded by $\mathfrak{a}$, where $(\mathfrak{a},\alpha, \beta)$ is any tuple in $[\rho] \times [\rho'] \times [\rho'']$. \newline 
 
 \noindent The union of the points of $R_{p,\mathfrak{a}}$ on  the  local stable manifolds $W^\mathfrak{a}_p$ for $\mathfrak{a} \in [ \rho''_{-1} \rho]$ is disjoint from the union of the points of  $R_{p,\mathfrak{a}}$  on the local stable manifolds $W^\mathfrak{a}_p$ for $\mathfrak{a} \notin [ \rho''_{-1} \rho]$  at every parameter $p \in \overline{\mathcal{P}}$. 
  We then pick a $C^r$-family $(h_p)_p$ of bump functions $h_p$ equal to $h$ in a neighborhood of the first ones and equal to 0 in a neighborhood of the second ones  for every $p \in \overline{\mathcal{P}}$. We also pick a $C^r$-family of maps $v^s_p$ s.t. $v^s_p(z)$ is close to  the stable  direction of $\mathcal{K}_p$ at $\mathcal{F}_p(z)$  for any point $z \in \mathcal{K}_p$. \newline

 \noindent   We now extend the family by setting for every $q:=(p,q’)$ with $p \in \mathcal{P}’$ and $q’=(q_i)_i \in \mathbb{R}^{\delta_{d,s}}$ small:
    
   \begin{equation} \label{pertureb} 
   \mathcal{F}_q(z) = \mathcal{F}_p(z)+  h_p(z)   \cdot  \big( \sum_{ i } q_{i}   \cdot   p^i  \big) \cdot v^s_p(z) \hspace{0.5cm} \forall z \in \mathbb{R}^2 \, , 
    \end{equation}      
    
    \noindent where we sum over $i=(i_1, \ldots , i_d )$  s.t. $\sum_k i_k \le s$ with  $p^i=p_1^{i_1} \cdots p_d^{i_d}$. \newline 
    
    \noindent This defines a $C^r$-family  of  endomorphisms $(\mathcal{F}_q )_q$. For small values of $q’$, let us say for $q$ in some open neighborhood  of  $\mathcal{P} \times \{0\}$,  these are still local diffeomorphisms. Moreover, the family of  hyperbolic basic sets admits a continuation as a family  $(\mathcal{K}_q)_q$ (see the Appendix).
\newline    
  
  \noindent 
  By Remark \ref{remutil}, we can define the  associated  family of $C^r$-skew-products $(F_{q})_q$ which extends $(F_p)_p$ and then the   associated map $(G_{q_0})_{q_0}$   acting on the $s$-jets derived from  $(F_{q})_q$, defined as in Step 2 of the proof of Theorem B (let us recall that  the jets are taken while varying only $p$ for fixed $q’$ if we set $q:=(p,q’)$). In particular, $(G_{q_0})_{q_0}$ is a    family of $C^2$-skew-products satisfying  the preliminary assumptions of Theorem C and also    ${\bf(U)}$. By Proposition \ref{pression}, the map $q \mapsto \Delta (q)$ is continuous. Thus, up to restricting the parameter space,  we have  $\Delta(q_0)>\delta_{d,s}$ for any $q_0$. \newline

\noindent We now prove that $(G_{q_0})_{q_0}$ satisfies the property {\bf (T)} restricted to any $(\mathfrak{a},\alpha, \beta) \in  [\rho] \times [\rho'] \times [\rho'']$.  More precisely we show below the following technical lemma:

\begin{Lemma} \label{cochon}
Up to reducing $\mathcal{Q}$, for every $\mathfrak{a} \in [\rho]$, $\alpha \in [\rho']$, $\beta \in [\rho'']$, $p_0  \in \mathcal{P}$ and $r>0$, with $q:=(p,q’)$, the set of $q’_0$ s.t. $\mathrm{J}_{p_0}^s \pi_{q,\mathfrak{a}}(\alpha)$ and    $\mathrm{J}_{p_0}^s \pi_{q,\mathfrak{a}}(\beta)$ are $r$-close is of Lebesgue measure dominated by $r^{\delta_{d,s}}$, with an independent constant. Moreover for every  family of $\vartheta$-perturbations  of $(G_{q_0})_{q_0}$ with $\vartheta$ small enough, $t \in \mathcal{T}$, $\mathfrak{a} \in [\rho]$, $\alpha \in [\rho']$, $\beta \in [\rho'']$, $p_0  \in \mathcal{P}$ and $r>0$, the set of $q’_0$ s.t. $\mathrm{J}_{p_0}^s \tilde{\pi}_{t,q,\mathfrak{a}}(\alpha)$ and    $\mathrm{J}_{p_0}^s    \tilde{\pi}_{t,q,\mathfrak{a}}(\beta)$ are $r$-close is of Lebesgue measure dominated by $r^{\delta_{d,s}}$, with the same constant.
\end{Lemma}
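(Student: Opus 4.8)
The plan is to reduce Lemma \ref{cochon} to a non-degeneracy statement for the dependence of the relevant jets on the new parameters, and then conclude by a standard transversality estimate. Fix $\mathfrak{a}\in[\rho]$, $\alpha\in[\rho']$, $\beta\in[\rho'']$ and $p_0\in\mathcal{P}$; write $q=(p,q'_0)$ with $q'_0=(q_i)_{|i|\le s}\in\mathbb{R}^{\delta_{d,s}}$ and set
$$G(q'_0):=\mathrm{J}^s_{p_0}\pi_{q,\mathfrak{a}}(\alpha)-\mathrm{J}^s_{p_0}\pi_{q,\mathfrak{a}}(\beta)\in\mathbb{R}^{\delta_{d,s}},$$
a $C^1$ map of $q'_0$ by Lemma \ref{ljungq}. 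Since the set in the statement is $\{q'_0:|G(q'_0)|<r\}$, it suffices to show, after possibly reducing $\mathcal{Q}$, that $DG(q'_0)$ is invertible with $\|DG(q'_0)^{-1}\|$ bounded by a constant depending only on the triple of cylinders (not on $\mathfrak{a},\alpha,\beta,p_0$, nor on $t$ in the perturbed statement). Indeed $DG$ then depends continuously on $(\mathfrak{a},\alpha,\beta)$ in the compact set $[\rho]\times[\rho']\times[\rho'']$ and on $p_0\in\overline{\mathcal{P}}$, hence uniformly continuously; by the inverse function theorem there is a uniform radius on balls of which $G$ is a diffeomorphism with $|\det DG^{-1}|\preceq 1$, and covering the bounded $q'_0$-domain by finitely many such balls and applying the change of variables formula on each yields $\mathrm{Leb}\{q'_0:|G(q'_0)|<r\}\preceq\mathrm{Leb}(B(r))\preceq r^{\delta_{d,s}}$.

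To estimate $DG$, I would expand each $\pi_{q,\mathfrak{a}}(\gamma)$, $\gamma\in\{\alpha,\beta\}$, along the recursion $\pi_{q,\mathfrak{a}}(\gamma)=\psi^{\gamma_{|k}}_{q,\mathfrak{a}}\big(\pi_{q,\gamma_{|k}\mathfrak{a}}(\sigma^k(\gamma))\big)$ and use that, by Eq. $(\ref{pertureb})$, the $q'_0$-perturbation alters only those maps $f_{q,\mathfrak{b}}$ whose fiber $\mathfrak{b}$ lies in $[\rho''_{-1}\rho]$, where moreover $h_p$ equals the constant $h$. The choices of Step 1 (Facts \ref{f12} and \ref{f15}) are designed so that among the first $M$ maps of the composition defining $\pi_{q,\mathfrak{a}}(\beta)$ only the outermost one, $f_{q,\beta_{-1}\mathfrak{a}}$, has fiber in $[\rho''_{-1}\rho]$, while in Case 1 none of the first $M$ maps of the composition defining $\pi_{q,\mathfrak{a}}(\alpha)$ does. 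Since $\lambda^M/(1-\lambda)$ is small and all maps are uniformly $C^r$, maps of depth $>M$ contribute to $\pi_{q,\mathfrak{a}}(\gamma)$, and to its $s$-jet in $p$ (here using the polynomial bound of Lemma \ref{polynome} to control the jets of the tail), a quantity whose $q'_0$-derivative has $C^0$-norm $\preceq\lambda^M$. Writing $f_{q,\beta_{-1}\mathfrak{a}}$ in the chart $s^{\mathfrak{a}}_p$ as $f_{p,\beta_{-1}\mathfrak{a}}(x)+\big(\sum_i q_i p^i\big)\kappa_p(x)$, with $\kappa_p$ a $C^r$ function of $(p,x)$ bounded away from $0$ (using that $v^s_p$ is close to the stable direction, which is the chart direction of the image manifold), one gets $\pi_{q,\mathfrak{a}}(\beta)=f_{p,\beta_{-1}\mathfrak{a}}(v_q)+\big(\sum_i q_i p^i\big)\kappa_p(v_q)+O(\lambda^M)$ with $v_q:=\pi_{q,\beta_{-1}\mathfrak{a}}(\sigma(\beta))$ and $\partial_{q'_0}v_q=O(\lambda^M)$. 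Hence $\partial_{q'_0}\mathrm{J}^s_{p_0}\pi_{q,\mathfrak{a}}(\beta)=-A(q'_0)+O(\lambda^M)+O(|q'_0|)$, where $A(q'_0)\cdot e_i=\mathrm{J}^s_{p_0}\big[p\mapsto p^i\kappa_p(v_q(p))\big]$; in monomial bases, $A(q'_0)$ is the composite of a linear isomorphism with inverse bounded uniformly in $p_0$ with jet-algebra multiplication by $\mathrm{J}^s_{p_0}[\kappa_p(v_q)]$, whose constant term $\kappa_{p_0}(v_q(p_0))$ is bounded away from $0$, so $A(q'_0)$ is invertible with inverse bounded uniformly.

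It remains to compare with the $\alpha$-side. In Case 1, $\partial_{q'_0}\mathrm{J}^s_{p_0}\pi_{q,\mathfrak{a}}(\alpha)=O(\lambda^M)$, so $DG(q'_0)=-A(q'_0)+O(\lambda^M)+O(|q'_0|)$. In Case 2, the $\alpha$-composition feels the perturbation through the same map $f_{q,\beta_{-1}\mathfrak{a}}$, but only after the extra contraction $\psi^{\mathfrak{p}^f}_{p,\mathfrak{a}}$ (a scalar of modulus $\lambda_{p,\mathfrak{a},\mathfrak{p}^f}(\cdot)\in(0,\lambda]$ on the one-dimensional fiber, itself unperturbed since the intermediate fibers are shifts of $\mathfrak{a}$); writing $u_q:=\pi_{q,\beta_{-1}\mathfrak{a}}(\sigma^{f|\mathfrak{p}|+1}(\alpha))$, whose $q'_0$-dependence is again $O(\lambda^M)$ by the combinatorial conditions (here one uses that $\beta_{-1}$ differs from the last letter of $\mathfrak{p}$), one gets $\partial_{q'_0}\mathrm{J}^s_{p_0}\pi_{q,\mathfrak{a}}(\alpha)=\pm\,\mathrm{J}^s_{p_0}[\lambda_{p,\mathfrak{a},\mathfrak{p}^f}(\cdot)]\cdot A'(q'_0)+O(\lambda^M)$, with $A'$ built from $\kappa_p(u_q)$ exactly as $A$ is from $\kappa_p(v_q)$. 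Choosing the charts $s^{\mathfrak{a}}_p$ of near-constant speed, $\kappa_p$ is near-constant, the constant terms of $A'$ and $A$ coincide, and $DG(q'_0)$ equals, up to $O(\lambda^M)+O(|q'_0|)$, the composite of $A(q'_0)$ with jet-algebra multiplication by $\mathrm{J}^s_{p_0}[\pm\lambda_{p,\mathfrak{a},\mathfrak{p}^f}(\cdot)-1]$, whose constant term has modulus $\ge 1-\lambda>0$; hence $DG(q'_0)$ is invertible with uniformly bounded inverse. Taking $M$ large and then reducing $\mathcal{Q}$ so the $O(\lambda^M)+O(|q'_0|)$ correction has norm smaller than half the least singular value of the invertible piece gives the first claim. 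For the $\vartheta$-perturbation statement, Lemma \ref{ljungq} gives that $\tilde\pi_{t,q,\mathfrak{a}}$ and its $s$-jets are $C^r$-close to $\pi_{q,\mathfrak{a}}$, uniformly in $t$, so the corresponding $D\tilde G$ differs from $DG$ by $O(\vartheta)$ and stays invertible with the same bound for $\vartheta$ small, and the measure estimate follows with the same constant. The hard part will be the combinatorial bookkeeping — checking in both Cases 1 and 2 that the fibers $\gamma_{|k}\mathfrak{a}$ avoid $[\rho''_{-1}\rho]$ for $k\le M$ except for $(\gamma,k)=(\beta,1)$ — together with the uniform $C^s$-in-$p$ control of the depth-$>M$ tails via the distortion estimates of Section \ref{difficile}.
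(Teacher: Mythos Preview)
Your approach is essentially the paper's: isolate the $q'$-derivative of the jet difference, show its leading part is invertible by exploiting that the perturbation hits the depth-$1$ map on the $\beta$-side but not the first $M$ maps on the $\alpha$-side (Case 1), or hits it only after an extra stable contraction (Case 2), with the tail beyond depth $M$ contributing $O(\lambda^M)$; then conclude by the change-of-variables estimate. The paper carries this out working with the $\mathbb{R}^2$-points $\Pi_{q,\mathfrak{a}}(\gamma)$ in stable/unstable coordinates, obtaining the lower-triangular structure of the action on stable $p$-jets directly from the Fa\`a di Bruno analysis of Step~2, and only transfers to the chart coordinate $\pi_{q,\mathfrak{a}}$ at the very end.

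One point to be careful about in your chart-based version: your displayed formula $f_{q,\beta_{-1}\mathfrak{a}}(x)=f_{p,\beta_{-1}\mathfrak{a}}(x)+\big(\sum_i q_ip^i\big)\kappa_p(x)$ tacitly treats the charts $s^{\mathfrak{a}}_q$ and $s^{\beta_{-1}\mathfrak{a}}_q$ as independent of $q'$. They are not: the local stable manifolds move under hyperbolic continuation, and in Case~1 there is no combinatorial condition on $\sigma^k(\mathfrak{a})$ ensuring the forward orbit avoids the support of $h_p$. This produces extra terms in $\partial_{q'}f_{q,\beta_{-1}\mathfrak{a}}$ that are not obviously $O(\lambda^M)$ or $O(|q'_0|)$. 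The paper's choice to work in ambient $\mathbb{R}^2$ coordinates sidesteps this entirely (the perturbation of $\mathcal{F}_q$ has the clean form Eq.~(\ref{pertureb}) there, with no charts involved), and this is probably the simplest fix for your write-up as well. Alternatively you can observe that the chart-dependence contributions enter both $\partial_{q'}\pi_{q,\mathfrak{a}}(\alpha)$ and $\partial_{q'}\pi_{q,\mathfrak{a}}(\beta)$ through the \emph{same} chart $s^{\mathfrak{a}}_q$ and hence cancel in $DG$ up to a term controlled by $|\Pi_{q,\mathfrak{a}}(\alpha)-\Pi_{q,\mathfrak{a}}(\beta)|$, which can be made small by shrinking the cylinders; but this requires an extra paragraph. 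Your reference to Lemma~\ref{polynome} for the tail is correct provided you mean it applied to the jet skew-product $(G_{q_0})_{q_0}$ (which satisfies {\bf (U)}), not to $(F_p)_p$.
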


\noindent We recall that the $s$-jets at $p_0$ are taken while varying $p$ around $p_0$ for a fixed value of $q’$ equal to $q’_0 $. In particular, by the Fubini Theorem,  the set of $(p_0,q’_0)$ s.t. $\mathrm{J}_{p_0}^s \pi_{q,\mathfrak{a}}(\alpha)$ and    $\mathrm{J}_{p_0}^s \pi_{q,\mathfrak{a}}(\beta)$ are $r$-close is also of  measure dominated by $r^{\delta_{d,s}}$ and the same holds for $\vartheta$-perturbations. The proof of Lemma \ref{cochon} is below. We first finish the proof of Proposition \ref{keylemme3}. \newline

\noindent We extend iteratively the initial family $(\mathcal{F}_p)_p$ a finite number of times by the same method. At each step, we extend by adding $\delta_{d,s}$ new parameters corresponding to a new product  of cylinders $[\rho] \times [\rho'] \times [\rho'']$ in the finite covering of $ \Omega$  defined in Step 1. The adaptation of Lemma \ref{cochon} is straightforward. 
 This proves property {\bf (T)} and so     $(G_{q_0})_{q_0 \in \mathcal{Q}}$ is a  family  of $C^2$-skew-products, with  $\mathcal{Q}:=\mathcal{P}  \times (-1,1)^{m}$ for some  $m>0$ (up to rescaling). This   concludes the proof. \end{proof}

\begin{proof} [Proof of Lemma \ref{cochon}]
We first prove the result for the Case 1 in Step 1  of the proof of Proposition \ref{keylemme3}. 
Let $\Pi_{q,\mathfrak{a}}(\alpha)$ and    $\Pi_{q,\mathfrak{a}}(\beta)$ be the points of the phase  space $\mathbb{R}^2$ equal to $\pi_{q,\mathfrak{a}}(\alpha)$ and    $\pi_{q,\mathfrak{a}}(\beta)$ in the parametrization of $W^{\mathfrak{a}}_q$. Both belong to $W^{\mathfrak{a}}_q$. \newline 

\noindent 
We pick local coordinates  in a    neighborhood of $\Pi_{0,\mathfrak{a}}(\alpha)$ centered at  $\Pi_{0,\mathfrak{a}}(\alpha) $ with a basis given by $\mathbb{R} \cdot e_0^s+\mathbb{R} \cdot e_0^u$, where we denoted by $\mathbb{R} \cdot e_0^s$ and $\mathbb{R} \cdot e_0^u$ the stable and unstable directions of $\overleftrightarrow{\mathcal{K}_0}$ at   $\Pi_{0,\mathfrak{a}}(\alpha)$. We write:
$$ \mathrm{J}_{p_0}^s \Pi_{q,\mathfrak{a}}(\alpha) =: \mathcal{J}_{p_0}^s \Pi_{q,\mathfrak{a}}(\alpha) \cdot e_0^s + \mathcal{J}_{p_0}^u \Pi_{q,\mathfrak{a}}(\alpha)  \cdot e_0^u \, . $$ 
\noindent Take care that the s on the left hand term is the order of the jet and the s on the right hand term  just means  « stable ». 
We can proceed similarly for $\Pi_{0,\mathfrak{a}}(\beta)$. \newline



\noindent
{\bf 1. Easy case: 0-jets.} We first perform the proof for 0-jets to show the general idea. We fix the parameter $p_0=0$.
We begin by studying $\Pi_{q,\mathfrak{a}}(\alpha)$, more precisely the variations of the  0-jet of $\Pi_{(0,q’),\mathfrak{a}}(\alpha)$, i.e.   $\Pi_{(0,q’),\mathfrak{a}}(\alpha)$  itself, when moving $q’$. In order to do this, we set 
$$\Pi_0(q’):=\Pi_{(0,q’),\mathfrak{a}}(\alpha)$$ and denote by $\Pi_k(q’)$ its preimage by $\mathcal{F}^k_{(0,q’)}$ on $W^{\alpha_{ |  k }  \mathfrak{a}}_{(0,q’)}$ (this is the point equal to $\pi_{(0,q’),  \alpha_{ |  k }  \mathfrak{a}}(\sigma^k ( \alpha ))$ in the parametrization of the stable manifold).  We pick local coordinates centered at each $P_k:=\Pi_k(0)$ with a basis given by the corresponding preimages $\mathbb{R} \cdot e_k^s$ and $\mathbb{R} \cdot e_k^u$ of $\mathbb{R} \cdot e_0^s$ and $\mathbb{R} \cdot e_0^u$ by $\mathcal{F}^k_0$ for $k>0$. These are the stable and unstable directions of $\overleftrightarrow{\mathcal{K}_0}$ at   $P_k$. In the decomposition $\mathbb{R} \cdot e_k^s+\mathbb{R} \cdot e_k^u$, we write
 $$            \Pi_k(q’)  =:      \Pi^s_k(q’)   \cdot e_k^s +    \Pi^u_k(q’)   \cdot e_k^u  \, .  $$ \noindent  In the coordinates given by   $P_{k+1} + \mathbb{R} \cdot e_{k+1}^s + \mathbb{R} \cdot e_{k+1}^u$ and $P_k + \mathbb{R} \cdot e_k^s+\mathbb{R} \cdot e_{k}^u$ with $P_{k+1}=P_k=(0,0)$, the map $\mathcal{F}_0$ restricted to a neighborhood of $P_{k+1} = (0,0)$ sends $P_{k+1}=(0,0)$ to $P_k=(0,0)$ and is $C^1$-close to its differential which is diagonal. In particular, the $(1,1)$-coefficient is a real number $\lambda_k$ s.t. $| \lambda_k| < \lambda<1$. \newline

\noindent By hyperbolic  continuation, there exists    $B>0$ independent of $k$ and $M$ s.t.:
$$   C_k := \left| \frac{d  \Pi^s_k }{dq’} (0)    \right| < B  \, .$$
\noindent We recall that $\Pi_{k+1} (q’)$ is sent onto $\Pi_k(q’)$ by $\mathcal{F}_{(0,q’)}$. Moreover $\mathcal{F}_{(0,q’)} (\Pi_{k+1} (q’))$ is the sum of $\mathcal{F}_{0} (\Pi_{k+1} (q’))$ and a term $\Sigma_{k}(q’) = \Sigma^s_k(q’) \cdot e_k^s+       \Sigma^u_k(q’)  \cdot            e_k^u$  coming from Eq. $(\ref{pertureb})$. In particular, we notice that  we have: $$  \left| \frac{d \Sigma^s_k}{dq’} (0) \right|  < 2h \, .$$
\noindent We have: $$ \frac{d\Pi^s_{k}}{dq’} (0)= \lambda_k  \cdot \frac{d \Pi^s_{k+1}}{d q’} (0) + \frac{d  \Sigma^s_k}{d q’} (0) \, .$$
$$ C_k < \lambda_k \cdot C_{k+1} + 2h  \, .$$ 
\noindent We also notice that $\Sigma_k(q’)$ is equal to  zero when $0 \le k \le  M-1$ by the first item of Case 1. This gives for every $M’>M$:
$$ C_0 < B \cdot \lambda^{M’} + 2h \cdot (\lambda^{M’-1} + \cdots +\lambda^M) \, .$$
\noindent By taking $M’$ large, we get:
$$ C_0 \le 2  h \frac{ \lambda^{M}} {1-  \lambda} \, ,$$
\noindent which is small compared to $h$ by assumption. We finally get that the derivative of $ \Pi_{(0,q’),\mathfrak{a}}(\alpha) = \Pi_0(q’)$ at $q’=0$  in the stable direction $\mathbb{R} \cdot e_0^s$ is small compared to $h$:
$$\frac{d}{dq’} \Pi^s_{(0,q’),\mathfrak{a}}(\alpha) \text{  is small compared to  }  h \, . $$
The same holds true  when     replacing $\Pi_{(0,q’),\mathfrak{a}}(\alpha)$ by the preimage $\Pi’_1(q’)$ of $\Pi_{(0,q’),\mathfrak{a}}(\beta)$ by $\mathcal{F}_{(0,q’)}$ on $W^{\beta_{ -1 }  \mathfrak{a}}_{(0,q’)}$, using the second item of Case 1. By  Eq. $(\ref{pertureb})$ and since $ e_0^s$ is close to $ v_0^s(\Pi’_1(0))$, the derivative $\frac{d}{dq’} \Pi^s_{(0,q’),\mathfrak{a}}(\beta)$ is then  close to $h$. Thus:
$$ \frac{d}{dq’}  \big( \Pi^s_{(0,q’),\mathfrak{a}}(\alpha) - \Pi^s_{(0,q’),\mathfrak{a}}(\beta) \big) \text{   is  bounded away from }(0,0) \, .$$ 
\noindent In particular, up to reducing $\mathcal{Q}$,  the set of $q’_0$ s.t. $\mathrm{J}_{0}^0 \Pi_{q, \mathfrak{a}}(\alpha)$ and    $\mathrm{J}_{0}^0 \Pi_{q,\mathfrak{a}}(\beta)$ are $r$-close is of Lebesgue measure dominated by $r$ with an independent constant. We can proceed the same way for any $p_0$, and the domination constant is independent. Integrating using the Fubini Theorem, the set $(p_0,q’_0)$ s.t. $\mathrm{J}_{p_0}^0 \Pi_{q, \mathfrak{a}}(\alpha)$ and    $\mathrm{J}_{p_0}^0 \Pi_{q,\mathfrak{a}}(\beta)$ are $r$-close is of Lebesgue measure dominated by $r$, and thus it is also the case for $\mathrm{J}_{p_0}^0 \pi_{q, \mathfrak{a}}(\alpha)$ and    $\mathrm{J}_{p_0}^0 \pi_{q,\mathfrak{a}}(\beta)$. 
The statement about $\vartheta$-perturbations follows easily with the same arguments when $\vartheta$ is small. 
\newline

\noindent  {\bf 2. General Case: $s$-jets.} Now we go to the general but more difficult case of $s$-jets. This time, we vary also the parameter $p$. We fix $p_0=0$. 
We  investigate the differentials of $\mathrm{J}_{0}^s \Pi_{q,\mathfrak{a}}(\alpha)$ and    $\mathrm{J}_{0}^s \Pi_{q,\mathfrak{a}}(\beta)$ when derivating relatively to $q’$. 
 \newline  

\noindent   We keep the same coordinates for each $k$: we keep expressing the point $\Pi_k(p,q’)$ (depending also on $p$ this time) in the coordinates $P_k+\mathbb{R} \cdot e_k^s+\mathbb{R} \cdot e_k^u$ (independent of $(p,q’)$ small).  Let  $\mathcal{J}_{0}^s \Pi_k(q’)$ and $\mathcal{J}_{0}^u \Pi_k(q’)$ be the components in this basis  of the  $s$-jet of $\Pi_k(p,q’)$ at $p_0=0$ for $q’$ fixed. Note that by hyperbolic  continuation, there exists $B’>0$ independent of $k$ and $M$  s.t.:
$$   C’_k := \left| \frac{d  \mathcal{J}_{0}^s \Pi_k}{dq’} (0) \right|< B’  \, .$$

\noindent For $q’$ fixed  the family $(\Pi_k(p,q’))_p$ is the image of  $(\Pi_{k+1}(p,q’))_p$ by the family of maps $(\mathcal{F}_{(p,q’)})_p$. The family $(\mathcal{F}_{(p,q’)} (\Pi_{k+1} (p,q’)))_p$ is the sum of $(\mathcal{F}_{p} (\Pi_{k+1} (p,q’)))_p$ and of  the family $(\Sigma_{p,k})_p$ of perturbations coming from Eq. $(\ref{pertureb})$.  In particular: $$  \left| \frac{d \mathcal{J}_{0}^s \Sigma_{k,p}}{dq’} (0) \right| \preceq 2 \delta_{d,s} h  \, .$$

\noindent  Again $\Sigma_{k,p}(q’)$ is equal to  zero when $0 \le k \le  M-1$ by the second item of Case 1. However this time the action of $(\mathcal{F}_p)_p$ on $s$-jets is more complicated than on 0-jets. For each $k$, we write the Taylor expansion of $\mathcal{F}_p$ in $P_{k+1}$ of order $s$. Then we replace each of its coefficients by its $s$-jet in $p$, and the variables by $ \mathcal{J}_{0}^s \Pi_{k+1}(q’)$ and $\mathcal{J}_{0}^u \Pi_{k+1}(q’)$, and we expand this expression. This shows that
 $$ \mathcal{J}_{0}^s \Pi_{k}(q’) = \mathcal{M}_k \cdot \mathcal{J}_{0}^s \Pi_{k+1}(q’)+\mathcal{M}_k’ \cdot \mathcal{J}_{0}^u \Pi_{k+1}(q’)+ \mathcal{J}_{0}^s \Sigma_{k,p}(q’)\, .$$  
 
 \noindent The term $\mathcal{J}_{0}^s \Sigma_{k,p}(q’)$ has  its derivative at $q’=0$ bounded by $2 \delta_{d,s} h$, and its 
  coefficients in $p^i$ depends only on $q_i$. The matrix $\mathcal{M}_k$  is inferior triangular with all its diagonal coefficients equal to $\lambda_k$ with $|\lambda_k|<\lambda<1$. On the other hand, the matrix $\mathcal{M}_k’$  is inferior triangular with all its diagonal coefficients equal to $0$. Iterating, using the first  item of Step 1 and then taking the derivative at $q’=0$, we see that 
$$ \frac{d}{dq’} \mathcal{J}_{0}^s \Pi_{(p ,q’),\mathfrak{a}}(\alpha)$$

\noindent  is inferior triangular with all its diagonal coefficients bounded by 
$$ 2 \delta_{d,s} h   \frac{ \lambda^{M}} {1-  \lambda} \, ,$$
\noindent  which is small compared to $h$ by assumption on $M$. Thus for every $i = (i_1, \ldots , i_d)$ s.t. $\sum_k i_k \le  s$,  the  coordinate in $p^i$  of the  stable   component  of the $s$-jet $\mathrm{J}_{0}^s \Pi_{q,\mathfrak{a}}(\alpha)$   has  small derivative compared to $h$  while moving the parameter $q_i$ around 0. \newline

\noindent The same holds for the preimage $\Pi’_1(p,q’)$ of $\Pi_{q,\mathfrak{a}}(\beta)$ by $\mathcal{F}_{q}$. By  Eq. $(\ref{pertureb})$ and since $\mathbb{R} \cdot e_0^s$ is close to $\mathbb{R} \cdot v_0^s(\Pi’_1(0,0))$, for every $i = (i_1, \ldots , i_d)$ s.t. $\sum_k i_k \le  s$,  the  coordinate in $p^i$  of the  stable  component of the $s$-jet $\mathrm{J}_{0}^s \Pi_{q,\mathfrak{a}}(\beta)$   has a derivative close to $h$ (up to a non zero independent multiplicative constant) while moving the parameter $q_i$. Thus the  coordinate in $p^i$  of  $\mathrm{J}_{0}^s \Pi_{q,\mathfrak{a}}(\alpha)-\mathrm{J}_{0}^s \Pi_{q,\mathfrak{a}}(\beta)$ has a non zero derivative while moving $q_i$ around 0.  
The same still holds for other values of  $p_0$.  Then, up to reducing $\mathcal{Q}$, the set  of $(p_0,q’_0)$ s.t. $\mathrm{J}_{p_0}^s \Pi_{q, \mathfrak{a}}(\alpha)$ and    $\mathrm{J}_{p_0}^s \Pi_{q,\mathfrak{a}}(\beta)$ are $r$-close is of Lebesgue measure dominated by $r^{\delta_{d,s}}$, and thus it is also the case for  $\mathrm{J}_{p_0}^s \pi_{q, \mathfrak{a}}(\alpha)$ and    $\mathrm{J}_{p_0}^s \pi_{q,\mathfrak{a}}(\beta)$. This gives the result, using the Fubini Theorem. The statement about $\vartheta$-perturbations follows easily with similar  arguments. This ends the proof in Case 1.\newline

\noindent In Case 2, the proof is more simple. First take the original sequences $\alpha$, $\beta$ and $\mathfrak{a}$ of Case 2 around which the cylinders $[\rho_\alpha ]$, $[\rho_\beta ]$ and $[\rho_{\mathfrak{a}}]$ are centered. The  coordinate in $p^i$  of the  stable  component of the $s$-jet $\mathrm{J}_{0}^s \Pi_{q,\mathfrak{a}}(\beta)$   has still  a non zero derivative    while moving the parameter $q_i$ (this only needs the second  item of Case 2 and not the first item of Case 1 not present in Case 2).  
On the other hand, $\Pi_{q,\mathfrak{a}}(\alpha)$ is the image of $\Pi_{q,\mathfrak{a}}(\beta)$ by some iterate of $\mathcal{F}_q$ restricted on the $M$ first images of $W^{\mathfrak{a}}_q$ since $\mathfrak{a}$ is periodic of period $\mathfrak{p}$ and the last letters of $\alpha$ are of the form $\beta_{-1}  \cdot \mathfrak{p}^f$ for some  $f>0$. These local stable manifolds do not depend on  $q’$ by the first item of Case 2. Moreover the action of $(\mathcal{F}_q)_q$ restricted to these stable manifolds on stable components of jets  is inferior triangular with diagonal coefficients between 0 and 1. Thus the  coordinate
 in $p^i$  of  $\mathrm{J}_{0}^s \Pi_{q,\mathfrak{a}}(\alpha)-\mathrm{J}_{0}^s \Pi_{q,\mathfrak{a}}(\beta)$ has again a non zero derivative while moving $q_i$. Since the lengths of the cylinders are large, this remains true for any $(\alpha, \beta, \mathfrak{a}) $ in $[\rho_\alpha ] \times [\rho_\beta ] \times [\rho_{\mathfrak{a}}]$.
 We conclude as in Case 1, which ends the proof.
\end{proof}

\begin{Remark} \label{keyrem3}
When $\ell =1$, to give a relative movement  inside the $\mathfrak{a}$-fiber   to the points encoded by $\alpha$ and $\beta$, we  considered the preimages of these two points by $\mathcal{F}_p$ (respectively on  $W^{\alpha_{-1} \mathfrak{a}}_p$ and $W^{\beta_{-1} \mathfrak{a}}_p$). The second one was not periodic and distinct from the first one. We then perturbed $\mathcal{F}_p$ in a neighborhood of this second preimage. \newline

\noindent
In the case when $\ell >1$, we shall look at the $\ell$  respective successive preimages of these two points by $\mathcal{F}_p$ (respectively on $W^{\alpha_{-1} \mathfrak{a}}_p$ and $W^{\beta_{-1} \mathfrak{a}}_p$ and their $\ell-1$ successive images  by $\mathcal{F}_p$) and take the first pair of preimages which are distinct. One of them is not periodic, and we perform the same perturbation as before in a neighborhood of this point. Then the proof is the same with the same  distinction in two cases  whether the $M$ successive  preimages of the other preimage intersect this neighborhood or not. 
\end{Remark}

   \section{Appendix}

   \subsection*{Proofs of Intermediate Lemmas \ref{polynome}, \ref{hol} and \ref{xy}}

     \begin{proof}[Proof of Lemma \ref{polynome}]
Let us fix $p \in  \overline{\mathcal{P}}$, $\mathfrak{a} \in  \overrightarrow{ \mathcal A}$,  $x \in X$ and $\alpha = (\alpha_{-n}, \cdots, \alpha_{-1}) \in \mathcal{A}^n$ for some $n >0$. We notice that the differential  $D\psi_{p,\mathfrak{a}}^\alpha(x)$ can be written as the product of $n$ factors:
\begin{equation} \label{lemme1eq11}
D\psi_{p,\mathfrak{a}}^\alpha(x)=    \prod_{k=1}^n  D\psi_{p, \mathfrak{a}_k}^{\alpha_{-k}} \big( \psi_{p, \mathfrak{a}_{k+1} }^{\alpha_{-k-1}} \circ \cdots \circ \psi_{p, \mathfrak{a}_n }^{\alpha_{-n}}(x) \big)  \text{ with }   \mathfrak{a}_k:=\alpha_{|  k-1} \mathfrak{a} 
\end{equation}
By ${\bf (U)}$, each of these $n$ factors is unipotent inferior  and then can be written as a sum of $N$ terms:
\begin{equation} \label{lemme1eq111}
D\psi_{p, \mathfrak{a}_k}^{\alpha_{-k}} \big( \psi_{p, \mathfrak{a}_{k+1} }^{\alpha_{-k-1}} \circ \cdots \circ \psi_{p, \mathfrak{a}_n }^{\alpha_{-n}}(x) \big) = M_{0}  + M_1 + \cdots + M_{N-1} \, .
\end{equation} 
Here $M_{0}$ is a diagonal matrix which has all its diagonal coefficients of absolute value equal to the coefficient  $\lambda_{p, \mathfrak{a}_k,\alpha_{-k}} \big( \psi_{p, \mathfrak{a}_{k+1} }^{\alpha_{-k-1}} \circ \cdots \circ \psi_{p, \mathfrak{a}_n }^{\alpha_{-n}}(x) \big)$. On the other side, for every $1 \le k \le N-1$, all the coefficients of $M_k$ are equal to 0 except possibly on the $k^{th}$ (small) diagonal  line below the (great) diagonal line. \medskip

\noindent   We can write each of the $n$ factors of Eq. $(\ref{lemme1eq11})$ as in Eq. $(\ref{lemme1eq111})$ and then expand $D\psi_{p,\mathfrak{a}}^\alpha(x)$ as a sum of $N^n$ factors of $n$ terms. Among them, any such product with $\ge N$ matrices having zero coefficients on and above  the great diagonal line vanishes. Thus, between the $N^n$ terms whose sum equals  $D\psi_{p,\mathfrak{a}}^\alpha(x)$, there are at most 
   \begin{equation} \label{lemme1eq11111}
  \tilde{P}(n):= {n \choose N-1} \cdot (N-1)^{N-1} + {n \choose N-2} \cdot (N-1)^{N-2}  + \cdots {n \choose 1}  \cdot (N-1) + 1
  \end{equation}     
  which are non zero, with $\tilde{P}$ polynomial. Each of these $\le \tilde{P}(n)$ terms $\mathcal{M}_1 \cdots \mathcal{M}_n$ is a product of $n$ factors $\mathcal{M}_k$. \medskip
  
\noindent  Let us consider such a product $\mathcal{M}_1 \cdots \mathcal{M}_n$. At most $N-1$ of the $\mathcal{M}_k$  have all their coefficients equal to 0 except possibly on one of the small diagonal lines below  the (great) diagonal line. The $\ge n-(N-1)$ other factors are all diagonal matrices. Each coefficient of the resulting product $\mathcal{M}_1 \cdots \mathcal{M}_n$ is then either zero or equal to the product of $n$  non zero coefficients $c_k$, with $c_k$ a non zero coefficient of $\mathcal{M}_k$. If $\mathcal{M}_k$ is diagonal, $|c_k|$ is equal to $\lambda_{p, \mathfrak{a}_k,\alpha_{-k}} \big( \psi_{p, \mathfrak{a}_{k+1} }^{\alpha_{-k-1}} \circ \cdots \circ \psi_{p, \mathfrak{a}_n }^{\alpha_{-n}}(x) \big)$. If not, $|c_k|$ is bounded by some independent constant $C_1$ since for every $a \in \mathcal{A}$, the $C^1$-bounded map $\psi_{p, \mathfrak{a} }^a$ depends continuously   on $p \in \overline{\mathcal{P}}$ and $\mathfrak{a} \in  \overrightarrow{ \mathcal A}$.  By Eq. $(\ref{produitchinne})$, we have:
  \begin{equation} 
   \lambda_{p,\mathfrak{a},\alpha}(x)  = \prod_{k=1}^{n}       \lambda_{p, \mathfrak{a}_k, \alpha_{-k}} \big( \psi_{p, \mathfrak{a}_{k+1} }^{\alpha_{-k-1}} \circ \cdots \circ \psi_{p, \mathfrak{a}_n }^{\alpha_{-n}}(x) \big)  \text{ with }  \mathfrak{a}_k:=\alpha_{|  k-1} \mathfrak{a}   \, .
   \end{equation} 

\noindent   Moreover, each of the coefficients $  \lambda_{p, \mathfrak{a}_k, \alpha_{-k}} \big( \psi_{p, \mathfrak{a}_{k+1} }^{\alpha_{-k-1}} \circ \cdots \circ \psi_{p, \mathfrak{a}_n }^{\alpha_{-n}}(x) \big) $ is larger than $\gamma'$. 
 Thus the resulting coefficient of  $\mathcal{M}_1 \cdots \mathcal{M}_n$ is smaller than $\lambda_{p,\mathfrak{a},\alpha}(x) \cdot (\gamma’)^{-N+1} \cdot C^{N-1}_{1}$ in modulus. Thus  any coefficient of $D\psi_{p,\mathfrak{a}}^\alpha(x)$ is bounded by $P(n) \cdot \lambda_{p,\mathfrak{a},\alpha}(x)$, where $P(n):= \tilde{P}(n) \cdot (\gamma’)^{-N-1}  \cdot C^{N-1}_{1}$ is a positive polynomial on $\mathbb{R}_+$.  \end{proof}

   \begin{proof}[Proof of Lemma \ref{xy}] 
For every $p \in \overline{\mathcal{P}}$, $\mathfrak{a} \in \overrightarrow{\mathcal A}$, $n \ge 0$, $\rho \in \mathcal{A}^n$ and $(\alpha, \beta) \in  \mathcal C_\rho$, the points $ \pi_{p,\mathfrak{a}} (\alpha)$ and  $\pi_{p,\mathfrak{a}} (\beta)$ are the respective images of $\pi_{p,\rho  \mathfrak{a}} (\sigma^n (\alpha))$ and $\pi_{p,\rho \mathfrak{a}}(\sigma^n(\beta))$ by $\psi_{p,\mathfrak a}^\rho$. Let us denote by $v = (v_1, \cdots , v_N) $ the vector $v:=\pi_{p, \rho  \mathfrak{a}} (\sigma^n(\alpha))-\pi_{p,\rho  
\mathfrak{a}}(\sigma^n(\beta))$. We denote by $j \in \{1, \cdots, N\}$ the maximal index s.t.  $|v_j | > 2N \cdot P(n) \cdot |v_i |$ for every $i<j$, where the polynomial $P$ was defined in Lemma \ref{polynome}. Using this, it follows:
  \begin{equation} \label{lemme1eq3}
  |v_j|  \ge F \cdot  (2N P(n) )^{-N} \cdot ||  v ||   
  \end{equation}
  for some positive constant $F$ (depending only on $N$). The segment between the two points $\pi_{p, \rho \mathfrak{a}} (\sigma^n(\alpha))$ and $\pi_{p,\rho \mathfrak{a}}(\sigma^n(\beta))$ is fully included in $X$ since $X$ is convex. Let $\chi: [0,1] \rightarrow \mathbb{R}$ be the $C^1$-map which sends $x \in [0,1]$ to the $j^{th}$ coordinate of $\psi_{p,\mathfrak a}^\rho(\pi_{p,\rho \mathfrak{a}}(\sigma^n(\beta))+xv )$.  By the mean value equality, there exists $x \in (0,1)$ s.t.:
   \begin{equation} \label{lemme1eq57}
  \chi(1) -\chi(0) = \chi'(x) = \sum_{i=1}^{ j}  a_{j,i} \cdot v_i 
  \end{equation} 
where $a_{j,i}$ is the coefficient of index $(j,i)$ of the differential $D \psi_{p,\mathfrak a}^\rho (y) $ with $y:=  \pi_{p,\rho \mathfrak{a}}(\sigma^n(\beta)) + xv $. The right-hand equality is due to the fact that   $D\psi_{p,\mathfrak a}^\rho( y )$ is unipotent inferior by ${\bf (U)}$. We notice that $| a_{j,j}  |= \lambda_{p,\mathfrak a,\rho} (y) $.  By Lemma \ref{polynome},  $|  a_{j,i}  |$ is smaller than $P(n) \cdot \lambda_{p,\mathfrak a,\rho} (y)$. By Eq. $(\ref{lemme1eq57})$, we then have:
   \begin{equation} \label{lemme1eq577}
  |\chi(1) -\chi(0)|  \ge     \lambda_{p,\mathfrak a,\rho} (y)   \cdot   |v_j |  -    \sum_{i<j}  P(n) \cdot \lambda_{p,\mathfrak a,\rho} (y) \cdot   \frac{| v_j |  }{ 2N \cdot P(n)}  \ge     \frac{ \lambda_{p,\mathfrak a,\rho} (y) \cdot | v_j |    }{2} \,  .
  \end{equation} 
Noticing that $|| \pi_{p,\mathfrak{a}} (\alpha)-\pi_{p,\mathfrak{a}} (\beta) ||  \ge   |\chi(1) -\chi(0)| $ and injecting Eq. $(\ref{lemme1eq3})$ in Eq. $(\ref{lemme1eq577})$, it then holds:
   \begin{equation} \label{lemme1eq31}
 || \pi_{p,\mathfrak{a}} (\alpha)-\pi_{p,\mathfrak{a}} (\beta) ||  \ge   \frac{F}{2} \cdot  (2N P(n) )^{-N} \cdot \lambda_{p,\mathfrak a,\rho } (y) \cdot  ||   \pi_{p,\rho  \mathfrak{a}} (\sigma^n(\alpha)) - \pi_{p,\rho \mathfrak{a}}(\sigma^n(\beta)) ||   \, . 
   \end{equation}
  By Lemma \ref{bdsk} and noting $R(n):= \frac{2D_1}{F}   \cdot (2N P(n) )^{N} $ which is positive on $\mathbb{R}_+$, we have:
   \begin{equation} \label{lemme1eq317}
 || \pi_{p,\mathfrak{a}} (\alpha) - \pi_{p,\mathfrak{a}} (\beta) ||  \ge      \frac{\Lambda_{p,\mathfrak a,\rho}}{R(n)}   \cdot  ||   \pi_{p,\rho  \mathfrak{a}} (\sigma^n(\alpha)) -   \pi_{p,\rho \mathfrak{a}}(\sigma^n(\beta) )||   \, . 
   \end{equation}
   \noindent The proof of the second item is similar and we apply Lemma \ref{poisson} to conclude. 
   \end{proof}

   \begin{proof}[Proof of Lemma \ref{hol}] We notice that the coefficient $  \lambda_{p, \mathfrak{a}, \alpha_{-1}}  \big( \pi_{p, \alpha_{-1} \mathfrak{a}   }(\sigma (\alpha))  \big)$ is positive and uniformly distant from 0 and $+\infty$. Since $\log$ is $C^1$ on $]0,+\infty[$, we just have to show that the following map  is  H\"older with positive exponent on its domain:
$$ (\alpha, \mathfrak{a} )     \mapsto  \lambda_{p, \mathfrak{a}, \alpha_{-1}}  \big(   \pi_{p, \alpha_{-1} \mathfrak{a}   }(\sigma (\alpha))  \big) \, .$$ Let us recall that the latter is the $(1,1)$ coefficient of $   D\psi_{p, \mathfrak{a} }^{ \alpha_{-1}}( \pi_{p, \alpha_{-1} \mathfrak{a}   }(\sigma (\alpha)) ) $ (up to the sign). By assumption the map $\mathfrak{a} \in \overrightarrow{ \mathcal A} \mapsto Df_{p,\mathfrak{a}}$ is H\"older for the $C^0$-topology and so it is enough to show that the map $(\alpha, \mathfrak{a} ) \in   \overleftarrow{ \mathcal A} \times \overrightarrow{ \mathcal A}     \mapsto \pi_{p,\mathfrak{a}} (\alpha) \in X$ is itself H\"older. By hyperbolicity,  the map $\alpha \in   \overleftarrow{ \mathcal A}     \mapsto \pi_{p,\mathfrak{a}} (\alpha) \in X$ is H\"older for any $\mathfrak{a}  \in  \overrightarrow{ \mathcal A}$, with exponent and constant independent of $ \mathfrak{a}   $. Thus it is enough to show that  the  map  $\mathfrak{a}  \in  \overrightarrow{ \mathcal A}    \mapsto \pi_{p,\mathfrak{a}} (\alpha) \in X$ is H\"older for any $\alpha \in  \overleftarrow{ \mathcal A}$, with independent constants. But using again both the hyperbolicity and that $\mathfrak{a}  \mapsto  f_{p,\mathfrak{a}}$ and  $\mathfrak{a}  \mapsto D f_{p,\mathfrak{a}}$ are H\"older for the $C^0$-topology, we see that the maps $\mathfrak{a} \mapsto \psi_{p,\mathfrak{a}}^{\alpha_n}(0)$ are H\"older, with exponent and constant independent of $p$, $\alpha$ and $n$. But this sequence converges uniformly to the map $\mathfrak{a}   \mapsto \pi_{p,\mathfrak{a}} (\alpha)$, which concludes the proof. 
 \end{proof}

   \subsection*{Proof of Distortion Lemmas \ref{bdsk}, \ref{gfsk}, \ref{fisk} and \ref{poisson}}

  \begin{proof}[Proofs of Lemma \ref{bdsk}]
  The Lemma will follow easily from the two following:
  
  \begin{Sublemma}\label{sub11}
  There exists $A>0$ s.t. for any  $p \in \overline{\mathcal{P}}$, $\mathfrak{a} \in \overrightarrow{ \mathcal A}$, $a \in \mathcal{A}$, it holds: 
  $$  1-A |x-y|    \le    \frac{   \lambda_{p,\mathfrak{a},a}(x) }{\lambda_{p,\mathfrak{a}, a}  (y) } \le 1 + A  |x-y|  \hspace{0.5cm} \forall \text{ } x,y \in X   \, . $$  
  \end{Sublemma}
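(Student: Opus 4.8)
The statement to prove is Sublemma~\ref{sub11}: a one-step bounded-distortion estimate for the eigenvalue $\lambda_{p,\mathfrak{a},a}(x)$ of the unipotent differential $D\psi^a_{p,\mathfrak{a}}(x)$, namely that the ratio $\lambda_{p,\mathfrak{a},a}(x)/\lambda_{p,\mathfrak{a},a}(y)$ is squeezed between $1-A|x-y|$ and $1+A|x-y|$ for a uniform constant $A$.

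\begin{proof}[Proof of Sublemma \ref{sub11}]
The plan is to use that $\lambda_{p,\mathfrak{a},a}(x)$ is, up to sign, the $(1,1)$-coefficient of $D\psi^a_{p,\mathfrak{a}}(x)=Df_{p,a\mathfrak{a}}(x)$, since this differential is inferior unipotent by assumption {\bf (U)}, so all its diagonal entries coincide and equal (in modulus) its unique eigenvalue. First I would note that the map $(p,\mathfrak{a},x)\mapsto Df_{p,\mathfrak{a}}(x)$ is continuous on $\overline{\mathcal{P}}\times\overrightarrow{\mathcal A}\times X'$, and that $(p,\mathfrak{a},x)\mapsto D^2 f_{p,\mathfrak{a}}(x)$ is continuous as well (this is part of the definition of a $C^r$-skew-product, $r\ge 2$). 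By compactness of $\overline{\mathcal{P}}$, $\overrightarrow{\mathcal A}$ and $X$, the second differential $D^2 f_{p,a\mathfrak{a}}$ is bounded in norm on $X$ uniformly in $p\in\overline{\mathcal{P}}$, $\mathfrak{a}\in\overrightarrow{\mathcal A}$, $a\in\mathcal{A}$ (recall $\mathcal A$ is finite); call this bound $A_0$. Hence the $(1,1)$-coefficient $x\mapsto \lambda_{p,\mathfrak{a},a}(x)$ (or its negative) is a $C^1$ function of $x\in X$ with derivative bounded in modulus by $A_0$ uniformly in $(p,\mathfrak{a},a)$, so $|\lambda_{p,\mathfrak{a},a}(x)-\lambda_{p,\mathfrak{a},a}(y)|\le A_0|x-y|$ for all $x,y\in X$ by the mean value inequality.

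Next I would divide by $\lambda_{p,\mathfrak{a},a}(y)$: from Eq.~$(\ref{ineqpres})$ applied with $|\alpha|=1$ (or directly from the pointwise bound $\gamma'<\lambda_{p,\mathfrak{a},a}(x)<\gamma$ stated just after Eq.~$(\ref{produitchinne})$), the denominator is bounded below by $\gamma'>0$ uniformly. Therefore
$$\left|\frac{\lambda_{p,\mathfrak{a},a}(x)}{\lambda_{p,\mathfrak{a},a}(y)}-1\right|\le\frac{A_0}{\gamma'}\,|x-y|\,,$$
and setting $A:=A_0/\gamma'$ gives the claimed two-sided bound $1-A|x-y|\le \lambda_{p,\mathfrak{a},a}(x)/\lambda_{p,\mathfrak{a},a}(y)\le 1+A|x-y|$, with $A$ independent of $p$, $\mathfrak{a}$ and $a$. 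This proves the Sublemma.
\end{proof}

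The only mild subtlety — not really an obstacle — is making sure the sign issue is handled consistently: $\lambda_{p,\mathfrak{a},a}(x)$ is defined as the \emph{absolute value} of the eigenvalue, while the $(1,1)$-entry of $Df_{p,a\mathfrak{a}}(x)$ could a priori change sign with $x$; but since that entry is a continuous function on the connected set $X$ that never vanishes (its modulus is $\ge\gamma'>0$), it has constant sign, so $\lambda_{p,\mathfrak{a},a}$ equals $\pm$ that entry with a fixed sign, and the $C^1$-bound on the entry transfers verbatim to $\lambda_{p,\mathfrak{a},a}$. Everything else is a direct application of the mean value inequality together with the uniform bounds coming from compactness, exactly as in the distortion arguments that will be used repeatedly in the sequel; Lemma~\ref{bdsk} then follows from Sublemma~\ref{sub11} (and the analogous one-step statement) by multiplying the $n$ factors in Eq.~$(\ref{produitchinne})$ and controlling the product $\prod_k(1+A\,|\cdot|)$ using the exponential contraction of the intermediate points, which is the standard telescoping argument.
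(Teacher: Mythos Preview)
Your proof is correct and follows essentially the same route as the paper: identify $\lambda_{p,\mathfrak{a},a}(x)$ with the $(1,1)$-entry of $D\psi^a_{p,\mathfrak{a}}(x)$ up to sign, use the uniform $C^2$-bound to get a Lipschitz estimate $|\lambda_{p,\mathfrak{a},a}(x)-\lambda_{p,\mathfrak{a},a}(y)|\le \tilde A\,|x-y|$, and divide by the lower bound $\gamma'$. Your added remark on why the sign is constant on the connected set $X$ is a welcome clarification that the paper leaves implicit.
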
 
  \begin{proof} The non zero number $  \lambda_{p,\mathfrak{a},a}(x)$ is the $(1,1)$ coefficient of the differential $   D\psi_{p, \mathfrak{a} }^a(x) $ (up to the sign). Since the maps $\psi_{p,\mathfrak{a} }^a$ are uniformly (in $p$, $\mathfrak{a} $ and $a$) $C^2$ bounded, denoting by $\tilde{A}$   a uniform bound of the second differential of $\psi_{p,\mathfrak{a} }^a$ on $X$ among $p \in \overline{\mathcal{P}}$, $\mathfrak{a} \in \overrightarrow{ \mathcal A}$, $a \in \mathcal{A}$,  the coefficient $  \lambda_{p,\mathfrak{a},a}(x)$ is between $     \lambda_{p,\mathfrak{a},a}(y) -\tilde{A} |x-y|$ and $    \lambda_{p,\mathfrak{a},a}(y) + \tilde{A} |x-y|$. We notice that $\gamma' <   \lambda_{p,\mathfrak{a},a}(y) $. Denoting $A:= \tilde{A} /\gamma'$ and taking the quotient, we get the desired inequality.
  \end{proof}

 \begin{Sublemma}\label{sub12} 
  There exists $A'>0$ such that for every $p \in \overline {\mathcal{P}}$, $
  \mathfrak{a} \in \overrightarrow{ \mathcal A}$, $n \ge 0$, $\alpha \in \mathcal{A}^n$, $x,y \in X$, the points $\psi_{p, \mathfrak{a} }^\alpha(x)$ and $\psi_{p, \mathfrak{a} }^\alpha(y)$ are $A' \cdot \gamma^{n/2}$ distant. 
  \end{Sublemma}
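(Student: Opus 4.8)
The plan is to bound the operator norm of the differential $D\psi_{p,\mathfrak{a}}^{\alpha}$ uniformly on $X$ and then invoke the mean value inequality, the only delicate point being that, because of the unipotent (hence non-conformal) structure, the norm of a product of $n$ such differentials is controlled by the fiber contraction rate only up to a polynomial factor in $n$; this factor will be absorbed by weakening the exponent from $\gamma^{n}$ to $\gamma^{n/2}$.

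First, I would fix $p \in \overline{\mathcal{P}}$, $\mathfrak{a} \in \overrightarrow{\mathcal{A}}$, $\alpha \in \mathcal{A}^{n}$ and $z \in X$, and describe the matrix $D\psi_{p,\mathfrak{a}}^{\alpha}(z)$: by ${\bf (U)}$ it is inferior unipotent, so its entries strictly above the diagonal vanish, its diagonal entries have modulus $\lambda_{p,\mathfrak{a},\alpha}(z)$, and by Lemma \ref{polynome} every entry below the diagonal has modulus at most $P(n)\cdot \lambda_{p,\mathfrak{a},\alpha}(z)$. Since $\lambda_{p,\mathfrak{a},\alpha}(z) \le \Lambda_{p,\mathfrak{a},\alpha} < \gamma^{n}$ by Eq. $(\ref{ineqpres})$, bounding the operator norm by the sum of the moduli of the entries produces a polynomial $\tilde{P}$, positive on $\mathbb{R}_{+}$ and depending only on $N$ and $P$, such that
\[
\sup_{z \in X} \| D\psi_{p,\mathfrak{a}}^{\alpha}(z) \| \le \tilde{P}(n)\, \gamma^{n}.
\]

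Next, using that $X$ is convex, the segment $[x,y]$ stays inside $X \subset X'$, where $\psi_{p,\mathfrak{a}}^{\alpha}$ is defined and of class $C^{1}$; the mean value inequality then gives
\[
\| \psi_{p,\mathfrak{a}}^{\alpha}(x) - \psi_{p,\mathfrak{a}}^{\alpha}(y) \| \le \tilde{P}(n)\, \gamma^{n}\cdot \mathrm{diam}(X).
\]
To finish, I would write $\tilde{P}(n)\gamma^{n} = \big(\tilde{P}(n)\gamma^{n/2}\big)\cdot \gamma^{n/2}$ and set $A' := \mathrm{diam}(X)\cdot \sup_{n \ge 0}\tilde{P}(n)\gamma^{n/2}$, which is finite because $\gamma < 1$ forces $\tilde{P}(n)\gamma^{n/2} \to 0$ as $n \to \infty$; this yields $\| \psi_{p,\mathfrak{a}}^{\alpha}(x) - \psi_{p,\mathfrak{a}}^{\alpha}(y) \| \le A'\gamma^{n/2}$ uniformly in all the data (the case $n=0$ being covered since $\tilde{P}(0) \ge N \ge 1$). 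I do not expect any real obstacle here: the entire difficulty has been outsourced to Lemma \ref{polynome}, which prevents the norms of the successive unipotent factors from accumulating faster than polynomially, and the remaining steps form a standard convexity-plus-mean-value argument.
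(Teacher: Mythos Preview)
Your proof is correct and follows essentially the same approach as the paper's: both combine Lemma \ref{polynome} with the bound $\Lambda_{p,\mathfrak{a},\alpha} < \gamma^{|\alpha|}$ from Eq.~(\ref{ineqpres}) to control the differential, and then absorb the resulting polynomial factor in $n$ into the weakened exponent $\gamma^{n/2}$. The paper's proof is a one-line sketch of exactly this argument; you have simply written out the mean value step and the definition of $A'$ explicitly.
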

  
  \begin{proof}
  It is an immediate consequence of the inequality $\Lambda_{ p,  \mathfrak{a} , \alpha}    < \gamma^{|\alpha|}$   and of Lemma \ref{polynome} that the diameter of $\psi_{p, \mathfrak{a} }^\alpha(X)$ is dominated by $\gamma^{|\alpha|/2}$. 
  \end{proof} 
  
  \noindent
  We can now conclude.  Using Eq. $(\ref{produitchinne})$ we write both $ \lambda_{p,\mathfrak{a},\alpha}(x)$ and $ \lambda_{p,\mathfrak{a},\alpha}(y)$ as products of $n$ factors and thus their quotients as:
  $$ \frac{   \lambda_{p,\mathfrak{a},\alpha}(x) }{\lambda_{p,\mathfrak{a},\alpha}  (y) } = \prod_{k=1}^{n}    \frac{   \lambda_{p, \mathfrak{a}_k, \alpha_{-k}} \big( \psi_{p, \mathfrak{a}_{k+1} }^{\alpha_{-k-1}} \circ \cdots \circ \psi_{p, \mathfrak{a}_n }^{\alpha_{-n}}(x) \big) }{   \lambda_{p, \mathfrak{a}_k, \alpha_{-k}} \big( \psi_{p, \mathfrak{a}_{k+1} }^{\alpha_{-k-1}} \circ \cdots \circ \psi_{p, \mathfrak{a}_n }^{\alpha_{-n}}(y) \big)  }    $$ 
  where we set again $\mathfrak{a}_k:=\alpha_{|  k-1}  \mathfrak{a}   \in \overrightarrow{ \mathcal A}$.
 Using the two Sublemmas, the previous quotient is between $\prod_{k=1}^{n}  (1-AA' \cdot \gamma^{n/2})$ and $\prod_{k=1}^{n}  (1+AA' \cdot \gamma^{n/2})$. Since $0<\gamma<1$, the infinite products $\prod  (1 \pm AA' \cdot \gamma^{n/2})$  converge and their limits are respectively in $(0,1)$ and $(1,+\infty)$, which concludes the proof.  \end{proof}

   \begin{proof}[Proof of Lemma \ref{gfsk}] 
   Let us fix $\eta>0$. By Sublemmas $(\ref{sub11})$ and  $(\ref{sub12})$, there exist $n_0 \in \mathbb{N}$ s.t. for every $p \in \overline{\mathcal{P}}$, $\mathfrak{a} \in \overrightarrow{ \mathcal A}$, $n > n_0$, $\alpha \in \mathcal{A}^n$, $a \in \mathcal{A}$ and $x,y \in \psi_{p, \mathfrak{a} }^\alpha(X)$, we have $e^{-\eta}<  \lambda_{p,\mathfrak{a},a}(x) /     \lambda_{p,\mathfrak{a}, a}  (y)< e^{\eta}$. We recall that the map $\psi_{p, \mathfrak{a} }^a$ depends continuously in the $C^2$-norm on $\mathfrak{a} \in \overrightarrow{ \mathcal A}$ and $p$ in $\overline{\mathcal{P}}$ (both compact sets). Thus there exists $\delta(\eta)>0$ such that for every $\mathfrak{a} \in \overrightarrow{ \mathcal A}$ and $p_1, p_2 \in \mathcal{P}$ with $||p_1-p_2|| < \delta(\eta)$, we have $e^{-\eta}<\lambda_{p_1,\mathfrak{a},a}(x) /     \lambda_{p_2,\mathfrak{a}, a}  (y)< e^{\eta}$ for every $n > n_0$, $\alpha \in \mathcal{A}^n$, $a \in \mathcal{A}$, $x \in \psi_{p_1,\mathfrak{a}}^\alpha(X)$ and $y \in \psi_{p_2,\mathfrak{a}}^\alpha(X)$. We denote by $D_2>0$ the maximum of the quotients $\lambda_{p_1,\mathfrak{a},\alpha}(x) /     \lambda_{p_2,\mathfrak{a},\alpha}  (y)$ among $p_1,p_2 \in \overline{\mathcal{P}}$,  $\mathfrak{a} \in \overrightarrow{ \mathcal A}$, $n \le n_0$,  $\alpha \in \mathcal{A}^{n}$ and $x,y \in X$. We conclude by noticing that for every $n>n_0$ and $x,y \in X
   $, we have:
   $$\frac{\lambda_{p_1,\mathfrak{a},\alpha}(x)}{ \lambda_{p_2,\mathfrak{a},\alpha}  (y)}    =       \frac{  \lambda_{p_1,\mathfrak{a}, \cdots \alpha_{-n_0}    \cdots \alpha_{-1}  }  (x)   }{  \lambda_{p_2,\mathfrak{a},  \alpha_{-n_0}   \cdots  \alpha_{-1}   }  (y) }  \cdot \prod_{k=n_0+1}^{n}    \frac{   \lambda_{p_1, \mathfrak{a}_k, \alpha_{-k}} \big( \psi_{p, \mathfrak{a}_{k+1} }^{\alpha_{-k-1}} \circ \cdots \circ \psi_{p, \mathfrak{a}_n }^{\alpha_{-n}}(x) \big)    }{   \lambda_{p_2, \mathfrak{a}_k, \alpha_{-k}} \big( \psi_{p, \mathfrak{a}_{k+1} }^{\alpha_{-k-1}} \circ \cdots \circ \psi_{p, \mathfrak{a}_n }^{\alpha_{-n}}(y) \big)  }   \, .$$    \end{proof}


\begin{proof}[Proof of Lemma \ref{fisk}] Let us fix $p \in \overline{\mathcal{P}}$. 
We saw in the proof of Lemma \ref{hol} that
$$ ( \mathfrak{a}  , \alpha )    \mapsto  \lambda_{p, \mathfrak{a}, \alpha_{-1}}  \big( \pi_{p, \alpha_{-1} \mathfrak{a}   }(\sigma (\alpha))  \big)$$ is  H\"older with positive exponent on its domain. We now fix an arbitrary $\beta \in \overleftarrow{ \mathcal A}$. Let us take $\mathfrak{a}, \mathfrak{a}’ \in  \overrightarrow{ \mathcal A} $. By Eq. $(\ref{produitchinne})$ and proceeding as in the proof of Lemma     \ref{bdsk}, we see that  the quotient $$ \frac{ \lambda_{p,\mathfrak{a},\alpha}(\pi_{p,\alpha \mathfrak{a}}(\beta)) } 
{\lambda_{p,\mathfrak{a}’,\alpha}  (\pi_{p,\alpha \mathfrak{a}’}(\beta))}$$ is bounded between 0 and $+\infty$, with constants independent of $p$, $\mathfrak{a}$, $\mathfrak{a}’$ and $\alpha$. To conclude, we just have to apply Lemma \ref{bdsk}. \end{proof}

    \begin{proof}[Proof of Lemma \ref{poisson}]

    By Sublemma  $(\ref{sub12})$, there exists $n_0 \in \mathbb{N}$ s.t. for every $p \in \overline{\mathcal{P}}$,  $n > n_0$, $\rho \in \mathcal{A}^n$, $a \in \mathcal{A}$ and $x,y \in \psi_{p, \mathfrak{a} }^\rho(X)$, we have $$ \lambda_{p,\mathfrak{a}, a}  (y)^{\epsilon’}<  \lambda_{p,\mathfrak{a},a}(x) <    \lambda_{p,\mathfrak{a}, a}  (y)^{1/\epsilon’}\, .$$
    \noindent 
     Then up to taking  $\vartheta$-perturbations for small $\vartheta$, for every $t \in \mathcal{T}$, $p \in \overline{\mathcal{P}}$,  $\mathfrak{a} \in \overrightarrow{ \mathcal A}$,  $n > n_0$, $\rho \in \mathcal{A}^n$, $a \in \mathcal{A}$, $x \in \psi_{t,p,\mathfrak{a}}^\rho(X)$ and $y \in \psi_{p,\mathfrak{a}}^\rho(X)$, we have
     $$ \lambda_{p,\mathfrak{a}, a}  (y)^{\epsilon’}<  \tilde{\lambda}_{t,p,\mathfrak{a},a}(x) <    \lambda_{p,\mathfrak{a}, a}  (y)^{1/\epsilon’}\, .$$
     \noindent Let $D_4>0$ be  the maximum of $\tilde{\lambda}_{t,p,\mathfrak{a},\rho}(x) /     \lambda^{\epsilon’}_{p,\mathfrak{a},\rho}  (y)$ and $  \lambda^{1/\epsilon’}_{p,\mathfrak{a},\rho}  (y)/\tilde{\lambda}_{t,p,\mathfrak{a},\rho}(x)$ among $t \in \mathcal{T}$, $ \mathfrak{a} \in \overrightarrow{ \mathcal A}$, $p \in \overline{\mathcal{P}}$,  $n \le n_0$,  $\rho  \in \mathcal{A}^{n}$, $x,y \in X$. We conclude as for Lemma \ref{gfsk}.  
\end{proof}

   \subsection*{Pressure function: Proof of Proposition \ref{pression}}

 \noindent  For simplicity, we fix $p \in  \overline{\mathcal{P}}$ and denote for every $ s \ge 0$: $$Z_n( s ):=\sum_{\alpha \in \mathcal A^n}  \Lambda_{p,\mathfrak{a}, \alpha}^s >0\, .$$
   For every $s \ge 0$, the sequence $n \in \mathbb{N}_+ \mapsto \mathrm{log} Z_n(s)$ is subadditive and so the limit $\Pi_{p,\mathfrak{a}}(s) = \mathrm{lim}_{n \rightarrow + \infty} \frac{1}{n} \mathrm{log} Z_n(s)$ exists and is finite by Fekete's Lemma.  
   We notice that $\Pi_{p,\mathfrak{a}} (0)$  is equal to the topological  entropy of the shift $\sigma$ which is positive since $\mathcal{A}$ has at least two letters. An immediate consequence of Lemma \ref{fisk} is that the pressure $\Pi_{p,\mathfrak{a}}$  only depends on $p$.  We denote it by $\Pi_p$.    We notice that $s \in \mathbb{R}_+ \mapsto Z_n(s)$ is log convex. Thus the map  $s \in \mathbb{R}_+ \mapsto   \mathrm{log} Z_n(s)$ is convex. The limit map $s \mapsto \Pi_{p    }(s)$  is then also convex and thus continuous. We remark that for $s,s’ \ge 0$, it holds: 
   $$  Z_n(s+s’)=   \sum_{\alpha \in \mathcal A^n}  \Lambda_{p,  \mathfrak{a}  ,  \alpha}^{s+s’} \le \sum_{\alpha \in \mathcal A^n }   \Lambda_{p,  \mathfrak{a}, \alpha}^{s} \cdot \gamma^{ns’} $$
   and then $\Pi_{p}(s+s’) \le   \Pi_{p}(s) + s’ \cdot   \mathrm{log} \gamma$. Since $ \mathrm{log} \gamma < 0$,  the map $ s   \in \mathbb{R}_+  \mapsto \Pi_{p}(s)$ is strictly decreasing. Tending $s \rightarrow + \infty$, we see that $\Pi_{p}(s)$ tends to $-\infty$. Finally, by the intermediate value theorem, the map $s \in \mathbb{R}_+ \mapsto \Pi_{p}(s)$ has a unique zero $\Delta(p)$.
   It remains to prove the continuity of $p \mapsto \Delta(p)$. By Eq. $(\ref{pii})$, we have:
   \begin{equation} \label{pir}
 \Pi_p(s)=\lim_{n \rightarrow + \infty} \frac{1}{n} \mathrm{log} \sum_{\alpha \in \mathcal A^n}  \Lambda_{p,\mathfrak{a},\alpha}^s \text{ for any  }  s\ge 0   \, .  
 \end{equation} By Lemma \ref{lemauxlilou} (whose proof does not need the continuity of $p \mapsto \Delta(p)$), we see that for every $p$ and $\epsilon’ >1$, there exists a neighborhood $\mathcal{U}_{p,\epsilon’}$ of $p$ and a constant $D_5>0$ s.t. for every $p’ \in \mathcal{U}_{p,\epsilon’}$ and $\alpha \in \mathcal{A}^*$,  the term $\Lambda_{p’,\mathfrak{a},\alpha} $ is bounded between $\Lambda_{p,\mathfrak{a},\alpha}^{\epsilon’} / D_5$ and   $D_5 \Lambda_{p,\mathfrak{a},\alpha}^{1/\epsilon’} $. Injecting this in Eq $(\ref{pir})$, this implies that $\Pi_{p}(s \epsilon’) \le \Pi_{p’}(s) \le \Pi_{p}(s/\epsilon’)$ for every $s \ge 0$ and $p’ \in \mathcal{U}_{p,\epsilon’}$ and so $\Delta(p) /\epsilon’ \le \Delta(p’) \le \Delta(p)\epsilon’$ for every $p’ \in \mathcal{U}_{p,\epsilon’}$. This proves the continuity of $p \mapsto  \Delta(p)$.

  

      \subsection*{Hyperbolicity  theory}

  We recall here some background on hyperbolic compact sets of $C^r$-endomorphisms. This subsection is mainly taken from Section 1    of the article \cite{b9} of Berger and from  Appendix C of the article \cite{bbb137} of Berger and the author. \newline 
  
  \noindent Let $\mathcal{M}$ be a manifold. A subset $\mathcal{K} \subset \mathcal{M}$ is left invariant by a  $C^1$-endomorphim $\mathcal{F}$ from $\mathcal{M}$ into $\mathcal{M}$ if $\mathcal{F} (\mathcal{K} ) =\mathcal{K} $. When $\mathcal{F}$ is a diffeomorphism, the invariant compact set $\mathcal{K} \subset \mathcal{M}$ is hyperbolic if there exists a $D\mathcal{F}$-invariant splitting $T\mathcal{M} | \mathcal{K} = \mathcal{E}^s \bigoplus  \mathcal{E}^u$ so that $\mathcal{E}^s$ is contracted by $D\mathcal{F}$ and $\mathcal{E}^u$ is expanded by $D\mathcal{F}$:
   \begin{equation} 
  \exists \lambda < 1, \text{ }  C>0, \text{  }  \forall k \in \mathcal{K}, \text{ } \forall n \ge 0, \text{ }   || D\mathcal{F}^n | \mathcal{E}^s_k  || \le C \lambda^n \text{ and }  || (D\mathcal{F}^n  |  \mathcal{E}^u_k)^{-1}  ||  \le C \lambda^n \, .    \nonumber 
   \end{equation} 
   When $\mathcal{F}$ is a local diffeomorphism, we shall study the inverse limit $ \overleftrightarrow{\mathcal{K}}_\mathcal{F}$ of $ \mathcal{K} $:
\begin{equation}
\overleftrightarrow{\mathcal{K}}_\mathcal{F} :=\{ (k_i)_i \in \mathcal{K}^{\mathbb{Z}} : \mathcal{F}(k_i) = k_{i+1} , \forall i \in \mathbb{Z} \} \, . \nonumber
\nonumber 
\end{equation} 
It is a compact space for the topology induced by the product one of $\mathcal{K}^{\mathbb{Z}}$. The dynamics induced by $\mathcal{F}$ on $\overleftrightarrow{\mathcal{K}}_\mathcal{F}$ is the shift $\overleftrightarrow{\mathcal{F}}$ and is invertible. Let $\pi  : \overleftrightarrow{\mathcal{K}}_\mathcal{F} \rightarrow \mathcal{K}$ be the zero-coordinate projection.  Let $\pi^* T\mathcal{M}$ be the bundle over $\overleftrightarrow{\mathcal{K}}_\mathcal{F}$ whose fiber at $\underline{k}$ is $T_{\pi(\underline{k})} \mathcal{M}$. The map $D\mathcal{F}$ acts canonically on $\pi^* T\mathcal{M}$ as $\overleftrightarrow{\mathcal{F}}$ on the basis and as the linear map $D_{\pi(\underline{k})} \mathcal{F}$ on the fiber of $\underline{k} \in  \overleftrightarrow{\mathcal{K}}_\mathcal{F}$. \newline
 
\noindent The compact set $\mathcal{K}$ (or $\overleftrightarrow{\mathcal{K}}_\mathcal{F}$) is hyperbolic if there exists a $D\mathcal{F}$-invariant splitting $\pi^* T\mathcal{M} = \mathcal{E}^s \bigoplus  \mathcal{E}^u$  s.t. $\mathcal{E}^s_{\underline{k}}$ is contracted by  $D_{\pi(\underline{k})} \mathcal{F}$ and $\mathcal{E}^u_{\underline{k}}$ is expanded by  $D_{\pi(\underline{k})} \mathcal{F}$:
       \begin{equation} 
  \exists \lambda < 1, \text{ }  C>0, \text{  }  \forall \underline{k} \in  \overleftrightarrow{\mathcal{K}}_\mathcal{F}, \text{ } \forall n \ge 0, \text{ }   || D\mathcal{F}^n | \mathcal{E}^s_{\underline{k}}  || \le C \lambda^n \text{ and }  || (D\mathcal{F}^n  |  \mathcal{E}^u_{\underline{k}})^{-1}  ||  \le C \lambda^n \, .    \nonumber 
   \end{equation} 
   
   \noindent Actually the definition of hyperbolicity for local diffeomorphisms is consistent with the definition of hyperbolicity for diffeomorphisms when the dynamics is invertible. Here is a useful  result about structural stability:
   
   \begin{theoremcontinu}
   Let $\mathcal{K}$ be a hyperbolic set for a $C^1$-local diffeomorphism $\mathcal{F}$ of $\mathcal{M}$. Then for every $C^1$-local diffeomorphism  $\mathcal{F}’$ which is $C^1$-close to $\mathcal{F}$, there exists a continuous map $i_{\mathcal{F}’} : \overleftrightarrow{\mathcal{K}}_\mathcal{F} \rightarrow \mathcal{M}$ which is $C^0$-close to $\pi$ and so that:
   \begin{enumerate} 
   \item $i_{\mathcal{F}’} \circ \overleftrightarrow{\mathcal{F}}= \mathcal{F}’ \circ i_{\mathcal{F}’}$, 
   \item  $\mathcal{K}_{\mathcal{F}’} := i_{\mathcal{F}’} (\overleftrightarrow{\mathcal{K}}_\mathcal{F}) $ is hyperbolic for $\mathcal{F}’$.
   
   \end{enumerate}

   \end{theoremcontinu}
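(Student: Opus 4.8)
\noindent The plan is to run the classical functional-analytic proof of structural stability, carried out on the inverse limit $\overleftrightarrow{\mathcal{K}}_\mathcal{F}$ so that the non-invertible cocycle $D\mathcal{F}$ becomes an invertible one. First I would fix a smooth exponential map on $\mathcal{M}$ and, for a continuous section $v$ of $\pi^* T\mathcal{M}$ over $\overleftrightarrow{\mathcal{K}}_\mathcal{F}$ with $\|v\|_\infty$ small, set $i_v(\underline{k}):=\exp_{\pi(\underline{k})}\big(v(\underline{k})\big)$. Since $\pi\circ\overleftrightarrow{\mathcal{F}}=\mathcal{F}\circ\pi$, composing the sought conjugacy $i_v\circ\overleftrightarrow{\mathcal{F}}=\mathcal{F}'\circ i_v$ on the left with $\exp^{-1}_{\mathcal{F}(\pi(\underline{k}))}$ turns it into the fixed-point equation
$$v\big(\overleftrightarrow{\mathcal{F}}\,\underline{k}\big)=\mathcal{G}^{\mathcal{F}'}_{\underline{k}}\big(v(\underline{k})\big),\qquad \mathcal{G}^{\mathcal{F}'}_{\underline{k}}(w):=\exp^{-1}_{\mathcal{F}(\pi(\underline{k}))}\!\Big(\mathcal{F}'\big(\exp_{\pi(\underline{k})}(w)\big)\Big),$$
to be solved for $v$ in a small ball of the Banach space $C^0\big(\overleftrightarrow{\mathcal{K}}_\mathcal{F},\pi^* T\mathcal{M}\big)$. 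A Taylor expansion at $w=0$ gives $\mathcal{G}^{\mathcal{F}'}_{\underline{k}}(w)=A_{\underline{k}}w+R_{\underline{k}}(w)$, where $A_{\underline{k}}$ equals $D_{\pi(\underline{k})}\mathcal{F}$ up to a term small in $\|\mathcal{F}'-\mathcal{F}\|_{C^1}$, and $R_{\underline{k}}$ has arbitrarily small Lipschitz constant on a small ball, uniformly in $\underline{k}$.

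\noindent Next I would linearize. The equation reads $\mathbb{L}v=R(v)+(\text{term small in }\mathcal{F}'-\mathcal{F})$, where $\mathbb{L}$ is the bounded operator $v\mapsto v\circ\overleftrightarrow{\mathcal{F}}-D\mathcal{F}\cdot v$ on $C^0\big(\overleftrightarrow{\mathcal{K}}_\mathcal{F},\pi^* T\mathcal{M}\big)$. The key point is that hyperbolicity of $\mathcal{K}$ — the $D\mathcal{F}$-invariant splitting $\pi^* T\mathcal{M}=\mathcal{E}^s\oplus\mathcal{E}^u$ with uniform contraction on $\mathcal{E}^s$ and expansion on $\mathcal{E}^u$ — is exactly equivalent to $\mathbb{L}$ being a Banach-space isomorphism, with $\|\mathbb{L}^{-1}\|$ bounded in terms of $C$, $\lambda$ and $\sup\|D\mathcal{F}\|$. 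One checks this by decomposing a section into its $\mathcal{E}^s$- and $\mathcal{E}^u$-parts, with respect to which $\mathbb{L}$ is block triangular: on the $\mathcal{E}^s$-block the composition term is a contraction, so $\mathbb{L}=\mathrm{Id}-(\text{contraction})$ is invertible by a Neumann series; on the $\mathcal{E}^u$-block one first inverts the composition operator, which is possible because $\overleftrightarrow{\mathcal{F}}$ is a homeomorphism of $\overleftrightarrow{\mathcal{K}}_\mathcal{F}$ and $D\mathcal{F}$ expands $\mathcal{E}^u$, then argues as before. Granting this, the fixed-point equation is solved by the contraction mapping principle once $\|\mathcal{F}'-\mathcal{F}\|_{C^1}$ is small enough: the solution $v=v_{\mathcal{F}'}$ is unique in a fixed small ball, depends continuously on $\mathcal{F}'$, and $v_\mathcal{F}=0$. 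Setting $i_{\mathcal{F}'}:=i_{v_{\mathcal{F}'}}$ then yields a continuous map that is $C^0$-close to $\pi$ (since $\|v_{\mathcal{F}'}\|_\infty\to 0$ as $\mathcal{F}'\to\mathcal{F}$) and satisfies item $(1)$ by construction.

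\noindent Finally I would check item $(2)$. The set $\mathcal{K}_{\mathcal{F}'}=i_{\mathcal{F}'}(\overleftrightarrow{\mathcal{K}}_\mathcal{F})$ is $\mathcal{F}'$-invariant: by $(1)$ and the bijectivity of $\overleftrightarrow{\mathcal{F}}$ on $\overleftrightarrow{\mathcal{K}}_\mathcal{F}$, one has $\mathcal{F}'(\mathcal{K}_{\mathcal{F}'})=\mathcal{F}'\big(i_{\mathcal{F}'}(\overleftrightarrow{\mathcal{K}}_\mathcal{F})\big)=i_{\mathcal{F}'}\big(\overleftrightarrow{\mathcal{F}}(\overleftrightarrow{\mathcal{K}}_\mathcal{F})\big)=\mathcal{K}_{\mathcal{F}'}$. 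For hyperbolicity I would argue with invariant cone fields rather than perturb the splitting by hand: pick continuous cone fields $\mathcal{C}^u\supset\mathcal{E}^u$ and $\mathcal{C}^s\supset\mathcal{E}^s$ on a neighbourhood $U$ of $\mathcal{K}$ such that $D\mathcal{F}$ maps $\mathcal{C}^u$ strictly inside itself and expands vectors there, while the inverse branches along orbits do the dual thing to $\mathcal{C}^s$; these are $C^1$-open conditions, hence persist for $\mathcal{F}'$ on $U$ after shrinking $U$. Since $i_{\mathcal{F}'}$ is $C^0$-close to $\pi$ we have $\mathcal{K}_{\mathcal{F}'}\subset U$, and, because $i_{\mathcal{F}'}$ intertwines the homeomorphism $\overleftrightarrow{\mathcal{F}}$ with $\mathcal{F}'$ on $\mathcal{K}_{\mathcal{F}'}$, the map $\underline{k}\mapsto\big(i_{\mathcal{F}'}(\overleftrightarrow{\mathcal{F}}^{n}\underline{k})\big)_{n\in\mathbb{Z}}$ is a continuous surjection of $\overleftrightarrow{\mathcal{K}}_\mathcal{F}$ onto $\overleftrightarrow{\mathcal{K}_{\mathcal{F}'}}$, so the latter also sits over $U$. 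The cone-field/graph-transform criterion applied along each orbit then produces a $D\mathcal{F}'$-invariant splitting over $\overleftrightarrow{\mathcal{K}_{\mathcal{F}'}}$ with uniform hyperbolic estimates, so $\mathcal{K}_{\mathcal{F}'}$ is hyperbolic for $\mathcal{F}'$.

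\noindent The hard part is the invertibility of $\mathbb{L}$: this is the only place where hyperbolicity is genuinely used, and it is precisely why one must work on the inverse limit $\overleftrightarrow{\mathcal{K}}_\mathcal{F}$ rather than on $\mathcal{K}$ itself — $\mathcal{F}$ being merely a local diffeomorphism, its cocycle becomes invertible only on the bilateral orbit space, and only there are $\mathcal{E}^s,\mathcal{E}^u$ defined and $D\mathcal{F}$-invariant. A secondary technical point is that $\pi^* T\mathcal{M}$ and its splitting are only continuous (indeed Hölder), so everything must be done with continuous sections and the sup norm, never assuming smoothness of $\mathcal{E}^s$ or $\mathcal{E}^u$; and the uniqueness of the small solution $v_{\mathcal{F}'}$ gives the local uniqueness of $i_{\mathcal{F}'}$.
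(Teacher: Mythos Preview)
The paper does not prove this statement: it is quoted in the Appendix as a background result attributed to Przytycki, with no argument supplied. So there is nothing in the paper to compare your proof against.

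That said, your sketch is the standard functional-analytic proof of structural stability, correctly adapted to the endomorphism setting by passing to the inverse limit $\overleftrightarrow{\mathcal{K}}_\mathcal{F}$ so that the shift becomes invertible and the splitting $\mathcal{E}^s\oplus\mathcal{E}^u$ is well-defined. The key step --- invertibility of $\mathbb{L}:v\mapsto v\circ\overleftrightarrow{\mathcal{F}}-D\mathcal{F}\cdot v$ on $C^0(\overleftrightarrow{\mathcal{K}}_\mathcal{F},\pi^*T\mathcal{M})$ --- is identified correctly, and your Neumann-series argument on each block is the right one. One small imprecision: since the splitting $\mathcal{E}^s\oplus\mathcal{E}^u$ is $D\mathcal{F}$-invariant, $\mathbb{L}$ is actually block \emph{diagonal}, not merely triangular; this only makes your argument cleaner. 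The cone-field verification of hyperbolicity for $\mathcal{K}_{\mathcal{F}'}$ is also standard and correct. Your proof is essentially Przytycki's.
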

   
  \noindent Let us also recall the definition of a stable manifold in this context.  For every $k \in \mathcal{K}$ and $\eta>0$, we define the stable manifold and local stable manifold of $k$ by:
    \begin{equation} 
W^s(k ; \mathcal{F}):= \{ k’ \in \mathcal{M}  : d(\mathcal{F}^n(k),\mathcal{F}^n(k’)) \underset{n\to+\infty}{\longrightarrow} 0 \}  \, .    \nonumber 
   \end{equation} 
     \begin{equation} 
W^s_\eta(k ; \mathcal{F}):= \{ k’ \in \mathcal{M}  : \eta>  d(\mathcal{F}^n(k),\mathcal{F}^n(k’)) \underset{n\to+\infty}{\longrightarrow} 0 \}  \, .    \nonumber 
   \end{equation} 
   \noindent
   For $\underline{k} \in  \overleftrightarrow{\mathcal{K}}_\mathcal{F} $  and $\eta>0$, the unstable manifold and local unstable manifold of $\underline{k}$ are:
   $$W^u(\underline{k}, \mathcal{F} ) = \{ k_0’ \in  \mathcal{M}   :  \exists (k’_i)_{i<0} \text{ s.t. }  \mathcal{F}(k’_{i-1} )= k’_i  \text{ and }      d( k_n ,  k’_n ) \underset{n\to - \infty}{\longrightarrow} 0\} \, .  $$ 
   $$W^u_\eta(\underline{k}, \mathcal{F} ) = \{ k_0’ \in  \mathcal{M}   :  \exists (k’_i)_{i<0} \text{ s.t. }  \mathcal{F}(k’_{i-1} )= k’_i  \text{ and }   \eta>   d( k_n ,  k’_n ) \underset{n\to - \infty}{\longrightarrow} 0\} \, .  $$
   
  \noindent    These sets are properly embedded $C^r$-manifolds. For simplicity, we do not denote $\mathcal{F}$ in the article when it is obvious: $W^s(k)$ for example. 

   
    \begin{theoremcontinub} \label{reff}
  Let $r \ge 1$ and  let $\mathcal{M}$ be a manifold. Suppose that $(\mathcal{F}_p)_p$ is a $C^r$-family of local diffeomorphisms $\mathcal{F}_p$ of $\mathcal{M}$  leaving  invariant the continuation of a  compact hyperbolic  set $\mathcal{K}_{p}$. Then  there  exists $\eta > 0$ s.t. 
 the families $(W_\eta^s( k_p ; \mathcal{F}_p))_{ p \in \mathcal{P} } $ and $(W_\eta^u(\underline{k}_p ; \mathcal{F}_p))_{ p \in \mathcal{P} }$  
  of $C^r$-submanifolds are of class $C^r$ and depend $C^0$  on respectively $k_0 \in \mathcal{K}_{0}$ and $\underline{k}_0 \in \overleftrightarrow{\mathcal{K}}_{\mathcal{F}_0}$. 
     \end{theoremcontinub}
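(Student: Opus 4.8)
\noindent The plan is to run the parametrized Hadamard--Perron (graph transform) argument, treating the parameter $p$ as a neutral dynamical coordinate. Over a compact neighborhood of $\overline{\mathcal{P}}$ inside $\mathcal{P}'$, form the fibered local diffeomorphism
$$\hat{\mathcal{F}} : (p,z) \mapsto (p, \mathcal{F}_p(z)) \, ,$$
which is $C^r$ since $(p,z) \mapsto \mathcal{F}_p(z)$ is. By Przytycki's theorem and the assumption that the $\mathcal{K}_p$ form a continuation, $\hat{\mathcal{K}} := \{ (p,k) : k \in \mathcal{K}_p \}$ is a compact $\hat{\mathcal{F}}$-invariant set; $D\hat{\mathcal{F}}$ preserves the stable bundle $\hat{\mathcal{E}}^s_{(p,k)} := \mathcal{E}^s_{p,k}$ and the unstable bundle $\hat{\mathcal{E}}^u_{(p,k)} := \mathcal{E}^u_{p,k}$, and transverse to $\hat{\mathcal{E}}^s \oplus \hat{\mathcal{E}}^u$ the dynamics is \emph{neutral}, since $\hat{\mathcal{F}}$ projects to the identity on the parameter factor. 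Thus $\hat{\mathcal{K}}$ is partially hyperbolic with neutral center, hence $r$-normally hyperbolic for every $r$, and the graph-transform machinery applies with no bunching restriction.

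\noindent First I would fix $\eta > 0$ and a uniform system of adapted $C^r$-charts around the points of the inverse limit $\overleftrightarrow{\hat{\mathcal{K}}}_{\hat{\mathcal{F}}}$ in which $\hat{\mathcal{F}}$ and $\hat{\mathcal{F}}^{-1}$ are uniformly $C^r$-small perturbations of their linear parts; the radius $\eta$ depends only on the $C^r$-norm of $(\mathcal{F}_p)_p$ and the hyperbolicity constants. On the Banach space of continuous sections assigning to each $\underline{\hat{k}} \in \overleftrightarrow{\hat{\mathcal{K}}}_{\hat{\mathcal{F}}}$ a based $C^0$-graph over $(\hat{\mathcal{E}}^s \oplus \hat{\mathcal{E}}^c)_{\underline{\hat{k}}}$ with values in $\hat{\mathcal{E}}^u_{\underline{\hat{k}}}$ and slope at most $1$, the graph transform of $\hat{\mathcal{F}}$ is a contraction --- the rate is $<1$ precisely because the center is neutral while the stable direction is contracted --- whose fixed point is the family of local center-stable manifolds $W^{cs}_\eta(\underline{\hat{k}})$, depending continuously on $\underline{\hat{k}}$; for the stable case the prehistory is irrelevant, so one may index directly by $(0,k_0) \in \hat{\mathcal{K}}$. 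Then, iterating the fiber-contraction theorem $r$ times, gaining one derivative per stage as in Hirsch--Pugh--Shub, I would upgrade the fixed point to a $C^r$-graph whose $r$-jet varies continuously with the base point; neutrality of $\hat{\mathcal{E}}^c$ is what keeps the contraction constants at all orders $\le r$ below $1$. The unstable case is symmetric, applying the same argument to $\hat{\mathcal{F}}^{-1}$ over $\overleftrightarrow{\hat{\mathcal{K}}}_{\hat{\mathcal{F}}}$, where the prehistory $\underline{k}_0 \in \overleftrightarrow{\mathcal{K}}_{\mathcal{F}_0}$ genuinely enters.

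\noindent It then remains to slice by the parameter fibers. Because $D\hat{\mathcal{F}}$ is the identity on the parameter factor, $W^{cs}_\eta$ is a $C^r$-graph over $\hat{\mathcal{E}}^s \oplus \hat{\mathcal{E}}^c$ whose tangent spaces stay uniformly close to $\hat{\mathcal{E}}^s \oplus \hat{\mathcal{E}}^c$, hence transverse to every slice $\{p\} \times \mathcal{M}$ throughout the $\eta$-neighborhood, and $\hat{\mathcal{F}}$-invariance gives $W^{cs}_\eta(0,k_0) \cap (\{p\} \times \mathcal{M}) = \{p\} \times W^s_\eta(k_p ; \mathcal{F}_p)$ for $p$ near $0$. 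Applying the implicit function theorem to this transverse intersection (the slices $\{p\}\times\mathcal{M}$ depending smoothly on $p$) yields that $(W^s_\eta(k_p ; \mathcal{F}_p))_{p \in \mathcal{P}}$ is a $C^r$-family of $C^r$-submanifolds depending $C^0$ on $k_0 \in \mathcal{K}_0$; the same with the center-unstable manifold gives $(W^u_\eta(\underline{k}_p ; \mathcal{F}_p))_{p \in \mathcal{P}}$, depending $C^0$ on $\underline{k}_0 \in \overleftrightarrow{\mathcal{K}}_{\mathcal{F}_0}$.

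\noindent The hard part is the $C^r$-regularity of the graph-transform fixed point together with continuity of its $r$-jet in the base point: one must set up the space of $C^r$-graphs, check invariance under the graph transform, and run the $r$-fold bootstrap while controlling the contraction constants at every order --- this is where the neutrality of the parameter direction does the real work, and it is exactly what is proved in Berger \cite{b9} (Prop. 1.6) and Berger--Biebler \cite{bbb137} (Theorem C.5). A secondary technical point is arranging the adapted charts and the parameter foliation so that $\hat{\mathcal{F}}$ is uniformly $C^r$-close to its linear part and the center-stable graphs remain transverse to all slices on the full $\eta$-neighborhood, which is what legitimizes the final slicing step.
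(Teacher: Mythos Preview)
The paper does not actually prove this statement: Theorem F is quoted in the Appendix as a result of Berger \cite{b9} (Prop.~1.6) and Berger--Biebler \cite{bbb137} (Theorem C.5), with no proof given. So there is no ``paper's own proof'' to compare your proposal against.

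That said, your sketch is a faithful outline of the approach carried out in those references: one treats the parameter as a neutral central direction of the fibered map $\hat{\mathcal{F}}:(p,z)\mapsto(p,\mathcal{F}_p(z))$, so that the resulting partially hyperbolic set is $r$-normally hyperbolic for every $r$, and then runs the graph transform / fiber-contraction bootstrap to obtain $C^r$ center-stable and center-unstable laminations whose slices by $\{p\}\times\mathcal{M}$ are the desired parametrized local stable and unstable manifolds. You correctly flag the two genuine technical points --- the $r$-fold bootstrap for $C^r$-regularity with continuous dependence of the $r$-jet, and the transversality needed for the slicing step --- and you correctly point to \cite{b9,bbb137} as the place where these are worked out. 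Your proposal is consistent with the cited proofs and with how the paper uses the result.
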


\begin{Remark}  \label{epicerie}
 An immediate adaptation of the proof of Theorem C.5 in Appendix C of \cite{bbb137} actually shows that the families $(W_\eta^s( k_p ; \mathcal{F}_p))_{ p \in \mathcal{P} } $ and $(W_\eta^u(\underline{k}_p ; \mathcal{F}_p)_{ p \in \mathcal{P} }$ depend H\"older for the $C^{r-1}$-topology on respectively $k_0 \in \mathcal{K}_{0}$ and $\underline{k}_0 \in \overleftrightarrow{\mathcal{K}}_{\mathcal{F}_0}$.
\end{Remark}

\noindent We will need the following parametric inclination lemma:

    \begin{Lemma} \label{refff}
  Let $r \ge 1$ and $U \Subset \mathbb{R}^m$. Suppose that $(\mathcal{F}_p)_p$ is a $C^r$-family of local diffeomorphisms $\mathcal{F}_p$ of $U$  leaving  invariant a compact hyperbolic  set $\mathcal{K}_{p}$. Let $\underline{k} =(k^i_0)_i \in \overleftrightarrow{\mathcal{K}}_{\mathcal{F}_0}$ and $(\Gamma_p)_p$ be a $C^{r}$-family of manifolds of the same dimension as $W^s_\eta(\underline{k}_p ; \mathcal{F}_p)$. Suppose that  $\Gamma_p$ does not intersect the stable set of  $\mathcal{K}_p$ and    
  $(\Gamma_p)_p$ intersects transversally $(W^u_\eta(\underline{k}_p ; \mathcal{F}_p))_p$ at a $C^r$-family of points $(z_p)_p$. Then for any  $\epsilon>0$ and  $n$ large there is a submanifold $\Gamma^n_p$  $C^r$-close to $W^s_\eta(k^{-n}_p ; \mathcal{F}_p)$, whose image by $\mathcal{F}^n_p$ is  in a  $\epsilon$-neighborhood of $z_p$ in $\Gamma_p$ and s.t. $(\Gamma^n_p)_p$ is $C^{r}$-close to $(W^s_\eta(k_p^{-n}  ; \mathcal{F}_p))_p$.
     \end{Lemma}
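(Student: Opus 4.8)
The plan is to recognize Lemma~\ref{refff} as a parametrized $C^r$ version of the inclination ($\lambda$-)lemma, and to prove it by a graph transform run along the backward orbit of $z_p$, carried out uniformly in the parameters by passing to the fibered map $\hat{\mathcal F}\colon (p,x)\mapsto(p,\mathcal F_p(x))$. First one fixes, using hyperbolic continuation and Theorem F, an $\eta>0$ small enough that the local stable and unstable manifolds $W^s_\eta(\cdot\,;\mathcal F_p)$, $W^u_\eta(\cdot\,;\mathcal F_p)$ are defined and form $C^r$ families, and a $\lambda<1$ for which $D\mathcal F_p$ contracts $\mathcal E^s_p$ by at most $\lambda$ and expands $\mathcal E^u_p$ by at least $1/\lambda$ along every orbit of $\overleftrightarrow{\mathcal K}_{\mathcal F_p}$, uniformly in $p$. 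Since $z_p\in W^u_\eta(\underline k_p;\mathcal F_p)$, it carries a backward orbit $(z^{-j}_p)_{j\ge0}$ with $\mathcal F_p(z^{-j-1}_p)=z^{-j}_p$, depending $C^r$ on $p$ and with $d(z^{-j}_p,k^{-j}_p)\le C\lambda^j$ uniformly in $p$. Given $\epsilon>0$ and $n$ large, one would define $\Gamma^n_p$ to be the component through $z^{-n}_p$ of the preimage of $B(z_p,\epsilon)\cap\Gamma_p$ under the inverse branch of $\mathcal F^n_p$ that follows $z_p\to z^{-1}_p\to\cdots\to z^{-n}_p$, clipped to an $\eta$-tube around $k^{-n}_p$; then $\mathcal F^n_p(\Gamma^n_p)\subset B(z_p,\epsilon)\cap\Gamma_p$ and $\Gamma^n_p$ depends $C^r$ on $p$ by construction. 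The hypothesis that $\Gamma_p$ avoids the stable set of $\mathcal K_p$, together with its transversality (and complementary dimension) to $W^u_\eta$, is what ensures these inverse branches are defined and that the pulled-back disks stay inside the hyperbolic tube while growing, in the stable direction, to full size $\eta$.

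The core step is the usual $C^r$ graph transform, applied here to $\mathcal F_p^{-1}$: from the viewpoint of $\mathcal F_p^{-1}$, $W^u_\eta(\mathcal F_p)$ is the local stable manifold, $W^s_\eta(\mathcal F_p)$ the local unstable manifold, and $\Gamma_p$ is a disk of dimension $\dim\mathcal E^s=\dim W^u(\mathcal F_p^{-1})$ transverse to the stable manifold of $\mathcal F_p^{-1}$ at a point of it. In $C^r$ charts along the backward orbit adapted to $\mathcal E^s\oplus\mathcal E^u$ (depending $C^r$ on $p$), transversality lets one write the relevant pieces as graphs $\xi^u=\gamma_p(\xi^s)$ over the stable factor, and one checks that each application of the inverse branch sends such a graph to a graph whose $k$-th derivative is at most $\lambda^{k+1}$ times the previous one, up to a nonlinear correction controlled by the $C^{k+1}$-size of $\mathcal F_p$. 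Since $\lambda^{k+1}<1$ for every $k\ge0$, the graphs defining $\Gamma^n_p$ converge, uniformly in $p$ and exponentially fast, in every $C^r$ norm (including $r=\infty$) to the graph of $W^s_\eta(k^{-n}_p;\mathcal F_p)$; this is exactly the assertion that $\Gamma^n_p$ is $C^r$-close to $W^s_\eta(k^{-n}_p;\mathcal F_p)$ for $n$ large.

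For the statement about families one runs the same argument inside $\hat{\mathcal F}$, for which the invariant set $\{(p,k):k\in\mathcal K_p\}$ is partially hyperbolic with stable bundle $\{0\}\times\mathcal E^s$, unstable bundle $\{0\}\times\mathcal E^u$, and a $d$-dimensional \emph{neutral} center direction (the parameter), on which $\hat{\mathcal F}$ is the identity. Because $(\Gamma_p)_p$ is a $C^r$ family, the total graph $\hat\Gamma=\bigsqcup_p\{p\}\times\Gamma_p$ is, near $(p,z_p)$, a graph over $\mathcal P\times\mathcal E^s$ with values in $\mathcal E^u$, and under $\hat{\mathcal F}^{-1}$ the parameter direction is only fixed (rate $1$) while $\mathcal E^u$ is contracted by $\le\lambda$, so the Lipschitz and higher-jet contraction in the graph transform persists with worst ratio $\lambda<1$. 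Hence $\hat\Gamma^n=\bigsqcup_p\{p\}\times\Gamma^n_p$ converges in $C^r$ to $\bigsqcup_p\{p\}\times W^s_\eta(k^{-n}_p;\mathcal F_p)$, which is the claim that $(\Gamma^n_p)_p$ is $C^r$-close to $(W^s_\eta(k^{-n}_p;\mathcal F_p))_p$, while $\mathcal F^n_p(\Gamma^n_p)\subset B(z_p,\epsilon)\cap\Gamma_p$ holds by construction.

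The part I expect to be the main obstacle is this last estimate with full $C^r$ (not merely $C^1$) control, uniformly in $p$ and with $C^r$-dependence on $p$, so that the conclusion is genuinely a statement about families: this needs the higher-jet version of the graph transform together with the verification that the neutral parameter direction does not break the hyperbolic cone conditions — which it does not, since under $\hat{\mathcal F}^{-1}$ the stable direction is expanded strictly more than both the contracted unstable direction and the fixed center direction. Everything else — following inverse branches, uniformity of constants, and the $C^r$-dependence on $p$ of charts and of $W^u_\eta$, $W^s_\eta$ — is bookkeeping handled by hyperbolic continuation and Theorem F.
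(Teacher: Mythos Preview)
Your proposal is correct and is the standard direct proof of the parametric inclination lemma via the $C^r$ graph transform, carried out in the fibered map $\hat{\mathcal F}$ so that the parameter direction is neutral. The paper, however, takes a different and much shorter route: rather than running the graph transform from scratch, it extends each $\mathcal F_p$ on a neighborhood $U'\supset U$ so that $\Gamma_p$ becomes a piece of the local stable manifold of a newly created saddle point (this is where the hypothesis that $\Gamma_p$ avoids the stable set of $\mathcal K_p$ is used, to leave room for the modification), and then simply invokes Theorem~F to get the $C^r$-family regularity of the resulting stable lamination. Your approach is self-contained and makes the $C^r$ contraction explicit, at the cost of actually carrying out the higher-jet estimates; the paper's trick trades that work for a one-line reduction to an already-stated theorem, but requires constructing the extension and checking that the enlarged hyperbolic set still satisfies the hypotheses of Theorem~F.
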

   
   \begin{proof}
   The proof is similar  to the one of the parametric inclination Lemma C.6  of \cite{bbb137} but for inverse iterations this time: one extends $\mathcal{F}_p$ on a neighborhood $U’$ of $U$ it in such a way that   $\Gamma_p$ is included in the stable manifold of some saddle point. Then we apply Theorem F. 
   \end{proof}

\noindent \emph{Sébastien Biebler} \newline
\noindent  sebastien.biebler@imj-prg.fr \newline 
\noindent Sorbonne Université, CNRS \newline 
\noindent IMJ-PRG, 4 Place Jussieu, 75005 Paris

\end{document}